\documentclass[11pt]{amsart}%
\usepackage{palatino, mathpazo}
\usepackage{amsfonts}
\usepackage{amsmath}
\usepackage{amssymb,latexsym,xcolor}
\usepackage{graphicx}
\usepackage{harpoon}
\usepackage[mathscr]{eucal}
\usepackage{amssymb}%
\usepackage[%backref=page,
linktocpage=true,colorlinks,citecolor=magenta,linkcolor=blue,urlcolor=magenta]{hyperref}
\usepackage[all]{xy}
\providecommand{\U}[1]{\protect \rule{.1in}{.1in}}

\newtheorem{theorem}{Theorem}[section]

\newtheorem{corollary}[theorem]{Corollary}
\newtheorem{definition}[theorem]{Definition}

\newtheorem{lemma}[theorem]{Lemma}
\newtheorem{proposition}[theorem]{Proposition}
\newtheorem{Theorem}{Theorem}

\theoremstyle{remark}
\newtheorem{remark}[theorem]{Remark}

\newtheorem{question}{Question}

\numberwithin{equation}{section}

\begin{document}
\title[Monodromy of generalized Lam\'e equations]{On monodromy and spectral geometry of generalized Lam\'e equations with four singularities, I: half periods
%\\
%Monodromy of generalized Lam\'e equations
%second order Fuchsian dierential equations  
%with apparent parameters, I.
}
%\author{Zhijie Chen}
%\address{Department of Mathematical Sciences, Yau Mathematical Sciences Center, Tsinghua University, Beijing, 100084, China }
%\email{zjchen2016@tsinghua.edu.cn}
\author{Erjuan Fu}
\address{Beijing Institute of Mathematical Sciences and Applications, Beijing, 101408, China}
\email{fej.2010@tsinghua.org.cn, ejfu@bimsa.cn}
\author{Chang-Shou Lin}
\address{Department of Mathematics, National Taiwan University, Taipei 10617, Taiwan }
\email{cslin@math.ntu.edu.tw}

\begin{abstract}
We consider the unitary monodromy problem of the following generalized Lam\'e equations with apparent parameters
\begin{equation*}
    y^{\prime\prime}(z)=\left(\frac{3}{4}\sum_{k=0}^3\wp(z-\frac{\omega_k}{2};\tau)+\sum_{k=0}^3T_k\zeta(z-\frac{\omega_k}{2};\tau)+B\right)y(z),
\end{equation*}
where $T_0,\cdots, T_3, B$ are apparent parameters. 
%$(T_0,\cdots, T_3, B)\in AP$. Here, $AP$ denotes the space of apparent parameters, which is an algebraic set. 
%with $T_0,\cdots, T_3, B\in \mathbb{C}$ and $\sum_{i=0}^3T_i=0.$ 
We first decompose the space of apparent parameters, which turns out to be an algebraic set, into three irreducible components. These three components intersect at $(T_0,\cdots, T_3, B)=(0,\cdots, 0)$, which plays an important role in determining whether the monodromy matrices is unitary or not.
%yields important information on the unitary 
%each of them can be identified with $\mathbb{C}$. Our approach indicates that the geometry of AP yields some important informations.

Following the approach in KdV theory, we define the spectral polynomial which is a degree 4 polynomial of the apparent parameter. We then obtain that the monodromy is not completely reducible if and only if the apparent parameter is a zero of the spectral polynomial. By introducing a branched double cover of the apparent space,  which parametrizes all one-dimensional common eigenspaces, we determine the monodromy data for all apparent parameters. By noticing that the equation under the covering map is exactly the spectral polynomial, we obtain that the generalized Lam\'e curve is isomorphic to the spectral curve. 
%Therefore, we have a spectral curve which parametrizes all one-dimensional common eigenspaces. 

Finally, with the help of the spectral curve defined by the spectral polynomial, 
%the monodromy data can be locally expressed as a holomophic function of the apparent parameter. 
we  characterize the conditional stability sets in two directions by making use of the local analytic coordinates of the monodromy data and then prove that the monodromy matrices are unitary if and only if $(T_0,\cdots, T_3, B)=(0,\cdots, 0)$ when the period $\tau\in i\mathbb{R}_{>0}$. %the periodic parallelogram is rectangular. 
%$\tau\in i\mathbb{R}$.  %   Together with \cite{Lin-2025}, we are able to answer those two questions completely.

\iffalse 
On a torus $E_{\tau}=\mathbb{C}/\Lambda(\tau)$, where $\Lambda(\tau)=\mathbb{Z}\oplus\tau \mathbb{Z}, \text{Im}(\tau)>0$, we consider the space of apparent parameters (denoted by AP) of the following ODE
\begin{equation}\tag{1}\label{eq_07141315}
    \begin{aligned}
        &y^{\prime\prime}(z)=q(z)y, q(z)=\frac{3}{4}\sum\limits_{k=0}\wp (z-w_k/2)\\
        &+\sum T_k\zeta(z-w_k/2)+B,\ \  \sum\limits_{k=0}^3 T_k=0.
    \end{aligned}
\end{equation}
For (\ref{eq_07141315}), the monodromy matrices $M_j$, $j=1,2$, always commute. In section  0, we pose two fundamental questions [Q1] and [Q2] concerning the monodromy matrices. Together with [12], we are able to answer those two questions completely. Our approach indicates that the geometry of AP yields some important informations. For example, (i) for each irreducible component of AP, there associates with a polynomial. This polynomial plays a basic role to distinguish which parameter such that (\ref{eq_07141315}) is completely reducible. (ii) The ODE (\ref{eq_07141315}) is particalarly special at the parameter where AP has a singularity.
\fi 
\end{abstract}

\thanks{Erjuan Fu was supported by NSFC (No. 12401188) and BIMSA Start-up Research Fund.}

\maketitle
%\makecontents
\tableofcontents
\setcounter{tocdepth}{1} 

\section{Introduction}\label{intro}

\iffalse 
 In this paper, we use the notations $\omega_0=0$, $\omega_1=1$, $\omega_2=\tau$ and $\omega_3=1+\tau$, $\wp(z;\tau)$ and $\zeta(z;\tau)$ are the Weierstrass elliptic function and Weierstrass zeta function such that $\zeta(z;\tau)$ (or simply $\zeta(z)$) is odd and satisfies
\begin{align*}
    \zeta'(z)=-\wp(z).
\end{align*}
Another useful entire function is the (Weierstrass) sigma function $\sigma$ satisfying
\begin{align*}
    \zeta(z)=\sigma'(z)/\sigma(z),\ \ \sigma(z+\omega_i)=-e^{\eta_i(z+\omega_i/2)}\sigma(z).
\end{align*}

\fi 
% In the theory of Fuchsian differential equations, the monodromy representations of solutions  play an important role in studying the geometric nature of solutions. 
This is the first in a series of papers on %characterization of  
the unitary monodromy problem for the following generalized Lam\'e equations with $n+1\in\mathbb{Z}_{>0}$  %regular
 singularities denoted by $p_0, p_1, \cdots, p_n\in E_\tau$:
\begin{equation}\label{eq-lame}
    \begin{aligned}
 &\mathcal{L}_{\textbf{m},\Sigma}(\mathbb{T}, B):\quad  
 y''(z)=q_{\textbf{m},\Sigma}(z;\mathbb{T}, B)y(z),\quad z\in \mathbb{C}\\
%&q(z;\mathbb{T}, B):=\sum_{k=1}^n \frac{m_k}{2}(\frac{m_k}{2}+1)\wp(z-p_k;\tau)+\sum_{k=1}^nT_k\zeta(z-p_k;\tau)+B
\end{aligned}
\end{equation}
where  
$$
q_{\textbf{m},\Sigma}(z;\mathbb{T}, B)=\sum_{k=0}^n \frac{m_k}{2}(\frac{m_k}{2}+1)\wp(z-p_k;\tau)+\sum_{k=0}^nT_k\zeta(z-p_k;\tau)+B,$$
with   
\begin{itemize}
\item $\Sigma:=\{p_0, \cdots, p_n\}$,
\item $\textbf{m}:=(m_0,\cdots, m_n)\in (\mathbb{Z}_{>0})^{n+1}$ satisfying $\sum_{k=0}^n m_k\in 2\mathbb{Z}_{>0}$,
\item
$(\mathbb{T}, B):=(T_0,\cdots, T_n, B)\in \mathbb{C}^{n+2}$  satisfying $
   \sum_{k=0}^nT_k=0$.
\end{itemize}
Here, the notations  $\tau\in \mathbb{H}=\{z\mid \text{Im}(z)>0\}$, $E_\tau=\mathbb{C}/\Lambda_\tau$ with $\Lambda_\tau=\mathbb{Z}+\mathbb{Z}\tau$, and $\wp(z;\tau), \zeta(z;\tau)$ denote the Weierstrass elliptic and zeta functions, respectively. 
%Moreover, we assume that  $m_1,\cdots, m_n\in \mathbb{N}$ with $\sum_{k=1}^n m_k \in 2\mathbb{Z}$ %being an even integer, which is to make sure the monodromy matrices commute. 
%The integrality assumption is such that the equation is an algebraically integrable system and methods in algebraic geometry and modular forms can be brought in to study the detailed structures of the moduli spaces of solutions.
% we consider the following Fuchsian dierential equations of second order with $n$  %regular singularities denoted by $p_1, \cdots, p_n\in E_\tau$:
%Linear ordinary differential equations having only regular singular points are called Fuchsian Differential Equations 
%In a series of coming papers, we aim to study the following second order ODE of Fuchsian type:
The condition $
   \sum_{k=0}^nT_k=0
$ is to make sure $q_{\textbf{m},\Sigma}(z; \mathbb{T}, B)$ is elliptic, i.e., it
is a meromorphic function on $E_\tau$. 
For $z\in\mathbb{C}$, we denote by $[z]:=z \, (\mod \Lambda_\tau) \in E_\tau$, and we write $z$ instead of $[z]$ for a point $[z]\in E_\tau$ to simplify notations when no confusion arises. 
%Those constants $T_1,\cdots,T_m$ and $B$ are not arbitrary. 

%Under the integrality assumption dj N, the equation is an algebraically integrable system and methods in algebraic geometry and modular forms can be brought in to study the detailed structures of the moduli spaces of solutions.

Notice that the local exponents of $\mathcal{L}_{\textbf{m},\Sigma}(\mathbb{T}, B)$ at  $p_k$ are $-{m_k}/{2}$ and $1+{m_k}/{2}$,  then their difference is $m_k\in\mathbb{Z}_{>0}$, so $\mathcal{L}_{\textbf{m},\Sigma}(\mathbb{T}, B)$  might have a solution $y(z)$ with the logarithmic singularity at $p_k$.   If all solutions of  $\mathcal{L}_{\textbf{m},\Sigma}(\mathbb{T}, B)$ are free of logarithmic  singularity at $p_k$,  we call $\mathcal{L}_{\textbf{m},\Sigma}(\mathbb{T}, B)$ is  \textit{apparent} at $p_k$.  If $\mathcal{L}_{\textbf{m},\Sigma}(\mathbb{T}, B)$ is apparent at all singularities, we simply call 
$\mathcal{L}_{\textbf{m},\Sigma}(\mathbb{T}, B)$ is apparent and then the 
 parameter $(\mathbb{T},B)$ is called an apparent parameter. % in this case.
\iffalse 
By adding some restrictions on the paramenter $(\mathbb{T}, B)$, we can guarantee that $\mathcal{L}_{\textbf{m},\Sigma}(\mathbb{T}, B)$ is apparent.
\fi 
Denote the space of apparent parameters by 
$$AP_{\textbf{m},\Sigma}:=\left\{(\mathbb{T},B)\in \mathbb{C}^{n+2} \mid  \sum_{j=0}^nT_j=0 \,\, \text{and}\,\, \mathcal{L}_{\textbf{m},\Sigma}(\mathbb{T}, B)\, \text{ is apparent}.\right\}.$$
By applying the classic Frobenius method, we will see that $AP_{\textbf{m},\Sigma}$ is an algebraic set and the defining polynomials can be explicitly written out for $\textbf{m}=(1,\cdots,1)$, see Section \ref{sec-apparent} for details. 
%all singularities are apparent, i.e., all solutions of  $\mathrm{Eqn}(\mathbb{T}, B)$ are free of logarithmic singularities. 
% no solutions with such a singular behavior near $p_k$. If it holds, then (\ref{ceq07120443}) is called apparent at $p_k$. Those constants $T_1,\cdots,T_m$ and $B$ are choosen so that (\ref{ceq07120443}) is apparent at all the singular points. It is elementary 
\iffalse 
Indeed, it is well known that there is a polynomial $P_k(\mathbb{T},B)$ such that $\mathcal{L}_{\textbf{m},\Sigma}(\mathbb{T}, B)$ is apparent at $p_k$ if and only if the parameters  satisfy $P_k(\mathbb{T},B)=0$. By applying the classic Frobenius method, this polynomial $P_k(\mathbb{T},B)$ can be  obtained through an elementary computation, see Section \ref{sec-apparent} for details.
 %for example, see  \cite{CKL1,CKL2,CKL3,CKL4}.  
In particular, we compute this polynomial for $\textbf{m}=(1,\cdots,1)$  in Section \ref{sec-apparent}.  %The set of all apparent parameters is simply called
%Denote the space of apparent parameters by 
%% denoted by AP, that is
%\begin{align*}
%    AP=\left\{(\mathbb{T},B)\in \mathbb{C}^{n+1} \mid \sum_{j=1}^nT_j=0,\, \, P_k(\mathbb{T},B)=0, \,k=1, \cdots, n.\right\}.
%\end{align*}
%Clearly, $AP$ is an algebraic set. 
\fi 
In general, $AP_{\textbf{m},\Sigma}$ could be very complicated. However, we expect the geometry of $AP_{\textbf{m},\Sigma}$ provides useful informations about $\mathcal{L}_{\textbf{m},\Sigma}(\mathbb{T}, B)$. 

Let $(\mathbb{T}, B)\in AP_{\textbf{m},\Sigma}$. Due to  $\textbf{m}\in (\mathbb{Z}_{>0})^{n+1}$, the local monodromy of $\mathcal{L}_{\textbf{m},\Sigma}(\mathbb{T}, B)$ near each singularity $p_k$ is $\pm I_2$,  we only need to consider the monodromies along the two fundamental cycles of $E_\tau$. 
% In the theory of generalized Lam\'e equations 
%second order Fuchsian differential equations 
%with apparent parameters, the monodromy representations %of solutions 
%play an important role in studying the geometric nature of solutions. 
%One important data is the monodromy data. 
%The monodromy matrices $M_j(z_0),j=1,2$, are defined as follows. 
 Fix a point $z_0\in \mathbb{C}$ such that along the lines $z_0+\mathbb{R}$ and $z_0+\tau\mathbb{R}$, there are no singularities of the potential $q_{\textbf{m},\Sigma}(z;\mathbb{T}, B)$. Let $y_1(z)$ and $y_2(z)$ be two linearly independent solutions of $\mathcal{L}_{\textbf{m},\Sigma}(\mathbb{T}, B)$ near $z_0$. Since $q_{\textbf{m},\Sigma}(z;\mathbb{T}, B)$ is elliptic, there are two  $2\times2$ matrices $M_1(z_0), M_1(z_0)$ such that
\begin{align*}
    \left(\begin{array}{cc}y_1(z+1)\\y_2(z+1) \end{array}\right)=M_1(z_0)\left(\begin{array}{cc}
    y_1(z)\\y_2(z) \end{array}\right), \,\,
 \left(\begin{array}{cc}y_1(z+\tau)\\y_2(z+\tau) \end{array}\right)=M_2(z_0)\left(\begin{array}{cc}
    y_1(z)\\y_2(z) \end{array}\right)  %, \quad j=1,2,
\end{align*}
for $z$ in a neighborhood of $z_0+\mathbb{R}$ or $z_0+\tau\mathbb{R}$. %, where $\omega_1=1$ and  $\omega_2=\tau$. %Of course, $M_j(z_0)$ do depend on $z_0$. In fact, 
Notice that, % $M_j(z_0)$  is  independent of the choice of $z_0$ up to conjugation by an element in $\mathrm{GL}(2,\mathbb{C})$.  
%In fact,
 for different $z_0'$, we have $M_j(z_0')=\pm M_j(z_0)$ because the local monodromy  near each singularity is $\pm I_2$. % because $m_k\in \mathbb{Z}_{>0}, k=1, \cdots, n$. 
Hence, without loss of generality, we can fix the base point $z_0$ and then simply denote the monodromy matrices by $M_1,M_2$.  Of course, $M_i=M_i(\mathbb{T}, B), \, i=1,2$ is depend on the apparent parameter. 
 Furthermore, due to  the assumption $\sum_{k=0}^nm_k$ is an even integer,  we always have $
M_1M_2=M_2M_1$.
Therefore, the monodromy representation can be split into two cases: \emph{completely reducible case and not completely reducible case.}
  %\begin{definition}
The equation $\mathcal{L}_{\textbf{m},\Sigma}(\mathbb{T}, B)$ is called completely reducible if $M_1, M_2$ can be simultaneously diagonalized. Otherwise, it is called not completely reducible.
%\end{definition}
%Clearly, the definition is independent of the choice of $z_0$. %Some natural questions concerning $\mathrm{Eqn}(\mathbb{T}, B)$ are considered in this series of paper:
%
%As we all know, if $\mathcal{L}(\mathbb{T}, B)$ is not completely reducible, then the monodromy matrices are definitely not unitarizable.
%
If $\mathcal{L}_{\textbf{m},\Sigma}(\mathbb{T}, B)$ is completely reducible, up to a common conjugation, we can set \begin{equation}\label{CompRed}M_1=\begin{pmatrix} e^{-2\pi is}&0\\ 0& e^{2\pi is}\end{pmatrix},\quad M_2=\begin{pmatrix} e^{2\pi ir}&0\\ 0& e^{-2\pi ir}\end{pmatrix},\quad s,r\in \mathbb{C}.\end{equation}  Here, $M_1, M_2$ can be in $\{\pm I\}$ simultaneously. %The solutions $y_1(z), y_2(z)$ with the monodromy matrices $M_1, M_2$ are called common eigenfunctions of   $\mathcal{L}_{\textbf{m},\Sigma}(\mathbb{T}, B)$.  %$e^{2\pi is}$ and $e^{2\pi ir}$ might be $\pm 1$. 
%Of course, if $y_1$ and $y_2$ exchange, then $(s,r)$ changes to $(-s,-r)$.
%
 %
If $\mathcal{L}_{\textbf{m},\Sigma}(\mathbb{T}, B)$ is not completely reducible, up to a common conjugation, we can set 
\begin{equation}\label{CompNRed}
M_1=\varepsilon_1\begin{pmatrix} 1&0\\ 1& 1\end{pmatrix},\quad M_2=\varepsilon_2\begin{pmatrix} 1&0\\ \mathcal{C}& 1\end{pmatrix},\end{equation} 
with $\varepsilon_1, \varepsilon_2\in \{\pm 1\}$ and $\mathcal{C} \in \mathbb{C}\cup \{\infty\}$.  If $\mathcal{C}=\infty$, then (\ref{CompNRed}) is understood as 
\begin{equation*}
M_1=\varepsilon_1\begin{pmatrix} 1&0\\ 0& 1\end{pmatrix},\quad M_2=\varepsilon_2\begin{pmatrix} 1&0\\ 1& 1\end{pmatrix}.\end{equation*} 
%\begin{equation}\label{CompNRed}
%M_j=\varepsilon_j\left(\begin{array}{cc} 1&0\\ \chi_j& 1\end{array}\right), \quad \varepsilon_j\in \{\pm 1\},\,\chi_j \in \mathbb{C}\, \text{ and }\, |\chi_1|+|\chi_2|\neq 0.\end{equation} 
 Throughout the paper, $(s,r) \mod \mathbb{Z}^2$ and $\mathcal{C}$ are called the \emph{monodromy data} of  $\mathcal{L}_{\textbf{m},\Sigma}(\mathbb{T}, B)$  for each case.  %\footnote{Here, we use $\chi$ instead of $a$ and the because $\chi$ is used later in this paper.} 
Obviously, the following questions are our first concern:

\begin{question}\label{Q1}
Given $\Sigma$ and \textbf{m}.
How to distinguish the apparent parameters $(\mathbb{T},B)$ such that $\mathcal{L}_{\textbf{m},\Sigma}(\mathbb{T}, B)$ is completely reducible?
\end{question}

\begin{question}\label{Q0}
Given $\Sigma$ and \textbf{m}.
What is the monodromy data of $\mathcal{L}_{\textbf{m},\Sigma}(\mathbb{T}, B)$ with $(\mathbb{T}, B)\in AP_{\textbf{m},\Sigma}(\tau)$?
\end{question}

%The solutions $y_1(z), y_2(z)$ with the monodromy matrices $M_1, M_2$ are called common eigenfunctions of   $\mathcal{L}_{\textbf{m},\Sigma}(\mathbb{T}, B)$.

The key step in solving these two questions is to find the common eigenfunctions of $\mathcal{L}_{\textbf{m},\Sigma}(\mathbb{T}, B)$ so that we can express $M_1, M_2$ in the form of (\ref{CompRed}) and (\ref{CompRed}).  By a common eigenfunction $y(z)$ of $\mathcal{L}_{\textbf{m},\Sigma}(\mathbb{T}, B)$, we mean that $y(z)$ solves $\mathcal{L}_{\textbf{m},\Sigma}(\mathbb{T}, B)$ and  there exist $\lambda_1, \lambda_2 \in \mathbb{C}$ such that 
\begin{equation}\label{intro-commeigen}
y(z+1)=\lambda_1 y(z) \quad \text{and}\quad y(z+\tau)=\lambda_2 y(z).
\end{equation}
%
%Let $T\in AP$, a solution $y(z)$ of $\mathcal{L}(T)$ is called a common eigenfunction of $\mathcal{L}(T)$ if 
%$$y(z+\omega_j)=\lambda_j y(z),\qquad j=1,2,$$
%where $\lambda_j$ is an eigenvalue of $M_j, j\in \{1,2\}$. 
%
Here, $\lambda_j$ is an eigenvalue of $M_j, j\in \{1,2\}$.
Since the local exponents of $y(z)$ at  $p_k$ are $-{m_k}/{2}$ and $1+{m_k}/{2}$, 
then the square of $y(z)$  is an elliptic function of second kind. By the classical function theory, $y(z)$ can be written as a so-called Hermite-Halphen ansatz (see (\ref{HHa}) below).  Apply the transformation law of the Weierstrass sigma function, the monodromy data $(s,r)$ can be calculated in terms of the zeros of $y(z)$ (see (\ref{commony}) ) if $\mathcal{L}_{\textbf{m},\Sigma}(\mathbb{T}, B)$ is completely reducible. Although the computation in Section \ref{sec-data} is written for a special case (see (\ref{q})  below), the procedure can work for $\mathcal{L}_{\textbf{m},\Sigma}(\mathbb{T}, B)$ without any extra difficulty. We emphasize that those computations basically has been carried out in \cite{CLW-2015}. In Section \ref{sec-data}, details are given just for the convience of readers. 

Our second concern is the question of unitarizable monodromy matrices of the ODE. Here ``unitarizable" means that $M_1, M_2$ can be unitary by a common conjugation. 
%
\iffalse 
%In particular, there is an important property when
Suppose that $M_1,M_2$ are expressed by  (\ref{CompRed}) under the basis of solutions $\{y_1, y_2\}$. If $\mathcal{L}_{\textbf{m},\Sigma}(\mathbb{T}, B)$  is completely reducible, then the ratio $h(z)=y_1(z)/y_2(z)$ is a single-valued meromorphic function. In classic analysis,  $h(z)$ is a so-called elliptic function of second type by (\ref{CompRed}). Therefore, we could apply the ODE $\mathcal{L}_{\textbf{m},\Sigma}(\mathbb{T}, B)$ to study such classic subject. 
If $\mathcal{L}_{\textbf{m},\Sigma}(\mathbb{T}, B)$ is not completely reducible, then $M_1,M_2$ can not be unitarizable by (\ref{CompNRed}), where ``unitarizable'' means that the matrix is unitary up to a common conjugation. In particular,
\fi 
 To determine whether the monodromy matrices are unitarizable or not is a basic question in representation theory  and  also an very important application in studying the solvablity of the corresponding curvature equation.  The curvature equation related to $\mathcal{L}_{\textbf{m},\Sigma}(\mathbb{T}, B)$
is as follows:
\begin{align}\label{eq07120532}
    \Delta u+e^u=4\pi\sum_{k=1}^nm_k\delta_{p_k}\text{\ on\ }E_\tau,
\end{align}
where $\delta_{p_k}$ is the Dirac measure at $p_k$. % and $E_\tau=\mathbb{C}/\Lambda_\tau$ with $\Lambda_\tau=\mathbb{Z}+\mathbb{Z}\tau$. 
By the well-known Liouville theorem, the (\ref{eq07120532}) is solvable if and only if there is $(\mathbb{T},B)\in AP_{\textbf{m},\Sigma}(\tau)$ such that the monodromy matrices of $\mathcal{L}_{\textbf{m},\Sigma}(\mathbb{T}, B)$ are unitarizable.  
This connection is well-known. For example, see \cite{FL-2025-2} for a proof. Therefore, to be able to solve the nonlinear PDE (\ref{eq07120532}), we propose to study the following question:
\begin{question}\label{Q-1}
Given $\Sigma$ and \textbf{m}. Is there an element $(\mathbb{T}, B)\in AP_{\textbf{m},\Sigma}$ such that 
$\mathcal{L}_{\textbf{m},\Sigma}(\mathbb{T}, B)$ is unitarizable?
\end{question}

Obviously, if $\mathcal{L}_{\textbf{m},\Sigma}(\mathbb{T}, B)$ is not completely reducible, then it is not unitarizable. For the completely reducible case, Question \ref{Q-1} is equivalent to the monodromy data $(s,r)\in \mathbb{R}^2$. 
\iffalse 
Interestingly, the classical theory of Schr\"odinger equations provides us another method to study Question \ref{Q-1}. 
%There is another interesting question related to $\mathcal{L}_{\textbf{m},\Sigma}(\mathbb{T}, B)$.
For the eigenvalue problem of the following Hill equation 
%one-dimensional Schr\"odinger equation 
with complex-valued periodic potential: 
$$y''(x)+q(x)y(x)=Ey(x),\qquad x\in \mathbb{R},$$
the Hill's discriminant $\Delta(E)$ is defined as the trace of the monodromy matrix and the conditional stability set $\sigma$ is introduced as (see \cite{B-CPAM,Rofe-Beketov}) 
$$\sigma(E)=\Delta^{-1}([-1,1]).$$
\fi 
This motivates us to study the conditional stability set of  $\mathcal{L}_{\textbf{m},\Sigma}(\mathbb{T}, B)$. This idea is inspired by \cite{CL-2020}, where  the conditional stability sets of the Lam\'e equation
\begin{equation}\label{lame}
y''(z)=\left(k(k+1)\wp(z)+\lambda \right)y(z), \qquad k\in \mathbb{Z}_{\geq 1},
\end{equation}
is used to prove the non-existence of solutions for the curvature equation 
$$\Delta u+e^u=8k\pi \delta_0,\qquad \text{on}\,\, E_\tau, \qquad  k\in \mathbb{Z}_{\geq 1}.$$
See also \cite{CFL-2026} for the study of conditional stability sets (as parts of the spectral geometry) of other linear ODEs.
 Remark that the conditional stability sets of (\ref{lame}) coincide with the spectra of the associated Lam\'e operators, but the conditional stability sets of $\mathcal{L}_{\textbf{m},\Sigma}(\mathbb{T}, B)$ are not becasue $B$ depends on $\mathbb{T}$ and  can not be seen as an eigenvalue. 
%We note that $B$ is not an eigenvalue of $\mathcal{L}_{\textbf{m},\Sigma}(\mathbb{T}, B)$ since $B$ depends on $\mathbb{T}$.

Specifically,  let $V$ be an irreducible component of $AP_{\textbf{m},\Sigma}$, and $M_j(\mathbb{T}, B), j=1,2$ be the monodromy matrices of $\mathcal{L}_{\textbf{m},\Sigma}(\mathbb{T}, B)$ with $(\mathbb{T}, B)\in V$. We define the Hill's discriminant by 
\begin{equation}\label{intro-Delta}
\Delta_j(\mathbb{T}, B;V)=\frac{1}{2}\mathrm{tr}M_j(\mathbb{T}, B), \qquad (\mathbb{T}, B)\in V,
\end{equation}
and then \emph{the conditional stability set} on $V$ by
\begin{equation}\label{intro-spectrum}
\sigma_j(\mathbb{T}, B;V)=\Delta_j^{-1}([-1,1]) \subseteq V,\qquad j=1,2.
\end{equation}
 In \cite{CLW-2015,CL-2020,CFL-2022,CKL4,EG-2016}, these two arcs already had some important applications. Natually, we raise the following question. 

\begin{question}
Study the geometry of the  conditional stability sets. 
\end{question}

Obviously, the above questions are highly nontrivial questions for 
 $\mathcal{L}_{\textbf{m},\Sigma}(\mathbb{T}, B)$.  %Served as an example,  
In this paper,  we consider 
$n=4$, $\textbf{m}=\textbf{m}_0:=(1,1,1,1)$ and $$\Sigma= E_\tau[2] :=
\{\frac{\omega_k}{2}\mid k=1,2,3,4\},$$ where $\omega_0=0, \omega_1=1, \omega_2=\tau$ and $\omega_3=1+\tau$.  
Thus the potential has the following form 
\begin{equation}\label{q}
\begin{aligned}
 %5 \mathrm{Eqn}(\mathbb{T}, B):   &y''(z)=q(z)y(z),   \\
&q(z; \mathbb{T}, B):=\dfrac{3}{4}\sum_{k=0}^3\wp(z-\frac{\omega_k}{2})+\sum_{k=0}^3T_k\zeta(z-\frac{\omega_{k}}{2})+B,
\end{aligned}
\end{equation}
with $(\mathbb{T}, B)=(T_0, \cdots, T_3, B)$.  This is the simplest example among the class of equations $\mathcal{L}_{\textbf{m},\Sigma}(\mathbb{T}, B)$.  
We will see later that the space $AP:=AP_{\textbf{m}_0,E_\tau[2]}$ of apparent parameters can be described concretely. In Section \ref{sec-apparent}, we will show that $AP=\cup_{j=1}^3 V_j$, where each $V_j$ is an irreducible component of $AP$ and $V_k\cap V_j=\{(0,0,0,0,0)\}$ for $k\neq j$.  %Thus $AP$ has a unique singularity at $(0,0,0,0)$.  }
Notice that any component $V_j$ of $AP(\tau)$  can be smoothly identified with $\mathbb{C}$, then $(\mathbb{T}, B)\in AP(\tau)$ can be denoted by one parameter $T\in V_\ell$ for some $\ell\in \{1,2,3\}$ and   %$AP$ has a unique singularity at $T=0$. 
$V_k\cap V_j=\{0\}$ for $k\ne j$. 
%$0\in V_j$ for all $j\in\{1,2,3\}$. 
%The  corresponding potential and equation are simply denoted by $q_\ell (z;T)$ and $\mathcal{L}_\ell(T)$, respectively.
%Thus $AP$ has a unique singularity at $(0,0,0,0)$. 
%
%Obviously, the parameter $T=0$ is special, which is the only singluar point of $AP$.
%
Therefore, $T=0$ is the only singular point of $AP$. Naturally, we wonder whether the monodromy matrices might have particular property at this singular point. It turns out that the following is a surprising result.

%Surprising, we have the following result concerning the monodromy of the equation at $T=0$, which accidentally is the unique singularity of $AP$. Indeed, our first result  about the monodromy in Section \ref{sec-monodromy} is related to this special point. 
\begin{theorem}\label{thm-intro-1}
Let  $T\in AP$, then $T=0$ if and only if  % i.e.,  $(T_0,\cdots, T_3,B)=(0,\cdots, 0)$, 
the monodromy matrices $M_k$ of $\mathcal{L}_{\textbf{m}_0, E_\tau[2]}(T)$ satisfy $M_k=\varepsilon_k I_{2\times 2}$ with  $\varepsilon_k\in\{\pm 1\}, k=1,2.$
\end{theorem}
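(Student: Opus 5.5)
For the direction $T=0\Rightarrow M_k=\pm I$ I will exhibit an explicit basis of solutions that are both single-valued up to sign and free of logarithms. At $T=0$ the potential is $q(z;0)=\frac34\sum_{k=0}^3\wp(z-\frac{\omega_k}{2})$. The classical duplication identity $\sum_{k=0}^3\wp(z-\frac{\omega_k}{2};\tau)=4\wp(2z;\tau)$ reduces this to $q=3\wp(2z)$.

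With $w=2z$, the equation $y''(z)=3\wp(2z)y$ becomes
\[
\frac{d^2y}{dw^2}=\tfrac34\wp(w)\,y=\tfrac{m}{2}\bigl(\tfrac m2+1\bigr)\wp(w)\,y,\qquad m=1.
\]
This is a Lam\'e-type equation with local exponents $-\frac12,\frac32$ at $w=0$. I then substitute $y=\wp'(w)^{-1/2}\,\phi$, or equivalently look for solutions of the form $y=\sigma(w)^{-1/2}\cdot(\cdots)$. In fact $y_\pm=\sigma(w)^{3/2}/\sqrt{\wp'(w)}\cdots$ is awkward, so the cleaner route is to use the general Hermite--Halphen ansatz (\ref{HHa}) and the computation of Section \ref{sec-data}.

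The better plan for this direction is structural. $\Lambda_\tau$ maps to $2\Lambda_\tau$ under $w=2z$, so the periods $1,\tau$ in $z$ become $2,2\tau$ in $w$. The monodromy of the $w$-equation along $2\omega_j$ is $M_j^{(w)}(\omega_j)^2$, up to the $\pm$ factors coming from local monodromy $-I$ at the singular points. I will find two independent solutions of $y_{ww}=\frac34\wp(w)y$ that are common eigenfunctions with eigenvalues $\pm1$ or $\pm i$ along $\omega_j$. One candidate is $\sqrt{\wp'(w)}$ (up to checking) times elementary factors; the better choices are $y_{1,2}=\sqrt{\wp(w)-e_j}$-type expressions built from $\sigma$-functions. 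Their squares are elliptic, which forces each eigenvalue $\lambda$ to satisfy $\lambda^2=\pm1$, and hence $M_j^{(z)}=\pm I$ once both eigenvalues agree.

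A concrete check: $\sigma(2z)/\bigl(\prod_k\sigma(z-\frac{\omega_k}{2})\bigr)^{1/2}$-type functions are solutions. Their squares are elliptic in $z$, and the transformation law of $\sigma$ gives multipliers $\pm1$ under $z\mapsto z+1,\,z+\tau$. Two independent such solutions sharing the same multiplier yield $M_k=\varepsilon_kI$.

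For the converse, suppose $M_1=\varepsilon_1I$ and $M_2=\varepsilon_2I$. Then every solution is a common eigenfunction, so in particular every solution has elliptic square. Using the ansatz (\ref{HHa}), write a common eigenfunction as
\[
y_a(z)=\frac{\sigma(z-a)\,e^{cz}}{\prod_k\sigma(z-\frac{\omega_k}{2})^{1/2}}\cdot(\dots).
\]
Its zeros $\pm a$ vary as the solution varies through a two-dimensional space, i.e.\ over a $\mathbb{P}^1$ of lines. Plugging the ansatz into the ODE and comparing residues at each $\frac{\omega_k}{2}$ expresses $T_k$ and $B$ in terms of $a$, via (\ref{commony}). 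The resulting formulas hold for a one-parameter continuum of $a$ while $(\mathbb{T},B)$ is fixed, so $T_k$ must be constant in $a$.

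I then evaluate at a convenient $a$, for instance one where the symmetry $z\mapsto -z$ applies, or compare $a$ with $-a$. This should show that $T_k=-T_k$, hence $T_k=0$ and then $B=0$. An alternative route for the converse uses the decomposition $AP=\cup_{j=1}^3V_j$ from Section \ref{sec-apparent}. On each $V_\ell\cong\mathbb{C}$ I would compute that the Wronskian-type invariant, namely the product of the two Hermite--Halphen solutions, equals a polynomial in $z$-functions whose coefficient in $T$ must vanish if the solution space is entirely common eigenfunctions.

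The main obstacle is the converse: extracting $T=0$ cleanly from ``all solutions are common eigenfunctions''. My first attempt will be the odd-symmetry argument. If $y(z)$ is a solution with $y(z)^2$ elliptic, then the product $y(z)y(-z)$ relates the equation at $\mathbb{T}$ to the equation at $-\mathbb{T}$, since $\zeta$ is odd and $\frac{\omega_k}{2}\equiv-\frac{\omega_k}{2}$. Combining this with the $\mathbb{P}^1$-family of $a$ should force $\mathbb{T}=-\mathbb{T}$.
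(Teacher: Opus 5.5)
Your reduction $q(z;0)=3\wp(2z)$ is correct, but the one explicit solution you commit to for $T=0\Rightarrow M_k=\pm I$ is not a solution. Integrating (\ref{ccccc16}) gives $\sigma(2z)=Ce^{\eta_3 z}\prod_{k=0}^3\sigma(z-\frac{\omega_k}{2})$. Hence $\sigma(2z)/(\prod_k\sigma(z-\frac{\omega_k}{2}))^{1/2}=Ce^{\eta_3 z}(\prod_k\sigma(z-\frac{\omega_k}{2}))^{1/2}$, which has exponent $+\frac12$ at each half period, while the local exponents are $-\frac12,\frac32$. Likewise $\sqrt{\wp'(w)}$ and $\sqrt{\wp(w)-e_j}$ have exponents $-\frac32$ and $-1$ at $w=0$. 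Your claims about the eigenvalues ($\lambda^2=\pm1$, ``$\pm1$ or $\pm i$'') are also unsupported.

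This direction is easy to repair in your spirit. For $u=(\wp'(z))^{-1/2}$ one has $u''/u=\frac34(\wp''/\wp')^2-6\wp=3\wp(2z)$ by the duplication formula. Also $(\wp u)''=\wp u''$, since $2\wp'u'=-\wp''u$. So $u,\wp u$ is a basis sharing the factor $(\wp')^{-1/2}$, which returns to $\pm$ itself along $1$ and along $\tau$; hence $M_k=\varepsilon_kI$. The paper instead gets this from $\mathrm{d}_0=3$: the three $F^{(\ell)}(z;0)$ are independent elliptic solutions of (\ref{cc6}), and Proposition \ref{prop1.1} applies.

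The converse has a genuine gap. The step that should produce $T=0$ is only announced (``this should show $T_k=-T_k$''), and neither proposed mechanism works.
\begin{itemize}
\item Writing the eigenfunctions with zeros $\{a,-a\}$ presupposes the conclusion: for $T\in V_\ell\setminus\{0\}$ the zero sets are $\{a,a-\frac{\omega_\ell}{2}\}$ (Lemma \ref{lem-YV}).
\item The symmetry $z\mapsto -z$ only intertwines $\mathcal{L}_\ell(T)$ with $\mathcal{L}_\ell(-T)$, since $q_\ell(-z;-T)=q_\ell(z;T)$. This gives $M_k(-T)=M_k(T)^{-1}$, which is compatible with $M_k=\pm I$ for every $T$.
\end{itemize}
Your idea that $T_k$ is constant along a continuum of zero sets could be completed, but only after determining which zero sets occur and how $T_k$ depends on them (Theorem \ref{lem-1}, Lemma \ref{lem-2}). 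Abel's theorem for the elliptic $y^2$ gives only $a_1+a_2\in E_\tau[2]$, so the cases $a_1+a_2\equiv\frac{\omega_j}{2}$, $j\neq0$, must still be excluded. Moreover, your ansatz (exponent $-\frac12$ at every half period) misses the solutions that vanish at a half period, and these exist at each $\frac{\omega_k}{2}$.

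Those solutions in fact give the shortest correct argument. If $M_k=\varepsilon_kI$, the Frobenius solution with exponent $\frac32$ at $0$ is a common eigenfunction. Counting zeros and poles of the elliptic function $y^2$ forces $y=e^{cz}\sigma(z)^{3/2}/\prod_{j\neq0}\sigma(z-\frac{\omega_j}{2})^{1/2}$. Matching the $z^{-1}$ and constant terms of $y''/y$ and $q$ at $0$ gives $B+\sum_{j\neq0}T_j\zeta_{0j}=\frac19T_0^2$. Together with (\ref{Tk}) for $k=0$ this yields $T_0=0$, hence $(\mathbb{T},B)=0$. This is exactly the computation in the proof of Lemma \ref{lem-commoneigen}.

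Your vague alternative route through products of eigenfunctions is, made precise, the paper's proof. If $M_k=\pm I$, all products $y_iy_j$ are elliptic, so $\mathrm{d}_T=3$. But for $T\neq0$, the elliptic solutions $\sum_k d_k\zeta(z-\frac{\omega_k}{2})+r$ of (\ref{cc6}) solve the system (\ref{sys-2}). Its coefficient block has $\det\Omega^{(\ell)}=4T^2(4T^2-\eta_{3-\ell}^2)$, and rank $3$ at $T=\pm\frac12\eta_{3-\ell}$. So $\mathrm{d}_T\le 2$, and therefore $\mathrm{d}_T=1$ by Proposition \ref{prop1.1}.
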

%The proof will be presented in section 1. 
This simply-looking result, proved in Section \ref{sec-monodromy}, has never appeared in the integral L\'ame equations and seems not obvious at all.
%Moreover, this result shows that the monodromy is unitarizable if $T=0$.  In Section \ref{sec-unitary}, we will show that $T=0$ is also a necessary condition for the monodromy to be unitarizable if the periodic parallelogram is rectangular. %, that is, $q(z)=\frac{3}{4}\sum\limits_{k=0}^3\wp(z-\omega_k/2)$.
%
 In order to prove Theorem \ref{thm-intro-1} and other main results of this paper,
%This simply-looking result seems not obvious at all. 
%In order to prove this theorem, 
 we introduce the classical second symmetry product equation for  $\mathcal{L}_\ell(T):=\mathcal{L}_{\textbf{m}_0,E_\tau[2]}(T)$ with $T\in V_\ell$ and $\ell\in \{1,2,3\}$, and also the generalized Lam\'e curve for each irreducible component $V_\ell$  in Section \ref{sec-monodromy}. Indeed, those ideas are motivated by the previous works (\cite{BG-JAM, CFL-2022,CKL1,HHV-2017}) when the potential of the studied ODE is an elliptic function and satisfies the nonlinear KdV equation. 

Let $y_1, y_2$ be two solutions of  $\mathcal{L}_\ell(T)$. 
It is well-known  (see \cite{WW}) that their product $F(z)=y_1y_2$ satisfies the third order equation:
\begin{equation}\label{eqn-0}
    F^{\prime\prime\prime}-4qF^\prime-2q^\prime F=0,
\end{equation}
which is called the second symmetry product equation of  $\mathcal{L}_\ell(T)$ . 
We first prove that (\ref{eqn-0}) always has the following nontrivial elliptic solution
\begin{equation}
 F^{(\ell)}(z;T)=-\sum\limits_{k=0}^3\varepsilon_k^{(\ell)}\zeta(z-\frac{\omega_k}{2})+(2T+\eta_{3-\ell}),
\end{equation}
where $\varepsilon_0^{(\ell)}=\varepsilon_\ell^{(\ell)}=-\varepsilon_j^{(\ell)}=-\varepsilon_k^{(\ell)}=1$ with $\{\ell,j,k\}= \{1,2,3\}$,  $\eta_{0}=0$ and $\eta_1, \eta_2$ are the quasi-periods of $\zeta(z)$. 
%$\varepsilon_k^{(0)}=1, \varepsilon_k^{(i)}=\delta_{ik}$ and $\eta_k=\frac{1}{2}\zeta(\frac{\omega_k}{2})$.
We should emphasize that such an elliptic solution of (\ref{eqn-0})  exists for a general ODE $\mathcal{L}_{\textbf{m}, \Sigma}(\mathbb{T}, B)$, this is one of reasons why our approach seems work %for $\mathcal{L}_{\textbf{m}, \Sigma}(\mathbb{T}, B)$ 
in general.
With the help of this special elliptic solution, we obtain that the space of elliptic solutions of (\ref{eqn-0}) is of dimension 3 for $T=0$ and is of dimension 1 for $T\neq 0$, which implies Theorem \ref{thm-intro-1}.
In particular,  %any solution of  (\ref{eqn-0}) is an elliptic function if $T=0$. 
%Note that $F^{(\ell)}(z;0)\neq F^{(j)}(z;0)$ if $\ell \neq j$. 
%Note that 
our choice of ${F^{(\ell)}}(z;T)$ holomorphically depends on the apparent paramenter $T$ along $V_\ell$ even at $T=0$, where all the solutions of (\ref{eqn-0}) are elliptic. 

\iffalse 
Next, we try to figure out the monodromy for $T\in V_\ell\setminus \{0\}$ in Section \ref{sec-data}.  %Notice that the component $V_i$ of $AP$  can be identified with $\mathbb{C}$, an element of $V_i$ can be denoted by one parameter $T$ and then we denote the  corresponding potential and equation by $q(z;T)$ and $\mathrm{Eqn}(T)$, respectively. 
Even if the potential $q_\ell(z;T):=q_{\textbf{m}_0,E_\tau[2]}(z;T)$ is not a KdV potential, the elliptic solution ${F^{(\ell)}}:={F^{(\ell)}}(z;T)$ of the  
second symmetry product equation motivates us to define
\fi 

By applying $F^{(\ell)}(z;T)$, we then define
 the following so-called \textit{spectral polymial} for $\mathcal{L}_\ell(T)$: % following the approach of KdV potentials. 
\begin{equation}\label{Qell}
Q_\ell(T)=({F^{(\ell)}}^{\prime})^2-2{F^{(\ell)}}^{\prime\prime}F^{(\ell)}+4q_\ell(z; T)(F^{(\ell)})^2,
\end{equation}
where  the derivatives are taken with respect to $z$. This concept of the spectral polynomial  %is derived from the approach of KdV potentials. 
is originally defined and has played important roles in the KdV theory (see \cite{GH-2003, GW2, GW-1996, HHV-2017}.).
By noticing that $Q_\ell(T)$ is  independent of $z$, we prove that 
 \begin{equation*}
       Q_\ell(T)=16\prod_{k\neq  \ell}(T^2+e_\ell-e_k),%\ 1\leq i\leq 3,
    \end{equation*}
  where $e_k=\wp(w_k/2), k=1,2,3$.  
This spectral polynomial $Q_\ell(T)$  answers Question \ref{Q-1} completely for $\mathcal{L}_\ell(T)$ in the following theorem.
\iffalse 
By a direct calculation, we obtain that $Q_\ell(T)$ is identical with the square of the Wronskian of $y_1$ and $y_2$.  %, where $F^{(\ell)}=y_1y_2$.  
Thus we see that if $T\neq 0$, then %Theorem \ref{thm-intro-Q} tells that 
$Q(T)\neq 0$ if and only if $\mathcal{L}_\ell(T)$ is completely reducible. However, if $T=0$, then we do not know whether $y_1, y_2$ are linearly independent or not. The particular property at $T=0$ gives a little difficulty to prove the following result.  % which answers Qestion \ref{Q-1} for $q(z;\mathbb{T}, B)$ completely. % and applying Theorem \ref{thm-intro-1}, we obtain the following proposition and then completely solve Question \ref{Q1}. 
\fi 

\begin{theorem}\label{thm-intro-Q}
    Let $T\in V_\ell$ for some $\ell\in \{1,2,3\}$.   We have $\mathcal{L}_\ell(T)$ is completely reducible if and only if $Q_\ell(T)\neq 0$.
\end{theorem}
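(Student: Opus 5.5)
The plan is to identify $F^{(\ell)}$ with a product of two common eigenfunctions and $Q_\ell(T)$ with the square of their Wronskian. We treat $T\neq 0$ and $T=0$ separately.

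\emph{Case $T=0$.} Here $Q_\ell(0)=16\prod_{k\ne\ell}(e_\ell-e_k)\neq 0$, since the $e_k$ are distinct. By Theorem \ref{thm-intro-1}, $M_k=\varepsilon_k I$, so $\mathcal{L}_\ell(0)$ is completely reducible. The equivalence therefore holds at $T=0$, and from now on we assume $T\neq 0$.

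\emph{Direction 1: complete reducibility implies $Q_\ell(T)\neq 0$.} Suppose $\mathcal{L}_\ell(T)$ is completely reducible. Take common eigenfunctions $y_1,y_2$ with monodromy as in (\ref{CompRed}). Then $y_1y_2$ is elliptic and solves (\ref{eqn-0}). For $T\ne0$ the space of elliptic solutions of (\ref{eqn-0}) is one-dimensional and spanned by $F^{(\ell)}$. Rescaling $y_2$, we may therefore assume $y_1y_2=F^{(\ell)}$.

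Next let $W=y_1'y_2-y_1y_2'$ be the Wronskian, which is constant. A direct computation using $y_i''=q_\ell y_i$ gives
\[
(F')^2-2F''F+4q_\ell F^2=(y_1'y_2+y_1y_2')^2-4y_1y_2y_1'y_2'=W^2 .
\]
Indeed, $F''=2q_\ell y_1y_2+2y_1'y_2'$, so $-2F''F+4q_\ell F^2=-4y_1y_2y_1'y_2'$. Hence $Q_\ell(T)=W^2$, and $W\neq0$ because $y_1,y_2$ are independent. So $Q_\ell(T)\ne0$.

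\emph{Direction 2: $Q_\ell(T)\neq 0$ implies complete reducibility.} Conversely, suppose $Q_\ell(T)\ne0$ but $\mathcal{L}_\ell(T)$ is not completely reducible. By (\ref{CompNRed}) there is, up to scaling, a unique common eigenfunction $y_1$, with $y_1(z+\omega_j)=\pm y_1(z)$. Then $y_1^2$ is an elliptic solution of (\ref{eqn-0}), so $y_1^2=cF^{(\ell)}$ with $c\ne0$. Plugging $F=y_1^2/c$ into the identity above with $y_2=y_1$ gives $W=0$, hence $Q_\ell(T)=0$, a contradiction.

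The alternative to this argument is to factor $F^{(\ell)}$ directly. Since $F^{(\ell)}$ has simple poles at the half periods, its zeros can be grouped and the Hermite--Halphen ansatz produces $y_{1,2}$ via
\[
y_{1,2}=\sqrt{F}\exp\Bigl(\pm\tfrac{\sqrt{Q_\ell}}{2}\int\frac{dz}{F}\Bigr).
\]
One then checks that when $Q_\ell\ne0$ these are two independent solutions whose product is elliptic and which are common eigenfunctions, since $\int dz/F$ changes by constants under the periods.

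The main obstacle is making sure that $Q_\ell(T)$ is a genuine invariant attached to the one-dimensional space of elliptic solutions. The quadratic form $Q$ scales by $c^2$ under $F\mapsto cF$, so its vanishing does not depend on the normalization; this is what makes both directions consistent with the fixed normalization of $F^{(\ell)}$. The other point needing care is the one-dimensionality statement for $T\ne0$ (assumed from Section \ref{sec-monodromy}), and the fact that the product of the two common eigenfunctions, or the square of the unique one, is elliptic. This holds because their monodromy factors are inverse, or $\pm1$, respectively.
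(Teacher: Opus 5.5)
Your proof is correct and is essentially the paper's argument: reduce to $T\neq 0$ using Theorem \ref{thm-intro-1} and $Q_\ell(0)\neq 0$, use that for $T\neq0$ the space of elliptic solutions is spanned by $F^{(\ell)}$, and identify $Q_\ell(T)$ with the squared Wronskian of two common eigenfunctions whose product is $F^{(\ell)}$. The only difference is bookkeeping. The paper factors $F^{(\ell)}=y_1y_2$ via Proposition \ref{prop1.1} and then reads off complete reducibility from the independence of the factors, while you start from the monodromy normal forms and match the resulting product (or square) of common eigenfunctions to a multiple of $F^{(\ell)}$, which makes the scaling invariance of $Q$ explicit.
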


Notice that the local exponents of $\mathcal{L}_\ell(T)$  at the each singular point ${\omega_k}/{2}$
%$$E_\tau[2]:=\left\{ \frac{\omega_k}{2}\in E_\tau\,|\, k=0, 1, 2, 3.\right\}$$
are $-{1}/{2}$ and ${3}/{2}$, then 
\iffalse 
the square of a common eigenfunction  is elliptic function of the second kind, so any zero of the common eigenfunction (in our case, the set of zeros  is equal to $\{a, a-{\omega_\ell}/{2}\}$ with $a\in E_\tau\setminus E_\tau[2]$ ) can be well-defined as a point of $E_\tau$.  %it can be express in Weistrass sigma functions.e 
%the common eigenfunction is a Hermite–Halphen ansatz function. 
Thus, 
\fi 
any eigenfunction $y(z)$ can be expressed as the following Hermite-Halphen ansatz:
\begin{equation}\label{HHa}
y(z)=e^{cz}\frac{\sigma(z-a_1)\sigma(z-a_2)}{(\prod_{j=0}^3(z-\frac{\omega_j}{2})^{\frac{1}{2}}},
\end{equation}
where $c,a_1,a_2\in \mathbb{C}$ and $\sigma$ is the Weierstrass sigma function. 
By analyzing the local behavior of $\mathcal{L}_\ell(T)$ at each singularity and each zero (see Section \ref{sec-data}), we can recover the corresponding apparent parameter, denoted by $T_\ell$, from zeros of $y(z)$ and obtain the following %branched double covering 
map % denoted by $T_\ell$
\begin{equation}\label{Tell}
\begin{aligned}
T^{(\ell)}: Y_\ell &\to V_\ell\cong \mathbb{C}\\
\underline{a} &\mapsto T^{(\ell)}(\underline{a})= \zeta(a)+\zeta(a-\frac{\omega_\ell}{2})-\zeta(2a)+\frac{1}{2}\eta_\ell
\end{aligned}
\end{equation} 
where $$Y_\ell:=\{\underline{a} \mid \underline{a}:=\{a, a-\frac{\omega_\ell}{2}\}\in \mathrm{Sym}^2(E_\tau \setminus E_\tau[2]) \}.$$ %is a generalized Lam\'e curve.  %  is the parameter space of all one-dimensional common eigenspaces.
The computation for the map $T^{(\ell)}$ is basically the same as done in \cite{BG-JAM,CKL1,CKL2,CKL3}. For the convenience of readers, we will give details of the calculation in Section \ref{sec-data}.
There is another correspondence, i.e., the following result. 
\begin{theorem}\label{thm-intro-Tell}
The map $T^{(\ell)}$ is a branched double covering map and the branch points are exactly the zeros of $Q_\ell (T)$. In particular, 
$$Y_\ell\cong \Sigma_\ell:=\{(W,T)\mid W^2=Q_\ell(T)\}.$$
%The correspondence (\ref{Tell}) is a two to one and onto covering. Futhermore, the branch point is exactly the zeros of $Q^{(\ell)}(T)$. 
\end{theorem}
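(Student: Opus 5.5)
Here is how I would argue. The idea is to identify the fiber of $T^{(\ell)}$ over a given $T\in V_\ell$ with the set of one-dimensional common eigenspaces of $\mathcal{L}_\ell(T)$, and then count those eigenspaces using Theorem \ref{thm-intro-Q}.

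\emph{Step 1: a common eigenfunction gives a point of $Y_\ell$.} Given $T\in V_\ell$ and a common eigenfunction $y$ of $\mathcal{L}_\ell(T)$, write $y$ in the Hermite--Halphen form (\ref{HHa}). The computation of Section \ref{sec-data} shows that its zeros have the form $\underline{a}=\{a,a-\omega_\ell/2\}\in Y_\ell$ and that $T=T^{(\ell)}(\underline{a})$.

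\emph{Step 2: the converse.} Conversely, for $\underline{a}\in Y_\ell$ the ansatz (\ref{HHa}) with zeros $\underline{a}$ and a suitable $c$ solves $\mathcal{L}_\ell(T^{(\ell)}(\underline{a}))$. Hence $T^{(\ell)}$ is well defined and its fibers are in bijection with the common eigenspaces of $\mathcal{L}_\ell(T)$.

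\emph{Step 3: the symmetry $\underline{a}\mapsto -\underline{a}$.} Since $\mathcal{L}_\ell(T)$ is invariant under $z\mapsto -z$ (one checks that $V_\ell$ is compatible with this, as $q_\ell(-z;T)=q_\ell(z;T)$ up to the parametrization), $y(-z)$ is again a common eigenfunction, with zero set $-\underline{a}=\{-a,-a+\omega_\ell/2\}$. Oddness of $\zeta$ and quasi-periodicity give directly that $T^{(\ell)}(-\underline{a})=T^{(\ell)}(\underline{a})$. So the fibers are invariant under this involution.

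\emph{Step 4: fibers over $T$ with $Q_\ell(T)\neq 0$.} By Theorem \ref{thm-intro-Q}, $\mathcal{L}_\ell(T)$ is then completely reducible. If $T\neq 0$, Theorem \ref{thm-intro-1} says the monodromy is not $\pm I$, so there are exactly two common eigenlines $y_1,y_2$ with monodromy data $(s,r)$ and $(-s,-r)$. Then $y_1y_2$ is elliptic and must equal $F^{(\ell)}$ up to a constant, because the elliptic solution space is one-dimensional for $T\ne0$. This gives a fiber of exactly two points, $\underline{a}$ and $-\underline{a}$, and they are distinct.

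\emph{Step 5: the point $T=0$ (main obstacle).} Here every solution is a common eigenfunction, so a naive count of eigenspaces fails. I would instead use that $T^{(\ell)}$ is a nonconstant meromorphic function on the compactified curve $\overline{Y_\ell}$. Its poles occur only where $a\to E_\tau[2]$, and computing the pole order there gives its degree as $2$. Then $T^{(\ell)}:\overline{Y_\ell}\to\mathbb{P}^1$ is a degree-two map, and the fiber over $0$ has two points counted with multiplicity. Since $Q_\ell(0)=16\prod_{k\ne\ell}(e_\ell-e_k)\neq0$, Step 6 shows $0$ is not a branch point.

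\emph{Step 6: branch points.} If $Q_\ell(T_0)=0$, then by Theorem \ref{thm-intro-Q} the monodromy is not completely reducible, so there is exactly one common eigenline. The fiber is a single point, a fixed point of the involution with $\underline{a}=-\underline{a}$, i.e. $2a\equiv\omega_\ell/2$ modulo the lattice. Such an $\underline{a}$ is a ramification point. Counting these points gives $4$ ($a$ with $2a\equiv \omega_\ell/2$ mod $\Lambda_\tau$, taken modulo $a\mapsto a-\omega_\ell/2$), matching the four simple roots $\pm\sqrt{e_k-e_\ell}$ of $Q_\ell$.

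\emph{Step 7: identification with $\Sigma_\ell$.} $\overline{Y_\ell}$ is a double cover of $\mathbb{P}^1$ branched exactly over the four simple zeros of $Q_\ell$, since $\deg Q_\ell=4$ is even and $\infty$ is unbranched. It is therefore the hyperelliptic (elliptic) curve $W^2=Q_\ell(T)$. Explicitly, $W$ can be taken as the odd function $\underline{a}\mapsto$ the Wronskian-type quantity $W(y(z),y(-z))$, suitably normalized, whose square equals $Q_\ell$ by the definition (\ref{Qell}) applied with $F^{(\ell)}=y(z)y(-z)$. Removing the points over $\infty$ gives $Y_\ell\cong\Sigma_\ell$.
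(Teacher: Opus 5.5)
Your Step 3 is false, and the error carries into Steps 6 and 7. Reflection does not preserve $\mathcal{L}_\ell(T)$. Since $\zeta(-z-\frac{\omega_j}{2})=-\zeta(z-\frac{\omega_j}{2})-\eta_j$ and $\sum_j\varepsilon_j^{(\ell)}\eta_j=-2\eta_{3-\ell}$, one gets $q_\ell(-z;T)=q_\ell(z;-T)$ (Lemma \ref{lem-loop}(b)). So $y(-z)$ solves $\mathcal{L}_\ell(-T)$; the ``reparametrization'' you allude to is exactly $T\mapsto -T$.

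Accordingly $f_\ell$ is odd and $T^{(\ell)}(\underline{-a})=-T^{(\ell)}(\underline{a})$. The map $\underline{a}\mapsto\underline{-a}$ covers $T\mapsto -T$ and is not the deck transformation. Its fixed points, $2a\equiv\frac{\omega_\ell}{2}$, give only two points of $Y_\ell$: the four solutions mod $\Lambda_\tau$ pair up under $a\mapsto a-\frac{\omega_\ell}{2}$. These two points are precisely $Y_0\cap Y_\ell=(T^{(\ell)})^{-1}(0)$, which is an unramified fiber. So the ramification points you name in Step 6, and the count ``$4$'' matched against the roots of $Q_\ell$, are wrong.

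Likewise in Step 7, $y(z)y(-z)$ is not a product of two solutions of the same equation. It is therefore not $F^{(\ell)}(z;T)$, and the Wronskian of $y(z),y(-z)$ does not square to $Q_\ell(T)$.

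The correct involution, which the paper checks using (\ref{ccccc16}), is $\underline{a}\mapsto\underline{\frac{\omega_j}{2}-a}$ with $j\neq\ell$. Its fixed points are $\underline{\pm\frac{\omega_j}{4}},\underline{\pm\frac{\omega_k}{4}}$, i.e.\ $2a\equiv\frac{\omega_j}{2}$ or $\frac{\omega_k}{2}$. The paper then evaluates $T^{(\ell)}$ at these points via the addition theorem and (\ref{p2z}), obtaining $T^2=e_j-e_\ell$ and $e_k-e_\ell$. The second common eigenfunction is $y_\ell(z;\frac{\omega_j}{2}-a)$. An explicit $W$ is its suitably normalized Wronskian with $y_\ell(z;a)$, equivalently $W=-(F^{(\ell)})'(a;T)$ as in the map $\varphi$ of Section \ref{sec-unitary}.

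Part of your plan survives without knowing the involution, and it is a genuinely different route from the paper's explicit elliptic-function computation. For $T\neq 0$, Steps 1--2 identify the fiber with the set of common eigenlines. Theorems \ref{thm-intro-1} and \ref{thm-intro-Q} give two eigenlines when $Q_\ell(T)\neq 0$ and one when $Q_\ell(T)=0$. Since the compactified map has degree $2$ (your pole count in Step 5), the branch values in $V_\ell\setminus\{0\}$ are exactly the roots of $Q_\ell$.

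But $T=0$ is not covered. There every solution is a common eigenfunction, so the eigenline count says nothing. Step 6 only shows that roots of $Q_\ell$ are branch values, not the converse, so ``Step 6 shows $0$ is not a branch point'' is a non sequitur. Indeed, with your involution, $0$ would be the branch value. You can close this in either of two ways:
\begin{itemize}
\item Directly: $f_\ell$ has four simple zeros, so $(T^{(\ell)})^{-1}(0)$ consists of two distinct unramified points.
\item By Riemann--Hurwitz: $\overline{Y_\ell}\cong E_\tau/\langle\frac{\omega_\ell}{2}\rangle$ has genus $1$, so a degree-$2$ map to $\mathbb{P}^1$ has exactly four branch values. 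These are already exhausted by the roots of $Q_\ell$, which also shows $\infty$ is unbranched.
\end{itemize}
With that repair, the abstract part of Step 7 is fine: a connected double cover of $\mathbb{P}^1$ branched exactly at the four simple roots of $Q_\ell$ is the curve $W^2=Q_\ell(T)$.
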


%To our surprise, the equation under the covering map is exactly the spectral polynomial $Q_\ell(T)$, i.e., 
%$$Y_\ell\cong \Sigma_\ell:=\left\{(W,T) \mid W^2=Q_\ell(T) \right\}.$$
 %Hence, $Y_\ell$ can be regarded as a generalized Lam\'e curve for the monodromy representation. 
By the correspondence, $Y_\ell\cong\Sigma_\ell$, we introduce the Baker-Akhiezer functions in Section \ref{sec-unitary}. 
Applying the Hermite-Halphen ansatz, we completely determine the monodromy data for $\mathcal{L}_\ell(T)$ in terms of zeros of common eigenfunctions. 

\begin{theorem}\label{thm-intro-data}
Let $\ell\in \{1,2,3\}$ and $T\in V_\ell$.
\begin{enumerate}
\item 
 If $Q_\ell(T)\neq 0$,
%$$[a]\not\in E_\tau[2] \cup \{\pm \frac{\omega_j}{4},\, \pm \frac{3\omega_j}{4},\,\pm \frac{\omega_k}{4},\,\pm\frac{3\omega_k}{4}\},$$ 
 then  $\mathcal{L}_\ell(T)$ 
is completely reducible. Up to a sign,  the monodromy data $(s,r)\mod \mathbb{Z}^2$ 
is determined by  
\begin{equation}\label{eqn-intro-sr}
\left\{
\begin{aligned}
r+s\tau&=2a-\frac{1}{2}\omega_\ell-\frac{1}{2}\omega_3\\
r\eta_1+s\eta_2&=\zeta(2a)-\frac{1}{2}\eta_\ell-\frac{1}{2}\eta_3
\end{aligned}\right.,
\end{equation}
where $\underline{a}\in (T^{(\ell)})^{-1}(T)$.
\item  If   $Q_\ell(T)=0$,  then  $\mathcal{L}_\ell(T)$ is not completely reducible and the monodromy data $\mathcal{C}$ is determined by
\begin{equation}\label{intro-C}
\mathcal{C}=\tau-\frac{2\pi i}{\eta_1+e_k}, \quad \text{if}\,\,\, T^2=e_k-e_\ell\,\, \text{and}\,\, k\in \{1,2,3\}\setminus \{\ell\}.
\end{equation}
\end{enumerate}
\end{theorem}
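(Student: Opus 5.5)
Part (1) comes from the Hermite--Halphen ansatz plus the transformation law of $\sigma$; part (2) from a degeneration of the common eigenfunctions at a branch point of $T^{(\ell)}$.

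\textbf{Part (1).} Suppose $Q_\ell(T)\neq 0$. By Theorem \ref{thm-intro-Q}, $\mathcal{L}_\ell(T)$ is completely reducible. By Theorem \ref{thm-intro-Tell}, $T$ is not a branch point, so $(T^{(\ell)})^{-1}(T)=\{\underline{a},\underline{b}\}$ consists of two distinct points. These correspond to the two independent common eigenfunctions $y_{\underline a},y_{\underline b}$. By the construction of $T^{(\ell)}$, the zero set of a common eigenfunction is $\underline a=\{a,a-\frac{\omega_\ell}{2}\}$, so I write
\[
y_{\underline a}(z)=e^{cz}\frac{\sigma(z-a)\sigma(z-a+\frac{\omega_\ell}{2})}{\bigl(\prod_{j=0}^3\sigma(z-\frac{\omega_j}{2})\bigr)^{1/2}} .
\]
The constant $c$ is fixed by requiring that $y_{\underline a}$ solve $\mathcal{L}_\ell(T)$. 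I would obtain it from the local expansion at a zero, or equivalently from the $\zeta$-sum identity already used to derive (\ref{Tell}). This should give $c$ as a combination of $\zeta(2a)$, $\zeta(a)$, $\zeta(a-\frac{\omega_\ell}{2})$ and half quasi-periods.

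Next I apply $\sigma(z+\omega_i)=-e^{\eta_i(z+\omega_i/2)}\sigma(z)$ to compute the multipliers $\lambda_1=y(z+1)/y(z)$ and $\lambda_2=y(z+\tau)/y(z)$. Up to sign each is the exponential of a linear expression in $c$ and in the zero-sum $2a-\frac{\omega_\ell}{2}$ minus the pole-sum $\frac12\sum_j\frac{\omega_j}{2}=\frac12\omega_3$. Writing $\lambda_1=e^{-2\pi i s}$ and $\lambda_2=e^{2\pi i r}$ produces two linear equations in $r,s$. I then solve them using the Legendre relation $\eta_1\tau-\eta_2=2\pi i$. The result should be exactly (\ref{eqn-intro-sr}), namely $r+s\tau=2a-\frac12\omega_\ell-\frac12\omega_3$ and $r\eta_1+s\eta_2=c$-type data reducing to $\zeta(2a)-\frac12\eta_\ell-\frac12\eta_3$.

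Replacing $\underline a$ by the other preimage $\underline b$ should correspond to $a\mapsto -a$ up to lattice and half-period shifts, which sends $(s,r)\mapsto(-s,-r)$. This accounts for the ``up to a sign''. The main work in this part is bookkeeping of half-periods and of the ambiguity in the choice of $a$ within $\underline a$ modulo $\Lambda_\tau$.

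\textbf{Part (2).} Suppose $Q_\ell(T)=0$, i.e.\ $T^2=e_k-e_\ell$. By Theorem \ref{thm-intro-Q} the monodromy is not completely reducible. By Theorem \ref{thm-intro-Tell}, $T$ is a branch point, so $\underline a=\underline b$ and hence $a\equiv -a$ modulo half-periods and the shift $\frac{\omega_\ell}{2}$. From this I would identify $a$ explicitly as a quarter period such as $\pm\frac{\omega_k}{4}$ or $\pm\frac{\omega_k}{4}+\frac{\omega_\ell}{2}$, checking it against (\ref{Tell}) and the value of $T^2$.

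To get $\mathcal{C}$, I use the standard degeneration argument. Near the branch point I take the local coordinate $a$ on $Y_\ell$, and the second solution is $y_2=\partial_a y_{\underline a}$ evaluated at the branch value. Differentiating $y(z+\omega_i)=\lambda_i(a)y(z)$ in $a$ gives $M_i$ in the form (\ref{CompNRed}) with off-diagonal entry proportional to $\lambda_i'(a)/\lambda_i(a)$. These logarithmic derivatives come from part (1): they are $-2\pi i\,\partial_a s$ and $2\pi i\,\partial_a r$. Therefore
\[
\mathcal{C}=\frac{\partial_a r}{\partial_a s}\cdot(\text{normalization}).
\]
Differentiating (\ref{eqn-intro-sr}) gives $r'+s'\tau=2$ and $r'\eta_1+s'\eta_2=-2\wp(2a)$. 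With $2a\equiv\frac{\omega_k}{2}$ we have $\wp(2a)=e_k$. Solving with Legendre's relation gives $s'\propto (\eta_1+e_k)$ and $r'/s'=\tau-2\pi i/(\eta_1+e_k)$ after fixing signs, matching (\ref{intro-C}). The case $\eta_1+e_k=0$ gives $\mathcal{C}=\infty$.

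I expect the main obstacle to be justifying that $\partial_a y$ is a genuine independent solution at the branch point, so that $y_{\underline a}$ does not vanish identically or degenerate. I also need to match normalizations and signs with (\ref{CompNRed}). I would handle the first point by using the Wronskian identity $W^2=Q_\ell(T)$ along the covering to control the order of vanishing.
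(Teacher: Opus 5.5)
Your part (1) is the paper's argument. The constant you leave undetermined is $c_a=\zeta(2a)-\frac12\eta_\ell-\frac12\eta_3$, already computed in (\ref{c-ell}). The second preimage is $\underline{\frac{\omega_j}{2}-a}$, which indeed gives $(-s,-r)$ mod $\mathbb{Z}^2$.

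Part (2) takes a genuinely different route.
\begin{itemize}
\item \emph{The paper's route.} It stays at the single parameter $T$ and writes $y_\ell^2(z;a)=bF^{(\ell)}(z;T)$. It expands $1/F^{(\ell)}$ in partial fractions (the constants $t_0,t_1,t_2,\mu$) and builds the second solution $\chi(z;a)\,y_\ell(z;a)$ by reduction of order. It then reads off the additive periods $\chi_1=2\eta_1+\wp(a)+\wp(a-\frac{\omega_\ell}{2})$ and $\chi_2$. Finally it needs the quarter-period identity (\ref{ek}) to replace $\wp(a)+\wp(a-\frac{\omega_\ell}{2})$ by $2e_k$.
\item \emph{Your route.} You differentiate the Hermite--Halphen family in the uniformizing coordinate $a$ of $Y_\ell$. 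This gives at once $\mathcal{C}=\partial_a\log\lambda_2/\partial_a\log\lambda_1=-\partial_a r/\partial_a s=(\tau\wp(2a)+\eta_2)/(\wp(2a)+\eta_1)=\tau-2\pi i/(\eta_1+\wp(2a))$. The value $\wp(2a_0)=e_k$ is then immediate. So the sign normalization you left open is $-1$.
\end{itemize}
The two second solutions actually coincide. At $a=\pm\frac{\omega_k}{4}$ one has $\wp(a)+\wp(a-\frac{\omega_\ell}{2})=2\wp(2a)$, so the paper's $\chi(z;a)$ equals $-\partial_a\log y_\ell(z;a)$ up to an additive constant. Your version is shorter and shows $\mathcal{C}$ as the ratio of the derivatives of the monodromy exponents along $Y_\ell$ at the ramification point. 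The paper's version gives a closed-form second solution without using the family.

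Three points are needed to make your version complete.
\begin{itemize}
\item \emph{Why $\partial_a y_\ell$ solves $\mathcal{L}_\ell(T_0)$.} You need $f_\ell'(a_0)=0$ for $f_\ell$ as in (\ref{fell}). Combining (\ref{preimage}) with $f_\ell(a)=f_\ell(a-\frac{\omega_\ell}{2})$ gives $f_\ell(a)=f_\ell(\frac{\omega_k}{2}-a)$. Differentiating this at its fixed point $a_0=\pm\frac{\omega_k}{4}$ gives $f_\ell'(a_0)=-f_\ell'(a_0)$.
\item \emph{Independence.} The obstacle you flagged needs no Wronskian argument. If $\partial_a y_\ell=\kappa y_\ell$, then both $\partial_a\log\lambda_i$ would vanish. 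But $\tau\,\partial_a\log\lambda_1-\partial_a\log\lambda_2=-2(\tau\eta_1-\eta_2)=-4\pi i$ by Legendre's relation. This is the same mechanism as the paper's $\tau\chi_1-\chi_2=4\pi i$.
\item \emph{Matching branch points to roots.} That $\underline{\pm\frac{\omega_k}{4}}$, rather than $\underline{\pm\frac{\omega_j}{4}}$, corresponds to $T^2=e_k-e_\ell$ does not follow from Theorem \ref{thm-intro-Tell} alone. It is the addition-theorem computation of Theorem \ref{mapT}, which you should cite.
\end{itemize}
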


Combing with Theorem \ref{thm-intro-1}, a direct computation gives us the following corollary. 
\begin{corollary}\label{cor-intro-T=0}
%Let $T\in AP$, then $T=0$ if and only if t
The monodromy matrices of $\mathcal{L}(0)$ satisfy $M_1=M_2=-I_2$.
\end{corollary}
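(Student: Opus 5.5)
By Theorem \ref{thm-intro-1}, at $T=0$ we already know $M_k=\varepsilon_k I_2$ with $\varepsilon_k\in\{\pm1\}$. It therefore remains only to determine the two signs, and I plan to read them off from the monodromy data formula (\ref{eqn-intro-sr}) in Theorem \ref{thm-intro-data}. Since $Q_\ell(0)=16\prod_{k\neq\ell}(e_\ell-e_k)\neq 0$ (the $e_k$ are distinct), the parameter $T=0\in V_\ell$ lies in the completely reducible case for any choice of $\ell$. Consequently $M_1=\mathrm{diag}(e^{-2\pi is},e^{2\pi is})$ and $M_2=\mathrm{diag}(e^{2\pi ir},e^{-2\pi ir})$, where $(s,r)$ solves (\ref{eqn-intro-sr}) for some $\underline{a}\in (T^{(\ell)})^{-1}(0)$. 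The claim $M_1=M_2=-I_2$ is then equivalent to $(s,r)\equiv(\tfrac12,\tfrac12)\pmod{\mathbb{Z}^2}$.

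To get there, I will first locate a preimage of $0$ under $T^{(\ell)}$. Natural candidates are quarter periods, for example $a=\omega_j/4$ or $a=\omega_\ell/4+\omega_j/2$ with $j\neq \ell$. For these, $2a\in E_\tau[2]$, so $\zeta(2a)$ is a half quasi-period, and $a-\omega_\ell/2$ is related to $a$ by a half-period shift, which lets $\zeta(a)+\zeta(a-\omega_\ell/2)$ be simplified. Concretely, I will take $a$ with $2a=\omega_\ell/2+\omega_3/2+\omega_1/2+\omega_2/2$ modulo $\Lambda_\tau$, chosen so that $2a-\tfrac12\omega_\ell-\tfrac12\omega_3\equiv\tfrac12+\tfrac12\tau$. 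I will then verify $T^{(\ell)}(\underline a)=0$ using the addition formula
\begin{equation*}
\zeta(u+v)=\zeta(u)+\zeta(v)+\tfrac12\frac{\wp'(u)-\wp'(v)}{\wp(u)-\wp(v)},
\end{equation*}
or equivalently the identity $\zeta(z+\tfrac{\omega_\ell}{2})=\zeta(z)+\tfrac12\eta_\ell+\tfrac12\frac{\wp'(z)}{\wp(z)-e_\ell}$. Setting $z=a-\omega_\ell/2$ turns the vanishing condition into an algebraic relation in $\wp(a)$, and I will check that this relation is solved by the chosen quarter point. Finding the precise preimage and confirming that it satisfies $T^{(\ell)}=0$ is the main obstacle, because of the sign and lattice bookkeeping.

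With $a$ in hand, the second equation of (\ref{eqn-intro-sr}) has right-hand side $\zeta(2a)-\tfrac12\eta_\ell-\tfrac12\eta_3$, which becomes a combination of half quasi-periods; I expect it to equal $\tfrac12\eta_1+\tfrac12\eta_2$. Because $\tau\eta_1-\eta_2=2\pi i\neq 0$, the linear system in $(r,s)$ has a unique solution, namely $r=s=\tfrac12$, so $e^{\pm 2\pi is}=e^{\pm2\pi ir}=-1$. The ambiguities in the data, namely $(s,r)\mapsto(-s,-r)$ and $\mathbb{Z}^2$ shifts, do not change this conclusion.

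As a fallback, and as a cross-check that avoids locating $a$, I would argue directly at $T=0$: the potential is $q=\tfrac34\sum_k\wp(z-\omega_k/2)=3\wp(2z)$. Setting $w=2z$ gives $\tilde y''=\tfrac34\wp(w)\tilde y$, i.e., the Lam\'e-type equation with exponent $1/2$, whose solutions near $w=0$ are $w^{-1/2}$ times a holomorphic function and $w^{3/2}$ times a holomorphic function. The signs $\varepsilon_k$ can then be computed by tracking the branch of $\prod_j(z-\omega_j/2)^{1/2}$ appearing in the ansatz (\ref{HHa}) along the period paths. I would use this only to confirm that both $\varepsilon_k$ equal $-1$.
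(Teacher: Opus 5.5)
Your proposal is correct and follows the paper's route: the paper likewise notes $(T^{(\ell)})^{-1}(0)=Y_\ell\cap Y_0$ and plugs these points into (\ref{rs-new}), and your representative $a=\frac{\omega_\ell}{4}+\frac{\omega_3}{2}$ does give $(s,r)=(\frac12,\frac12)$, as you predict. The step you flag as the main obstacle is immediate, since $2a\equiv\frac{\omega_\ell}{2}$ means $\underline{a}\in Y_0\cap Y_\ell$, so $T=0$ by Lemma \ref{lem-YV}(1) (or directly from the oddness of $\zeta$ and $\zeta(\frac{\omega_\ell}{2})=\frac12\eta_\ell$); note also that your other candidate $a=\frac{\omega_j}{4}$ with $j\neq\ell$ is not a preimage of $0$ but a ramification point with $(T^{(\ell)})^2=e_j-e_\ell\neq 0$ (Theorem \ref{mapT}).
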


Corollary \ref{cor-intro-T=0} shows that the monodromy is unitarizable if $T=0$.  In Section \ref{sec-unitary}, with the help of spectral geometry, we show that $T=0$ is also a necessary condition for the monodromy to be unitarizable if $E_\tau$ is rectangular.  
Specifically, we define the  conditional stability sets $\sigma_j^{(\ell)} \subseteq V_\ell, \, j\in \{1,2\}$ for each $\ell\in \{1,2,3\}$ as (\ref{intro-spectrum}). In particular, we can obtain the explicit expression of $\sigma_j^{(\ell)}(\tau)$ for $\tau\in i\mathbb{R}_{>0}$, which yields the following result. % that is, $E_\tau$ is a rectangular. 

\begin{theorem}\label{thm-intro-T=0}
Let $\tau\in i\mathbb{R}_{>0}$. % and $\sigma_j(\ell)$ be the arc of stable spectrum for $\mathcal{L}_\ell(T)$.
We have $$\sigma_1^{(\ell)} \cap \sigma_2^{(\ell)}=\{0\}\cup \{T\in V_\ell \mid Q_\ell(T)=0\}.$$ Furthermore, $\mathcal{L}(T)$ with  $T\in AP$ is unitarizable if and only if $T=0$. %, that is, $q(z)=\frac{3}{4}\sum\limits_{k=0}^3\wp(z-\omega_k/2)$.
\end{theorem}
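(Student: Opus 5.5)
Since $\tau\in i\mathbb{R}_{>0}$, the lattice is rectangular. Hence $e_1,e_2,e_3$, $\eta_1$ and $\eta_2/i$ are real, and $\wp$ is real on the real axis and on $i\mathbb{R}$. Consequently the potential satisfies $\overline{q_\ell(\bar z;\bar T)}=q_\ell(z;T)$, and $V_\ell$ is invariant under $T\mapsto\bar T$. Up to the sign ambiguity, this gives $\Delta_j(\bar T)=\overline{\Delta_j(T)}$.

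\textbf{Step 1: the Hill discriminants and the tangent at $T=0$.} Take $\underline{a}\in (T^{(\ell)})^{-1}(T)$ with $Q_\ell(T)\neq0$. By Theorem \ref{thm-intro-data}, $\Delta_1=\cos 2\pi s$ and $\Delta_2=\cos 2\pi r$, where $(r,s)$ solves the linear system (\ref{eqn-intro-sr}). The Legendre relation $\eta_1\tau-\eta_2=2\pi i$ lets us solve it explicitly:
\[
s=\frac{(2a-c_1)\eta_1-(\zeta(2a)-c_2)}{2\pi i},\qquad r=\frac{\tau(\zeta(2a)-c_2)-\eta_2(2a-c_1)}{2\pi i},
\]
with constants $c_1=\tfrac12(\omega_\ell+\omega_3)$ and $c_2=\tfrac12(\eta_\ell+\eta_3)$. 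Thus $\sigma_1^{(\ell)}$ is the set of $T$ with $s\in\mathbb{R}$, and $\sigma_2^{(\ell)}$ the set with $r\in\mathbb{R}$; the zeros of $Q_\ell$ are added in because there $M_j=\pm\begin{pmatrix}1&0\\ *&1\end{pmatrix}$, so $\Delta_j=\pm1$. On $\sigma_1^{(\ell)}\cap\sigma_2^{(\ell)}$ away from these zeros, both $s$ and $r$ are real. For rectangular $\tau$ the pair of conditions $s,r\in\mathbb{R}$ becomes: $2a-c_1-\tfrac{\zeta(2a)-c_2}{\eta_1}$ is real, and $\zeta(2a)-c_2-\tfrac{\eta_2}{\tau}(2a-c_1)$ is purely imaginary. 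I would rewrite this as: $r+s\tau=2a-c_1$ lies in $\mathbb{R}+\tau\mathbb{R}$, i.e.\ it is a real-coordinate point of $E_\tau$. By Corollary \ref{cor-intro-T=0}, $T=0$ corresponds to $(s,r)\equiv(\tfrac12,\tfrac12)$ or $0$, which is real.

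\textbf{Step 2: local analysis and uniqueness of real points.} The map $T\mapsto (s,r)$ is locally holomorphic on $V_\ell$ away from the branch points. Its Jacobian with respect to $a$ is governed by $\wp(2a)+\eta_1$ and by $dT^{(\ell)}/da$. If both $s$ and $r$ are real along a set with an accumulation point, this forces $2a$ to satisfy one real-analytic condition while $T$ varies in a 2-dimensional family. The real set where both $s$ and $r$ are real is generically discrete, since $(s,r)$ real means $2a-c_1$ is determined by the two real numbers $(s,r)$ and $\zeta(2a)$ must then match. So I would show that the function
\[
\Phi(a)=\zeta(2a)-c_2-r\eta_1-s\eta_2,\qquad\text{where } 2a-c_1=r+s\tau,
\]
restricted to the real torus $(r,s)\in[0,1)^2$, vanishes only at the half-period points corresponding to $T=0$ and at the branch points. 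Here $\Phi$ is the function $Z(w)=\zeta(w)-r\eta_1-s\eta_2$, with $w=r+s\tau$ after the shift. Its zeros on $E_\tau$ are exactly the three nonzero half periods plus two extra zeros for general $\tau$. For rectangular $\tau$ the two extra zeros are absent; this is the known result of Lin–Wang on the Green function of a rectangular torus having exactly three critical points. I would invoke it to conclude that $w$ is a half period. Each half-period value of $w$ then gives either $T=0$ or $\underline{a}$ a ramification point, i.e.\ $Q_\ell(T)=0$. This is checked against the explicit $T^{(\ell)}$ by evaluating $\zeta(a)+\zeta(a-\omega_\ell/2)-\zeta(2a)+\tfrac12\eta_\ell$ at the quarter-period $a$.

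\textbf{Step 3: unitarizability.} If $\mathcal{L}(T)$ is unitarizable, it is completely reducible with $(s,r)\in\mathbb{R}^2$, so $T\in\sigma_1^{(\ell)}\cap\sigma_2^{(\ell)}$ with $Q_\ell(T)\ne0$, which forces $T=0$. Conversely, $T=0$ is unitarizable by Corollary \ref{cor-intro-T=0}.

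The \emph{main obstacle} is Step 2: correctly matching the shifted variable $2a-c_1$ with the Green function critical point problem, handling the half-period constants, and checking that every half-period solution maps to $0$ or to a zero of $Q_\ell$ rather than to a genuine nonzero $T$.
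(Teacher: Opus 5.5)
Your argument is correct, but it follows a genuinely different route from the paper.

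\textbf{The paper's route.} The paper never locates the points where $s$ and $r$ are real. It determines the whole sets $\sigma_k^{(\ell)}$ by spectral geometry:
\begin{itemize}
\item Lemma \ref{lem-order}: the order of $\Delta_k^{(\ell)}\mp 1$ is odd exactly at the zeros of $Q_\ell$, so these are the only finite endpoints.
\item The large-$T$ asymptotics of $s$ and $r$ give exactly two unbounded arcs of $\sigma_1^{(\ell)}$ along $\pm i\infty$, and of $\sigma_2^{(\ell)}$ along $\pm\frac{i}{2\tau}\infty$, which is the real direction when $\tau\in i\mathbb{R}_{>0}$.
\item The maximum principle for $\mathrm{Im}\,\Delta_k^{(\ell)}$ excludes loops.
\item The symmetries $T\mapsto -T$ and $T\mapsto \bar T$ then force every arc onto the axes.
\end{itemize}
This yields the explicit sets of Proposition \ref{prop-spec}, and the intersection is read off from them.

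\textbf{Your route.} You reduce to critical points of the Green function and import Lin--Wang's theorem that a rectangular torus has only the three half-period critical points. The ``shift'' you flag as the main obstacle resolves cleanly. Write $c_1=\alpha+\beta\tau$ with $\alpha,\beta\in\frac12\mathbb{Z}$; then $c_2=\alpha\eta_1+\beta\eta_2$. Setting $R=r+\alpha$ and $S=s+\beta$, the system (\ref{eqn-intro-sr}) becomes $R+S\tau=2a$ and $R\eta_1+S\eta_2=\zeta(2a)$.
\begin{itemize}
\item Hence $s,r\in\mathbb{R}$ iff $2a$ is a critical point of the Green function. Note that $2a$ is nonzero in $E_\tau$ because $[a]\notin E_\tau[2]$.
\item Equivalently, $\Phi(a)=Z(2a)$. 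You must evaluate $Z$ at $2a$, not at $2a-c_1$; the two differ when $c_1$ is a half period.
\item Lin--Wang then forces $2a$ to be a nonzero half period. The twelve such $a$ group in $Y_\ell$ into the two points of $Y_0\cap Y_\ell$, where $T^{(\ell)}=0$, and the four ramification points $\underline{\pm\frac{\omega_j}{4}},\underline{\pm\frac{\omega_k}{4}}$ of Theorem \ref{mapT}, exactly as you claim.
\end{itemize}

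\textbf{Trade-offs.} Your route is shorter and explains the phenomenon. For arbitrary $\tau$ it says that $\mathcal{L}_\ell(T)$ with $T\neq 0$ is unitarizable iff $2a$ is one of the extra critical points of $G$. That is the mechanism behind the $\tau\in\mathcal{E}$ versus $\tau\notin\mathcal{E}$ dichotomy quoted in the introduction. The price is a deep external theorem. The paper's argument is self-contained and elementary, and it gives the full shape of the conditional stability sets. However, it rests on the reflection symmetry special to $\tau\in i\mathbb{R}_{>0}$ and does not extend to other $\tau$.

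\textbf{Things to clean up.}
\begin{itemize}
\item In Step 1 your two conditions are swapped. In fact $s\in\mathbb{R}$ iff $2a-c_1-(\zeta(2a)-c_2)/\eta_1\in i\mathbb{R}$, and $r\in\mathbb{R}$ iff $\zeta(2a)-c_2-\frac{\eta_2}{\tau}(2a-c_1)\in\mathbb{R}$.
\item Dividing by $\eta_1$ is not legitimate in general, since $\eta_1$ vanishes at some rectangular modulus.
\item The reformulation ``$2a-c_1\in\mathbb{R}+\tau\mathbb{R}$'' is vacuous, since every complex number has real coordinates. The real content is that these coordinates also satisfy the second equation, which is exactly what your $\Phi$ encodes.
\item The ``generically discrete'' Jacobian paragraph is not an argument and can simply be deleted.
\item The extra pair of zeros of $Z$ exists only for $\tau$ in an open set of moduli, not ``for general $\tau$''.
\end{itemize}
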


\iffalse 
\begin{corollary}
Let $\tau\in i\mathbb{R}_{>0}$. For any $(\mathbb{T}, B)\in AP(\tau)\setminus \{(0,0,0,0,0)\}$,  the ODE $\mathcal{L}(\mathbb{T},B)$  is not unitarizable.
\end{corollary}
\fi 

However, it seems that the spectral geometry can not help us to figure out the unitary monodromy problem for $T\neq 0$, which is completely solved in \cite{FL-2025-2}. 
The result 
%In , a further analysis is performed and then an improvement of Theorem \ref{thm-intro-data} is obtained and stated in the following. This improvement 
depends on the geometry of $E_\tau$. Denote by 
$$\mathcal{E}:=\left\{\tau\in \mathbb{H}\,\mid\, \text{The Green function has exactly five critical points.}\right\}.$$

\begin{Theorem}\cite{FL-2025-2}
Let $T\in AP\setminus \{0\}$.  
\begin{itemize}
\item  If $\tau\in \mathcal{E}$, then for each $\ell\in \{1,2,3\}$, there exists a unique pair $\pm T_\ell\in V_\ell$ such that $\mathcal{L}(\pm T_\ell)$ is unitarizable. 
\item If $\tau \notin \mathcal{E}$, then $\mathcal{L}(T)$ is always not unitarizable. 
\end{itemize}
\end{Theorem}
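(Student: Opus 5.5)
The plan is to reduce unitarizability on each component $V_\ell$ to the vanishing of the gradient of the Green function, and then count critical points. Corollary~\ref{cor-intro-T=0} handles $T=0$, so fix $T\in V_\ell\setminus\{0\}$. If $Q_\ell(T)=0$, Theorem~\ref{thm-intro-Q} (equivalently Theorem~\ref{thm-intro-data}(2)) says $\mathcal{L}_\ell(T)$ is not completely reducible, so it is not unitarizable. If $Q_\ell(T)\neq0$, the monodromy has the diagonal form (\ref{CompRed}), and it is unitarizable if and only if $(s,r)\in\mathbb{R}^2$. By Theorem~\ref{thm-intro-Tell} we may write $T=T^{(\ell)}(\underline a)$, and Theorem~\ref{thm-intro-data}(1) gives $(s,r)$ through (\ref{eqn-intro-sr}). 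Unitarizability is therefore equivalent to the existence of real $r,s$ satisfying (\ref{eqn-intro-sr}) for this $a$.

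Next I would rewrite (\ref{eqn-intro-sr}) as a critical-point equation. Put $c:=2a-\tfrac12\omega_\ell-\tfrac12\omega_3+\tfrac12\omega_m$, where $\omega_m$ is chosen so that $\tfrac12(\omega_\ell+\omega_3-\omega_m)\in\Lambda_\tau$. Using the quasi-periodicity $\zeta(z+\omega_j)=\zeta(z)+\eta_j$, the system (\ref{eqn-intro-sr}) becomes
\[
c=r'+s'\tau,\qquad \zeta(c)=r'\eta_1+s'\eta_2 ,
\]
with $(r',s')$ differing from $(r,s)$ by a fixed vector in $\tfrac12\mathbb{Z}^2$. By Legendre's relation, real solutions $(r',s')$ of this system correspond exactly to critical points $c$ of the Green function $G(z;\tau)$ on $E_\tau$. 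This is the classical Lin--Wang characterization $\partial_z G(z)=-\tfrac{1}{4\pi}\bigl(\zeta(z)-r\eta_1-s\eta_2\bigr)$ with $z=r+s\tau$. The three half periods $\tfrac{\omega_k}{2}$ are always critical points. A further pair $\pm p$ with $p\notin E_\tau[2]$ exists precisely when $\tau\in\mathcal{E}$, and these five are all the critical points.

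The third step, which I expect to be the main obstacle, is translating back from $c$ to $T$ and showing the half-period critical points contribute nothing new. If $c\in E_\tau[2]$, then $2a\in E_\tau[2]$, so $a\in E_\tau[4]$. I would check case by case that such $a$ either lies in $E_\tau[2]$, hence outside $Y_\ell$, or makes $2a\equiv\tfrac{\omega_k}{2}$. In the latter case $\zeta(2a)$ is explicit, and one should verify that $T^{(\ell)}(\underline a)$ is either $0$ or a root of $Q_\ell$. These are exactly the fixed points of the covering involution, i.e. the branch points of Theorem~\ref{thm-intro-Tell}, so they give no new unitarizable $T\ne0$. If instead $c=\pm p$, then $2a$ is determined modulo $\Lambda_\tau$ up to sign. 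Hence $a$ is determined up to $E_\tau[2]$-translation and sign. The translation $a\mapsto a-\tfrac{\omega_\ell}{2}$ fixes $\underline a\in Y_\ell$. A direct computation with $\zeta(a+\tfrac{\omega_\ell}{2})=\zeta(a-\tfrac{\omega_\ell}{2})+\eta_\ell$ gives $T^{(\ell)}(-\underline a)=-T^{(\ell)}(\underline a)$, so the sign change produces $\pm T_\ell$.

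The delicate point is the remaining translations $a\mapsto a+\tfrac{\omega_j}{2}$ with $j\neq\ell$. They shift $2a$ by $\omega_j$, hence preserve $c$ modulo $\Lambda_\tau$, but they may change $\underline a$ and hence $T$. Here I would use that both points lie over the same monodromy data $(s,r)$. A completely reducible equation has exactly two one-dimensional common eigenspaces, giving $\underline a$ and $-\underline a$. So two different $T$'s with identical data would force the two eigenfunctions, and hence their product $F$, to coincide. Since $F^{(\ell)}$ determines $T$ through its constant term $2T+\eta_{3-\ell}$, this gives $T=T'$. This yields existence and uniqueness of $\pm T_\ell$ when $\tau\in\mathcal{E}$, and nonexistence for $T\neq0$ when $\tau\notin\mathcal{E}$. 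The latter is consistent with Theorem~\ref{thm-intro-T=0} for rectangular tori, since $i\mathbb{R}_{>0}\cap\mathcal{E}=\emptyset$.
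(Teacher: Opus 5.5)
The paper does not prove this statement; it is quoted from \cite{FL-2025-2}, and the authors remark that the spectral-geometry method of Section~\ref{sec-unitary} does not reach $T\neq0$. So I am judging your proposal on its own terms. Your reduction is sound. For $T\in V_\ell\setminus\{0\}$ with $Q_\ell(T)\neq0$ and any $\underline{a}\in(T^{(\ell)})^{-1}(T)$, the system (\ref{eqn-intro-sr}) has a real solution exactly when $2a$ is a critical point of $G$. The points $\underline{a}\in Y_\ell$ with $2a\in E_\tau[2]\setminus\{0\}$ are precisely the two preimages of $T=0$ (those with $2a\equiv\frac{\omega_\ell}{2}$, cf.\ (\ref{Y0Yell})) together with the four ramification points of Theorem~\ref{mapT}. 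One small correction: only the ramification points are fixed by the covering involution. The involution swaps the two preimages of $0$, but those are excluded anyway since $T=0$.

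The genuine error is in your last step. You invoke the principle that two values of $T$ with identical monodromy data must coincide, conclude that the translation $a\mapsto a+\frac{\omega_j}{2}$ preserves $T$, and both the principle and the conclusion are false.
\begin{itemize}
\item The data modulo $\mathbb{Z}^2$ determine only $2a$ modulo $\Lambda_\tau$, not $a$.
\item Lemma~\ref{lem-loop}(b) gives $M_k^{(\ell)}(-T)=M_k^{(\ell)}(T)^{-1}$. So $T$ and $-T$ always have the same data up to sign, a direct counterexample to the principle.
\item The translation actually sends $T$ to $-T$. By (\ref{preimage}) the fibre through $\underline{b}$ is $\{\underline{b},\underline{\frac{\omega_j}{2}-b}\}$, and $\underline{a+\frac{\omega_j}{2}}=\underline{\frac{\omega_j}{2}-(-a)}$. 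Hence $T^{(\ell)}(\underline{a+\frac{\omega_j}{2}})=T^{(\ell)}(\underline{-a})=-T^{(\ell)}(\underline{a})$.
\item Your supporting claim that the two eigenspaces of $\mathcal{L}_\ell(T)$ have zero sets $\underline{a}$ and $\underline{-a}$ contradicts your own oddness computation. By the proof of Theorem~\ref{thm-intro-data} they are $\underline{a}$ and $\underline{\frac{\omega_j}{2}-a}$, while $\underline{-a}$ belongs to $\mathcal{L}_\ell(-T)$.
\end{itemize}

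The repair is short. Let $\pm p$ be the non-half-period critical points and fix $b$ with $2b\equiv p$. The points of $Y_\ell$ with $2a\equiv\pm p$ are $\underline{b}$, $\underline{b+\frac{\omega_j}{2}}$, $\underline{-b}$, $\underline{-b+\frac{\omega_j}{2}}$. By the computation above they map to $T_\ell,-T_\ell,-T_\ell,T_\ell$, where $T_\ell:=T^{(\ell)}(\underline{b})$. Moreover $T_\ell\neq0$ and $Q_\ell(T_\ell)\neq0$, because $2b\notin E_\tau[2]$. With this in place of your last paragraph, the argument gives existence and uniqueness of the pair $\pm T_\ell$ when $\tau\in\mathcal{E}$, and nonexistence for $T\neq0$ when $\tau\notin\mathcal{E}$.
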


The paper is organized as below.  
In Section \ref{sec-apparent}, we obtain the explicit parameter of each irreducible component $V_j, j=1,2,3$ and $AP=\cup_{j=1}^3 V_j$. In Section \ref{sec-monodromy}, we introduce the spectral polynomial $Q_\ell(T)$ on an irreducible component $V_\ell$ and prove Theorem \ref{thm-intro-1} and Theorem \ref{thm-intro-Q}.
In Section \ref{sec-data},  we prove Theorem \ref{thm-intro-Tell} and Theorem \ref{thm-intro-data}.
Finally, Theorem \ref{thm-intro-T=0} is proved in Section \ref{sec-unitary}.

%
%\begin{remark}
%If $\tau\in i\mathbb{R}_{>0}$,  $\sigma_1^{(k)}$ and $\sigma_2^{(k)}$ can be related by the modular property of $\wp(z), \zeta(z)$, see Proposition \ref{prop-modular}. This observation gives us an insight to explore the conditional stabilitiy sets for $\tau\not\in i\mathbb{R}_{>0}$.
%\end{remark}

\section{The space of apparent parameters}\label{sec-apparent}
%\section{Apparent spaces in the primitive case}\label{sec-appspace}
We will concretely determine the apparent space $AP_{\textbf{m},\Sigma}$. 
%$$AP_{\textbf{m},\Sigma}:=\left\{(\mathbb{T},B)\in \mathbb{C}^{n+2} \mid  \sum_{j=0}^nT_j=0 \,\, \text{and}\,\, \mathcal{L}_{\textbf{m},\Sigma}(\mathbb{T},B)\, \text{ is apparent}.\right\}.$$
%
%Let $\textbf{m}=(m_1, \cdots, m_n)\in (\mathbb{Z}_{\geq 1})^n$ and $\sum_{j=1}^nm_j$ be an even number. We now consider  the following generalized Lam\'e equation: 
%\begin{align*}
%\mathcal{L}_{\textbf{m},\Sigma}(\mathbb{T},B; \tau): \quad y''(z)=q_\textbf{m}(z;\mathbb{T}, B, \tau)y(z),\quad z\in \mathbb{C},
%\end{align*}
%Recall that 
%\begin{align*}
%q_{\textbf{m},\Sigma}(z; \mathbb{T},B)=\sum_{j=0}^n\frac{m_j}{2}(\frac{m_j}{2}+1)\wp(z-p_j)+\sum_{j=0}^nT_j\zeta(z-p_j)+B
%\end{align*}
%which is an algebaric set. Indeed, 
Let 
$$q_{\textbf{m},\Sigma}(z;\mathbb{T}, B)=\sum_{j=0}^\infty b^{(k)}_j(z-p_k)^{j-2}$$
be  the Laurant series in a neighborhood of $p_k$,
then
\begin{align*}
b_0^{(k)}&=\frac{m_k}{2}(\frac{m_k}{2}+1)\\
b_1^{(k)}&=T_k\\
b_2^{(k)}&=\sum_{j\neq k}\frac{m_j}{2}(\frac{m_j}{2}+1)\wp(p_k-p_j)+\sum_{j\neq k}T_j\zeta(p_k-p_j)+B
\end{align*}
and $b_3^{(k)}, b_4^{(k)} , \cdots$ are linearly polynomials of $T_0, \cdots, T_n$.
Denote by 
\begin{align*}
c_0^{(k)}(\mu)&=\mu+\frac{m_k}{2},\\
c^{(k)}_t(\mu)&=\frac{\sum_{j=1}^{t}b_j^{(k)}c_{t-j}^{(k)}(\mu)}{\phi(\mu+t)},\qquad t\in \mathbb{Z}_{>0}.
\end{align*}
By the Frobenius method, we have  %$(\mathbb{T}, B)\in AP_{\textbf{m},\Sigma}$ if and only if 
%$c_{m_k+1}^{(k)}(-\frac{m_k}{2})=0,\,k=0, \cdots, n,$ i.e., 
$$ AP_{\textbf{m},\Sigma}=\left\{(\mathbb{T},B)\in \mathbb{C}^{n+1} \mid  \sum_{j=0}^nT_j=0 \,\, \text{and}\,\, c_{m_k+1}^{(k)}(-\frac{m_k}{2})=0, k=0, \cdots, n.\right\}.$$
Notice that $c_{m_k+1}^{(k)}(-\frac{m_k}{2})$ is a polynomial of $T_0, \cdots, T_n, B$, %\footnote{\textcolor{blue}{感觉没有必要加上degree，如果要加上degree的话：If we view $T_k$ as a variable of degree $1$ and $B$ as a variable of degree $2$, then $c_{m_k+1}^{(k)}(-\frac{m_k}{2})$ is of degree $\frac{1}{2}(m_0+1)\cdots (m_n+1)$. }}
 thus $AP_{\textbf{m},\Sigma}$ is an algebraic set.

Assume $m_0=\cdots=m_n=1$, %$\textbf{m}=\textbf{m}_n:=(1,1,\cdots, 1)$,  %and  $n+1=\sum_{j=0}^nm_j$ is even. Since $\textbf{m}$  is fixed, 
we then ignore $\textbf{m}$ in the notations and use the notation 
%$\mathcal{L}_{\Sigma}(\mathbb{T},B; \tau):=\mathcal{L}_{\alpha,\Sigma}(\mathbb{T},B; \tau)$ %$q_{\Sigma}(z; \mathbb{T},B,\tau):=q_{\alpha,\Sigma}(z; \mathbb{T},B,\tau)$ 
$AP_{\Sigma}$ to instead of $AP_{\textbf{m},\Sigma}$.
Notice that 
%\begin{align*}
% AP_{\Sigma}(\tau)=\left\{(\mathbb{T},B)\in \mathbb{C}^{n+1} \mid  \sum_{j=1}^nT_j=0 \,\, \text{and}\,\, c_{2}^{(k)}(-\frac{1}{2})=0, k=1, \cdots, n.\right\},
%\end{align*}
%and 
$c_{2}^{(k)}(-\frac{1}{2})=0$ is equivalent to 
% There is a precise description for $AP(\tau)$.  Indeed, 
%by the Frobenius method, 
%any solution of  $\mathcal{L}(\mathbb{T},B;\tau)$ has no logarithmic singularitiy at $p_k$ if and only if 
\begin{equation}\label{eqn-Tk}
T_k^2-\frac{3}{4}\sum_{j\neq k} \wp(p_k-p_j)-\sum_{j\neq k} T_j\zeta(p_k-p_j)-B=0. \tag{$*_k$} 
\end{equation}
%which is as the same as (\ref{B-T-1}). 
Denote by $\wp_{kj}:=\wp(p_k-p_j)$ and $\zeta_{kj}:=\zeta(p_k-p_j)$, we have 
$$AP_{\Sigma}=\left\{(\mathbb{T},B) \in \mathbb{C}^{n+2} \mid \begin{array}{l}\sum_{j=0}^n T_j=0,\, \text{and for }\,  k=0,1, \cdots, n,\\
T_k^2-\frac{3}{4}\sum_{j\neq k} \wp_{kj}-\sum_{j\neq k} T_j\zeta_{kj}-B=0.%\\ k=1, \cdots, n; \, T_1,\cdots, T_n, B\in \mathbb{C}. 
\end{array}\right\}.$$ %which  is an algebraic set.
In the rest of this paper, we will focus on the case of  $\Sigma=E_\tau[2]$.
 %$$\Sigma=\{p_1, p_2, p_3, p_4\}=E_\tau[2]=\{\frac{\omega_k}{2}\mid k=0,1,2,3\}.$$
%where $\omega_0:=0, \omega_1:=1, \omega_2:=\tau, \omega_3:=1+\tau$.  
%For the convience of notations, we use $T_0, T_1,T_2,T_3$ to replace $T_1, T_2, T_3, T_4$ respectively. 
Notice that  $\sum_{j\neq k}\wp_{kj}=\sum_{j=1}^3e_j=0$,   then  (\ref{eqn-Tk}) can be rewritten as 
\begin{equation}\label{Tk}
T_k^2-\sum_{j\neq k} T_j\zeta_{kj}-B=0, \qquad k=0,1,2,3.
\end{equation}
Thus,
$$AP:=AP_{E_\tau[2]}=\left\{(T_0,\cdots, T_3,B) \in \mathbb{C}^{5} \mid \sum_{j=0}^3 T_j=0,\, \text{and}\, (\ref{Tk})\, \text{hold}.\right\}.$$ 
 Define 
\begin{equation*}
    \begin{aligned}
    &V_1=\{(T,T,-T,-T,B)\,|\,B=T^2-\eta_2T, \,T\in\mathbb{C}\},\\
    &V_2=\{(T,-T,T,-T,B)\,|\,B=T^2-\eta_1T,\,T\in\mathbb{C}\},\\
    &V_3=\{(T,-T,-T,T, B)\,|\,B=T^2,\,T\in\mathbb{C}\},
    \end{aligned}
\end{equation*}
where $\eta_i:=\zeta(z+\omega_i)-\zeta(z),\,i=1,2,3$ are the quasi-periods of $\zeta(z)$. Clearly, $V_i\cap V_j=\{(0,0,0,0,0)\}$ for $i\ne j$. In the following theorem, we will see $AP$ can be decomposed into the union of $V_i, i=1, 2,3$.

\begin{lemma}\label{thm1.1}
    Let $V_i, i=1,2,3$ be defined above. Then  $AP=\bigcup\limits_{i=1}^3V_i$. In particular, $AP$ has exactly one singular point $(0,0,0,0,0)$.
\end{lemma}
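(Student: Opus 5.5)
The plan is to solve the polynomial system defining $AP$ directly. It consists of $\sum_k T_k=0$ together with the four equations (\ref{Tk}). The first step is to compute the constants $\zeta_{kj}=\zeta(\frac{\omega_k}{2}-\frac{\omega_j}{2})$ exactly, not just modulo periods, since $\zeta$ is only quasi-periodic. For this I will use that $\zeta$ is odd, $\zeta(\frac{\omega_i}{2})=\frac{\eta_i}{2}$, $\eta_3=\eta_1+\eta_2$, and $\zeta(z+\omega_i)=\zeta(z)+\eta_i$. For example,
\[
\zeta\Big(\frac{\omega_1}{2}-\frac{\omega_2}{2}\Big)=\zeta\Big(\frac{\omega_3}{2}-\omega_2\Big)=\frac{\eta_3}{2}-\eta_2=\frac{\eta_1-\eta_2}{2},
\]
and similarly for the other pairs. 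This makes (\ref{Tk}) four explicit equations, each of the form $T_k^2-L_k(\mathbb{T})-B=0$ with $L_k$ linear in $\mathbb{T}$.

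Next I eliminate $B$ by subtracting the equation for $k=0$ from the equation for $k=1,2,3$. Each difference has the form $T_k^2-T_0^2=L_k-L_0$. The key observation I expect is that $L_k-L_0$ is divisible by $T_k+T_0$ once $\sum_j T_j=0$ is used, i.e.\ once $T_i+T_j$ is replaced by $-(T_k+T_0)$ for $\{i,j\}=\{1,2,3\}\setminus\{k\}$. Each difference should then factor as
\[
(T_k+T_0)\big(T_k-T_0-c_k\big)=0
\]
for a constant $c_k$ built from the $\eta_i$. Checking that this factorization really happens, with correct constants, is the main computational obstacle. If it fails as stated, I will instead form the combinations $\mathrm{eq}_0+\mathrm{eq}_k-\mathrm{eq}_i-\mathrm{eq}_j$, which are adapted to the symmetry of the $V_\ell$.

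Then I do the case analysis on which factors vanish. If $T_0+T_k=0$ for one $k$, the sum condition gives $T_i+T_j=0$ for the complementary pair, and the remaining factored equations force either $T_0=T_i$ or $T_0=T_j$ (up to the constants). This yields the sign patterns $(T,T,-T,-T)$, $(T,-T,T,-T)$ and $(T,-T,-T,T)$, corresponding to $k=2,3$ and $1$ respectively. Substituting back into (\ref{Tk}) for $k=0$ gives $B$: for instance, on the pattern of $V_1$,
\[
B=T^2-\tfrac12\big(T\eta_1-T\eta_2-T\eta_3\big)=T^2-\eta_2T.
\]
I must also rule out the branch where no factor $T_0+T_k$ vanishes. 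Then $T_k=T_0+c_k$ for $k=1,2,3$, and the sum condition gives $4T_0+\sum c_k=0$. Plugging into a remaining equation should either produce a contradiction or land on one of the $V_\ell$ (e.g.\ the origin); consistency here is a useful check on the constants $c_k$. The inclusion $V_\ell\subseteq AP$ is verified directly by substitution.

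Finally, each $V_\ell$ is the graph of a polynomial map $T\mapsto(\pm T,\dots,B(T))$, hence a smooth curve isomorphic to $\mathbb{C}$. Comparing sign patterns, $V_i\cap V_j$ forces $T=-T$, so the intersection is $\{0\}$; thus every point of $AP\setminus\{0\}$ lies on exactly one smooth component and is nonsingular. At the origin the three tangent directions $(1,1,-1,-1,0)$, $(1,-1,1,-1,0)$, $(1,-1,-1,1,0)$ are linearly independent. So the Zariski tangent space of $AP$ at $0$ has dimension at least $3>1=\dim AP$, and $0$ is singular.
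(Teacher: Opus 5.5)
Your plan is essentially the paper's argument: since $\zeta_{ij}=\frac12(\eta_i-\eta_j)$, you get $L_k-L_0=\frac12\eta_k\sum_j T_j=0$, so your factorization holds with every $c_k=0$, i.e.\ $T_k=\pm T_0$, and your sign analysis then yields exactly $V_1,V_2,V_3$ (the branch with no vanishing factor forces $T_k=T_0$ for all $k$, hence $T=0$). There are two harmless slips: the bracket in your formula for $B$ should carry a $+$ sign because $\zeta_{0j}=-\frac12\eta_j$ (your final value $T^2-\eta_2T$ is correct), and the tangent vectors of $V_1,V_2$ at the origin have last entries $-\eta_2,-\eta_1$ rather than $0$, which does not affect their linear independence or your singular-point argument (a part the paper leaves implicit).
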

\begin{proof}
 Let $(T_0,T_1,T_2,T_3,B)\in AP$, then $\sum_{j=0}^3 T_j=0$ and (\ref{Tk}) holds. 
Notice that $\zeta_{ij}=\zeta(\frac{\omega_i}{2}-\frac{\omega_j}{2})=\frac{1}{2}\eta_i-\frac{1}{2}\eta_j$. 
For any $\ell\in \{1,2,3\}$, denote by $\{\ell, j, k\}=\{1,2,3\}$, we compute
\begin{align*}
    T_\ell^2-T_0^2&=\sum_{i\neq \ell}T_i\zeta_{\ell i}-\sum_{i\neq 0}T_i\zeta_{0i}\\
&=T_0\zeta_{\ell 0}-T_\ell\zeta_{0\ell}+T_j(\zeta_{\ell j}-\zeta_{0j})+T_k(\zeta_{\ell k}-\zeta_{0k})\\
&=T_0\zeta_{\ell 0}+T_\ell\zeta_{\ell 0}+T_j\zeta_{\ell 0}+T_k\zeta_{\ell 0}\equiv 0.
\end{align*}
   % We note that since $\omega_i/2,\,0\leq i\leq 3$, are half periods, we have  $\sum\limits_{j\ne i}\wp_{ij}=0$ for all $i$. 
 %Note that $\sum_{\ell\neq k}\varepsilon_\ell^{(i)}\zeta_{k\ell}=c_i$, we have  $B=T_k^2-\sum_{j\neq k}\zeta_{kj}T_k,$
\iffalse 
We first show that $T_i^2=T_j^2$ for any $i,j$,  if $\sum_{j=0}^3 T_j=0$ and (\ref{Tk}) holds. By (\ref{Tk}), we have
\begin{align*}
    &T_0^2+\dfrac{1}{2}\eta_1T_1+\dfrac{1}{2}\eta_2T_2+\dfrac{1}{2}\eta_3T_3-B=0,\\
    &T_1^2-\dfrac{1}{2}\eta_1T_0+\dfrac{1}{2}(\eta_2-\eta_1)T_2+\dfrac{1}{2}\eta_2T_3-B=0,\\
    &T_2^2-\dfrac{1}{2}\eta_2T_0-\dfrac{1}{2}(\eta_2-\eta_1)T_2+\dfrac{1}{2}\eta_1T_3-B=0,\\
    &T_3^2-\dfrac{1}{2}\eta_3T_0-\dfrac{1}{2}\eta_2T_1-\dfrac{1}{2}\eta_1T_2-B=0,\\
\end{align*}
which implies that
\begin{align*}
    0=T_0^2-T_k^2+\dfrac{1}{2}\eta_k(T_0+T_1+T_2+T_3)=T_0^2-T_k^2,\quad  k=1,2,3.
\end{align*}
\fi 
Hence, $T_\ell=\pm T_0$ for all $\ell\in \{1,2,3\}$. %We use the same argument to show that $T_0^2=T_2^2$ and $T_0^2=T_3^2$. 
Combine with  %(\ref{cc3})
$\sum_{j=0}^3T_j=0$, 
we obtain that 
 %$(T_0,T_1,T_2,T_3,B)\in AP(\tau)$ implies 
$(T_0,T_1,T_2,T_3,B)\in V_\ell$ for some $\ell\in \{1,2,3\}$, because $B=T_0^2-\sum_{j=1}^3T_j\zeta_{0j}$. 

Finally, $\bigcup\limits_{j=1}^3V_j\subseteq AP$ is obvious.  
This proves the lemma.
\end{proof}

In what follows, we introduce the following notations: \begin{align*}
&\eta_0=0,\quad \varepsilon_0^{(i)}=\varepsilon_i^{(i)}=-\varepsilon_j^{(i)}=-\varepsilon_k^{(i)}=1, \, \text{if}\, \{i,j,k\}=\{1,2,3\},
 %&c_1=\eta_2,\ c_2=\eta_1,\,\text{and}\, c_3=0,
 \end{align*}
then $$V_i=\{(\varepsilon_0^{(i)}T,\varepsilon_1^{(i)}T,\varepsilon_2^{(i)}T,\varepsilon_3^{(i)}T,B)\,|\,B=T^2-\eta_{3-i}T, \,T\in\mathbb{C}\},\quad i=1,2,3,$$ which are parameterized by $T(=T_0)\in \mathbb{C}$ only, so we can identify $V_i$ with $\mathbb{C}$. From now on, we simply use the variable $T$ to denote an element of $AP$,  %Note that  $(\mathbb{T}, B)\in AP$  means $T_k= \varepsilon_k^{(j)}T, k=0,1,2,3, B=T^2-c_jT$ for some $j\in \{1,2,3\}$ with $T\in \mathbb{C}$, so we can denote by $T\in AP$ and 
then use the notations $\mathcal{L}(T)$ and $q(z;T)$ to replace $\mathcal{L}(\mathbb{T}, B)$ and $q(z;\mathbb{T}, B)$, respectively. Sometimes, we write $\mathcal{L}_\ell(T)$ and $q_\ell(z;T)$  to emphasize $T\in V_\ell$.

\section{Spectral polynomials}\label{sec-monodromy}

In this section, we will first introduce the classical second symmetry product equation and then derive the definition of  spectral polynomials. With the help of these two tools, we prove Theorem \ref{thm-intro-1}  and Theorem  \ref{thm-intro-Q}. 

\subsection{Second symmetry product equations}
Let $T\in AP$ and $y_1, y_2$ be two solutions of $\mathcal{L}(T)$,   %It is well-known  (see \cite{WW}) that
then their product $F(z)=y_1y_2$ satisfies the following third order equation:
\begin{equation}\label{cc6}
    F^{\prime\prime\prime}-4q F^\prime-2q^\prime F=0,
\end{equation}
which is called the second symmetry product equation of $\mathcal{L}(T)$. 
In particular, if $y_1, y_2$ are chosen as linearly independent common eigenfunctions in the case of (\ref{CompRed}) and $y_1=y_2$ in the case of (\ref{CompNRed}), then $F(z)=y_1y_2$  is an elliptic function.
In fact, any elliptic solution of (\ref{cc6}) can be expressed as a multiple of two common eigenfunctions of $\mathcal{L}(T)$. 
%constructed in this way for any nonsingular point $T\in AP\setminus \{0\}$. 
%We consider the elliptic solutions of (\ref{cc6}). 
Denote by  $\mathrm{ES}(T)$  the space of elliptic solutions  of  (\ref{cc6}) and $\mathrm{d}_T$ the dimension  of  $\mathrm{ES}(T)$.  By the classification of monodromy matrices, we have the following conclusion about  $\mathrm{d}_T$. 

%From the above classification of monodromy matrices,

\begin{proposition}\label{prop1.1}
   Let $T\in AP$
%$$\mathrm{ES}(T)=\left\{y_1y_2\,\mid \, y_1, y_2 \,\, \text{are common eigenfunctions of}\,\, \mathcal{L}(T)\right\}$$
and $F(z)\in \mathrm{ES}(T)$, then $F(z)=y_1y_2$ for some two   common eigenfunctions $y_1, y_2$ of $\mathcal{L}(T)$. Furthermore, we have 
 %The solution space  of (\ref{cc6}) is of dimension 3 and generated by $y_1^2,y_1y_2,y_2^2$.
 %The equation (\ref{cc6}) always possesses an elliptic solution. Moreover, 
%the dimension $\mathrm{d}_T$ of the space of elliptic solutions  of  (\ref{cc6}) satisfies 
\begin{equation*}
\mathrm{d}_T=\left\{\begin{array}{ll} 1 & \begin{array}{l}\text{if either}\,\, \mathcal{L}(T) \,\,\text{is not completely reducible}\\
\text{or}\,\, \mathcal{L}(T) \,\,\text{is completely reducible and}\,\, (s,r)\in  \mathbb{C}^2\setminus(\frac{1}{2}\mathbb{Z})^2\end{array} 
%2 & \text{if}\,\, \mathcal{L}(T) \,\,\text{is completely reducible and}\,\, (r,s)\in \mathbb{C}^2\setminus \left( (\frac{1}{2}\mathbb{Z})^2\cup (\mathbb{C}\setminus\frac{1}{2}\mathbb{Z})^2\right)
%\frac{1}{2}\mathbb{Z}\times (\mathbb{C}\setminus \frac{1}{2}\mathbb{Z})\cup(\mathbb{C}\setminus \frac{1}{2}\mathbb{Z})\times\frac{1}{2}\mathbb{Z}
 \\
\\
3 & \text{if}\,\, \mathcal{L}(T) \,\,\text{is completely reducible and}\,\, (s,r)\in (\frac{1}{2}\mathbb{Z})^2
\end{array}\right.
\end{equation*}
%\begin{enumerate}
%\item If $\mathcal{L}(T)$ is completely reducible and the monodromy data $(r,s)\in (\frac{1}{2}\mathbb{Z})^2$, then the space of elliptic solutions of  (\ref{cc6}) is of dimension 3. 
%\item If $\mathcal{L}(T)$ is completely reducible and the monodromy data $(r,s)\in \mathbb{C}^2\setminus (\frac{1}{2}\mathbb{Z})^2$, then the space of elliptic solutions of  (\ref{cc6}) is of dimension 3. 
%\item If $\mathcal{L}(T)$ is not completely reducible, then the space of elliptic solutions of  (\ref{cc6}) is of dimension 1. 
%\end{enumerate}
\end{proposition}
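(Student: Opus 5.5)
The plan is to identify the full solution space of (\ref{cc6}) with $\mathrm{Sym}^2$ of the solution space of $\mathcal{L}(T)$, and then to read off which symmetric products are elliptic from the monodromy normal forms (\ref{CompRed}) and (\ref{CompNRed}).

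\textbf{Step 1: the symmetric-square description.} Fix a basis $y_1,y_2$ of solutions of $\mathcal{L}(T)$ near $z_0$. The Wronskian is a nonzero constant, so $y_1^2, y_1y_2, y_2^2$ are linearly independent. Each satisfies (\ref{cc6}), which is third order. Hence every solution of (\ref{cc6}) is a quadratic form
$$F=\alpha y_1^2+2\beta y_1y_2+\gamma y_2^2$$
in the basis. Over $\mathbb{C}$, any binary quadratic form factors into two linear forms, so $F=\tilde y_1\tilde y_2$ for some solutions $\tilde y_1,\tilde y_2$, possibly equal.

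\textbf{Step 2: ellipticity in terms of monodromy.} The local monodromy at each singularity is $\pm I_2$, so a product of two solutions is single valued near the singular points. Therefore $F$ is elliptic exactly when it is invariant under the induced action of $M_1$ and $M_2$ on quadratic forms, that is, under $\mathrm{Sym}^2 M_j$. Write $F=\tilde y_1\tilde y_2$. Invariance means the unordered pair of lines $\{[\tilde y_1],[\tilde y_2]\}$ is preserved by $M_j$ up to the scalar constraint. By unique factorization of binary forms, each $M_j$ either fixes both lines or swaps them.

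We first rule out swapping. If some $M_j$ swapped the two lines, then $M_j$ would map a distinct pair of lines to each other. Since $M_1$ and $M_2$ commute, their common eigenlines are also preserved, so the swap would have to be compatible with that structure. I expect to handle this by a direct calculation in the normal forms. In the completely reducible case it comes down to the following situation, which we analyze directly: a quadratic form $\alpha y_1^2+2\beta y_1y_2+\gamma y_2^2$ is invariant iff each monomial whose eigenvalue differs from $1$ has vanishing coefficient.

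\textbf{Step 3: the completely reducible case, via eigenvalues.} Under (\ref{CompRed}), the monomials $y_1^2, y_1y_2, y_2^2$ are eigenvectors of $\mathrm{Sym}^2M_1$ and $\mathrm{Sym}^2M_2$. Their eigenvalues are
$$(e^{-4\pi is},e^{4\pi ir}),\qquad (1,1),\qquad (e^{4\pi is},e^{-4\pi ir}).$$
So the invariant forms are spanned by $y_1y_2$ alone, together with $y_1^2$ and $y_2^2$ exactly when $e^{4\pi is}=e^{4\pi ir}=1$. That condition is precisely $(s,r)\in(\frac12\mathbb{Z})^2$. This gives $\mathrm{d}_T=3$ in that case and $\mathrm{d}_T=1$ otherwise. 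In both cases the spanning products are products of common eigenfunctions. When $\mathrm{d}_T=3$, every solution is a product $\tilde y_1\tilde y_2$ by Step 1. In that case $M_j=\pm I$, so every solution of $\mathcal{L}(T)$ is a common eigenfunction, and the first claim holds.

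\textbf{Step 4: the not completely reducible case.} Here $y_2$ is a common eigenfunction and, with the convention of (\ref{CompNRed}), $y_1\mapsto\varepsilon_j(y_1+\chi_j y_2)$, where $(\chi_1,\chi_2)\neq(0,0)$. The induced action on $\mathrm{Sym}^2$ is $\varepsilon_j^2=1$ times a unipotent map. A direct computation shows the action is
$$\alpha y_1^2+2\beta y_1y_2+\gamma y_2^2\ \longmapsto\ \alpha y_1^2+(2\beta+2\alpha\chi_j)y_1y_2+(\gamma+2\beta\chi_j+\alpha\chi_j^2)y_2^2.$$
Invariance for some $j$ with $\chi_j\ne0$ forces $\alpha=0$ and then $\beta=0$. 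So the invariant space is spanned by $y_2^2$, which gives $\mathrm{d}_T=1$ and $F=y_2\cdot y_2$, a product of common eigenfunctions.

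The main obstacle is the bookkeeping in Steps 2–3, making sure no invariant form is missed. Working directly with the explicit eigenbasis of $\mathrm{Sym}^2$, as above, avoids the abstract line-swapping argument altogether.
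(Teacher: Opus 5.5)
Your proof is correct, but it runs in the opposite logical direction from the paper's.

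\textbf{The paper's route.} The paper proves the factorization first, analytically. For elliptic $F$ it shows $W_0^2=(F')^2-2FF''+4qF^2$ is constant, sets $y_\pm=\sqrt{F}\exp\bigl(\mp\frac{1}{2}\int^z W_0/F\bigr)$, and verifies $y_\pm''=qy_\pm$. Since $y_\pm'/y_\pm=(F'\mp W_0)/(2F)$ is elliptic, $y_\pm$ are common eigenfunctions. The value of $\mathrm{d}_T$ is then deduced from the normal forms, using uniqueness of common eigenfunctions together with this factorization.

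\textbf{Your route.} You identify the solutions of (\ref{cc6}) with $\mathrm{Sym}^2$ of the solution space of $\mathcal{L}(T)$. You then compute the joint fixed space of $\mathrm{Sym}^2M_1,\mathrm{Sym}^2M_2$ explicitly, and read the factorization off the resulting spanning sets. You only need the splitting of binary quadratic forms in the case $M_j=\pm I$.

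\textbf{Comparison.} Your argument is more elementary, and it makes explicit two points the paper leaves implicit: why $y_1^2,y_2^2$ fail to be elliptic when $(s,r)\notin(\frac{1}{2}\mathbb{Z})^2$, and why only the square of the eigenfunction survives in the unipotent case. It also sidesteps the branch and residue issues behind the paper's ``it is easy to see'' for $y_\pm$. The paper's construction, however, produces the invariant $W_0^2$, the squared Wronskian of the two factors. That quantity becomes the spectral polynomial $Q_\ell(T)$ and underlies the Baker--Akhiezer functions later, so it is doing double duty.

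\textbf{Two small points.}
\begin{enumerate}
\item You can simply delete the unfinished line-swapping discussion in Step 2, since Steps 3--4 are self-contained. A swap is impossible anyway: if $M_j\tilde y_1=\mu\tilde y_2$ and $M_j\tilde y_2=\nu\tilde y_1$, then $1=\det M_j=-\mu\nu$ forces $F\mapsto -F$.
\item In (\ref{CompNRed}) the common eigenfunction is the first basis vector, not $y_2$. This is only a relabeling.
\end{enumerate}
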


\begin{proof}
 Let $T\in AP$ and $F(z)\in \mathrm{ES}(T)$.  Define $W(z)$ by 
$$W(z)^2=(F')^2-2FF''+4q F^2.$$
By taking the derivative with respect to $z$, by (\ref{cc6}), it is easy to see that $W(z)\equiv W_0$, a constant independent of $z$. Set 
$$y_\pm(z)=\sqrt{F(z)}\exp\left(\mp\frac{1}{2}\int^z\frac{W_0}{F(z)} dz\right).$$
Then we have $F(z)=y_+(z)y_-(z)$ and 
$$\frac{y'_\pm(z)}{y_\pm(z)}=\frac{F'(z)\mp W_0}{2F(z)}.$$
Thus, 
\begin{align*}
\frac{y''_\pm}{y_\pm}=\left(\frac{y'_\pm}{y_\pm}\right)'+\left(\frac{y'_\pm}{y_\pm}\right)^2&=\frac{FF''}{2F^2}-\frac{F'(F'\mp W_0)}{2F^2}+\frac{(F'\mp W_0)^2}{4F^2}\\
&=\frac{-(F')^2+W_0+2FF''}{4F^2}=\frac{4q F^2}{4F^2}=q.
\end{align*}
This proves $y_\pm(z)$ is a solution of $\mathcal{L}(T)$. Since $F(z)$ is elliptic, it is easy to see that both $y_\pm$ are common eigenfunctions.

Furthermore, if $\mathcal{L}(T)$ is not completely reducible, then $y_1=y_2$ is the unique common eigenfunction, up to a constant multiple. Therefore, $\mathrm{ES}(T)$ is generated by $y_1^2$ and then $\mathrm{d}_T=1$.

If $\mathcal{L}(T)$ is completely reducible and $(s,r)\notin (\frac{1}{2}\mathbb{Z})^2$, then the two linearly independent common eigenfunctions $y_1, y_2$ are unique, up to constant  multiples.  Thus, any elliptic solution $F(z)$ is equal to $y_1y_2$, up to a constant multiple.  Therefore, $\mathrm{d}_T=1$.  

If $\mathcal{L}(T)$ is completely reducible with $(s,r)\in (\frac{1}{2}\mathbb{Z})^2$, this means  all solutions of $\mathcal{L}(T)$ are either periodic (if $s\in \mathbb{Z}$) or antiperiodic (if $s\in \frac{1}{2}+\mathbb{Z}$) in $\omega_1$ direction simultaneously. 
Similarly, the same conclusion holds for the $\omega_2$ direction. This proves $\mathrm{d}_T=3$. 
%
\iffalse 
 This proves the first part. %The seond part is an immediate corollary of the first part. 

Furthermore, let $y_1$ be a common eigenfunction of $\mathcal{L}(T)$ and $y_2$ is picked such that $M_1, M_2$ are expressed in the form of (\ref{CompRed}) or (\ref{CompNRed}). 
%let $y_1(z)$ be a common eigenfunction of $\mathcal{L}(T)$. 

If $\mathcal{L}(T)$ is not completely reducible, then the space of common eigenfunctions is of dimension $1$ and spanned by $y_1(z)$, thus $\mathrm{ES}(T)$ is spanned by $y^2_1$, so $\mathrm{d}_T=1$. 

If $\mathcal{L}(T)$ is completely reducible, then $y_1, y_2$ are two linearly independent common eigenfunctions, thus $y_1^2, y_1y_2, y_2^2$ are linearly independent. 
%
If $(s,r)\in  \mathbb{C}^2\setminus(\frac{1}{2}\mathbb{Z})^2$, then
  at least one common eigenvalue $\lambda_1$ (either $e^{- 2\pi is}$ or $e^{2\pi ir}$) of  $y_1$ is not equal to $\pm 1$ and then the corresponding  common eigenvalue $\lambda_1^{-1}$ of  $y_2$  is also not equal to $\pm 1$. Hence, $y_1^2, y_2^2$ are not elliptic. Since $y_1y_2$ is elliptic, then 
$\mathrm{ES}(T)$ is spanned by $y_1y_2$ and $\mathrm{d}_T=1$. 
%
If  $(s,r)\in  (\frac{1}{2}\mathbb{Z})^2$, then  $y_1^2, y_1y_2, y_2^2$ are elliptic,   %all solutions of $\mathcal{L}(T)$ are common eigenfunctions.  %Pick any two linearly independent solutions $y_1(z), y_2(z)$ of $\mathcal{L}(T)$. 
%It is clear to see that 
 thus $\mathrm{ES}(T)$ is spanned by $y_1^2, y_1y_2, y_2^2$ and $\mathrm{d}_T=3$. 
\fi 
\end{proof}

Next, we try to construct a specific basis for $\mathrm{ES}(T)$. 
Notice that  the local exponents of (\ref{cc6}) at ${w_k}/{2}$ are $-1,\, 1$ and $3$, then any elliptic solution $F(z)$ of  (\ref{cc6})  has simple poles only, and thus it can be written as a linear combination of $\zeta(z-\frac{w_k}{2})$ and $1$. Consequently, we get the following conclusion. 

% \,i.e.,  $F(z)={ \sum_{k=0}^3}d_k\zeta(z-{w_k}/{2})+r$, where $d_k,\ r$ are constants with $\sum_{k=0}^3 d_k=0$.  

\begin{theorem}\label{prop1.2}
 Let $T\in V_\ell$ with $\ell\in\{1,2,3\}$. Then
\begin{equation}\label{eq_07141414}
 F^{(\ell)}(z;T)=-\sum\limits_{k=0}^3\varepsilon_k^{(\ell)}\zeta(z-\frac{\omega_k}{2})+(2T+\eta_{3-\ell}) ,
\end{equation}
%
%$$F^{(\ell)}(z;T):=\sum_{k=0}^{3}d_k^{(\ell)}(T)\zeta(z-\frac{\omega_k}{2})+r^{(\ell)}$$
%with     \begin{align}\label{cc9}
%        d_k^{(\ell)}(T)&=-\varepsilon_k^{(\ell)},\quad  0\leq k\leq 3,\\
%r^{(\ell)}&=2T+\eta_{3-\ell}
%    \end{align}
is an elliptic solution of (\ref{cc6}).   Moreover,
\begin{enumerate}
% is the unique elliptic solution of (\ref{cc6}) up to a constant multiple.
%If $T=0$, $F^{(\ell)}(z;T)$ may not be the unique solution elliptic solution of (\ref{cc6}) up to a constant multiple.
%    where $\varepsilon_0^{(i)}=\varepsilon_i^{(i)}=-\varepsilon_j^{(i)}=-\varepsilon_k^{(i)}=1,\ \{i,j,k\}=\{1,2,3\},\ c_1=\eta_2,\ c_2=\eta_1$ and $c_3=0$. 
\item  if $T\neq 0$, %$T_1^{(i)},T_2^{(i)},T_3^{(i)}\text{ and } T_4^{(i)},   
 then $\mathrm{d}_T=1$ and $\mathrm{ES}(T)$ is spanned by $F^{(\ell)}(z;T)$.    %the space of elliptic solutions of (\ref{cc6}) is of dimension $1$ and generated by $F^{(\ell)}(z;T)$. 
\item if $T=0$, then $\mathrm{d}_T=3$ and $\mathrm{ES}(T)$ is spanned by $F^{(1)}(z;0)$, $F^{(2)}(z;0)$, $F^{(3)}(z;0)$.
%the space of elliptic solutions of (\ref{cc6}) is of  dimension $3$ and generated by $F^{(1)}(z;0), F^{(2)}(z;0), F^{(3)}(z;0)$.
\end{enumerate}
\end{theorem}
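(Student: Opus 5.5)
The plan is to reduce everything to linear algebra on the coefficients of the ansatz. As noted just before the statement, the local exponents of (\ref{cc6}) at each $\omega_k/2$ are $-1,1,3$. Hence every elliptic solution has at most simple poles, located only at the half periods. We therefore write
\[
F(z)=\sum_{k=0}^3 d_k\zeta(z-\tfrac{\omega_k}{2})+c, \qquad \sum_{k=0}^3 d_k=0 .
\]
We substitute this into $F'''-4qF'-2q'F=0$, with $q=q_\ell(z;T)$ as in (\ref{q}), $T_k=\varepsilon_k^{(\ell)}T$ and $B=T^2-\eta_{3-\ell}T$. The left-hand side is elliptic with poles only at the $\omega_k/2$. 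It vanishes identically if and only if its principal parts at each $\omega_k/2$ vanish and its constant term vanishes.

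We expand around $p_k=\omega_k/2$ with $x=z-p_k$. Then $F=d_kx^{-1}+f_0^{(k)}+f_1^{(k)}x+\cdots$, where
\[
f_0^{(k)}=c+\sum_{j\ne k}d_j\zeta_{kj}, \qquad f_1^{(k)}=-\sum_{j\ne k}d_j\wp_{kj},
\]
and $q=\tfrac34x^{-2}+T_kx^{-1}+b_2^{(k)}+\cdots$ with $b_2^{(k)}=\sum_{j\ne k}T_j\zeta_{kj}+B=T_k^2$ by (\ref{Tk}). The order-$x^{-4}$ coefficient vanishes automatically, since the exponent $-1$ is admissible. The $x^{-3}$ and $x^{-2}$ coefficients give relations linking $d_k$, $f_0^{(k)}$, $T_k$ and $b_2^{(k)}$. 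I expect them to reduce to
\[
f_0^{(k)}=2T_kd_k \quad\text{(or a similar linear relation)},
\]
together with a $x^{-1}$ condition involving $f_1^{(k)}$ and $d_k$. I would carry out this expansion explicitly and check it against the candidate $d_k=-\varepsilon_k^{(\ell)}$, $c=2T+\eta_{3-\ell}$, using $\zeta_{kj}=\tfrac12(\eta_k-\eta_j)$ and $\sum_{j\ne k}\wp_{kj}=0$. With these identities the verification that (\ref{eq_07141414}) solves (\ref{cc6}) becomes a finite check over $k=0,\dots,3$. Symmetry under the translations $z\mapsto z+\omega_m/2$, which permute the $V_\ell$ data, should reduce this to one or two values of $k$.

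For the dimension count I would solve the same linear system in general: unknowns $(d_0,\dots,d_3,c)$ with $\sum d_k=0$, and equations $f_0^{(k)}=2T_kd_k$ for $k=0,\dots,3$, plus whatever the $x^{-1}$ and constant-term conditions add. These are four affine relations in four free unknowns whose coefficients depend on $T$ only through $T_k=\varepsilon_k^{(\ell)}T$. When $T\ne0$, I expect the coefficient matrix to have rank $3$, so the kernel is one-dimensional and spanned by $F^{(\ell)}(z;T)$; this gives (1). When $T=0$ the right-hand sides $2T_kd_k$ vanish and the conditions collapse to $c+\sum_{j\ne k}d_j\zeta_{kj}=0$ for all $k$. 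Subtracting these pairwise, using the same computation as in the proof of Lemma \ref{thm1.1}, yields $\tfrac12\eta_k\sum_jd_j=0$, which is automatic. The system thus has rank $1$, and the solution space is $3$-dimensional.

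The three functions $F^{(\ell)}(z;0)$ have residue vectors $-\varepsilon^{(\ell)}$ for $\ell=1,2,3$. These are linearly independent, because the sign patterns $(1,1,-1,-1)$, $(1,-1,1,-1)$, $(1,-1,-1,1)$ are independent. So the $F^{(\ell)}(z;0)$ span $\mathrm{ES}(0)$, giving (2); this is consistent with Proposition \ref{prop1.1}, since $\mathrm{d}_T\in\{1,3\}$. The main obstacle is the careful Laurent bookkeeping for the $x^{-1}$ coefficient and the constant term of the left-hand side. That is where the subleading coefficients $b_3^{(k)}$ and $f_1^{(k)}$ enter, and where one must confirm that no extra constraint appears beyond the rank computation above. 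If the $x^{-1}$ condition turns out to be messy, I would instead bypass it for $T\ne0$ as follows. Having shown that $F^{(\ell)}$ is a solution, Proposition \ref{prop1.1} forces $\mathrm{d}_T\in\{1,3\}$. To exclude $\mathrm{d}_T=3$ it suffices that the residue-level conditions $f_0^{(k)}=2T_kd_k$ alone already cut the space down to dimension at most $2$ when $T\neq0$.
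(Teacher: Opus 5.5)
Your outline is correct and follows the paper's skeleton: the $\zeta$-ansatz, linear conditions from the $(z-\frac{\omega_k}{2})^{-3}$ coefficients, $\mathrm{d}_T\ge1$ from Proposition \ref{prop1.1}, and independence of the residue vectors at $T=0$. It has one slip and one genuine improvement over the paper.

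The slip is the sign. The $x^{-3}$ coefficient of $F'''-4qF'-2q'F$ is $3f_0^{(k)}+6T_kd_k$, so the condition is $f_0^{(k)}=-2T_kd_k$, which is (\ref{sys-2}). With your sign, $d=-\varepsilon^{(\ell)}$ would force $c=\eta_{3-\ell}-2T$, and the check against (\ref{eq_07141414}) would fail. With the correct sign, the $x^{-2}$ coefficient $4d_k(b_2^{(k)}-T_k^2)+2T_k(f_0^{(k)}+2T_kd_k)$ vanishes automatically.

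The improvement is that you keep $\sum_k d_k=0$ in the rank count. Your own $T=0$ computation then proves the rank claim for every $T\neq0$, not just as an expectation:
\begin{itemize}
\item Since $\zeta_{kj}=\frac12(\eta_k-\eta_j)$, one gets $f_0^{(k)}=c-\frac12\sum_j d_j\eta_j$, which is independent of $k$.
\item So for $T\neq0$ the conditions force $\varepsilon^{(\ell)}_kd_k$ to be constant, i.e.\ $d=\lambda\varepsilon^{(\ell)}$.
\item Then $-\frac12\sum_j\varepsilon^{(\ell)}_j\eta_j=\eta_{3-\ell}$ gives $c=-\lambda(2T+\eta_{3-\ell})$, which is exactly $F^{(\ell)}$.
\end{itemize}
The paper instead computes $\det\Omega^{(\ell)}=4T^2(4T^2-\eta_{3-\ell}^2)$ without the constraint. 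That produces spurious exceptional values $T=\pm\eta_{3-\ell}/2$, which need a separate rank argument. In your version $T=0$ is the only exceptional point.

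You do not actually have to confront the ``main obstacle'' you flag. The paper's device works verbatim in your setting. For $T\neq0$ the necessary conditions already have a one-dimensional solution space, and $\mathrm{ES}(T)\neq0$ by Proposition \ref{prop1.1}. Hence that space equals $\mathrm{ES}(T)$, and $F^{(\ell)}$, whose coefficients lie in it, is a solution. For $T=0$, use continuity: the left side of (\ref{cc6}) evaluated on $F^{(\ell)}(z;T)$ is polynomial in $T$.

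If you prefer the direct check, it is short.
\begin{itemize}
\item One has $f_2^{(k)}=0$ and $b_3^{(k)}=-\sum_{j\neq k}T_j\wp_{kj}$. So the $x^{-1}$ condition reads $\sum_{j\neq k}\wp_{kj}(T_kd_j-T_jd_k)=0$, which is trivially true for $d_j=-\varepsilon_j^{(\ell)}$, $T_j=\varepsilon_j^{(\ell)}T$.
\item Once all principal parts vanish, $F'''-4qF'-2q'F\equiv C$ for a constant $C$.
\item Since $F(F'''-4qF'-2q'F)=\bigl(FF''-\frac12(F')^2-2qF^2\bigr)'$ has zero residues, $C d_k=0$ for every $k$, hence $C=0$.
\end{itemize}
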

%\footnote{\textcolor{blue}{Theorem 3.6里面的计算除了之前那个augmented matrix的rank是用mathematic计算的之外，其他我都有手算。等我周四回北京查一下之前的手稿确认一下，或者再验算一下。\\关于(\ref{sys-2})(\ref{sys-1})，由于要考虑两个未知量不完全一样的system，感觉分开对system标号，后面会清楚一点。}}
\begin{proof}
Let $G^{(\ell)}(z;T)=\sum_{k=0}^3d_k^{(\ell)}\zeta(z-\frac{\omega_k}{2})+r^{(\ell)}$ with $d_k^{(\ell)}, r^{(\ell)}\in \mathbb{C}, k=0,1,2,3$. %First of all,  $G^{(\ell)}(z;T)$ is elliptic if and only if $\sum_{k=0}^3d_k^{(\ell)}=0$. 
\iffalse 
Consider the Laurent expransion of the following function at $z=\frac{\omega_k}{2}\, k=0,1,2,3$:
$$(G^{(\ell)}(z;T))'''-4q_\ell(z;T)(G^{(\ell)}(z;T))'-2   (q_\ell(z;T))'G^{(\ell)}(z;T)$$

If $G^{(\ell)}(z;T)$ is  an elliptic solution of (\ref{cc6}), then the coefficient of $(z-\frac{\omega_k}{2})^{-3}$ term vanishes, which implies $d_k^{(\ell)}, \, k=0,1,2,3,$ and $r^{(\ell)}$ satisfies  the following $4\times 4$ linear system with unknows $d_k^{(\ell)}, \, k=0,1,2,3$:
\begin{equation}\label{sys-2}
        2\varepsilon_k^{(\ell)}Td_k^{(\ell)}+\sum_{k^\prime\neq k}d_{k^\prime}^{(\ell)}\zeta_{kk^\prime }=-r^{(\ell)},\ 0\leq k\leq3.
\end{equation}
\fi 
If $G^{(\ell)}(z;T)$ is  an elliptic solution of (\ref{cc6}), by computing  the coefficient of $(z-\frac{\omega_k}{2})^{-3}$ term of the left hand side of (\ref{cc6}), we have that $d_k^{(\ell)}, \, k=0,1,2,3,$ and $r^{(\ell)}$ satisfies  the following $5\times 5$ linear system with unknowns $d_k^{(\ell)}, \, k=0,1,2,3,\, r^{(\ell)}$:
%  \begin{align}
%        2\varepsilon_k^{(\ell)}Td_k^{(\ell)}+\sum_{k^\prime\neq k}d_{k^\prime}^{(\ell)}\zeta_{kk^\prime }+r^{(\ell)}&=0,\ 0\leq k\leq3.\label{eq07101044}\\
%\sum\limits_{k=0}^3d_k^{(\ell)}&=0 \label{eqn-dk0}
%    \end{align}
\begin{align}
        2\varepsilon_k^{(\ell)}Td_k^{(\ell)}+\sum_{k^\prime\neq k}d_{k^\prime}^{(\ell)}\zeta_{kk^\prime }+r^{(\ell)}&=0,\ 0\leq k\leq3,\label{sys-2}\\
\sum\limits_{k=0}^3d_k^{(\ell)}&=0  \label{sys-1},
    \end{align}
where  (\ref{sys-1}) is due to $G^{(\ell)}(z;T)$ is elliptic.

Denote by 
\begin{equation*}
\Omega^{(\ell)}:=\frac{1}{2}\begin{pmatrix}4T&-\eta_1&-\eta_2&-\eta_3\\\eta_1&4\varepsilon_1^{(\ell)} T&\eta_1-\eta_2&-\eta_2\\ \eta_2&\eta_2-\eta_1&4\varepsilon_2^{(\ell)} T&-\eta_1\\
\eta_3&\eta_2&\eta_1&4\varepsilon_3^{(\ell)} T\end{pmatrix},
\end{equation*}
which is the coefficient matrix of  (\ref{sys-2}) with  unknowns $d_k^{(\ell)}, \, k=0,1,2,3$.  By a direct computation, we obtain the determinant 
$$\det \Omega^{(\ell)}=4T^2(2T-\eta_{3-\ell})(2T+\eta_{3-\ell}).$$%\footnote{\textcolor{red}{$\det \Omega^{(\ell)}$ 和 $\mathrm{rank} \Omega^{(\ell)}$ 的具体计算过程我明天写一下发给您哈。}}

%By a direct computation, we obtain the determinant of the coefficient matrix of  (\ref{sys-2}) is 
%\begin{equation*}
%\frac{1}{16}\left|\begin{array}{cccc}4T&-\eta_1&-\eta_2&-\eta_3\\\eta_1&4\varepsilon_1^{(\ell)} T&\eta_1-\eta_2&-\eta_2\\ \eta_2&\eta_2-\eta_1&4\varepsilon_2^{(\ell)} T&-\eta_1\\
%\eta_3&\eta_2&\eta_1&4\varepsilon_3^{(\ell)} T\end{array}\right|%=4T^2(4T^2-c_i^2)
%=4T^2(2T-\eta_{3-\ell})(2T+\eta_{3-\ell}).
%\end{equation*}

\iffalse 
If $T=\eta_{3-\ell}/2$, by a direct computation, the rank of the coefficient matrix of  (\ref{sys-1})  is $4$, thus the solution space of (\ref{sys-1}) is of dimension 1.
%$d_k^{(\ell)}, \, k=0,1,2,3 $ and $r^{(\ell)}$ uniquely exist up to a constant multiple.\

If $T\neq \eta_{3-\ell}/2$, 
by noticing that 
\begin{equation*}
\sum\limits_{k=0}^3\varepsilon_k^{(\ell)}\left( 2\varepsilon_k^{(\ell)}Td_k^{(\ell)}+\sum_{k^\prime\neq k}d_{k^\prime}^{(\ell)}\zeta_{kk^\prime }+r^{(\ell)}\right)=(2T-\eta_{3-\ell})\sum\limits_{k=0}^3d_k^{(\ell)}.
\end{equation*}
then the last equation $\sum_{k=0}^3d_k^{(\ell)}=0 $ in  (\ref{sys-1})  is redundant. 
%Let $T\neq \eta_{3-\ell}/2$. 
\fi 
%is a polynomial of degree 4 in the $T$ variable. 

If $T\not\in \{0, \pm \eta_{3-\ell}/2\}$, then $d_k^{(\ell)},\, 0\leq k\leq 3,$  are uniquely determined by $r^{(\ell)}$.  Therefore, the solution space of  (\ref{sys-2})  is of dimension 1. Since  $\mathrm{d}_T\geq 1$  by Proposition \ref{prop1.1}, we obtain that $\mathrm{d}_T=1$ and thus any solution of (\ref{sys-2}) provides a solution of (\ref{cc6}). Consequently, (\ref{sys-1}) holds automatically. Therefore, we conclude that if $T\not\in \{0, \pm \eta_{3-\ell}/2\}$, then $\mathrm{d}_T=1$ and 
%\begin{itemize}
%\item[($*$)]$G^{(\ell)}(z;T)$ is an elliptic solution of (\ref{cc6}) if and only if its coefficient $(d_k^{(\ell)}, r_k)$ is a solution of  (\ref{sys-2}).
%\end{itemize}
\begin{equation}\label{iff}
\begin{aligned}
&G^{(\ell)}(z;T) \,\textit{is an elliptic solution of}\, (\ref{cc6})\, \textit{if and only if}\\ 
&\textit{its coefficient }\, 
 (d_k^{(\ell)}, r_k)\, %\text{of}\, G^{(\ell)}(z;T)\,  
\textit{is a solution of}\, (\ref{sys-2}).
\end{aligned}\tag{$*$} 
\end{equation}
Notice that the coefficient of $F^{(\ell)}(z;T)$ satisfies (\ref{sys-2}), then $F^{(\ell)}(z;T)$  is  an elliptic solution of (\ref{cc6})  for  $T\not\in \{0, \pm \eta_{3-\ell}/2\}$.  By the continuity with respect to $T$, we have that 
 $F^{(\ell)}(z;T)$ is an elliptic solution of  (\ref{cc6}) for all $T\in V_\ell$.

If $T=\pm \eta_{3-\ell}/2\neq 0$, by a direct computation, the rank of $\Omega^{(\ell)}$ is $3$, then the rank of the coefficient matrix of  (\ref{sys-2})  is $\geq 3$, thus the dimension of
 the solution space of  (\ref{sys-2})  is $\leq 2$. Hence, $\mathrm{d}_T\leq 2$.  By Proposition \ref{prop1.1},  we have $\mathrm{d}_T\geq 1$, then $\mathrm{d}_T=1$. 

Therefore, $\mathrm{d}_T=1$ for $T\neq 0$. Notice that  $F^{(\ell)}(z;T)$ is an elliptic solution of  (\ref{cc6}) for all $T\in V_\ell$,  then $\mathrm{ES}(T)$ is spanned by $F^{(\ell)}(z;T)$ for all $T\neq 0$.

%Consequently, the space of elliptic solutions of (\ref{cc6}) is of dimension 1 and then generated by the nontrivial elliptic solution $F^{(\ell)}(z;T)$. 
%This proves Theorem \ref{prop1.2}.

 %First of all,  $F^{(\ell)}(z;T)$ with $\ell\in\{1,2,3\}$ is  an elliptic solution of (\ref{cc6}) by a straightforward computation. 

%\textcolor{red}{Notice that the left hand side of (\ref{cc6}) $$(F^{(\ell)}(z;T))'''-4q_\ell(z;T)(F^{(\ell)}(z;T))'-2   (q_\ell(z;T))'F^{(\ell)}(z;T)$$  is a polynomial of $T$ and it is identically equal to $0$ for $T\neq 0$, then it is also $0$ for $T=0$. } Therefore, 
If $T=0$, then %notice that $0\in V_\ell$ for any $\ell\in \{1,2,3\}$,
%Since $\ell_i(0)=(e_i-e_j)(e_i-e_k)\ne 0$, (\ref{cc1}) is completely reducible. 
 $F^{(\ell)}(z;0), %=-\sum_{k=0}^{3}\varepsilon_k^{(\ell)}\zeta(z-w_k/2)+\eta_{3-\ell},
 \, \ell=1,2,3$ are elliptic solutions of the same equation (\ref{cc6}), and they are linearly independent, which implies $\mathrm{d}_T=3$. 
\iffalse 
Let $$k_1F^{(1)}(z;0)+k_2F^{(2)}(z;0)+k_3F^{(3)}(z;0)=0$$ with $k_1, k_2, k_3\in \mathbb{C}$. 
By considering the coefficents of ${(z-\frac{\omega_k}{2})}^{-1}, k=0,1,2,3$ terms, we obtain that $k_1=k_2=k_3=0$. Thus $F^{(1)}(z;0), F^{(2)}(z;0), F^{(3)}(z;0)$ are linearly independent. Since (\ref{cc6}) is a third order ODE, then the space of elliptic solutions of (\ref{cc6}) is of dimension 3.
\fi 
%Now, we assume that $T\neq 0$. 
\end{proof}

Combining Proposition \ref{prop1.1} and Theorem \ref{prop1.2}, we obtain the  monodromy at the singular point $T=0$ stated in the following theorem. 

\begin{theorem}\label{thm-monodromy-1}
  Let $T\in AP$, then $T=0$ if and only if  % i.e.,  $(T_0,\cdots, T_3,B)=(0,\cdots, 0)$, 
the monodromy matrices $M_k$ of $\mathcal{L}(T)$ satisfy $M_k=\varepsilon_k I_{2\times 2}$ with  $\varepsilon_k\in\{\pm 1\}, k=1,2.$
\end{theorem}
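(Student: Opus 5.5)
We combine Proposition \ref{prop1.1} with Theorem \ref{prop1.2}. The key observation is that the condition "$M_k=\varepsilon_k I_{2\times 2}$ for $k=1,2$" is equivalent to "$\mathcal{L}(T)$ is completely reducible with monodromy data $(s,r)\in(\frac12\mathbb{Z})^2$". Indeed, if $M_1,M_2$ are both $\pm I$, they are trivially simultaneously diagonal, so $\mathcal{L}(T)$ is completely reducible. In the normal form (\ref{CompRed}) we then need $e^{-2\pi i s}=e^{2\pi i s}=\varepsilon_1$ and $e^{2\pi i r}=e^{-2\pi i r}=\varepsilon_2$, i.e.\ $e^{4\pi i s}=e^{4\pi i r}=1$, which is exactly $s,r\in\frac12\mathbb{Z}$. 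Conversely, if $\mathcal{L}(T)$ is completely reducible with $(s,r)\in(\frac12\mathbb{Z})^2$, then (\ref{CompRed}) gives $M_1=e^{2\pi i s}I=\pm I$ and $M_2=e^{2\pi i r}I=\pm I$.

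By Proposition \ref{prop1.1}, this situation is characterized by $\mathrm{d}_T=3$. In every other case (not completely reducible, or completely reducible with $(s,r)\notin(\frac12\mathbb{Z})^2$) we have $\mathrm{d}_T=1$. So the statement reduces to showing that $\mathrm{d}_T=3$ if and only if $T=0$.

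\emph{($\Rightarrow$)} Suppose $M_k=\varepsilon_kI$ for $k=1,2$. Then $\mathrm{d}_T=3$. If $T\neq 0$, Theorem \ref{prop1.2}(1) gives $\mathrm{d}_T=1$, a contradiction. Hence $T=0$.

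\emph{($\Leftarrow$)} If $T=0$, Theorem \ref{prop1.2}(2) gives $\mathrm{d}_T=3$. By the trichotomy in Proposition \ref{prop1.1}, $\mathcal{L}(0)$ must then be completely reducible with $(s,r)\in(\frac12\mathbb{Z})^2$, so $M_k=\varepsilon_k I$.

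A more direct route for ($\Leftarrow$) is also available. Every solution $F$ of (\ref{cc6}) is elliptic when $T=0$, and the products $y_1^2,\,y_1y_2,\,y_2^2$ span the solution space. Elliptic invariance of all three products forces each $M_k$ to act on $\mathrm{Sym}^2$ as the identity. Since $\det M_k=1$ and the symmetric square representation has kernel $\{\pm I\}$, this gives $M_k=\pm I$.

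The only point needing care is that Proposition \ref{prop1.1} is a genuine trichotomy covering all $T\in AP$, so that $\mathrm{d}_T=3$ really forces the case $(s,r)\in(\frac12\mathbb{Z})^2$. The substantive content, namely the dimension count for $T\neq 0$ via the rank of $\Omega^{(\ell)}$, is already done in Theorem \ref{prop1.2}. I therefore expect no real obstacle beyond this bookkeeping of the monodromy normal forms.
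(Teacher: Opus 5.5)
Your proposal is correct and is essentially the paper's argument: the paper proves this theorem in one line by combining the trichotomy of Proposition~\ref{prop1.1} with the dimension count from Theorem~\ref{prop1.2}, namely $\mathrm{d}_T=1$ for $T\neq 0$ and $\mathrm{d}_0=3$. Your explicit translation of ``$M_k=\varepsilon_k I$'' into ``completely reducible with $(s,r)\in(\frac12\mathbb{Z})^2$'' fills in bookkeeping the paper leaves implicit, and your symmetric-square argument for the direction $T=0\Rightarrow M_k=\pm I$ is also valid.
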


\subsection{Spectral polynomials. } %Let $T\in AP$. We now follow the approach of KdV potentials to define the so-called spectral polynomial for  $\mathcal{L}(T)$.

Let $T\in V_\ell$ for some  $\ell\in \{1,2,3\}$. Define % $Q_\ell(T)$ by
\begin{equation}\label{cc11}
    Q_\ell(T)=({F^{(\ell)}}^{\prime})^2-2{F^{(\ell)}}^{\prime\prime}F^{(\ell)}+4q_\ell(z; T)(F^{(\ell)})^2,
\end{equation}
where ${F^{(\ell)}}:={F^{(\ell)}}(z;T)$ and the derivatives are taken with respect to $z$.
By taking a derivative  with respect to $z$, it is easy to see that the RHS of (\ref{cc11}) is independent of $z$. Therefore, $Q_\ell(T)$ is a polynomial of $T$ of degree 4. Throughout the paper, $Q_\ell(T)$ is called \textbf{\textit{the spectral polynomial}} of $\mathcal{L}_\ell(T)$ with $T\in V_\ell$. 
This definition of the spectral polynomial is the same as the case of KdV potentials. %If $q(z)$ is an elliptic KdV potential, then our definition of spectral polynomial is the same as the case of KdV.
\begin{proposition}\label{prop-ell}
 Let  $\ell\in\{1,2,3\}$  and $\{\ell, j, k\}=\{1,2,3\}$, then  % The polynomial $Q_\ell(T)$ is given by
    \begin{equation*}\label{cc12}
  Q_\ell(T)=16(T^2+e_\ell-e_j)(T^2+e_\ell-e_k),%\ 1\leq i\leq 3,
    \end{equation*}
  where $e_i=\wp(w_i/2), i=1,2,3$.
\end{proposition}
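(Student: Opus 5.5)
The plan is to use the fact, already noted after (\ref{cc11}), that the right-hand side of (\ref{cc11}) is independent of $z$. This lets us compute $Q_\ell(T)$ as the constant term of the Laurent expansion of $({F^{(\ell)}}')^2-2{F^{(\ell)}}''F^{(\ell)}+4q_\ell(F^{(\ell)})^2$ at the singular point $z=0$. Since the total expression is constant, every negative power of $z$ must cancel automatically, which serves as a consistency check. The constant coefficient is then a polynomial in $T$ and in the constants $\eta_i,e_i$.

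\textbf{Step 1: expand $F^{(\ell)}$.} Near $z=0$ write $F^{(\ell)}(z;T)=-\frac1z+f_0+f_1z+f_2z^2+f_3z^3+O(z^4)$. The term $-\varepsilon_0^{(\ell)}\zeta(z)=-\zeta(z)$ contributes $-\frac1z$ plus an $O(z^3)$ correction with coefficient $g_2/60$. The terms $-\varepsilon_k^{(\ell)}\zeta(z-\frac{\omega_k}{2})$ with $k\ne0$ are Taylor expanded using
\[
\zeta(-\tfrac{\omega_k}{2})=-\tfrac12\eta_k,\qquad \zeta'=-\wp,\qquad \wp'(\tfrac{\omega_k}{2})=0.
\]
This gives
\[
f_0=2T+\eta_{3-\ell}+\tfrac12\sum_{k\ne0}\varepsilon_k^{(\ell)}\eta_k,\qquad f_1=\sum_{k\ne0}\varepsilon_k^{(\ell)}e_k,\qquad f_2=0,
\]
and $f_3$ is expressed through $\wp''(\frac{\omega_k}{2})$ and $g_2$. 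Using $\eta_3=\eta_1+\eta_2$ and the sign pattern of $\varepsilon^{(\ell)}$, I expect $f_0$ to collapse to $2T$ in every case. For $\ell=3$ this is immediate: $f_0=2T+\frac12(-\eta_1-\eta_2+\eta_3)=2T$. Similarly $f_1=e_\ell-e_j-e_k=2e_\ell$, using $\sum e_i=0$.

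\textbf{Step 2: expand $q_\ell$.} Write $q_\ell=\frac{3}{4z^2}+\frac{T}{z}+b_2+b_3z+b_4z^2+\cdots$. Here $b_2=B-\sum_{k\ne0}\varepsilon_k^{(\ell)}T\zeta(-\frac{\omega_k}{2})$ together with the constant terms of $\frac34\wp(z-\frac{\omega_k}{2})$. Since $\sum_{k\ne 0}e_k=0$, the latter contribute nothing, and the remaining $T\eta$ terms should cancel against $B=T^2-\eta_{3-\ell}T$, leaving $b_2=T^2$. The coefficients $b_3,b_4$ involve $T e_k$ and $\frac34\wp''(\frac{\omega_k}{2})$-type terms, and also $g_2$ coming from $\frac34\wp(z)$. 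Only $b_3$ and $b_4$ are needed, because $F^2$ starts at $z^{-2}$.

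\textbf{Step 3: collect the constant term.} I would use
\[
F^2=z^{-2}-2f_0z^{-1}+(f_0^2-2f_1)+2f_0f_1z+(f_1^2-2f_3)z^2+\cdots,
\]
together with the analogous expansions of $(F')^2$ and $F''F$. The constant coefficient should then reduce, using $e_1+e_2+e_3=0$ and $\wp''(\frac{\omega_k}{2})=2(e_k-e_i)(e_k-e_j)$ (equivalently $6e_k^2-g_2/2$), to
\[
16T^4+32e_\ell T^2+16(e_\ell-e_j)(e_\ell-e_k)=16(T^2+e_\ell-e_j)(T^2+e_\ell-e_k),
\]
where the middle coefficient uses $2e_\ell-e_j-e_k=3e_\ell$.

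The main obstacle is the bookkeeping in Steps 2–3, namely tracking the $g_2$ and $\wp''(\frac{\omega_k}{2})$ contributions in $f_3$ and $b_4$ so that they cancel correctly. As a fallback, or as a check on the leading coefficient $16$, the $T$-constant term can be computed independently: at $T=0$ the expression should equal $16(e_\ell-e_j)(e_\ell-e_k)$. Alternatively, one could evaluate at the half period $z=\frac{\omega_\ell}{2}$ instead of $z=0$, where the local data are symmetric.
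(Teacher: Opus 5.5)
Your plan is the paper's own argument: the paper also reads off $Q_\ell(T)$ as the constant term of the Laurent expansion at $z=0$ and omits the bookkeeping, and your values $f_0=2T$, $f_1=2e_\ell$, $f_2=0$, $b_2=T^2$ are correct; carrying the computation through gives $f_3=\frac{2e_\ell^2+8e_je_k}{5}$, $b_4=\frac{3g_2}{5}$ (quickest via $\frac34\sum_{k=0}^3\wp(z-\frac{\omega_k}{2})=3\wp(2z)$), and constant term $16T^4+48e_\ell T^2+16e_\ell^2+16f_3+\frac{12}{5}g_2=16T^4+48e_\ell T^2+32e_\ell^2+16e_je_k$, which is exactly $16(T^2+e_\ell-e_j)(T^2+e_\ell-e_k)$. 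There are two slips to fix: the middle coefficient in your final display must be $48e_\ell$, not $32e_\ell$, as your own remark $2e_\ell-e_j-e_k=3e_\ell$ shows; and the sign in $b_2$ should be plus, $b_2=B+\sum_{k\neq0}\varepsilon_k^{(\ell)}T\zeta(-\frac{\omega_k}{2})=B+\eta_{3-\ell}T$, since with your minus sign the $\eta_{3-\ell}T$ terms would double instead of cancelling.
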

\begin{proof}
    Since $Q_\ell(T)$ is independent of $z$, we might analyse the behavior of the right hand side of (\ref{cc11}) near $z=0$ to obtain the proposition. All the computations are straightforward, so we omit the details.
\end{proof}

%\begin{remark}
 Since $e_i\ne e_j$ for $i\ne j$, then $Q_\ell(0)\neq 0$ and  $Q_\ell(T)$ has no multiple root. 
%\end{remark}

%\textcolor{red}{The reason that $\ell_i(T)$ is called spectral because it can be applied to determine whether (\ref{ode-1}) is completely reducible or not.}

\begin{theorem}\label{prop-poly}
    Let $T\in V_\ell$ with $\ell\in \{1,2,3\}$.   We have  $\mathcal{L}_\ell(T)$ is completely reducible if and only if  $Q_\ell(T)\neq0$. % This is why we call 
\end{theorem}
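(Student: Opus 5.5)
The plan is to relate $Q_\ell(T)$ to the Wronskian of the two common eigenfunctions built from $F^{(\ell)}$, exactly as in the proof of Proposition \ref{prop1.1}. By construction, $Q_\ell(T)=W_0^2$ is the constant associated with the elliptic solution $F=F^{(\ell)}(z;T)$. Setting
\[
y_\pm=\sqrt{F}\exp\Bigl(\mp\tfrac12\int^z \tfrac{W_0}{F}\,dz\Bigr),
\]
we get two common eigenfunctions with $y_+y_-=F^{(\ell)}$. A direct computation from $y_\pm'/y_\pm=(F'\mp W_0)/(2F)$ gives the Wronskian
\[
y_+'y_--y_+y_-'=-W_0 .
\]
Hence $y_+,y_-$ are linearly independent if and only if $Q_\ell(T)\neq 0$.

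\emph{Case $T\neq 0$.} Here Theorem \ref{prop1.2} gives $\mathrm{ES}(T)=\mathbb{C}F^{(\ell)}$.

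If $Q_\ell(T)\neq 0$, then $y_\pm$ are two independent common eigenfunctions, so $M_1,M_2$ are simultaneously diagonalizable and $\mathcal{L}_\ell(T)$ is completely reducible.

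Conversely, suppose $\mathcal{L}_\ell(T)$ is completely reducible. Choose independent common eigenfunctions $y_1,y_2$ as in (\ref{CompRed}). Then $y_1y_2\in\mathrm{ES}(T)$, so $y_1y_2=cF^{(\ell)}$ with $c\neq0$. Now apply the same construction to $F=y_1y_2$: the associated constant $W_0$ satisfies $W_0^2=(y_1'y_2-y_1y_2')^2$. This is a routine identity, obtained by expanding $(F')^2-2FF''+4qF^2$ with $y_i''=qy_i$. Since $y_1,y_2$ are independent, this constant is nonzero. The quantity $(F')^2-2FF''+4qF^2$ is quadratic in $F$, so $Q_\ell(T)=c^{-2}(y_1'y_2-y_1y_2')^2\neq0$.

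The same identity also settles the converse directly. If $Q_\ell(T)=0$, then $F^{(\ell)}=y_+y_-$ with $y_+=y_-$ up to a constant, so $F^{(\ell)}$ is the square of a common eigenfunction. Completely reducible would force $F^{(\ell)}=cy_1y_2$ with a nonzero Wronskian, which is a contradiction.

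\emph{Case $T=0$.} Here $Q_\ell(0)=16(e_\ell-e_j)(e_\ell-e_k)\neq0$ by Proposition \ref{prop-ell}. By Theorem \ref{thm-monodromy-1}, $M_k=\pm I$, which is trivially completely reducible. So both sides of the equivalence hold, and the theorem is consistent at $T=0$ without needing uniqueness of $\mathrm{ES}(T)$.

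The main obstacle is the Wronskian identity $W_0^2=(y_1'y_2-y_1y_2')^2$ for $F=y_1y_2$. I would check it by substitution:
\[
F'=y_1'y_2+y_1y_2',\qquad F''=2y_1'y_2'+2qy_1y_2 .
\]
Then
\[
(F')^2-4y_1y_2\,y_1'y_2'=(y_1'y_2-y_1y_2')^2 .
\]
A secondary subtlety is the uniqueness of $\mathrm{ES}(T)$ for $T\neq 0$, which is what allows one to identify $y_1y_2$ with $F^{(\ell)}$; this is supplied by Theorem \ref{prop1.2}(1).
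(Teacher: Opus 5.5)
Your proof is correct and follows the paper's argument. You reduce to $T\neq 0$ using $Q_\ell(0)\neq 0$ and Theorem~\ref{thm-monodromy-1}, use $\mathrm{d}_T=1$ from Theorem~\ref{prop1.2} to identify the product of two common eigenfunctions with $F^{(\ell)}$, and conclude from $(F')^2-2FF''+4qF^2=W(y_1,y_2)^2$ that independence is equivalent to $Q_\ell(T)\neq 0$. Your use of the homogeneity of this quadratic expression to handle the scalar $c$ is a slightly more explicit justification of the step the paper attributes to ``the proof of Proposition~\ref{prop1.1}''.
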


\begin{proof}
Since $Q_\ell(0)\neq 0$ and $\mathcal{L}(0)$ is completely reducible by Theorem 
\ref{thm-monodromy-1}, we only need to prove the statement for $T\neq 0$. 
%Since $T\in V_i\setminus \{0\}$ for some $i\in \{1,2,3\}$, 
Assume that $T\neq 0$, by Theorem \ref{prop1.2}, we have $\mathrm{d}_T=1$ and $\mathrm{V}(T)$ is spanned by $F^{(\ell)}(z;T)$. 
By Proposition \ref{prop1.1}, there are two common eigenfunctions $y_1, y_2$ of $\mathcal{L}_\ell(T)$ such that $F^{(\ell)}(z;T)=y_1y_2$.  Furthermore, from the proof of  Proposition \ref{prop1.1},  we see that  $\mathcal{L}_\ell(T)$  is completely reducible if and only if $y_1, y_2$ are linearly independent.

Denote by $F(z):=F^{(\ell)}(z;T)=y_1y_2$. By simply computation, we have 
    \begin{align*}
        \dfrac{F^\prime(z)}{F(z)}=\dfrac{y_1^\prime}{y_1}+\dfrac{y_2^\prime}{y_2}\quad{\rm and}\quad\dfrac{y_2^\prime}{y_2}-\dfrac{y_1^\prime}{y_1}=\dfrac{W}{F(z)},
    \end{align*}
    where $W=W(y_1,y_2):= y'_2y_1-y_2y'_1$ is the Wronskian of $y_1$ and $y_2$. Hence 
    \begin{equation*}
        \dfrac{1}{2}\left(\dfrac{F^\prime}{F}+\dfrac{W}{F}\right)^\prime=\left(\dfrac{y_2^\prime}{y_2}\right)^\prime=\dfrac{y_2^{\prime\prime}}{y_2}-\left(\dfrac{y_2^\prime}{y_2}\right)^2=q(z)-\dfrac{1}{4}\left(\dfrac{F^\prime}{F}+\dfrac{W}{F}\right)^2,
    \end{equation*}
    which implies 
    \begin{equation*}
        W^2={F^\prime}^2-2F^{\prime\prime}F+4qF^2.
    \end{equation*}
 Comparing to (\ref{cc12}), we have $W^2=Q_\ell(T)$. Hence  $y_1$ and $y_2$ are linearly independent if and only if $Q_\ell(T)\neq 0$.  This proves the theorem. 
\end{proof}

\section{Monodromy data}
\label{sec-data}
 
There are two methods to determine the existence of unitary monodromy.  In this section, we will introduce the one by applying the Hermite-Halphen anastz. In the next section, we will discuss the so-called Baker-Akhizer functions.  The first method can be used to determine the monodromy data for all apparent parameters through the informations of zeros of common eigenfunctions. %Through this pair of functions, we might obtain an efficient tool to determine the existence of unitarizable monodromy matices. 

\subsection{Common eigenfunctions}\label{subsec-commoneigen}
%In Section \ref{intro}, we have introduced the monodromy matrices $M_1, M_2$ (see (\ref{CompRed})(\ref{CompNRed})). 
Let $(\mathbb{T}, B)\in AP$ and $y(z)$ be a common eigenfunction of $\mathcal{L}(\mathbb{T}, B)$.
%a solution $y(z)$ of $\mathcal{L}(T)$ is called a common eigenfunction of $\mathcal{L}(T)$ if 
%$$y(z+\omega_j)=\lambda_j y(z),\qquad j=1,2,$$
%where $\lambda_j$ is an eigenvalue of $M_j, j\in \{1,2\}$.  
We will express $y(z)$  in terms of its zeros.
Let $a$ be a zero of  $y(z)$,  then $a+\omega$ is also a zero for any $\omega\in \Lambda_\tau$. For simplicity, we say the corresponding point $[a]$ on $E_\tau$ is a zero of $y(z)$. 
By noticing the local exponents at each singularity, the square of $y(z)$  is an elliptic function of second kind, thus   %Notice that any common eigenfunction $y(z)$ of $\mathcal{L}(T)$ has \textcolor{red}{exactly two distinct zeros $[a_1], [a_2]$}\footnote{\textcolor{red}{这句话感觉不太准确，$T=0$时可以只有半周期点这一个零点？}} on $E_\tau$, 
$y(z)$ can be expressed as the following so-called Hermite–Halphen ansatz
\begin{equation}\label{commony}
y(z;c,a_1,a_2):=e^{cz}\frac{\sigma(z-a_1)\sigma(z-a_2)}{\left(\prod_{k=0}^3\sigma(z-\frac{\omega_k}{2})\right)^{\frac{1}{2}}},% \ \ a_i\notin \{w_k/2\mid k =0,1,2,3\}.
\end{equation}
where $c, a_1, a_2\in \mathbb{C}$.  It is clearly that $[a_1], [a_2]$ are exactly zeros of $y(z)$.  % but $c$ is undetermined and depends on the representatives of $[a_1], [a_2]$.  In what follows, we need to determine $c$ such that $y(z;c,a_1,a_2)$ is a common eigenfunction of $\mathcal{L}(T)$. 
 %has the expression (\ref{commony}) in terms of its zeros.
%Next we will express the common eigenfunction $y$ in Hermite–Halphen ansatz.  %Let us first return to the general case, i.e., the parameters are not necessary apparent. 
%discuss how to use the zeros $a_1$ and $a_2$ of $y(z,z_0;P_0)$ to compute the monodromy data. %, and will see that consequence of those computations implies the converse of Corollary \ref{cor07111344} also holds. 
%For the simplicity, we write $y(z,z_0;P_0)$ by $y(z)$. 
%Let $c,a_1,a_2\in \mathbb{C}$, % with $[a_1], [a_2]\not\in E_\tau[2]$, where $[z]$ is considered as the point on $E_\tau$, 
%we define 
%Generally, we consider 
%
%where  $c\in \mathbb{C}$ depending on the selection of  representative elements of $[a_1], [a_2]$. Here, $y(z;c,a_1,a_2)$ is called a Hermite–Halphen ansatz of $y(z)$. 

By the transformation law
\begin{equation}\label{trans-law}
\sigma(z+\omega_\ell)=-e^{\eta_\ell(z+\frac{\omega_\ell}{2})}\sigma(z), \quad \ell=1,2,3,
\end{equation}
we have $y(z;c,a_1,a_2)$ is invariant up to a nonzero constant multiple by changing the representatives of $[a_1], [a_2]$ and 
\begin{equation}\label{comm-eigen}
y(z+\omega_\ell;c,a_1,a_2)=e^{c\omega_\ell-\eta_\ell(a_1+a_2-\frac{1}{2}\omega_3)}y(z;c,a_1,a_2).
\end{equation}
Therefore, $y(z;c,a_1,a_2)$ is a solution of $\mathcal{L}(\mathbb{T}, B)$ if and only if $y(z;c,a_1,a_2)$ is a common eigenfunction of $\mathcal{L}(\mathbb{T}, B)$.
%We summarize the above discussion as the first part of the following result. 

We first get a conclusion about the zeros of a common eigenfunction for $(\mathbb{T}, B)\in AP\setminus \{0\}$. 
\begin{lemma}\label{lem-commoneigen}
Let $(\mathbb{T}, B)\in AP$ and $y(z;c,a_1,a_2)$ be a common eigenfunction of $\mathcal{L}(\mathbb{T}, B)$.  If $(\mathbb{T}, B)\neq 0$, then $[a_1], [a_2]\not\in E_\tau[2]$ and $[a_1]\neq  [a_2]$. 
\end{lemma}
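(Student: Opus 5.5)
The plan is to work with the logarithmic derivative $u=y'/y$ and the Riccati equation $u'+u^2=q(z;\mathbb{T},B)$. A pole of $u$ at a point $p$ with residue $\rho$ must be matched against $q$ there. Writing $u=\frac{\rho}{z-p}+g(z)$ with $g$ holomorphic at $p$, we get
$$u'+u^2=\frac{\rho^2-\rho}{(z-p)^2}+\frac{2\rho\, g(p)}{z-p}+O(1).$$
From (\ref{commony}) we have
$$u(z)=c+\zeta(z-a_1)+\zeta(z-a_2)-\tfrac12\sum_{k=0}^3\zeta(z-\tfrac{\omega_k}{2}),$$
so the residues of $u$ are read off directly from the positions of $[a_1],[a_2]$. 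The argument then splits according to where these zeros sit.

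\emph{Step 1 (cases excluded by the double-pole coefficient alone).} Suppose first that $[a_1]=[a_2]\notin E_\tau[2]$. Then $u$ has residue $2$ at $a_1$, giving the term $2/(z-a_1)^2$. But $q$ is regular there, a contradiction. Next suppose exactly one of $[a_1],[a_2]$ equals some $\frac{\omega_k}{2}$, or they are two distinct half periods. Then at that half period the residue of $u$ is $1-\frac12=\frac12$, so $\rho^2-\rho=-\frac14$, whereas $q$ has leading coefficient $\frac34$ there. This is again a contradiction. These cases need no use of $(\mathbb{T},B)\neq0$.

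\emph{Step 2 (the remaining case $[a_1]=[a_2]=\frac{\omega_k}{2}$).} Here the residue is $\frac32$ at $\frac{\omega_k}{2}$ and $-\frac12$ at the other three half periods. Both give $\rho^2-\rho=\frac34$, so the double poles match, and this is the case where apparentness and $(\mathbb{T},B)\neq0$ must be used. I would compare simple-pole coefficients, which gives $T_j=2\rho_j\, g_j(\frac{\omega_j}{2})$ for $j=0,\dots,3$. Explicitly, $g_j(\frac{\omega_j}{2})=c+\sum_{i\neq j}\rho_i\zeta_{ji}$, with $\zeta_{ji}=\frac12(\eta_j-\eta_i)$ and $\rho_k=\frac32$, $\rho_i=-\frac12$ for $i\neq k$. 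This expresses all four $T_j$ as affine functions of the single unknown $c$: $T_k$ comes with the factor $+3$ and the other three with the factor $-1$. Imposing $\sum_j T_j=0$ should fix $c$, and I expect this to yield $T_j=\pm\frac12(\eta_j-\eta_k)$-type values. Finally I would check these against Lemma \ref{thm1.1}: a point of $V_\ell$ has $|T_j|$ all equal, with the sign pattern $\varepsilon^{(\ell)}$, and $B$ determined by (\ref{Tk}), while the constant term of $u'+u^2$ must equal $B$.

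\emph{Main obstacle.} The crux is this last compatibility computation in Step 2, which I expect to force $T=0$. The expected mechanism is that the sign pattern $(+,-,-,-)$ up to relabelling, coming from one residue $\frac32$ and three residues $-\frac12$, is incompatible with the pattern $(+,+,-,-)$ of the $V_\ell$ unless every $T_j$ vanishes. The $T_j$ themselves must first be computed carefully from the quasi-period relations. If the sign comparison turns out not to be decisive, the fallback is to use the $B$-equation (the constant term) together with $\sum_i e_i=0$ to get a contradiction for $T\neq0$. This is consistent with the statement singling out $(\mathbb{T},B)=0$, where all solutions are (anti)periodic by Theorem \ref{thm-monodromy-1} and a common eigenfunction with a double half-period zero is plausible.
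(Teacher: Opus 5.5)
\textbf{Verdict: the overall strategy is correct, but the crux of Step 2 contains a step that fails; it is easily repaired with tools you already named.}

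Your Step 1 is correct. It makes explicit what the paper compresses into ``because the local exponent at $\frac{\omega_k}{2}$ is $\frac32$'', plus its ODE-uniqueness argument for a double zero. Your overall strategy is also the paper's: Laurent analysis of $u'+u^2=q$ at the half periods, closed by apparentness.

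The gap is in Step 2, at the point you call the crux: imposing $\sum_j T_j=0$ cannot fix $c$. The coefficient of $c$ in $\sum_j T_j=\sum_j 2\rho_j g_j(\frac{\omega_j}{2})$ is $2\sum_j\rho_j=2(\frac32-\frac32)=0$. In fact $\sum_jT_j=0$ holds identically, since it is the total residue of the elliptic function $u'+u^2$.

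Now do the computation with $\zeta_{ji}=\frac12(\eta_j-\eta_i)$, $\eta_0=0$ and $\eta_1+\eta_2=\eta_3$. All four constants coincide: $g_j(\frac{\omega_j}{2})=t:=c-\eta_k+\frac12\eta_3$ for every $j$. Hence $T_j=2\rho_j t$, i.e.\ $T_k=3t$ and $T_j=-t$ for $j\neq k$, with $t$ still a free parameter. So no specific values such as $\pm\frac12(\eta_j-\eta_k)$ come out to test against $V_\ell$, and your plan as stated (fix $c$, then check) would stall.

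\textbf{Repair.} Once you have $T_j=2\rho_j t$, the argument closes in either of two ways.
\begin{enumerate}
\item The proof of Lemma \ref{thm1.1} shows that every point of $AP$ satisfies $T_j^2=T_0^2$ for all $j$. This gives $9t^2=t^2$, so $t=0$, hence $\mathbb{T}=0$ and $B=T^2-\eta_{3-\ell}T=0$, contradicting $(\mathbb{T},B)\neq 0$. This is your sign-pattern mechanism made precise: a point of $V_\ell$ cannot have three equal entries unless $T=0$.
\item Your fallback is exactly the paper's route. At $\frac{\omega_k}{2}$ the constant term of $u'+u^2$ is $4g_k'(\frac{\omega_k}{2})+t^2=t^2$, because $g_k'(\frac{\omega_k}{2})=\frac12\sum_{i\neq k}\wp_{ki}=0$. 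So $B+\sum_{j\neq k}T_j\zeta_{kj}=\frac19T_k^2$, which is the paper's (\ref{BTk}). Apparentness (\ref{Tk}) says the same left side equals $T_k^2$, so $T_k=0$ and then $(\mathbb{T},B)=0$.
\end{enumerate}
The first closing uses only the simple-pole coefficients plus the structure of $AP$. The paper's uses one constant term plus one apparent condition.
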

\iffalse 
\begin{lemma}\label{lem-commoneigen}
Let $T\in AP$ and $y(z)$ be a common eigenfunction of $\mathcal{L}(T)$.  Up to a nonzero constant multiple,  there exists $c,a_1,a_2\in \mathbb{C}$  such that 
$$y(z)=  y(z;c,a_1,a_2).$$ 
%and thus  $y(z)$  vanishes at $[a_1], [a_2]$. 
 Furthermore,  if $T\neq 0$, then $[a_1], [a_2]\not\in E_\tau[2]$ and $[a_1]\neq  [a_2]$. 
%\begin{enumerate}
%\item if $[a_1], [a_2]\not\in E_\tau[2]$, then $[a_1]\neq [a_2]$ and thus $y(z)$ has exactly two simple zeros $[a_1], [a_2]$ on $E_\tau$. 
%\item if $T\neq 0$, then $[a_1], [a_2]\not\in E_\tau[2]$.
%\item if $T=0$ and either $[a_1]$ or $[a_2]\in E_\tau[2]$, then $[a_1]=[a_2]\in E_\tau[2]$.
%\end{enumerate}
\end{lemma}
\fi

\begin{proof}

Suppose that $[a_1]=[\frac{\omega_k}{2}]$, then we have 
%\textbf{Claim 2:} Let $(\mathbb{T}, B)\in AP$, $k\in \{0,1,2,3\}$ and $c\in \mathbb{C}$, then   
$$y(z;c,a_1,a_2)=y(z;c,k):=e^{c z}\frac{\left(\sigma(z-\frac{\omega_k}{2})\right)^{\frac{3}{2}}}{\left(\prod_{j\neq k}\sigma(z-\frac{\omega_j}{2})\right)^{\frac{1}{2}}}$$  
because the local exponent at $\frac{\omega_k}{2}$ is $\frac{3}{2}$. 
%is a solution of $\mathcal{L}(\mathbb{T}, B)$ if and only if $(\mathbb{T}, B)=(0,0,0,0,0)$.

%\textcolor{red}{这个Claim应该是对的，见Remark \ref{rek-Y}，我们有 $$\overline{Y}_k\setminus {Y}_k=\left\{\{a_1, a_2\}\in \overline{Y}_k\mid a_1\in E_\tau\right\}\cong E_\tau[2],\qquad k=0,1,2,3,$$
%$y(z;c,k)$对应的是$\overline{Y}_k\setminus {Y}_k$中的点。$T=0$对应的是$Y_0$中的点不在$\overline{Y}_0\setminus {Y}_0$中。这个结果和Proposition \ref{pro09101540} （这个是您原来文章的Proposition 1.5）中的结果也是相吻合的。}
%\textbf{Proof of the Claim 2:}  
Notice that   $y:=y(z;c,k)$ is a solution of $\mathcal{L}(\mathbb{T}, B)$ if and only if 
 \begin{equation}\label{eqn-qyck}
q(z;\mathbb{T}, B)=\frac{y''}{y}=\left(\frac{y^\prime}{y}\right)^\prime+\left(\frac{y^\prime}{y}\right)^2.
\end{equation}
%where \begin{equation}\label{eqn-qyck}
%    q(z;\mathbb{T}, B)=\frac{3}{4}\sum_{j=0}^3\wp(z-\frac{\omega_j}{2})+\sum_{j=0}^3T_j\zeta(z-\frac{\omega_j}{2})+B.
%\end{equation}
Since $\zeta(z)={\sigma'(z)}/{\sigma(z)}$ and $\zeta'(z)=-\wp(z)$,
by the definition of $y:=y(z;c,k)$, we have
\begin{align*}
    \frac{y^\prime}{y}=c+\frac{3}{2}\zeta(z-\frac{\omega_k}{2})-\frac{1}{2}\sum_{j\neq k}\zeta(z-\frac{\omega_j}{2}),
    \end{align*}
which is an elliptic function.
Since $q(z;\mathbb{T}, B)$ is  also an elliptic function, 
 then (\ref{eqn-qyck}) holds if and only if the elliptic function %$$H(z):=q(z;\mathbb{T}, B)-\left(\frac{y^\prime}{y}\right)^\prime-\left(\frac{y^\prime}{y}\right)^2\equiv 0,$$
\begin{equation*}%\label{qyc}
\begin{aligned}
%0=&q(z;\mathbb{T}, B)-\frac{y''}{y}\\
H(z):=&q(z;\mathbb{T}, B)-\left(\frac{y^\prime}{y}\right)^\prime-\left(\frac{y^\prime}{y}\right)^2\\
=&\frac{3}{4}\sum_{j=0}^3\wp(z-\frac{\omega_j}{2})+\sum_{j=0}^3T_j\zeta(z-\frac{\omega_j}{2})+B+\frac{3}{2}\wp(z-\frac{\omega_k}{2})\\
&-\frac{1}{2}\sum_{j\neq k}\wp(z-\frac{\omega_j}{2})-\left(c+\frac{3}{2}\zeta(z-\frac{\omega_k}{2})-\frac{1}{2}\sum_{j\neq k}\zeta(z-\frac{\omega_j}{2})\right)^2
    \end{aligned}
\end{equation*}
is identically equal to $0$, 
 i.e., none of $\frac{\omega_k}{2}$'s are poles of $H(z)$ and the constant term of the Laurent expansion of $H(z)$ at $z=0$ is $0$. 

Consider the Laurent expansion of $H(z)$ at $z=\frac{\omega_k}{2}$, there are no $(z-\frac{\omega_k}{2})^{-m}$ terms for $m\geq 2$,
 the coefficients of $(z-\frac{\omega_k}{2})^{-1}$ term gives us 
\begin{align}
T_k&=3(c-\frac{1}{2}\sum_{j\neq k}\zeta_{k j})\label{cTk}\\
T_i&=-(c+\frac{3}{2}\zeta_{ik}-\frac{1}{2}\sum_{j\neq k,i}\zeta_{ij}),%=-(c+2\zeta_{ik}-\frac{1}{2}\sum_{j\neq i}\zeta_{i j}),
\qquad i\neq k.\label{cTi}
\end{align}
%Since 
%\begin{align*}
%T_\ell+\sum_{k\neq \ell}T_k=-\frac{3}{2}\sum_{k\neq \ell}\zeta_{\ell k}-\frac{3}{2}\sum_{k\neq \ell}\zeta_{k\ell}+\frac{1}{2}\sum_{k\neq \ell}\sum_{j\neq \ell, k}\zeta_{k j}=\frac{1}{2}\sum_{k\neq \ell}\sum_{j\neq \ell, k}\zeta_{kj}=0
%\end{align*}
% Finally, the constant term of the Laurent expansion of $H(z)$ at $z=0$  gives us 
%\begin{align}
%B+\sum_{j=1}^3T_j\zeta_{0 j}&=(c-\frac{1}{2}\sum_{j=1}^3\zeta_{0 j})^2=\frac{1}{9}T_0^2\label{BT0}\\
%B+\sum_{j\neq i}T_j\zeta_{i j}&=(c+\frac{3}{2}\zeta_{i0}-\frac{1}{2}\sum_{j\neq 0, i}\zeta_{ij})^2=(c+2\zeta_{i0}-\frac{1}{2}\sum_{j\neq  i}\zeta_{ij})^2=T_i^2\label{BTi}
%\end{align}
%
%Consider the Laurent expansion at $z=\frac{\omega_k}{2}$, 
and the constant term %of $H(z)$ at $z=\frac{\omega_k}{2}$  
gives us
\begin{align}
B+\sum_{j\neq k}T_j\zeta_{k j}&=(c-\frac{1}{2}\sum_{j\neq k}\zeta_{k j})^2=\frac{1}{9}T_k^2\label{BTk}\\
B+\sum_{j\neq i}T_j\zeta_{i j}&=(c+\frac{3}{2}\zeta_{ik}-\frac{1}{2}\sum_{j\neq k, i}\zeta_{ij})^2%=(c+2\zeta_{ik}-\frac{1}{2}\sum_{j\neq  i}\zeta_{ij})^2
=T_i^2, \qquad i\neq k.\label{BTi}
\end{align}
Therefore, $H(z)\equiv 0$ if and only if (\ref{cTk}),(\ref{cTi}),(\ref{BTk}) and (\ref{BTi}) hold. 
Since $(\mathbb{T}, B)\in AP$, by (\ref{Tk}), we have 
\begin{equation}\label{BT}
T_i^2=\sum_{j\neq i} T_j\zeta_{ij}+B, \qquad i=0,1,2,3.
\end{equation}
By (\ref{BTk}) and (\ref{BT}), we obtain $T_k=0$ and then $(\mathbb{T},B)=0$. 
\iffalse 
On the other hand, if $(\mathbb{T},B)=(0,0,0,0,0)$, then (\ref{BTk}), (\ref{BTi}) automatically hold, and  (\ref{cTk}), (\ref{cTi}) are equivalent to 
\begin{equation}\label{cT}
\sum_{j\neq k}\zeta_{k j}
=\sum_{j\neq k,i}\zeta_{ij}-3\zeta_{ik},
\qquad i\neq k,
\end{equation}
which is an identity.  By (\ref{cTk}),  we have $c=\frac{1}{2}\sum_{j\neq k}\zeta_{kj}=\eta_k-\frac{1}{2}\eta_3$. 
\fi 

Finally, if $[a_1]=[a_2]\notin E_\tau[2]$, then $[a_1]$ is a zero of second order of $y(z)$, thus $y(a_1)=y'(a_1)=0$. By the uniqueness of ODE, we have $y(z)\equiv 0$ on $E_\tau$, which is a contradiction. Thus $[a_1]\neq [a_2]$. 
\end{proof}

Next, we give a necessary and sufficient condition on the parameters  such that $y(z;c,a_1,a_2)$ is a solution of $\mathcal{L}(\mathbb{T}, B)$ for some $(\mathbb{T}, B)\in AP$, in the following theorem.  Its proof is a direct computation, which has been carried out in \cite{BG-JAM}.  We present the details here just for the convenience of readers. 
%To prove $y(z)$ of (\ref{ccccc9}) satisfies (\ref{cc1}), we set $G(z)=q(z)-(\left(y'/y\right)^\prime+\left(y'/y\right)^2)$. We can prove that the pole of order of $G$ at $\omega_k/2\leq 1$. Moreover, we obtain that

\iffalse 
\begin{lemma}\label{lem-1}
Let  $(\mathbb{T},B)\in \mathbb{C}^5$ with $\sum_{k=0}^3T_k=0$ and 
 $c,a_1,a_2\in \mathbb{C}$.
Then $y(z;c,a_1,a_2)$ is a solution of $\mathcal{L}(\mathbb{T}, B)$
%the following second order ODE of Fuchsian type
%\begin{equation*} 
 %   y^{\prime\prime}(z)=q(z)y(z),
%\end{equation*}
if and only if  $(\mathbb{T}, B)\in AP$,  $[a_1], [a_2]\not\in E_\tau[2]$, $[a_1]\neq [a_2]$ and the following equations hold:
\begin{align}
%&\zeta(a_2-a_1)+\frac{1}{4}\sum_{k=0}^3\zeta(a_1-\frac{\omega_k}{2})-\frac{1}{4}\sum_{k=0}^3\zeta(a_2-\frac{\omega_k}{2})=0,\\
 & 2\zeta(a_2-a_1)+\zeta(2a_1)-\zeta(2a_2)=0,\label{eqn-a}\\
&c=-T_k+\sum_{i=1}^2\zeta(a_i-\frac{\omega_k}{2})+\eta_k-\frac{1}{2}\eta_3,\label{eqn-c}\\
&B=-3\wp(a_i-a_j)+
\frac{3}{4}\sum_{\ell=0}^3\wp(a_i-\frac{\omega_\ell}{2})-\sum_{\ell=0}^3T_\ell\zeta(a_i-\frac{\omega_\ell}{2})\label{eqn-B},
\end{align}
with $k=0,1,2,3,\,\{i, j\}=\{1,2\}$. 
\end{lemma}
%\textcolor{blue}{林老师好，我把原来的这个Lemma还放在这儿，便于您对比。证明中有个关键的地方涉及$(\mathbb{T}, B)\in AP$，反向我验证是需要有这个前提，我在证明中用红色标出了，您看一下对不对。}
\fi

\begin{theorem}\label{lem-1}
Let $(\mathbb{T},B)\in AP$ and 
 $c,a_1,a_2\in \mathbb{C}$
%Let $c, a_1, a_2\in \mathbb{C}$ 
with $[a_1], [a_2]\notin E_\tau[2]$ and $[a_1]\neq [a_2]$. 
If $y(z;c,a_1, a_2)$ is a solution of $\mathcal{L}(\mathbb{T}, B)$, then the following holds:
\begin{align}
%&\zeta(a_2-a_1)+\frac{1}{4}\sum_{k=0}^3\zeta(a_1-\frac{\omega_k}{2})-\frac{1}{4}\sum_{k=0}^3\zeta(a_2-\frac{\omega_k}{2})=0,\\
 & 2\zeta(a_2-a_1)+\zeta(2a_1)-\zeta(2a_2)=0,\label{eqn-a}\\
&c=\zeta(a_2-a_1)+\frac{1}{2}\sum_{k=0}^3\zeta(a_1-\frac{\omega_k}{2}),\label{eqn-c-1}\\
&T_k=\zeta(a_1-a_2)-\zeta(2a_1)+\sum_{i=1}^2\zeta(a_i-\frac{\omega_k}{2})+\eta_k,\quad k=0,1,2,3,\label{eqn-c}\\
&B=-3\wp(a_1-a_2)+
\frac{3}{4}\sum_{\ell=0}^3\wp(a_1-\frac{\omega_\ell}{2})-\sum_{\ell=0}^3T_\ell\zeta(a_1-\frac{\omega_\ell}{2}).\label{eqn-B}
\end{align}
%with $k=0,1,2,3$. 
%\footnote{\textcolor{blue}{把(\ref{eqn-c})放到Conversely之后，定理的叙述确实更好，但不利于后面Lemma 4.5的应用。Lemma 4.5中是假设$y(z;c,a_1, a_2)$是解之后求$\mathbb{T}$，若把(\ref{eqn-c})放在第一部分可以直接引用。}}

 Conversely, if $a_1,a_2\in \mathbb{C}$
%Let $c, a_1, a_2\in \mathbb{C}$ 
with $[a_1], [a_2]\notin E_\tau[2]$ and $[a_1]\neq [a_2]$  satisfy (\ref{eqn-a}) and $c$ is given by (\ref{eqn-c-1}),
%\begin{equation}\label{eqn-c-1}
%c=\zeta(a_2-a_1)+\frac{1}{2}\sum_{k=0}^3\zeta(a_1-\frac{\omega_k}{2}),
%\end{equation}
then $y(z;c,a_1,a_2)$ is a solution of $\mathcal{L}(\mathbb{T}, B)$ for some  $(\mathbb{T},B)\in AP$, where $\mathbb{T}$ is given by (\ref{eqn-c}) and $B$ is given by (\ref{eqn-B}). 
\end{theorem}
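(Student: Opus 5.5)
The plan follows the proof of Lemma \ref{lem-commoneigen}: $y=y(z;c,a_1,a_2)$ solves $\mathcal{L}(\mathbb{T},B)$ if and only if the elliptic function
$$H(z):=q(z;\mathbb{T},B)-\left(\frac{y'}{y}\right)'-\left(\frac{y'}{y}\right)^2$$
vanishes identically. Here
$$\frac{y'}{y}=c+\zeta(z-a_1)+\zeta(z-a_2)-\frac12\sum_{k=0}^3\zeta(z-\tfrac{\omega_k}{2}),$$
which is elliptic because of the coefficient balance $1+1-\frac12\cdot 4=0$. Since $[a_1],[a_2]\notin E_\tau[2]$ and $[a_1]\neq[a_2]$, the possible poles of $H$ are the simple zeros $a_1,a_2$ and the four half periods. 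I would expand $H$ at each of these points and record the resulting conditions.

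\textbf{Expansion at $a_i$.} The double poles cancel automatically, since $-\wp+\wp$ comes from the derivative term and $-1/(z-a_i)^2$ from the square. The residue of $H$ at $a_i$ is $-2$ times the regular part of $y'/y$ at $a_i$. For $i=1$ this gives
$$c+\zeta(a_1-a_2)-\frac12\sum_k\zeta(a_1-\tfrac{\omega_k}{2})=0.$$
For $i=2$ it gives the same expression with $a_1$ and $a_2$ interchanged.

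\textbf{Deriving (\ref{eqn-a}) and (\ref{eqn-c-1}).} The $i=1$ condition is (\ref{eqn-c-1}), using that $\zeta$ is odd. Subtracting the two conditions gives
$$2\zeta(a_2-a_1)=\frac12\sum_k\big(\zeta(a_2-\tfrac{\omega_k}{2})-\zeta(a_1-\tfrac{\omega_k}{2})\big).$$
I then apply the duplication identity
$$\sum_{k=0}^3\zeta(a-\tfrac{\omega_k}{2})=2\zeta(2a)+\tfrac12\eta_3\cdot(\text{const}).$$
More precisely, I would use $\sum_k\zeta(a-\omega_k/2)=2\zeta(2a)+\frac12\sum_k\eta_k$, which follows because both sides are odd-ish meromorphic functions with the same poles and residues and the same quasi-periodicity. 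The constants cancel in the difference, which yields (\ref{eqn-a}).

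\textbf{Deriving (\ref{eqn-c}).} At $\frac{\omega_k}{2}$, the $(z-\frac{\omega_k}{2})^{-2}$ coefficient cancels because of the exponent $-\frac12$: we have $\frac34-\frac12-\frac14=0$. The residue gives
$$T_k=-2\cdot\Big(-\tfrac12\Big)\cdot(\text{regular part of } y'/y \text{ at } \tfrac{\omega_k}{2}),$$
that is,
$$T_k=c+\zeta(\tfrac{\omega_k}{2}-a_1)+\zeta(\tfrac{\omega_k}{2}-a_2)-\frac12\sum_{j\neq k}\zeta_{kj}.$$
I then substitute (\ref{eqn-c-1}) and use $\zeta_{kj}=\frac12(\eta_k-\eta_j)$ together with the duplication identity above. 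This should rewrite the expression into the form (\ref{eqn-c}), the $\eta_k$ term absorbing the half-period constants. I expect this normalization of constants to be the fiddliest bookkeeping step. A sign check is needed here: $T_k$ must come with the sign matching $\sum_i\zeta(a_i-\omega_k/2)$.

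\textbf{Deriving (\ref{eqn-B}).} Once all poles are removed, $H$ is a constant. I evaluate that constant as the constant term of the Laurent expansion of $H$ at $z=a_1$. The terms are: $q(a_1)$ contributes
$$\frac34\sum\wp(a_1-\tfrac{\omega_\ell}{2})+\sum T_\ell\zeta(a_1-\tfrac{\omega_\ell}{2})+B;$$
the derivative $-(y'/y)'$ and the square contribute $-3\wp(a_1-a_2)$-type terms and $\wp$ values at the half-period shifts. Setting the constant to zero and simplifying with (\ref{eqn-c-1}) gives (\ref{eqn-B}). This constant-term bookkeeping, and checking that it produces exactly $-3\wp(a_1-a_2)+\frac34\sum\wp$, is the main obstacle. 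If the direct route is messy, I would instead take the constant term at a half period, which by (\ref{Tk}) is equivalent.

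\textbf{Converse.} Given (\ref{eqn-a}) and $c$ defined by (\ref{eqn-c-1}), define $\mathbb{T}$ by (\ref{eqn-c}) and $B$ by (\ref{eqn-B}). The residue computations above run backwards: the residues at $a_1$ and $a_2$ vanish by (\ref{eqn-c-1}) and (\ref{eqn-a}), the residues at the half periods vanish by the choice of $T_k$, and the constant vanishes by the choice of $B$. Hence $H\equiv0$ and $y$ is a solution. Two further checks remain. First, $\sum_k T_k=0$ follows from the residue theorem applied to the elliptic function $H$, whose residues sum to zero while the residues at $a_1,a_2$ are already zero. Second, $(\mathbb{T},B)\in AP$ holds automatically: the solution $y$ is free of logarithms, and a logarithm-free solution at each half period forces apparency, since the exponent-$\frac32$ solution always exists and $y$ has exponent $-\frac12$ there.
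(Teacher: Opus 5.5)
Your strategy is the paper's: expand $H$ (the paper's $G$) at $a_i$ and at $\frac{\omega_k}{2}$, use the duplication identity, and for the converse show that $H$ is a pole-free elliptic function vanishing at $a_1$. The expansions at $a_i$, and with them (\ref{eqn-a}), (\ref{eqn-c-1}) and (\ref{eqn-B}), are fine. The derivation of (\ref{eqn-c}) fails as written, for two independent reasons.

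\emph{The residue at $\frac{\omega_k}{2}$ has the wrong sign.} Let $A_k$ be the regular part of $y'/y$ there. The $(z-\frac{\omega_k}{2})^{-1}$ coefficient of $(y'/y)^2$ is $2\cdot(-\frac12)A_k=-A_k$, and it enters $H$ with a minus sign. So $\mathrm{Res}_{\omega_k/2}H=T_k+A_k$, and $T_k=-A_k=-c+\sum_{i}\zeta(a_i-\frac{\omega_k}{2})+\eta_k-\frac12\eta_3$, using $\frac12\sum_{j\neq k}\zeta_{kj}=\eta_k-\frac12\eta_3$. You set $T_k=+A_k$.

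\emph{The constant in your duplication identity is wrong.} Let $z\to0$ and use $\zeta(-\frac{\omega_k}{2})=-\frac12\eta_k$ and $\eta_1+\eta_2=\eta_3$. This gives $\sum_{k}\zeta(z-\frac{\omega_k}{2})=2\zeta(2z)-\eta_3$, which is (\ref{ccccc16}), not $2\zeta(2z)+\eta_3$. The constant cancels in (\ref{eqn-a}) but not in (\ref{eqn-c}). Only with both corrections does $c=\zeta(a_2-a_1)+\zeta(2a_1)-\frac12\eta_3$ turn $T_k=-A_k$ into exactly (\ref{eqn-c}). Your converse inherits the problem, since it needs the $T_k$ of (\ref{eqn-c}) to kill the residues at the half periods. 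Settle the sign by recomputing, not by matching the target formula.

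\emph{The argument for $\sum_kT_k=0$ is circular as stated.} $H$ is elliptic only if $q$ is, i.e.\ only once $\sum_kT_k=0$ is already known. Repair it by applying the residue theorem to $(y'/y)^2$, which is elliptic regardless. Its residue at $a_i$ is twice the regular part of $y'/y$ there, which vanishes by (\ref{eqn-c-1}) and (\ref{eqn-a}). Its residue at $\frac{\omega_k}{2}$ is $-A_k=T_k$. Hence $\sum_kT_k=0$, a tidy alternative to the paper's direct computation with (\ref{ccccc16}).

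\emph{Apparency.} Your argument is correct and differs from the paper's: the log-free solution $y$ of exponent $-\frac12$, together with the always-existing exponent-$\frac32$ Frobenius solution, spans the local solutions. The paper instead reads (\ref{Tk}) off the vanishing constant terms of $H$ at the half periods.

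\emph{Drop the fallback for (\ref{eqn-B}).} The constant term at a half period gives (\ref{Tk}), not (\ref{eqn-B}). The direct route is short: the constant term of $(y'/y)'+(y'/y)^2$ at $a_1$ is $3S_1+R_1^2$. Here $R_1=0$ and $S_1=-\wp(a_1-a_2)+\frac12\sum_\ell\wp(a_1-\frac{\omega_\ell}{2})$ are the $(z-a_1)^0$ and $(z-a_1)^1$ coefficients of $y'/y$.
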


\begin{remark}\label{rek-a-T}
Note that $a_1, a_2$ play the same role in (\ref{eqn-a}) and adding a period to $a_1$ or $a_2$ does not change  (\ref{eqn-a}).  Combing with  (\ref{eqn-a}), it is clear to see that $T_k$ is invariant under the exchange of $a_1,a_2$ and the period shifts of $a_1, a_2$.
%
%For fixed $a_2$, the left hand side of (\ref{eqn-a}) is an elliptic function of $a_1$, whose poles are $[a_2]$ and $[\omega_j/2], j=0,1,2,3$. Furthermore, if $c$ of (\ref{eqn-c-1}) is substituted in (\ref{eqn-c}), then $T_k$ is not changed if $a_i$ is replaced by $a_i+\omega$ with $\omega\in \Lambda_\tau$. 
\end{remark}

\begin{proof}
%First of all, by Lemma \ref{lem-commoneigen}, the conditions  $[a_1], [a_2]\not\in E_\tau[2]$ and $[a_1]\neq  [a_2]$ are natural. Hence, we always assume $[a_1], [a_2]\not\in E_\tau[2]$ and $[a_1]\neq  [a_2]$ in the following proof. 
%
Notice that $y:=y(z;c,a_1,a_2)$ is a solution of $\mathcal{L}(\mathbb{T}, B)$ if and only if \begin{equation}\label{eqn-qy}
q(z;\mathbb{T}, B)=\frac{y''}{y}=\left(\frac{y^\prime}{y}\right)^\prime+\left(\frac{y^\prime}{y}\right)^2\end{equation}
Since $\zeta(z)={\sigma'(z)}/{\sigma(z)}$ and $\zeta'(z)=-\wp(z)$, by the definition of $y:=y(z;c,a_1,a_2)$, we have
\begin{align*}
    \dfrac{y^\prime}{y}=c+\zeta(z-a_1)+\zeta(z-a_2)-\dfrac{1}{2}\sum_{k=0}^3\zeta(z-\dfrac{\omega_k}{2}),
    \end{align*}
    which is an elliptic function. Recall that \begin{equation*}
    q(z;\mathbb{T}, B)=\frac{3}{4}\sum_{k=0}^3\wp(z-\frac{\omega_k}{2})+\sum_{k=0}^3T_k\zeta(z-\frac{\omega_k}{2})+B.
\end{equation*}
%with $(\mathbb{T}, B)\in AP$. 
Notice that if $(\mathbb{T}, B)\in\mathbb{C}^5$ satisfies $\sum_{k=0}^3 T_k=0$, then  $q(z):=q(z;\mathbb{T}, B)$ is  an elliptic function, 
 thus  (\ref{eqn-qy}) holds if and only if the elliptic function $$G(z):=q(z)-\left(\frac{y^\prime}{y}\right)^\prime-\left(\frac{y^\prime}{y}\right)^2\equiv 0,$$
 i.e., none of $\frac{\omega_k}{2}$'s and $a_i$'s are poles of $G(z)$ and the constant term of the Laurent expansion of $G(z)$ at $z=0$ is $0$, because $[a_1], [a_2]\not\in E_\tau[2]$.
%    \begin{equation}\label{eq07111731}
%     \begin{aligned}
%  \dfrac{y^{\prime\prime}}{y}=\left(\dfrac{y^\prime}{y}\right)^\prime+\left(\dfrac{y^\prime}{y}\right)^2=\left(\sum_{i=1}^2(-\wp(z-a_i))+\dfrac{1}{2}\sum_{k=0}^3\wp(z-\dfrac{\omega_k}{2})\right)\\+\left(c+\sum_{i=1}^2\zeta(z-a_i)-\dfrac{1}{2}\sum_{k=0}^3\zeta(z-\dfrac{\omega_k}{2})\right)^2.
%\end{aligned}
%\end{equation}

Recall the laurent expansion of $\wp(z)$ and $\zeta(z)$ at $z=0$ as follows:
\begin{align*}
\wp(z)&=\frac{1}{z^2}+\sum\limits_{n=1}^\infty b_n z^{2n},\\
\zeta(z)&=\frac{1}{z}-\sum\limits_{n=1}^\infty \frac{b_n}{2n+1} z^{2n+1},
\end{align*}
where $b_1={g_2}/{20}$ and $g_2=2(e_1^2+e_2^2+e_3^2)$. 

Firstly, we consider the laurent expansion at $z={\omega_k}/{2}, k\in\{0,1,2,3\}$. Notice that 
\begin{align*}
q(z)&=\frac{\frac{3}{4}}{(z-\frac{\omega_k}{2})^2}+\frac{T_k}{z-\frac{\omega_k}{2}}+\left(\sum_{\ell\neq k}T_\ell\zeta_{k\ell}+B\right)+O((z-\frac{\omega_k}{2}))\\
\frac{y'}{y}&=\frac{-\frac{1}{2}}{z-\frac{\omega_k}{2}}+\left(c+\sum_{i=1}^2\zeta(\frac{\omega_k}{2}-a_i)-\frac{1}{2}\sum_{\ell\neq k}\zeta_{k\ell}\right)\\&-(z-\frac{\omega_k}{2})\sum_{i=1}^2\wp(\frac{\omega_k}{2}-a_i)+O((z-\frac{\omega_k}{2})^2).
\end{align*} 
\iffalse 
and then 
\begin{align*}
\left(\frac{y'}{y}\right)'&=\frac{\frac{1}{2}}{(z-\frac{\omega_k}{2})^2}-\sum_{i=1}^2\wp(\frac{\omega_k}{2}-a_i)+O((z-\frac{\omega_k}{2})),\\
\left(\frac{y'}{y}\right)^2&=\frac{\frac{1}{4}}{(z-\frac{\omega_k}{2})^2}-\left(c+\sum_{i=1}^2\zeta(\frac{\omega_k}{2}-a_i)-\frac{1}{2}\sum_{\ell\neq k}\zeta_{k\ell}\right)\frac{1}{z-\frac{\omega_k}{2}}\\&+\sum_{i=1}^2\wp(\frac{\omega_k}{2}-a_i)
+\left(c+\sum_{i=1}^2\zeta(\frac{\omega_k}{2}-a_i)-\frac{1}{2}\sum_{\ell\neq k}\zeta_{k\ell}\right)^2+O((z-\frac{\omega_k}{2})),
\end{align*}
\fi
In the expansion of $G(z)$, the coefficient of $(z-\omega_k/2)^{-2}$ automatically vanishes; the coefficient of $(z-\omega_k/2)^{-1}$ is zero if and only if the following (\ref{eqn-1}) holds and the constant term vanishes if and only if the following (\ref{eqn-2}) holds.
%then $G(z)\equiv 0$ gives us 
%\begin{equation}%\label{eqn-1}
\begin{align}
T_k&=-\left(c+\sum_{i=1}^2\zeta(\frac{\omega_k}{2}-a_i)-\frac{1}{2}\sum_{\ell\neq k}\zeta_{k\ell}\right)\label{eqn-1}\\
\sum_{\ell\neq k}T_\ell\zeta_{k\ell}+B&=\left(c+\sum_{i=1}^2\zeta(\frac{\omega_k}{2}-a_i)-\frac{1}{2}\sum_{\ell\neq k}\zeta_{k\ell}\right)^2.\label{eqn-2} 
\end{align}
%\end{equation}
%\begin{equation}\label{eqn-1-1}
%%\begin{aligned}
%%T_k&=-\left(c+\sum_{i=1}^2\zeta(\frac{\omega_k}{2}-a_i)-\frac{1}{2}\sum_{\ell\neq k}\zeta_{k\ell}\right),\\
%\sum_{\ell\neq k}T_\ell\zeta_{k\ell}+B&=\left(c+\sum_{i=1}^2\zeta(\frac{\omega_k}{2}-a_i)-\frac{1}{2}\sum_{\ell\neq k}\zeta_{k\ell}\right)^2. 
%%\end{aligned}
%\end{equation}
Note that  $\frac{1}{2}\sum_{\ell\neq k}\zeta_{k\ell}=\eta_k-\frac{1}{2}\eta_3$ with $\eta_0:=0$, (\ref{eqn-1}) and (\ref{eqn-2})  are equivalent to 
\begin{align}
c&=-T_k+\sum_{i=1}^2\zeta(a_i-\frac{\omega_k}{2})+\eta_k-\frac{1}{2}\eta_3,\label{eqn-ck}\\
B&=T_k^2-\sum_{\ell\neq k}T_\ell\zeta_{k\ell}\label{eqn-Bk},
\end{align}
%Let there are $4$ equations in (\ref{eqn-ck}) and (\ref{eqn-Bk}). 
with $k\in \{0,1,2,3\}$.  Here, (\ref{eqn-Bk})  holds automatically due to $(\mathbb{T}, B)\in AP$.

Next, we consider the laurent expansion at $z=a_i, i\in\{1,2\}$. Denote by $\{i,j\}=\{1,2\}$, We have 
\begin{align*}
q(z)&=\frac{3}{4}\sum_{\ell=0}^3\wp(a_i-\frac{\omega_\ell}{2})+\sum_{\ell=0}^3T_\ell\zeta(a_i-\frac{\omega_\ell}{2})+B+O\left((z-a_i)\right)\\
\frac{y'}{y}&=\frac{1}{z-a_i}+\left(c+\zeta(a_i-a_j)-\frac{1}{2}\sum_{\ell=0}^3\zeta(a_i-\frac{\omega_\ell}{2})\right)\\
&+(z-a_i)\left(-\wp(a_i-a_j)+\frac{1}{2}\sum_{\ell=0}^3\wp(a_i-\frac{\omega_\ell}{2})\right)+O\left((z-a_i)^2\right).
\end{align*} 
\iffalse 
and then 
\begin{align*}
\left(\frac{y'}{y}\right)'&=\frac{-1}{(z-\frac{\omega_k}{2})^2}+\left(-\wp(a_i-a_j)+\frac{1}{2}\sum_{\ell=0}^3\wp(a_i-\frac{\omega_\ell}{2})\right)+O\left((z-a_i)\right),\\
\left(\frac{y'}{y}\right)^2&=\frac{1}{(z-\frac{\omega_k}{2})^2}+2\left(c+\zeta(a_i-a_j)-\frac{1}{2}\sum_{\ell=0}^3\zeta(a_i-\frac{\omega_\ell}{2})\right)\frac{1}{z-a_i}\\
&+2\left(-\wp(a_i-a_j)+\frac{1}{2}\sum_{\ell=0}^3\wp(a_i-\frac{\omega_\ell}{2})\right)\\
&+\left(c+\zeta(a_i-a_j)-\frac{1}{2}\sum_{\ell=0}^3\zeta(a_i-\frac{\omega_\ell}{2})\right)^2
+O\left((z-a_i)\right),
\end{align*}
\fi 
%then $G(z)\equiv 0$ gives us
In the expansion of $G(z)$, there are always no $(z-a_i)^{-2}$ term; the coefficient of $(z-a_i)^{-1}$ is zero if and only if the following (\ref{eqn-3}) holds and the constant term vanishes if and only if the following (\ref{eqn-4}) holds. 
\begin{equation}\label{eqn-3}
c+\zeta(a_i-a_j)-\frac{1}{2}\sum_{\ell=0}^3\zeta(a_i-\frac{\omega_\ell}{2})=0,
\end{equation}
\begin{equation}\label{eqn-4}
\begin{aligned}
&\frac{3}{4}\sum_{\ell=0}^3\wp(a_i-\frac{\omega_\ell}{2})+\sum_{\ell=0}^3T_\ell\zeta(a_i-\frac{\omega_\ell}{2})+B\\
=&3\left(-\wp(a_i-a_j)+\frac{1}{2}\sum_{\ell=0}^3\wp(a_i-\frac{\omega_\ell}{2})\right)+\left(c+\zeta(a_i-a_j)-\frac{1}{2}\sum_{\ell=0}^3\zeta(a_i-\frac{\omega_\ell}{2})\right)^2,
\end{aligned}
\end{equation}
which are equivalent to 
\begin{align}
c&=-\zeta(a_i-a_j)+\frac{1}{2}\sum_{\ell=0}^3\zeta(a_i-\frac{\omega_\ell}{2}),\label{eqn-ca}\\
B&=-3\wp(a_i-a_j)+
\frac{3}{4}\sum_{\ell=0}^3\wp(a_i-\frac{\omega_\ell}{2})-\sum_{\ell=0}^3T_\ell\zeta(a_i-\frac{\omega_\ell}{2})\label{eqn-Ba},
\end{align}
with $\{i, j\}=\{1,2\}$. 

Let $(\mathbb{T},B)\in AP$ and 
 $c,a_1,a_2\in \mathbb{C}$
%Let $c, a_1, a_2\in \mathbb{C}$ 
with $[a_1], [a_2]\notin E_\tau[2]$ and $[a_1]\neq [a_2]$. 
If $y(z;c,a_1, a_2)$ is a solution of $\mathcal{L}(\mathbb{T}, B)$, then
%$y(z;c,a_1,a_2)$ is a solution of $\mathcal{L}(\mathbb{T}, B)$ 
$G(z)\equiv 0$ which implies (\ref{eqn-ck}),  (\ref{eqn-Bk}), (\ref{eqn-ca}) and  (\ref{eqn-Ba}) hold. %Clearly, (\ref{eqn-ck}) is the same as (\ref{eqn-c}),  (\ref{eqn-Bk}) is the same as (\ref{Tk}),  and (\ref{eqn-Ba}) is the same as (\ref{eqn-B}).   %\textcolor{red}{This proves the first part. 到这儿(\ref{eqn-a})还没得到，必要性没有得到}
Clearly, (\ref{eqn-ca}) implies  (\ref{eqn-c-1}) and (\ref{eqn-Ba}) implies  (\ref{eqn-B}). Therefore, we only need to show  (\ref{eqn-a})  and (\ref{eqn-c}) hold. 
\iffalse 
On the one hand, 
note that 
$$\sum_{k=0}^3\sum_{\ell\neq k}\zeta_{k\ell}= \sum_{k=0}^3 (\eta_k-\frac{1}{2}\eta_3)=0,$$
by (\ref{eqn-ck}) and (\ref{eqn-ca}), we have 
$$\sum_{k=0}^3T_k=\sum_{k=0}^3\sum_{i=1}^2\zeta(a_i-\frac{\omega_k}{2})-\sum_{i=1}^2\sum_{k=0}^3\zeta(a_i-\frac{\omega_k}{2})=0,$$
which is the prerequisite of $\mathcal{L}(\mathbb{T}, B)$. 
This together with (\ref{eqn-Bk}) are the apparent conditions, i.e., $(\mathbb{T}, B)=(T_0, \cdots, T_3, B)\in AP.$ 
%%Without loss of generality, we assume that $({\bf T}, B)\in V_i$ for some $i\in \{1,2,3\}$, then 
%$T_k=\varepsilon_k^{(i)}T, B=T^2-c_iT$ with $T\in \mathbb{C}$. 
\fi 
%
%Conversely, if $y(z)$ is given by (\ref{commony}), then we want to find $c$ such that $y(z)$ might be a solution of $\mathcal{L}(\mathbb{T}, B)$ for some $(\mathbb{T}, B)\in AP$. First we observe that, by the two identity $i\in \{1,2\}$ of (\ref{eqn-ca}) together,
For this purpose, we introduce the following identity
    \begin{equation}\label{ccccc16}
        \zeta(2z)-\frac{1}{2}\sum_{k=0}^3\zeta(z-\frac{\omega_k}{2})=\frac{1}{2}\eta_3.
\end{equation}
Indeed, since $\zeta^\prime(z)=-\wp(z)$, the left hand side of the above identity is a constant independent of $z$, which follows from the following identity:
    \begin{equation}\label{p2z}
        4\wp(2z)=\sum_{k=0}^3\wp(z-\frac{\omega_k}{2}).
    \end{equation}
The constant is $\eta_3/2$ can be obtained by analysing the behavior of the left hand side near $z=0$.

By (\ref{eqn-ca}), we have that $a_1, a_2$ satisfy 
\begin{equation}\label{ccccc15}
    \zeta(a_2-a_1)+\frac{1}{4}\sum_{k=0}^3\zeta(a_1-\frac{\omega_k}{2})-\frac{1}{4}\sum_{k=0}^3\zeta(a_2-\frac{\omega_k}{2})=0,
\end{equation}
which is equivalent to  (\ref{eqn-a}) by  (\ref{ccccc16}).

By substituting  (\ref{eqn-c-1}) into  (\ref{eqn-ck}), we have
$$T_k=\zeta(a_1-a_2)-\frac{1}{2}\sum_{k=0}^3\zeta(a_1-\frac{\omega_k}{2})+\sum_{i=1}^2\zeta(a_i-\frac{\omega_k}{2})+\eta_k-\frac{1}{2}\eta_3,$$
Applying (\ref{ccccc16}), we obtain  (\ref{eqn-c}).
This proves the first part of the lemma. 

Conversely, let $a_1,a_2\in \mathbb{C}$
%Let $c, a_1, a_2\in \mathbb{C}$ 
with $[a_1], [a_2]\notin E_\tau[2]$ and $[a_1]\neq [a_2]$  satisfy (\ref{eqn-a}), we want to find $c$ such that $y(z; c, a_1, a_2)$ might be a solution of $\mathcal{L}(\mathbb{T}, B)$ for some $(\mathbb{T}, B)\in AP$. By above analysis, we have  (\ref{eqn-a}) is equivalent to 
  (\ref{ccccc15}), then we set 
\iffalse 
On the other hand, if $(\mathbb{T}, B)\in AP$ and (\ref{eqn-a}), (\ref{eqn-c}), (\ref{eqn-B}) hold, then (\ref{eqn-ck}), (\ref{eqn-Bk}),(\ref{eqn-Ba}) hold and $\sum_{j=0}^3T_k=0$.  We only need to show 
(\ref{eqn-ca}) holds. 
Notice that  $\sum_{j=0}^3T_k=0$, adding the four equations in (\ref{eqn-ck}) and combining  (\ref{ccccc15}), we obtain (\ref{eqn-ca}).

Therefore, $y(z;c,a_1,a_2)$ is a solution of  $\mathcal{L}(\mathbb{T}, B)$  if and only if $(\mathbb{T}, B)\in AP$ and  (\ref{eqn-a}), (\ref{eqn-c}),  (\ref{eqn-B}) hold. 

Conversely, if $a_1, a_2$ satisfies (\ref{eqn-a}), where we assume $[a_1]\neq [a_2] $ and $[a_i]\notin E_\tau[2]$, then (\ref{ccccc15}) holds. Set 
\fi 
\begin{equation}\label{def-c}
c=\zeta(a_2-a_1)+\frac{1}{2}\sum_{k=0}^3\zeta(a_1-\frac{\omega_k}{2})=\zeta(a_1-a_2)+\frac{1}{2}\sum_{k=0}^3\zeta(a_2-\frac{\omega_k}{2})\end{equation}
and  $T_k$ by (\ref{eqn-c}), $B$ by (\ref{eqn-B}).  Combing with (\ref{ccccc16}), we have (\ref{eqn-ck}) and (\ref{eqn-ca}) hold, which implies $G(z)$ is smooth at $a_1, a_2, \frac{\omega_k}{2}, \, k=0,1,2,3$.  By  (\ref{eqn-c}), (\ref{ccccc16}) and  (\ref{def-c}), we have 
\begin{align*}
\sum_{k=0}^3T_k&=4\zeta(a_1-a_2)-4\zeta(2a_1)+\sum_{k=0}^3\sum_{i=1}^2\zeta(a_i-\frac{\omega_k}{2})+\sum_{k=0}^3\eta_k\\
&=4\zeta(a_1-a_2)-4\zeta(2a_1)+2c-2\zeta(a_2-a_1)+2c-2\zeta(a_1-a_2)+2\eta_3\\
%&=4c+4\zeta(a_1-a_2)+4(\frac{1}{2}\eta_3-\zeta(2a_1))\\
&=4c+4\zeta(a_1-a_2)-2\sum_{k=0}^3\zeta(a_1-\frac{\omega_k}{2})=0,
%\sum_{k=0}^3T_k&=-4c+\sum_{k=0}^3\sum_{i=1}^2\zeta(a_i-\frac{\omega_k}{2})+\sum_{k=0}^3\eta_k-2\eta_3\\
%&=-4c+\sum_{k=0}^3\sum_{i=1}^2\zeta(a_i-\frac{\omega_k}{2})=\zeta(a_1-a_2)+\zeta(a_2-a_1)=0,
\end{align*}
which implies $q(z;\mathbb{T}, B)$ is an elliptic function and then $G(z)$ is elliptic. 
Hence, $G(z)\equiv $ a constant.  By the definition of $B$, we have $G(a_1)=0$, then $G(z)\equiv 0$, which implies (\ref{eqn-Bk}) holds for all $k$, thus $(\mathbb{T}, B)\in AP$ and $y(z;c,a_1, a_2)$ is a solution of $\mathcal{L}(\mathbb{T}, B)$. This proves the converse part. 
\iffalse 
\textcolor{red}{ Further, by taking a sum of (\ref{eqn-c}), we have
$$\sum_{k=0}^3T_k=\sum_{k=0}^3\sum_{i=1}^2\zeta(a_i-\frac{\omega_k}{2})-\sum_{i=1}^2\sum_{k=0}^3\zeta(a_i-\frac{\omega_k}{2})=0,$$
 which implies $G(z)$ is a smooth elliptic function. Thus $G(z)\equiv $ a constant. Since  (\ref{eqn-Ba})  holds at $i=1$ and $j=2$, we have $G(a_1)=0$ which implies $G(z)\equiv 0$ and in turn, implies (\ref{eqn-Ba}) holds for $i=2$ and $j=1$, also  by (\ref{eqn-ca}) $G(z)$ is smooth at $z=\frac{\omega_k}{2}, k=0,1,2,3$. Thus $G$ is smooth on $\mathbb{C}$.  Therefore, (\ref{eqn-Bk}) holds. %Clearly  (\ref{eqn-Ba}) and $\sum_{k=0}^3T_k=0$ yields $(\mathbb{T}, B)\in AP$. 
This proves the second part. }
\textcolor{blue}{红色部分不太明白，$B$ 是通过(\ref{eqn-B}) with $i=1, j=2$来定义的，感觉没法说明(\ref{eqn-B}) with $i=2, j=1$也是成立的，这个条件感觉必须加上。}
\fi 
\end{proof}

From Theorem \ref{lem-1}, we see that any two points $a_1, a_2\in E_\tau\setminus E_\tau[2]$ with $a_1\neq a_2$ satisfying (\ref{eqn-a}) 
can be chosen as the zeros of some common eigenfunction. Note that $a_1, a_2$ play the same role in (\ref{eqn-a}) and adding a period to $a_1$ or $a_2$ does not change  (\ref{eqn-a}), we collect these zeros and define
$$Y:=\left\{\{a_1, a_2\}\in \mathrm{Sym}^2(E_\tau \setminus E_\tau[2])\,\mid\, a_1\neq a_2 \,\,\text{satisfy}\,\, (\ref{eqn-a}) \right\},$$
\iffalse 
$$Y:=\left\{\{a_1, a_2\}\in \mathrm{Sym}^2(E_\tau \setminus E_\tau[2])\setminus \Delta_2\,\mid\, a_1, a_2 \,\,\text{satisfy}\,\, (\ref{eqn-a}) \right\},$$ %2\zeta(a_2-a_1)+\zeta(2a_1)-\zeta(2a_2)=0
where $\Delta_2:=\left\{\{z,z\}\,\mid\, z\in E_\tau\right\}$. % which is the set of all possible zero sets of a common eigenfunction. 
\fi 
 Furthermore, we get the following conclusion so that we can decompose $Y$ into $4$ components. 
\begin{lemma}\label{lem-2}
Let $a_1, a_2\in E_\tau\setminus E_\tau[2]$ with $a_1\neq a_2$. Then $a_1, a_2$ satisfy (\ref{eqn-a}) if and only if $a_2=-a_1$ or $a_2=a_1-\frac{\omega_i}{2}$ for some $i\in \{1,2,3\}$ on $E_\tau$. 
\end{lemma}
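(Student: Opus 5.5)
Fix $a_2$ and regard the left-hand side of (\ref{eqn-a}),
$$\Phi(a_1):=2\zeta(a_2-a_1)+\zeta(2a_1)-\zeta(2a_2),$$
as a function of $a_1$. The plan is to show that $\Phi$ is elliptic, read off its poles, and then locate its zeros by a degree count.

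\textbf{Step 1: $\Phi$ is elliptic and has degree $3$.} Under $a_1\mapsto a_1+\omega_j$, the term $2\zeta(a_2-a_1)$ changes by $-2\eta_j$. The term $\zeta(2a_1)$ changes by $\zeta(2a_1+2\omega_j)-\zeta(2a_1)=2\eta_j$. So $\Phi$ is elliptic in $a_1$. Its poles modulo $\Lambda_\tau$ are as follows. There is a simple pole at $a_1=a_2$ with residue $-2$. The points with $2a_1\in\Lambda_\tau$, namely $a_1\in E_\tau[2]$, give four simple poles, each with residue $\tfrac12$ coming from $\zeta(2a_1)$. The residues sum to $-2+4\cdot\tfrac12=0$, as they must. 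Since $a_2\notin E_\tau[2]$, these five poles are distinct, so $\Phi$ is an elliptic function of degree $5$. That is too many zeros to account for directly, so I will use identity (\ref{ccccc16}) to rewrite $\Phi$. By (\ref{ccccc16}), (\ref{eqn-a}) is equivalent to (\ref{ccccc15}), and we may work with
$$\Psi(a_1):=\zeta(a_2-a_1)+\tfrac14\sum_k\zeta(a_1-\tfrac{\omega_k}{2})-\tfrac14\sum_k\zeta(a_2-\tfrac{\omega_k}{2}).$$
This is a constant multiple of $\Phi$, so it has the same five simple poles. Hence $\Phi$ has exactly five zeros counted with multiplicity, and I must find all of them.

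\textbf{Step 2: exhibit the candidate zeros.} I expect the five zeros to be $a_1=-a_2$, the three points $a_1=a_2+\tfrac{\omega_i}{2}$, and one further point to be identified.

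First take $a_1=-a_2$. Then $\Phi=2\zeta(2a_2)+\zeta(-2a_2)-\zeta(2a_2)=0$, using that $\zeta$ is odd.

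Next take $a_1=a_2+\tfrac{\omega_i}{2}$. Then $2\zeta(-\tfrac{\omega_i}{2})=-\eta_i$ and $\zeta(2a_2+\omega_i)-\zeta(2a_2)=\eta_i$, so $\Phi=0$. This gives the relation $a_2=a_1-\tfrac{\omega_i}{2}$ on $E_\tau$, since $\tfrac{\omega_i}{2}\equiv-\tfrac{\omega_i}{2}$ modulo $\Lambda_\tau$.

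Finally, Abel's theorem accounts for the fifth zero. The sum of the zeros must equal the sum of the poles, namely $a_2+\sum_k\tfrac{\omega_k}{2}\equiv a_2$ modulo $\Lambda_\tau$. The four zeros found so far sum to $-a_2+3a_2+\tfrac{\omega_1+\omega_2+\omega_3}{2}\equiv 2a_2$. So the fifth zero is congruent to $-a_2$.

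\textbf{Step 3: identify the fifth zero.} The computation above says the fifth zero is again $-a_2$, so I expect $a_1=-a_2$ to be a double zero. I would confirm this by checking that
$$\Phi'(a_1)=2\wp(a_2-a_1)-2\wp(2a_1)$$
vanishes at $a_1=-a_2$. Indeed, it equals $2\wp(2a_2)-2\wp(-2a_2)=0$.

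One must also check that the candidate zeros do not collide with the poles. Since $a_2\notin E_\tau[2]$, none of them equals $a_2$ or lies in $E_\tau[2]$. If some candidates coincide with one another (for instance $-a_2=a_2+\tfrac{\omega_i}{2}$, which happens when $a_2$ is a quarter period), the multiplicity count still holds, because the count is of zeros with multiplicity.

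\textbf{Conclusion.} The zero divisor of $\Phi$ is therefore exactly
$$2(-a_2)+\sum_{i=1}^3\left(a_2+\tfrac{\omega_i}{2}\right),$$
which has degree $5$ and so accounts for all zeros. This proves both directions of the lemma: $\Phi(a_1)=0$ with $a_1\neq a_2$ if and only if $a_1=-a_2$ or $a_1=a_2+\tfrac{\omega_i}{2}$ for some $i\in\{1,2,3\}$.

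The main obstacle is the bookkeeping in this count. One has to make sure that the degree is really $5$, meaning no pole cancellation occurs given the hypotheses $a_2\notin E_\tau[2]$ and $a_1\neq a_2$, and that the double zero at $-a_2$ is handled correctly.
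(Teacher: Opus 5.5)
Your proof is correct and is essentially the paper's argument with the roles of $a_1$ and $a_2$ interchanged. The paper fixes $a_1$ and studies $g(z)=2\zeta(z-a_1)+\zeta(2a_1)-\zeta(2z)$, which has five simple poles at $a_1$ and $E_\tau[2]$; it exhibits the zeros $-a_1$ and $a_1-\frac{\omega_i}{2}$, shows $-a_1$ is at least a double zero via $g'(-a_1)=0$, and uses Abel's theorem to identify the fifth zero.

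Two small points on presentation. The rewriting via (\ref{ccccc16}) into $\Psi$ is unnecessary. Logically, the double-zero check $\Phi'(-a_2)=0$ should come before the Abel count: in the quarter-period case $-a_2=a_2+\frac{\omega_i}{2}$, it is exactly what justifies counting your four candidates as four zeros with multiplicity.
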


\begin{proof}
Let
$$g(z)=2\zeta(z-a_1)+\zeta(2a_1)-\zeta(2z),$$
which is clearly  an elliptic function with  five simple poles $a_1,\,\frac{\omega_j}{2}, 0\leq j\leq 3$, then $g(z)$ should has five zeros counting with multiplicity. 
By a direct computation, we obtain that  $a_1-\frac{\omega_i}{2}, i=1,2,3$ and $-a_1$ are zeros of $g(z)$, where $a_1-\frac{\omega_i}{2}, i=1,2,3$ are distinct, but we may have $-a_1= a_1-\frac{\omega_i}{2}$ for some $i\in \{1,2,3\}$. 
Note that the order of zero at $-a_1$ is at least 2, because $-a_1$  is also a zero of $g'(z)=-2\wp(z-a_1)+2\wp(2z)$,  then $-a_1, a_1-\frac{\omega_i}{2}, i=1,2,3$ are four zeros of $g(z)$ counting with multiplicity.
Denote by $a_1^\ast$  the remaining zero of $g(z)$, we have
    \begin{equation*}
        a_1^\ast-a_1+3a_1-\frac{1}{2}\sum_{i=1}^3\omega_i=a_1+\frac{1}{2}\sum_{j=0}^3\omega_j,
    \end{equation*}
    which implies $a_1^\ast=-a_1$. This proves the Lemma. % which also means the  zero order of $g(z)$ at $-a_1$ is 2.
\end{proof} 

\iffalse 
\begin{remark}\label{rek-Y}
In the proof of Lemma \ref{lem-2}, we  actually prove that $-a_1$ is a double zero of $g$. 
\end{remark}
\fi

By Lemma \ref{lem-2}, we have $Y=\cup_{j=0}^3Y_j$, where  
 \begin{align*}
Y_0:=&\left\{\{a_1, a_2\}\in \mathrm{Sym}^2(E_\tau \setminus E_\tau[2])\mid  a_1+a_2=0\right\}\\
Y_\ell:=&\left\{\{a_1, a_2\}\in \mathrm{Sym}^2(E_\tau \setminus E_\tau[2])\mid  a_1-a_2=\frac{\omega_\ell}{2}\right\}, \quad \ell=1,2,3.
%Y:=&\left\{\{z_1, z_2\}\in \mathrm{Sym}^2(E_\tau \setminus E_\tau[2])\mid  (\ref{ccccc17}) \,\, \text{holds}.\right\}.
\end{align*}
Clearly, $Y_j\cap Y_k=\emptyset$ for $j\neq k$ and $j,k\in \{1,2,3\}$. 
Denoted by $\ell\in \{1,2,3\}$ and $\{\ell, j, k\}=\{1,2,3\}$, we have 
\begin{equation}\label{Y0Yell}
Y_0\cap Y_\ell=\left\{\{\frac{\omega_\ell}{4}, -\frac{\omega_\ell}{4}\},\,\{\frac{\omega_\ell}{4}+\frac{\omega_j}{2}, -\frac{\omega_\ell}{4}+\frac{\omega_j}{2}\} \right\}.
\end{equation}
%,\,\{\frac{\omega_\ell}{4}+\frac{\omega_k}{2}, -\frac{\omega_\ell}{4}+\frac{\omega_k}{2}\}
%To our surprise, there is a correspondence between the decomposition $Y=\cup_{j=0}^3 Y_j$ and the decomposition $AP=\cup_{j=1}^3 V_j$
To our surprise, this classification of zero sets of common eigenfunctions exactly corresponds to the decomposition of the apparent space.  In the following lemma, we investigate this surprising correspondence. 

\begin{lemma}\label{lem-YV}
Let $(\mathbb{T}, B)\in AP$ and 
$y(z; c, a_1, a_2)$ be a common eigenfunction of $\mathcal{L}(\mathbb{T}, B)$ with  $c,a_1, a_2\in \mathbb{C}$ and $[a_1], [a_2]\notin E_\tau[2]$, then the followings hold:
\begin{enumerate}
\item  $\{[a_1], [a_2]\}\in Y_0$ if and only if $(\mathbb{T}, B)=0$. Moreover,  if $a_2=-a_1$, then 
\begin{equation}\label{c-0}
c=-\frac{1}{2}\eta_3.
\end{equation}
\item  $\{[a_1], [a_2]\}\in Y_\ell\setminus Y_0$ if and only if $(\mathbb{T}, B)\in V_\ell\setminus \{0\}$, where $\ell\in \{1,2,3\}$.  Moreover, let $\{[a_1], [a_2]\}\in Y_\ell$, we have 
\begin{align}
c&=c_{a_1}:=\zeta(2a_1)-\frac{1}{2}\eta_\ell-\frac{1}{2}\eta_3, \quad \text{if}\,\,\, a_2=a_1-\frac{\omega_\ell}{2}, \label{c-ell}\\
T_0&=\zeta(a_1)+\zeta(a_1-\frac{\omega_\ell}{2})-\zeta(2a_1)+\frac{1}{2}\eta_\ell. \label{T-ell}
\end{align}
%
%Let $\ell\in \{1,2,3\}$ and $\{[a_1], [a_2]\}\in Y_\ell$.  If 
%Moreover, 
%\begin{enumerate}
%\item[(i)] $\{[a_1], [a_2]\}\in Y_\ell\setminus Y_0$ if and only if $(\mathbb{T}, B)\in V_\ell\setminus \{0\}$, where $\ell\in \{1,2,3\}$.  
%\item[(ii)] $\{[a_1], [a_2]\}\in Y_\ell\cap  Y_0$ if and only if $(\mathbb{T}, B)=0$, where $\ell\in \{1,2,3\}$.  
%\end{enumerate}
\end{enumerate}
\end{lemma}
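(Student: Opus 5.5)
The approach is to compute $\mathbb{T}$ directly from the formulas (\ref{eqn-c-1}) and (\ref{eqn-c}) of Theorem \ref{lem-1}, specialised to the two shapes of zero sets allowed by Lemma \ref{lem-2}. Then compare the result with the parametrization of $V_\ell$ from Lemma \ref{thm1.1}.

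First note the reductions. If $(\mathbb{T},B)\neq0$, Lemma \ref{lem-commoneigen} gives $[a_1]\neq[a_2]$. Then Theorem \ref{lem-1} and Lemma \ref{lem-2} force $\{[a_1],[a_2]\}\in Y_0\cup Y_1\cup Y_2\cup Y_3$. The degenerate case $[a_1]=[a_2]\notin E_\tau[2]$ is excluded by the ODE uniqueness argument; this also covers $(\mathbb{T},B)=0$. So Theorem \ref{lem-1} applies in all cases.

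\emph{Case $a_2=-a_1$.} Substitute into (\ref{eqn-c-1}) and use the oddness of $\zeta$ together with identity (\ref{ccccc16}) at $z=a_1$:
\[
c=\zeta(-2a_1)+\tfrac12\sum_k\zeta(a_1-\tfrac{\omega_k}{2})=-\zeta(2a_1)+\zeta(2a_1)-\tfrac12\eta_3=-\tfrac12\eta_3 ,
\]
which is (\ref{c-0}). Next substitute into (\ref{eqn-c}):
\[
T_k=\zeta(2a_1)-\zeta(2a_1)+\zeta(a_1-\tfrac{\omega_k}{2})+\zeta(-a_1-\tfrac{\omega_k}{2})+\eta_k .
\]
Since $\zeta(-a_1-\frac{\omega_k}{2})=-\zeta(a_1+\frac{\omega_k}{2})=-\zeta(a_1-\frac{\omega_k}{2})-\eta_k$, this gives $T_k=0$ for all $k$. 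Then $B=T_0^2-\sum_j T_j\zeta_{0j}=0$ by (\ref{Tk}). Hence $Y_0$ forces $(\mathbb{T},B)=0$.

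\emph{Case $a_2=a_1-\frac{\omega_\ell}{2}$.} Here $\zeta(a_2-a_1)=\zeta(-\frac{\omega_\ell}{2})=-\frac12\eta_\ell$. Combined with (\ref{ccccc16}), this yields (\ref{c-ell}) from (\ref{eqn-c-1}). For $k=0$, formula (\ref{eqn-c}) with $\zeta(a_1-a_2)=\frac12\eta_\ell$ gives exactly (\ref{T-ell}). The remaining task is to show $T_k=\varepsilon_k^{(\ell)}T_0$, i.e.\ $T_\ell=T_0$ and $T_j=T_k=-T_0$. This is the main computational step.

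For $T_\ell$, the sum $\zeta(a_1-\frac{\omega_\ell}{2})+\zeta(a_1-\omega_\ell)$ equals $\zeta(a_1-\frac{\omega_\ell}{2})+\zeta(a_1)-\eta_\ell$. Adding $\eta_\ell$ recovers the $T_0$ expression.

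For $T_j$ with $j\neq\ell$, one gets $\zeta(a_1-\frac{\omega_j}{2})+\zeta(a_1-\frac{\omega_\ell}{2}-\frac{\omega_j}{2})+\eta_j$. Showing $T_j=-T_0$ then amounts to the identity
\[
\zeta(a_1)+\zeta(a_1-\tfrac{\omega_\ell}{2})+\zeta(a_1-\tfrac{\omega_j}{2})+\zeta(a_1-\tfrac{\omega_\ell}{2}-\tfrac{\omega_j}{2})-2\zeta(2a_1)+\eta_\ell+\eta_j=0 .
\]
Since $\omega_\ell+\omega_j\equiv\omega_k$ mod $\Lambda_\tau$, the four $\zeta$ terms are $\sum_{m}\zeta(a_1-\frac{\omega_m}{2})$ up to a quasi-period correction. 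By (\ref{ccccc16}) the total is a constant, and I will check it vanishes by tracking the $\eta$'s (e.g.\ $\frac{\omega_1+\omega_2}{2}=\frac{\omega_3}{2}$, while $\frac{\omega_1+\omega_3}{2}=\frac{\omega_2}{2}+\omega_1$ contributes $-\eta_1$). This bookkeeping of quasi-periods over the three choices of $(\ell,j)$ is where I expect errors to creep in, so I will verify it case by case. Once it holds, $(\mathbb{T},B)\in AP$ with $\mathbb{T}$ proportional to $\varepsilon^{(\ell)}$, so by Lemma \ref{thm1.1} $(\mathbb{T},B)\in V_\ell$.

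\emph{Assembling the equivalences.} If $\{[a_1],[a_2]\}\in Y_\ell\setminus Y_0$ and $(\mathbb{T},B)=0$, then $c$ is determined two ways: $c=-\frac12\eta_3$ from the $Y_0$-type analysis of $T=0$, and $c=c_{a_1}$ from (\ref{c-ell}). I will rule this out instead by comparing (\ref{T-ell}) with $T_0=0$ through the map $T^{(\ell)}$. Alternatively, and more cleanly, apply Lemma \ref{lem-commoneigen} together with Theorem \ref{prop1.2}: for $T=0$, the common eigenfunctions are exactly those with $Y_0$-type zeros, because each of the three-dimensional family of products $F^{(m)}(z;0)$ has zeros symmetric under $z\mapsto -z$. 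I expect this converse direction (that $T=0$ forces $a_2=-a_1$, and that $T\neq0$ forces the $Y_\ell$ type with the right $\ell$) to be the subtle point. The plan is to use disjointness: the $V_m\setminus\{0\}$ are disjoint, and every nonzero parameter's zero set lies in some $Y_m$ by Lemma \ref{lem-2}, which then pins down $m=\ell$. For $T=0$ I will use evenness, $q(-z;0)=q(z;0)$: if $y(z)$ is a common eigenfunction, so is $y(-z)$, and since all solutions are common eigenfunctions for $T=0$ by Theorem \ref{thm-monodromy-1}, I will check that the zero set $\{a_1,a_2\}$ must be symmetric.
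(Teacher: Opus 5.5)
Your direct computations for $a_2=-a_1$ and $a_2=a_1-\frac{\omega_\ell}{2}$ are the paper's, and they are correct. I also checked your quasi-period bookkeeping for $T_j=-T_0$ in all three cases $\{\ell,j\}$. That identity is dispensable, though. The paper verifies only $T_\ell=T_0$ and then uses $AP=\bigcup_m V_m$ (Lemma~\ref{thm1.1}): if $(\mathbb{T},B)\in V_m$ with $m\neq\ell$, then $T_\ell=-T_0$, hence $T_0=0$.

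For $(\mathbb{T},B)\in V_\ell\setminus\{0\}\Rightarrow\{[a_1],[a_2]\}\in Y_\ell\setminus Y_0$, the paper builds the auxiliary function $\varphi(z)=\zeta(z-\frac{\omega_\ell}{2})-\zeta(z)+\zeta(a_1-\frac{\omega_\ell}{2})-\zeta(a_1)+\eta_\ell$ and locates its second zero by Abel's theorem. Your disjointness argument is shorter and complete: Lemma~\ref{lem-2} puts the zero set in some $Y_m$, $m=0$ is excluded by your forward computation, and $V_m\cap V_\ell=\{0\}$ forces $m=\ell$.

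Both parts hinge on the step $(\mathbb{T},B)=0\Rightarrow\{[a_1],[a_2]\}\in Y_0$. Here you offer a menu, and you should commit to one option.
\begin{enumerate}
\item Comparing two values of $c$ is circular, because $c=-\frac12\eta_3$ presupposes $Y_0$-type zeros.
\item Evenness of $q(z;0)$ alone is not enough. At $T=0$ every solution is a common eigenfunction, so $y(-z)$ need not be proportional to $y(z)$.
\item The $\mathrm{ES}(0)$ route works, but the operative fact is not that the three $F^{(m)}(z;0)$ have symmetric zero sets, since that does not survive linear combination. What you need is that each $F^{(m)}(z;0)$ is \emph{odd}. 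This follows from $\zeta(z+\frac{\omega_k}{2})=\zeta(z-\frac{\omega_k}{2})+\eta_k$ together with $\sum_k\varepsilon_k^{(m)}\eta_k=-2\eta_{3-m}$.
\end{enumerate}
With oddness in hand, Theorem~\ref{prop1.2} shows every element of $\mathrm{ES}(0)$ is odd, in particular $y^2$. Hence $\{[a_1],[a_2]\}=\{[-a_1],[-a_2]\}$, and $[a_1]\notin E_\tau[2]$ gives $[a_2]=[-a_1]$.

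Your other option also works once the zero count is done. Namely, $T_0=f_\ell(a_1)$ vanishes only at $\pm\frac{\omega_\ell}{4}$ and $\frac{\omega_j}{2}\pm\frac{\omega_\ell}{4}$, and all of these give points of $Y_0\cap Y_\ell$.

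Either route is in fact tighter than the paper's argument at this step. The paper applies Abel's theorem to $y^2$ and obtains only $2([a_1]+[a_2])=0$, i.e.\ $[a_1]+[a_2]\in E_\tau[2]$. That leaves the four points $\underline{\pm\frac{\omega_j}{4}},\underline{\pm\frac{\omega_k}{4}}$ of $Y_\ell\setminus Y_0$, where $[a_1]+[a_2]$ is a nonzero half period. Excluding them needs the extra input $T^{(\ell)}(\underline{\pm\frac{\omega_k}{4}})^2=e_k-e_\ell\neq 0$ from Theorem~\ref{mapT}.
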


\begin{remark}\label{rek-T0}
Notice that the right hand side of (\ref{T-ell}) is invariant if $a_1$ is replaced by $a_1-\frac{\omega_\ell}{2}$ or $a_1+\omega$ with $\omega\in\Lambda_\tau$, thus $T_0$ can be viewed as a function of $\{[a_1], [a_2]\}$. 
\end{remark}
\begin{remark}\label{rek-yell}
Let $a_1\in \mathbb{C}$ with $[a_1]\notin E_\tau[2]$. 
By Theorem \ref{lem-1}, we have $y_0(z; a_1):=y(z;-\frac{1}{2}\eta_1, a_1, -a_1)$ is a common eigenfunction of $\mathcal{L}(0)$ and 
$y_\ell(z; a_1):=y(z;c_{a_1}, a_1, a_1-\frac{\omega_\ell}{2})$ is a common eigenfunction of $\mathcal{L}_\ell(T_0)$ for $\ell\in \{1,2,3\}$, where $T_0$ is given by (\ref{T-ell}).
\end{remark}

\iffalse 
\begin{theorem}
Let $T\in AP$ and 
$y(z)$ be a common eigenfunction of $\mathcal{L}(T)$.  %such that $y^2(z)$ is an elliptic function of the second kind on $E_\tau$, 
If $y(z)$ has two zeros $a_1, a_2\notin E_\tau[2]$, then $T=0$ if and only if $\{a_1, a_2\}\in Y_0$. Moreover, $y(z)=y(z;c,a_1,a_2)$ %has the expression (\ref{commony}) 
with $c=-\frac{1}{2}\eta_3$.
%Let $T\in AP$ and $y(z)$ be a common eigenfunction of $\mathcal{L}(T)$.  Up to a nonzero constant multiple,  there exists $c,a_1,a_2\in \mathbb{C}$  such that $$y(z)=  y(z;c,a_1,a_2).$$ 
%and thus  $y(z)$  vanishes at $[a_1], [a_2]$.  Furthermore,  if $T\neq 0$, then $[a_1], [a_2]\not\in E_\tau[2]$ and $[a_1]\neq  [a_2]$. 
\end{theorem}
\fi

\begin{proof}
\iffalse 
tells us that given a point $\{[a_1], [a_2]\}\in Y$, there exists $c$ given by (\ref{eqn-c}) such that $y(z;c, a_1, a_2)$ is a solution of $\mathcal{L}(\mathbb{T}, B)$, where $\mathbb{T}$  is given by (\ref{eqn-c}) and $B$ is given by (\ref{eqn-B}) for some $i\in \{1,2\}$. 

Furthermore, if $c$ of (\ref{eqn-c-1}) is substituted in (\ref{eqn-c}), then $T_k$ is not changed if $a_i$ is replaced by $a_i+\omega$ with $\omega\in \Lambda_\tau$. 
\fi 
Let $(\mathbb{T}, B)\in AP$ and 
$y(z;c, a_1, a_2)$ be a common eigenfunction of $\mathcal{L}(\mathbb{T}, B)$.  By Theorem \ref{lem-1}, $\{[a_1], [a_2]\}\in Y$ and $\mathbb{T}$ is determined by (\ref{eqn-c}). By Remark \ref{rek-a-T},  (\ref{eqn-c})  is independent of the choice of the representatives of $[a_1], [a_2]$, so we can fix representatives for $[a_1], [a_2]$ in computing $\mathbb{T}$.

%$\zeta(a_1-a_2)-\frac{1}{2}\sum_{k=0}^3\zeta(a_1-\frac{\omega_k}{2})+\sum_{i=1}^2\zeta(a_i-\frac{\omega_k}{2})+\eta_k-\frac{1}{2}\eta_3$

\begin{enumerate}
\item If $\{[a_1],[a_2]\}\in Y_0$, we can set $a_2=-a_1$ to compute $\mathbb{T}$. 
%Substituting $c$ into the left hand side of 
By (\ref{eqn-c}), we have 
\begin{align*}
T_k=\zeta(2a_1)-\zeta(2a_1)+\zeta(a_1-\frac{\omega_k}{2})-\zeta(a_1+\frac{\omega_k}{2})+\eta_k=0
%\\&=\zeta(2z_1)-\frac{1}{2}\sum_{j=0}^3\zeta(z_1-\frac{\omega_j}{2})-\frac{1}{2}\eta_3=0.
\end{align*}
for $k\in \{0,1,2,3\}$.  Hence, $(\mathbb{T}, B)=0$. % where $\zeta(a_1+\frac{\omega_k}{2})=\zeta(a_1-\frac{\omega_k}{2})+\eta_k$ is used. 
%It proves the sufficient part. 

Conversely, if $(\mathbb{T}, B)=0$, by Theorem  \ref{thm-monodromy-1},  $y^2(z)$ is an elliptic function with exactly four simple poles at $[\omega_k/2], \, k=0,1,2,3$ and exactly  two double zeros at  $[a_1], [a_2]$ on $E_\tau$. Therefore, we have $2[a_1]+2[a_2]=\sum_{k=0}^3[\omega_k/2]=0$ on $E_\tau$, i.e., $\{[a_1],[a_2]\}\in Y_0$. %  This proves the necessary part. 
%\end{proof}

Finally, if $a_2=-a_1$, by (\ref{eqn-c-1}), we have 
$$c=-\zeta(2a_1)+\frac{1}{2}\sum_{j=0}^3\zeta(a_1-\frac{\omega_j}{2})=-\frac{1}{2}\eta_3.$$

%Next, we consider the zeros of the common eigenfunction lie in $Y_\ell$ with $\ell\in \{1,2,3\}$. 
\iffalse 
\begin{lemma}\label{lem-appV}
Let $T\in AP$ and 
$y(z)$ be a common eigenfunction of $\mathcal{L}(T)$.  %such that $y^2(z)$ is an elliptic function of the second kind on $E_\tau$, 
If $y(z)$ has two zeros $[a_1], [a_2]\notin E_\tau[2]$, then $\{[a_1], [a_2]\}\in Y_\ell$ for some $\ell\in \{1,2,3\}$ implies $T\in V_\ell$.  
%Let $\{a_1, a_2\}\in Y_\ell$. Then the apparent parameter $(\mathbb{T}, B)$ in Theorem \ref{lem-1} is contained in $V_\ell$. 
\end{lemma}
\fi 
%\begin{proof}

\item  Let $\ell\in\{1,2,3\}$. If $\{[a_1],[a_2]\}\in Y_\ell$, we can set $a_2=a_1-\omega_\ell/2$ to compute $\mathbb{T}$. By (\ref{eqn-c}), we have 
%Applying (\ref{eqn-c}), (\ref{eqn-c-1}) and $a_1-a_2=\omega_\ell/2$, we have 
%$$T_k=\zeta(a_1-a_2)-\zeta(2a_1)+\sum_{i=1}^2\zeta(a_i-\frac{\omega_k}{2})+\eta_k$$
$$T_k=\zeta(\frac{\omega_\ell}{2})-\zeta(2a_1)+\zeta(a_1-\frac{\omega_k}{2})+\zeta(a_1-\frac{\omega_\ell}{2}-\frac{\omega_k}{2})+\eta_k,$$
then
\begin{align*}
T_\ell&=\zeta(\frac{\omega_\ell}{2})-\zeta(2a_1)+\zeta(a_1-\frac{\omega_\ell}{2})+\zeta(a_1-\omega_\ell)+\eta_\ell\\
&=\frac{1}{2}\eta_\ell-\zeta(2a_1)+\zeta(a_1-\frac{\omega_\ell}{2})+\zeta(a_1)=T_0.
%T_\ell-T_0=&\zeta(a_1-\frac{\omega_\ell}{2})+\zeta(a_1-{\omega_\ell})-\zeta(a_1)-\zeta(a_1-\frac{\omega_\ell}{2})+\eta_\ell\\
%=&\zeta(a_1-{\omega_\ell})-\zeta(a_1)+\eta_\ell=0.
\end{align*}
Since $(\mathbb{T}, B)\in AP$, then $T_\ell=T_0$ implies $(\mathbb{T}, B)\in V_\ell$.  Furthermore, if $\{[a_1],[a_2]\}\notin Y_0$ and $(\mathbb{T}, B)=0$, by Part (1), we have $\{[a_1],[a_2]\}\in Y_0$, which is a contradiction.
\iffalse 
and
\begin{align*}
T_k+T_0&=2\zeta(\frac{\omega_\ell}{2})-2\zeta(2a_1)+\zeta(a_1-{\omega_k})+\zeta(a_1-\frac{\omega_\ell}{2}-\frac{\omega_k}{2})+\eta_k\\
&+\zeta(a_1)+\zeta(a_1-\frac{\omega_\ell}{2})
\end{align*}
for $k\neq \ell$.
Applying (\ref{ccccc16}), we have 
$$T_k+T_0=0.$$
For example, if $\ell=1$ and $k=2$, then 
\begin{align*}
T_2+T_0&=\eta_1-2\zeta(2a_1)+\zeta(a_1-\frac{\omega_2}{2})+\zeta(a_1-\frac{\omega_3}{2})+\eta_2\\
&+\zeta(a_1)+\zeta(a_1-\frac{\omega_1}{2})\\
&=\eta_1-\eta_3+\eta_2=0,
\end{align*}
where  (\ref{ccccc16}) is applied. Similarly, if  $\ell=1$ and $k=3$, we have 
\begin{align*}
T_3+T_0&=\eta_1-2\zeta(2a_1)+\zeta(a_1-\frac{\omega_3}{2})+\zeta(a_1-\frac{\omega_1}{2}-\frac{\omega_3}{2})+\eta_3\\
&+\zeta(a_1)+\zeta(a_1-\frac{\omega_1}{2})\\
&=\eta_1-2\zeta(2a_1)+\sum_{j=0}^3\zeta(a_1-\frac{\omega_j}{2})-\eta_1+\eta_3=0,
\end{align*}
by (\ref{ccccc16}) again. This proves $\ell=1$ completely. For $\ell=2,3$, a similar strategy can be applied to prove the theorem. 
\fi 

On the other hand, if $(\mathbb{T}, B)\in V_\ell\setminus \{0\}$, i.e., $T_\ell=T_0$,
by (\ref{eqn-c}), we have 
% then 
%\begin{align*}
%%T_k=\zeta(a_1-a_2)-\zeta(2a_1)+\sum_{i=1}^2\zeta(a_i-\frac{\omega_k}{2})+\eta_k
%T_\ell&=\zeta(a_1-a_2)-\zeta(2a_1)+\sum_{i=1}^2\zeta(a_i-\frac{\omega_\ell}{2})+\eta_\ell\\
%&=\zeta(a_1-a_2)-\zeta(2a_1)+\sum_{i=1}^2\zeta(a_i)=T_0,
%\end{align*}
%thus 
\begin{equation}\label{a2}
\sum_{i=1}^2\zeta(a_i-\frac{\omega_\ell}{2})-\sum_{i=1}^2\zeta(a_i)+\eta_\ell\equiv 0.\end{equation}
Let $\varphi(z)=\zeta(z-\frac{\omega_\ell}{2})-\zeta(z)+\zeta(a_1-\frac{\omega_\ell}{2})-\zeta(a_1)+\eta_\ell$, which is an elliptic function with two simple poles at $[0], [\frac{\omega_\ell}{2}]$.  Notice that $[a_1-\frac{\omega_\ell}{2}]$ is a zero of $\varphi$, then the other zero is $[0]+[\frac{\omega_\ell}{2}]-[a_1-\frac{\omega_\ell}{2}]=[-a_1]$. By (\ref{a2}), we obtain that $[a_2]=[a_1-\frac{\omega_\ell}{2}]$ or $[-a_1]$, i.e., $\{[a_1], [a_2]\}\in Y_\ell$ or $Y_0$. Furthermore, if $\{[a_1], [a_2]\}\in Y_0$, by Part (1), we have $(\mathbb{T}, B)=0$, which is a contradiction. 

Finally,  if $a_2=a_1-\frac{\omega_\ell}{2}$, by (\ref{eqn-c-1}) and (\ref{ccccc16})  , we have 
$$c=-\zeta(\frac{\omega_\ell}{2})+\frac{1}{2}\sum_{j=0}^3\zeta(a_1-\frac{\omega_j}{2})=\zeta(2a_1)-\frac{1}{2}\eta_\ell-\frac{1}{2}\eta_3.$$
\end{enumerate}
\end{proof}

%Theorem \ref{lem-YV} builds a correspondence between the decomposition of $Y$ and the decomposition of $AP$. However, in order to determine the monodromy data of $\mathcal{L}(T)$ with $T\in AP$ in term of the zeros $\{[a_1],[a_2]\}\in Y$, we need more specific relation between $T$ and $\{[a_1],[a_2]\}$.

%and (\ref{T-ell})
Due to the value of $c$ 
%Notice that $c=\zeta(a_2-a_1)+\frac{1}{2}\sum_{j=0}^3\zeta(a_1-\frac{\omega_j}{2})$ 
depends on the representives of $[a_1]$ and $[a_2]$,  we make the following convention regarding the selection of representatives. 
%
%Let $\{[a_1], [a_2]\}\in Y$, we can define $c$ by (\ref{eqn-c-1}). 
%However, the value of $c$ 
%depends on the representives of $[a_1]$ and $[a_2]$.  Thus, 
For any  $\{[a_1],[a_2]\}\in Y$, 
we fix the representative $a_2$ of $[a_2]$ as long as the representative $a_1$ of $[a_1]$ is chosen in the following way: 
\begin{itemize}
\item   $a_2=-a_1$ if $\{[a_1], [a_2]\}\in Y_0$;
\item $a_2=a_1-\frac{\omega_\ell}{2}$  if  $\{[a_1], [a_2]\}\in Y_\ell$ with $\ell\in \{1,2,3\}$.
\end{itemize}
Under this convention, $c$ is given by (\ref{c-0}) and (\ref{c-ell}). 
For simplification of notations,  we denote by 
$\underline{a}:=\{[a], [-a]\}\in Y_0$ and $\underline{a}:=\{[a], [a-\frac{\omega_\ell}{2}]\}\in Y_\ell$  for $\ell\in \{1,2,3\}$, where $a\in \mathbb{C}$.
%we simply write $\underline{a_1}:=\{[a_1], [a_2]\}\in Y_0$ if $a_2=-a_1$ on $\mathbb{C}$ and  $\underline{a_1}:=\{[a_1], [a_2]\}\in Y_\ell$ if $a_2=a_1-\frac{\omega_\ell}{2}$ on $\mathbb{C}$ for $\ell\in \{1,2,3\}$.

Lemma \ref{lem-YV} builds a correspondence between the decomposition of $Y$ and the decomposition of $AP$. However, in order to determine the monodromy data of $\mathcal{L}(T)$ with $T\in AP$ in terms of the zeros $\underline{a}\in Y$, we need more specific relation between $T$ and $\underline{a}$.  By Lemma \ref{lem-YV}(1), we only need to make a concrete correspondence between $Y_\ell$ and $V_\ell$ for $\ell\in \{1,2,3\}$. 

 Let $\ell\in \{1,2,3\}$.  By Remark \ref{rek-T0}, 
$T=T_0\in V_\ell$ can be expressed in terms of  $\underline{a}\in Y_\ell$ by (\ref{T-ell}), which induces us to define
\iffalse 
 Denote the right hand side of  (\ref{T-ell}) by
$$f(a)=\zeta(a)+\zeta(a-\frac{\omega_\ell}{2})-\zeta(2a)+\frac{1}{2}\eta_\ell, \qquad a\in \mathbb{C},$$
 which is an elliptic function with $4$ simple poles at $E_\tau[2]$. By a direct computation, we obtain that $f(z)$ has $4$ simples zeros at $\{\pm \frac{\omega_\ell}{4}, \pm \frac{\omega_\ell}{4}+\frac{\omega_j}{2}\}$ with $j\in \{1,2,3\}\setminus\{\ell\}$, which exactly corresponds to $Y_\ell\cap Y_0$. 
Notice that $f(a)=f(a-\frac{\omega_\ell}{2})$, $f$ can be viewed as a function on $Y_\ell$, 
\fi
 the following map
\begin{equation*}
\begin{aligned}
T^{(\ell)}: Y_\ell &\to V_\ell\\
\underline{a}&\mapsto T^{(\ell)}(\underline{a}):=\zeta(a)+\zeta(a-\frac{\omega_\ell}{2})-\zeta(2a)+\frac{1}{2}\eta_\ell
\end{aligned}.
\end{equation*}
 This map will enclose the relationship between the apparent parameter $T\in V_\ell$ and the zeros $\underline{a}\in Y_\ell$ of common eigenfunctions of $\mathcal{L}(T)$.

\begin{theorem}\label{mapT}
Let $\ell\in \{1,2,3\}$ and $\{j,k,\ell\}=\{1,2,3\}$. The map $T^{(\ell)}$
 is a double covering map with four ramification points 
$\underline{\pm \frac{\omega_j}{4}}, \underline{\pm \frac{\omega_k}{4}}$ on $Y_\ell$. Moreover, the corresponding four branch points are exactly four roots of $Q_\ell(T)$. More precisely,
$$\left(T^{(\ell)}\left(\underline{\pm \frac{\omega_j}{4}}\right)\right)^2=e_j-e_\ell\quad \text{and}\quad \left(T^{(\ell)}\left(\underline{\pm \frac{\omega_k}{4}}\right)\right)^2=e_k-e_\ell.$$
%and the corresponding four branched values are $\{T\in V_\ell\mid Q_\ell(T)=0\}$.
%and their parameters are exactly $\pm\sqrt{e_j-e_\ell}$, which are the four roots of $Q_\ell(T)$.  
\end{theorem}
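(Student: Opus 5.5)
We study the elliptic function $f(a)=\zeta(a)+\zeta(a-\frac{\omega_\ell}{2})-\zeta(2a)+\frac{1}{2}\eta_\ell$ of $a\in E_\tau$, since $T^{(\ell)}(\underline{a})=f(a)$. The key observation is that $Y_\ell$ is the quotient of $E_\tau\setminus E_\tau[2]$ by the fixed-point-free involution $a\mapsto a-\frac{\omega_\ell}{2}$. By Remark \ref{rek-T0}, $f$ is invariant under this involution, so $f$ descends to $Y_\ell$. We then need to understand $f$ as a map $E_\tau\to\mathbb{P}^1$.

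\textbf{Degree of $f$.} Using (\ref{ccccc16}), we rewrite $\zeta(2a)=\frac12\sum_{k}\zeta(a-\frac{\omega_k}{2})+\frac12\eta_3$. Hence $f(a)=\frac12\zeta(a)+\frac12\zeta(a-\frac{\omega_\ell}{2})-\frac12\zeta(a-\frac{\omega_j}{2})-\frac12\zeta(a-\frac{\omega_k}{2})+\text{const}$. Thus $f$ has exactly four simple poles, at $E_\tau[2]$, with residues $\pm\frac12$, and $\deg f=4$. Removing $E_\tau[2]$ therefore removes exactly the fibre over $\infty$. Consequently $f:E_\tau\setminus E_\tau[2]\to\mathbb{C}$ is a proper degree-$4$ branched cover, and it factors through the unramified double cover $E_\tau\setminus E_\tau[2]\to Y_\ell$. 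The induced map $T^{(\ell)}:Y_\ell\to\mathbb{C}\cong V_\ell$ is therefore a proper map of degree $2$, with image inside $V_\ell$ by Lemma \ref{lem-YV}.

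\textbf{Ramification.} The function $f$ is odd up to the involution. Indeed, $f(-a)=-\zeta(a)-\zeta(a+\frac{\omega_\ell}{2})+\zeta(2a)+\frac12\eta_\ell=-f(a)$, after using $\zeta(a+\frac{\omega_\ell}{2})=\zeta(a-\frac{\omega_\ell}{2})+\eta_\ell$. So $a\mapsto -a$ also descends to $Y_\ell$ and gives the deck transformation $T\mapsto -T$? Not quite: it sends $f$ to $-f$. The genuine deck transformation of $T^{(\ell)}$ is instead $a\mapsto -a+\frac{\omega_\ell}{2}$. This map satisfies $f(-a+\frac{\omega_\ell}{2})=f(-(a-\frac{\omega_\ell}{2}))$. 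Combining oddness with the invariance gives $-f(a)$ again, so this guess also needs care.

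I will therefore proceed more directly with Riemann–Hurwitz. The quotient $Y_\ell$ compactifies to the elliptic curve $E_\tau/\langle\frac{\omega_\ell}{2}\rangle$, which has genus $1$. A degree-$2$ map from a genus-$1$ curve to $\mathbb{P}^1$ has exactly $4$ ramification points. The four points of $E_\tau[2]$ map to two points of the quotient. Each is a simple pole of the descended function, since the residues at $0$ and $\frac{\omega_\ell}{2}$ agree. So $\infty$ is not a branch value, and all four ramification points lie in $Y_\ell$.

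To locate them, I use the fixed points of the deck involution $\iota$ on the quotient. For a degree-$2$ map from an elliptic curve, $\iota$ has the form $\underline{a}\mapsto\underline{c-a}$ for some $c$. Since $\iota$ must swap the two poles $\underline{0}$ and $\underline{\omega_j/2}$, we get $c\equiv\frac{\omega_j}{2}$ modulo the lattice generated by $\Lambda_\tau$ and $\frac{\omega_\ell}{2}$. The fixed points then satisfy $2a\equiv\frac{\omega_j}{2}$ modulo that lattice. This yields the four classes $\underline{\pm\omega_j/4}$ and $\underline{\pm\omega_k/4}$, using $\frac{\omega_j}{2}\equiv\frac{\omega_k}{2}$ modulo $\frac{\omega_\ell}{2}$. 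Once the form of the deck map is established, verifying this list is routine.

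\textbf{Branch values.} I must show $f(\frac{\omega_j}{4})^2=e_j-e_\ell$. I expect this evaluation to be the main obstacle. At $a=\frac{\omega_j}{4}$ we have $2a=\frac{\omega_j}{2}$ and $\zeta(2a)=\frac12\eta_j$. Then $f(\frac{\omega_j}{4})=\zeta(\frac{\omega_j}{4})+\zeta(\frac{\omega_j}{4}-\frac{\omega_\ell}{2})-\frac12\eta_j+\frac12\eta_\ell$. The plan is to apply the addition formula $\zeta(u+v)=\zeta(u)+\zeta(v)+\frac12\frac{\wp'(u)-\wp'(v)}{\wp(u)-\wp(v)}$ with $v=-\frac{\omega_\ell}{2}$, where $\wp'(v)=0$. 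This gives $f=2\zeta(\frac{\omega_j}{4})-\frac12\eta_j+\frac12\frac{\wp'(\omega_j/4)}{\wp(\omega_j/4)-e_\ell}$. Next I use the duplication identity $2\zeta(\frac{\omega_j}{4})-\zeta(\frac{\omega_j}{2})=-\frac{\wp''(\omega_j/4)}{2\wp'(\omega_j/4)}$; since $\wp'(\frac{\omega_j}{2})=0$, this reduces to $\wp''/\wp'$. The $\zeta$ terms cancel, leaving an algebraic expression in $x=\wp(\frac{\omega_j}{4})$. That value satisfies $(x-e_j)^2=(e_j-e_\ell)(e_j-e_k)$, which follows from $\wp(2u)=e_j$. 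Squaring should then give $e_j-e_\ell$. As a consistency check, Theorem \ref{prop-poly} already implies that the branch values are roots of $Q_\ell$: at a ramification point the two common eigenfunctions coincide, so the monodromy is not completely reducible. That gives an independent route to the identification, and leaves only the pairing of $j$ versus $k$ to be settled by the explicit computation.
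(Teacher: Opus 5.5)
Your proposal is correct, and it differs from the paper's proof mainly in how the deck involution is found.

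The paper verifies by hand, using (\ref{ccccc16}), that $T^{(\ell)}(\underline{\frac{\omega_j}{2}-a})=T^{(\ell)}(\underline{a})$. This gives the fibres $\{\underline{a},\underline{\frac{\omega_j}{2}-a}\}$ explicitly, and the ramification points are then read off as the fixed points of that map. You instead derive the involution structurally, from four ingredients:
\begin{itemize}
\item $f$ has degree $4$;
\item Riemann--Hurwitz on $E_\tau/\langle\frac{\omega_\ell}{2}\rangle$;
\item any involution of a torus with fixed points has the form $z\mapsto c-z$;
\item the involution must swap the two poles $\underline{0},\underline{\frac{\omega_j}{2}}$.
\end{itemize}
Your route needs no guessing and would carry over to configurations with no convenient $\zeta$-identity. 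The paper's route is shorter and elementary, and the fibre formula (\ref{preimage}) it produces is reused later; you recover that formula as a byproduct.

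Two small clean-ups. The exploratory remarks about $a\mapsto -a$ and $a\mapsto -a+\frac{\omega_\ell}{2}$ should be dropped. The two poles of the descended function are simple because the quotient map is unramified (or because a degree-$2$ function has two distinct poles), not because the residues agree.

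For the branch values, the paper takes $u=\frac{\omega_k}{4}$ and $v=\frac{\omega_k}{4}-\frac{\omega_\ell}{2}$, so that $u+v\equiv\frac{\omega_j}{2}$. The $\wp$-addition theorem together with (\ref{p2z}) then gives $e_j+2e_k=e_k-e_\ell$ in two lines. Your route through $v=-\frac{\omega_\ell}{2}$ and the duplication formula also closes, though it takes more algebra. Set $x=\wp(\frac{\omega_j}{4})$, $X=x-e_j$, $A=e_j-e_\ell$ and $B=e_j-e_k$. One finds $f(\frac{\omega_j}{4})=-2A(X+B)/\wp'(\frac{\omega_j}{4})$. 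Hence $f^2=A^2(X+B)/\bigl(X(X+A)\bigr)$, and since $X^2=AB$ this equals $A$.

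Your observation that Theorem \ref{prop-poly} already forces the branch values to be the four roots of $Q_\ell$ is a valid conceptual shortcut that the paper does not use. It leaves only the $j$ versus $k$ pairing to be computed.
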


%\textcolor{blue}{查到如下关于``branch points"的术语：Let $\pi: Y\to X$ be a holomorphic map between Riemann surfaces / algebraic curves, $p\in Y$ and $q=f(p)$. Denote the ramification index at $p$ by $\nu_p$. If $\nu_p=1$,  we say that $f$  is unramified at $p$. Otherwise, $p$  is a ramification point of  $f$, in which case we say that $q$ is a branch point.  A branch value is exactly the same object as a branch point.  The ``branch point" is standard geometric term when treating $X$ as a curve/topological space, and the ``branch value" is predominantly used in function theory, complex analysis, or spectral problems.}

\begin{proof}
Let
\begin{equation}\label{fell}
f_\ell(a)=\zeta(a)+\zeta(a-\frac{\omega_\ell}{2})-\zeta(2a)+\frac{1}{2}\eta_\ell, \qquad a\in \mathbb{C},\end{equation}
 which is an elliptic function with $4$ simple poles at $E_\tau[2]$. By a direct computation, we obtain that $f_\ell(z)$ has $4$ simples zeros at $\{\pm \frac{\omega_\ell}{4}, \pm \frac{\omega_\ell}{4}+\frac{\omega_j}{2}\}$ with $j\in \{1,2,3\}\setminus\{\ell\}$, which exactly corresponds to $Y_\ell\cap Y_0$, thus $(T^{(\ell)})^{-1}(0)=Y_0\cap Y_\ell$. 
Notice that $f_\ell(a)=f_\ell(a-\frac{\omega_\ell}{2})$, $f_\ell$ can be viewed as a function on $Y_\ell$, thus $T^{(\ell)}$ is well-defined and is a covering map of degree $2$. 

Applying (\ref{ccccc16}), we do the following computation:

\begin{align*}
&T^{(\ell)}(\underline{\frac{\omega_j}{2}-a})-T^{(\ell)}(\underline{a})\\
%=&\zeta(\frac{\omega_j}{2}-a)+\zeta(\frac{\omega_j}{2}-a-\frac{\omega_\ell}{2})-\zeta({\omega_j}-2a)-\zeta(a)-\zeta(a-\frac{\omega_\ell}{2})+\zeta(2a)\\
%=&-\zeta(a-\frac{\omega_j}{2})-\zeta(a-\frac{\omega_j}{2}+\frac{\omega_\ell}{2})-\zeta(a)-\zeta(a-\frac{\omega_\ell}{2})+2\zeta(2a)-\eta_j\\
=&-\sum_{i=0}^3\zeta(a-\frac{\omega_i}{2})+2\zeta(2a)-\eta_j+\zeta(a-\frac{\omega_k}{2})-\zeta(a-\frac{\omega_j}{2}+\frac{\omega_\ell}{2})\\
=&\frac{1}{2}(\eta_\ell+\eta_j+\eta_k)-\eta_j+\zeta(a-\frac{\omega_k}{2})-\zeta(a-\frac{\omega_j}{2}+\frac{\omega_\ell}{2})=0.
\end{align*}
Therefore, 
\begin{equation}\label{preimage}
(T^{(\ell)})^{-1}\left(T^{(\ell)}(\underline{a})\right)=\left\{\underline{a}, \underline{\frac{\omega_j}{2}-a}\right\}.
\end{equation}
Notice that $\underline{a}=\underline{\frac{\omega_j}{2}-a}$ if and only if 
$\underline{a}\in \left\{\underline{\pm \frac{\omega_j}{4}}, \underline{\pm \frac{\omega_k}{4}}\right\}$. Hence, $T^{(\ell)}$ has four branched points: 
$\underline{\pm \frac{\omega_j}{4}}, \underline{\pm \frac{\omega_k}{4}}$. 

Finally, we compute the branch  points. Notice that $T^{(\ell)} (\underline{\pm a})=\pm T^{(\ell)} (\underline{a})$, we only need to compute 
% In particular, we have 
%$$T^{(\ell)}(\omega_k/4)=\pm \sqrt{e_k-e_\ell}.$$
%Indeed, by (\ref{T0ell}), we have 
\begin{align*}
T^{(\ell)}(\underline{\frac{\omega_k}{4}})&=\zeta(\frac{\omega_k}{4})+\zeta(\frac{\omega_k}{4}-\frac{\omega_\ell}{2})-\zeta(\frac{\omega_k}{2})+\frac{1}{2}\eta_\ell\\
%&=\zeta(\frac{\omega_k}{4})+\zeta(\frac{\omega_k}{4}-\frac{\omega_\ell}{2})-\zeta(\frac{\omega_k}{2}-\frac{\omega_\ell}{2})+\zeta(\frac{\omega_k}{2}-\frac{\omega_\ell}{2})-\frac{1}{2}\eta_k+\frac{1}{2}\eta_\ell\\
&=\zeta(\frac{\omega_k}{4})+\zeta(\frac{\omega_k}{4}-\frac{\omega_\ell}{2})-\zeta(\frac{\omega_k}{2}-\frac{\omega_\ell}{2})\\
&=-\frac{1}{2}\frac{\wp'(\frac{\omega_k}{4})-\wp'(\frac{\omega_k}{4}-\frac{\omega_\ell}{2})}{\wp(\frac{\omega_k}{4})-\wp(\frac{\omega_k}{4}-\frac{\omega_\ell}{2})}.
\end{align*}
Using the addition-theorem, we obtain that 
\begin{equation}\label{Tomega}
\begin{aligned}
\left(T^{(\ell)}(\underline{\frac{\omega_k}{4}})\right)^2&=\frac{1}{4}\left(\frac{\wp'(\frac{\omega_k}{4})-\wp'(\frac{\omega_j}{4}-\frac{\omega_\ell}{2})}{\wp(\frac{\omega_k}{4})-\wp(\frac{\omega_k}{4}-\frac{\omega_\ell}{2})}\right)^2\\
&=\wp(\frac{\omega_k}{2}-\frac{\omega_\ell}{2})+\wp(\frac{\omega_k}{4})+\wp(\frac{\omega_k}{4}-\frac{\omega_\ell}{2}).
\end{aligned}
\end{equation}
%Note the following identity 
%$$4\wp(2z)=\sum_{i=0}^3\wp(z-\frac{\omega_i}{2}),$$
By the identity  (\ref{p2z}), 
we have  
\begin{align}
e_k=\wp(2\cdot\frac{\omega_k}{4})%&=\frac{1}{4}\sum_{i=0}^3\wp(\frac{\omega_k}{4}-\frac{\omega_i}{2})\\
&=\frac{1}{4}\left(\wp(\frac{\omega_k}{4})+\wp(\frac{\omega_k}{4}-\frac{\omega_\ell}{2})+\wp(-\frac{\omega_k}{4})+\wp(\frac{\omega_k}{4}-\frac{\omega_j}{2})\right)\notag\\
&=\frac{1}{2}\left(\wp(\frac{\omega_k}{4})+\wp(\frac{\omega_k}{4}-\frac{\omega_\ell}{2})\right),\label{ek}
\end{align}
because  $\frac{\omega_k}{4}-\frac{\omega_j}{2}=-(\frac{\omega_k}{4}-\frac{\omega_\ell}{2})$ on $E_\tau$.  Therefore, 
\begin{align*}
\left(T^{(\ell)}\left(\underline{\frac{\omega_k}{4}}\right)\right)^2
%&=\wp(\frac{\omega_k}{2}-\frac{\omega_\ell}{2})+\wp(\frac{\omega_k}{4})+\wp(\frac{\omega_k}{4}-\frac{\omega_\ell}{2})\\
&=e_j+2e_k=e_k-e_\ell.
\end{align*}
\iffalse 
It is clear to see that $T^{(\ell)}$ is an elliptic function with only simple poles at $a=0$ and $a=\frac{\omega_\ell}{2}$. Thus, for any $T\in \mathbb{C}$, $(T^{(\ell)})^{-1}(T)$ consists of only two points counting multiplicity. 

Suppose $T^{(\ell)}(\{a_1, a_2\})=T$ for some $T\in \mathbb{C}$. Then $(T^{(\ell)})^{-1}(T)=\left\{\{a_1, a_2\}, \{\frac{\omega_j}{2}-a_1, \frac{\omega_j}{2}-a_2\}\right\}.$ The branch points occurs at either $a_1=\frac{\omega_j}{2}-a_1$ or $a_1=\frac{\omega_j}{2}-a_2$ on $E_\tau$ for $j\neq \ell$, which yields $a_1\in \{\pm \frac{\omega_j}{4}, \pm \frac{\omega_k}{4}\}$, $\{j, k, \ell\}=\{1,2,3\}$. Then 
$y(z; a_1^{(i)})=e^{c_iz}\frac{\sigma(z-a_1^{(i)})\sigma(z-a_2^{(i)})}{(\prod_{k=0}^3\sigma(z-\frac{\omega_k}{2}))^{\frac{1}{2}}}$ is the only solution of $\mathcal{L}_\ell(T_i^{(\ell)})$, which square is an elliptic function of second class, where $T_i^{(\ell)}=T^{(\ell)}(a_1^{(i)})$, $i=1,2,3$, that is $\mathcal{L}_\ell(T_i^{(\ell)})$ is not completely reducible. Thus $Q_\ell(T_i^{(\ell)})=0$. 
\fi
This proves the theorem. 
\end{proof}

Now, we are ready to calculate the monodromy data of $\mathcal{L}_\ell(T)$ and prove Theorem \ref{thm-intro-data}. 

%\noindent\textit{\bf Proof of Theorem \ref{thm-intro-data}. } 
\begin{proof}(Proof of Theorem \ref{thm-intro-data}.)
Let $\ell\in \{1,2,3\}$ and $T\in V_\ell$. 
%Let $\ell\in \{1,2,3\}, \{\ell,j,k\}=\{1,2,3\}$ and $a\in \mathbb{C}$ with $[a]\not\in E_\tau[2]$. 

If $Q_\ell(T)\neq 0$, then 
%$\underline{a}\not\in \left\{\underline{\pm \frac{\omega_j}{4}},\, \underline{\pm \frac{\omega_k}{4}}\right\}$, by Theorem \ref{mapT}, we have $Q_\ell(T^{(\ell)}(\underline{a}))\neq 0$, i.e.,
 $\mathcal{L}_\ell(T)$ is completely reducible.  Hence, there are two linearly independent common eigenfunctions.
Denote by $(T^{(\ell)})^{-1}(T)=\left\{\underline{a}, \underline{\frac{\omega_j}{2}-a}\right\}\subseteq Y_\ell$ with $a\in \mathbb{C}$ and $[a]\notin E_\tau[2]$.  By Remark \ref{rek-yell}, $y_\ell(z;a)$ and $y_\ell(z;\frac{\omega_j}{2}-a)$ 
%\begin{align*}
%y_\ell(z;a)&=e^{c_az}\frac{\sigma(z-a)\sigma(z-a+\frac{\omega_\ell}{2})}{(\prod_{k=0}^3\sigma(z-\frac{\omega_k}{2}))^{\frac{1}{2}}},\\
%y_\ell(z;\frac{\omega_j}{2}-a)&=e^{c_{\frac{\omega_j}{2}-a}z}\frac{\sigma(z+a-\frac{\omega_j}{2})\sigma(z+a-\frac{\omega_k}{2})}{(\prod_{k=0}^3\sigma(z-\frac{\omega_k}{2}))^{\frac{1}{2}}}
%\end{align*}  
are two common eigenfunctions of the same ODE $\mathcal{L}_\ell(T)$. By considering  the zeros of $y_\ell(z;a)$ and $y_\ell(z;\frac{\omega_j}{2}-a)$, we obtain that they are linearly independent. Hence, the monodromy data $(s,r)$ can be determined by $y_\ell(z;a)$ and $y_\ell(z;\frac{\omega_j}{2}-a)$. 
By (\ref{comm-eigen}), we have 
\begin{align*}
y_\ell(z+1;a)&=e^{c_a-\eta_1(2a-\frac{1}{2}\omega_\ell-\frac{1}{2}\omega_3)}y_\ell(z;a),\\
y_\ell(z+\tau;a)&=e^{\tau c_a-\eta_2(2a-\frac{1}{2}\omega_\ell-\frac{1}{2}\omega_3)}y_\ell(z;a).
\end{align*}
where $c_a=\zeta(2a)-\frac{1}{2}\eta_\ell-\frac{1}{2}\eta_3$ is defined in (\ref{c-ell}).
The Legendre identity $\tau\eta_1-\eta_2=2\pi i$ implies there is a unique $(s,r)\in \mathbb{C}^2$ such that 
\begin{equation}
\left\{
\begin{aligned}\label{rs-new}
r+s\tau&=2a-\frac{1}{2}\omega_\ell-\frac{1}{2}\omega_3\\
r\eta_1+s\eta_2&=\zeta(2a)-\frac{1}{2}\eta_\ell-\frac{1}{2}\eta_3
\end{aligned}\right.,
\end{equation}
then 
%By (\ref{CompRed}), we define $(s,r)$  by
$$y_\ell(z+1;a)=e^{-2\pi i s}y_\ell(z;a)\quad \text{and}\quad 
y_\ell(z+\tau;a)=e^{2\pi i r}y_\ell(z;a).$$
%Furthermore, by a direct computation, we have 
%\begin{align*}
%y_\ell(z+1;\frac{\omega_j}{2}-a)&=e^{2\pi i s}y_\ell(z;\frac{\omega_j}{2}-a)\\
%y_\ell(z+\tau;\frac{\omega_j}{2}-a)&=e^{-2\pi i r}y_\ell(z;\frac{\omega_j}{2}-a).
%\end{align*}
Finally, if $a$ is replaced by $a+m_1+m_2\tau$ with $m_1,m_2\in \mathbb{Z}$, then  solving (\ref{rs-new}) gives us  $(s+2m_1,r+2m_2)$.
This proves Part (1) of Theorem \ref{thm-intro-data}.

Next, if $Q_\ell(T)=0$, by Theorem \ref{mapT}, $T$ is a branched value of $T^{(\ell)}$.  Hence, $(T^{(\ell)})^{-1}(T)=\{\underline{a}\}$ with 
\iffalse 
Let $\ell\in \{1,2,3\}$ and $\{j,k,\ell\}=\{1,2,3\}$. The map $T^{(\ell)}$
 is a double covering map and branched along four points 
$\underline{\pm \frac{\omega_j}{4}}, \underline{\pm \frac{\omega_k}{4}}$ on $Y_\ell$. Moreover, the corresponding four branched values are exactly four roots of $Q_\ell(T)$. More precisely,
$$\left(T^{(\ell)}\left(\underline{\pm \frac{\omega_j}{4}}\right)\right)^2=e_j-e_\ell\quad \text{and}\quad \left(T^{(\ell)}\left(\underline{\pm \frac{\omega_k}{4}}\right)\right)^2=e_k-e_\ell.$$
\fi 
 $\underline{a}\in \left\{\underline{\pm \frac{\omega_j}{4}},\, \underline{\pm \frac{\omega_k}{4}}\right\}$ and  $\{\ell, j, k\}=\{1,2,3\}$. 
%%%
\iffalse 
 Notice that $y_\ell(z;a)$, $y_\ell(z; \frac{\omega_j}{2}-a)$ and $y_\ell(z; \frac{\omega_k}{2}-a)$  are linearly dependent, we need to construct another solution of $\mathcal{L}_\ell(T)$ such that it is linearly independent with $y_\ell(z;a)$. 
Solve the equation (\ref{rs-new}) gives us 
\begin{equation*}
\begin{aligned}
2\pi is&=\eta_1(2a-\frac{1}{2}\omega_\ell-\frac{1}{2}\omega_3)-(\zeta(2a)-\frac{1}{2}\eta_\ell-\frac{1}{2}\eta_3)\\
2\pi ir&=-\eta_2(2a-\frac{1}{2}\omega_\ell-\frac{1}{2}\omega_3)+\tau(\zeta(2a)-\frac{1}{2}\eta_\ell-\frac{1}{2}\eta_3).
\end{aligned}
\end{equation*}
By a direct computation, we obtain that $s,r\in \frac{1}{2}\mathbb{Z}$, i.e., the eigenvalues are $\pm 1$. 
%By (\ref{eqn-y1}), (\ref{eqn-y2}), we have an elliptic function
%\begin{align*}
%&y(z;a)y(z;\frac{\omega_j}{2}-a)\\
%=&e^{(\eta_j-\eta_i-\eta_3)z}\frac{\sigma(z-a)\sigma(z-a+\frac{\omega_i}{2})\sigma(z+a-\frac{\omega_j}{2})\sigma(z+a-\frac{\omega_j}{2}+\frac{\omega_i}{2})}{\prod_{k=0}^3\sigma(z-\frac{\omega_k}{2})},
%\end{align*}
\fi
 %Notice that $y_\ell(z;a)$, $y_\ell(z; \frac{\omega_j}{2}-a)$ and $y_\ell(z; \frac{\omega_k}{2}-a)$  are linearly dependent, 
Since $\mathcal{L}_\ell(T)$  is not completely reducible, then we need to construct another solution of $\mathcal{L}_\ell(T)$ such that it is linearly independent with $y_\ell(z;a)$. 
Notice that $y_\ell^2(z;a)$ is an elliptic solution of the second symmetry product equation (\ref{cc6}). By Theorem \ref{prop1.2}, there exists a nonzero constant $b\in \mathbb{C}$ such that $y_\ell^2(z;a)=bF^{(\ell)}(z;T)$, where
\begin{align*}%\label{eq_07141414}
 F^{(\ell)}(z;T)&=-\sum\limits_{k=0}^3\varepsilon_k^{(\ell)}\zeta(z-\frac{\omega_k}{2})+(2T+\eta_{3-\ell})\\
 &=-\sum\limits_{k=0}^3\varepsilon_k^{(\ell)}\left(\zeta(z-\frac{\omega_k}{2})-\zeta(a-\frac{\omega_k}{2})\right).
\end{align*}
%is an elliptic solution of (\ref{cc6}) for all $T\in V_i$. Moreover, we get the following important corollary. 
Here, the second equality is due to $T=T^{(\ell)}(\underline{a})$ and (\ref{ccccc16}).
%%%
Note that  $1/F^{(\ell)}(z;T)$ is also  an elliptic function with exactly two double poles at $a$ and $a-\frac{\omega_\ell}{2}$ on $E_\tau$, then we can set 
$$ \frac{1}{F^{(\ell)}(z;T)}=t_1\wp(z-a)+t_2\wp(z-a+\frac{\omega_\ell}{2})+t_0+\mu\zeta(z-a)-\mu\zeta(z-a+\frac{\omega_\ell}{2}),$$
with $t_0,t_1,t_2,\mu\in \mathbb{C}$. 
By a direct computation, we have
\begin{align*}
t_1&=\lim_{z\to a}\frac{(z-a)^2}{F^{(\ell)}(z;T)}=\frac{2}{\sum\limits_{k=0}^3\varepsilon_k^{(\ell)}\wp'(a-\frac{\omega_k}{2})},\\
t_2&=\lim_{z\to a-\frac{\omega_\ell}{2}}\frac{(z-a+\frac{\omega_\ell}{2})^2}{F^{(\ell)}(z;T)}=\frac{2}{\sum\limits_{k=0}^3\varepsilon_k^{(\ell)}\wp'(a-\frac{\omega_\ell}{2}-\frac{\omega_k}{2})}=t_1,\\
\mu&=\lim_{z\to a}\frac{(z-a)^2-t_1F^{(\ell)}(z;T)}{(z-a)F^{(\ell)}(z;T)}=0,\\
t_0&=-t_1\wp(a)-t_1\wp(a-\frac{\omega_\ell}{2})=-\frac{2\left(\wp(a)+\wp(a-\frac{\omega_\ell}{2})\right)}{\sum\limits_{k=0}^3\varepsilon_k^{(\ell)}\wp'(a-\frac{\omega_k}{2})},
\end{align*}
where the last equation is due to $0$ is a zero of $1/F^{(\ell)}(z;T)$.

%\begin{align*}
%t_1&=\lim_{z\to a}\frac{(z-a)^2}{y(z;a)y(z;\frac{\omega_j}{2}-a+\omega_i)}\\
%&=e^{(\eta_3+\eta_i-\eta_j)a}\frac{\prod_{k=0}^3\sigma(a-\frac{\omega_k}{2})}{\sigma(z-a)\sigma(z-a+\frac{\omega_i}{2})\sigma(z+a-\frac{\omega_j}{2})\sigma(z+a-\frac{\omega_j}{2}+\frac{\omega_i}{2})}
%\end{align*}

Define 
\begin{align*}
\chi(z;a):=&-\frac{1}{t_1}\int_0^z\frac{1}{F^{(\ell)}(\xi;T)}d\xi\\
=&\zeta(z-a)+\zeta(z-a+\frac{\omega_\ell}{2})+z\wp(a)+z\wp(a-\frac{\omega_\ell}{2})+\zeta(a)+\zeta(a-\frac{\omega_\ell}{2}), 
\end{align*}
then $$\chi'(z;a)=-\frac{1}{t_1F^{(\ell)}(z;T)}=-\frac{b}{t_1y_\ell^2(z;a)}.$$
Since $y_\ell''(z;a)=q_\ell(z;T)y_\ell(z;a)$, then $y_2(z;a):=\chi(z;a)y_\ell(z;a)$ is also a solution of $\mathcal{L}_\ell(T)$, which is clearly linearly independent with $y_\ell(z;a)$.
Denote by 
\begin{equation}\label{chi12}
\begin{aligned}
\chi_1=&\chi(z+1;a)-\chi(z;a)
=2\eta_1+\wp(a)+\wp(a-\frac{\omega_\ell}{2}),\\
 \chi_2=&\chi(z+\tau;a)-\chi(z;a)=2\eta_2+\tau\wp(a)+\tau\wp(a-\frac{\omega_\ell}{2}),
\end{aligned}
\end{equation}
we have 
\begin{align*}
y_2(z+1)
&=(\chi(z)+\chi_1)e^{-2\pi is}y(z)=e^{-2\pi is}y_2(z)+e^{-2\pi is}\chi_1y(z),\\
 y_2(z+\tau)
&=(\chi(z)+\chi_2)e^{2\pi ir}y(z)=e^{2\pi ir}y_2(z)+e^{2\pi ir}\chi_2y(z).
\end{align*}
%Replacing $y_1$ by $\chi_1y_1$ (still denoted by $y_1$), the monodromy matrices under $y_1$ and $y_2$ becomes 
Therefore, the monodromy matrices under the linearly independent solutions $y(z;a)$ and $y_2(z; a)$ are the following:
\begin{align*}
\begin{array}{ccc}
e^{-2\pi is}\left(\begin{array}{cc} 1&0\\
\chi_1 &1
\end{array}\right),
&&
e^{2\pi ir}\left(\begin{array}{cc} 1&0\\
\chi_2 &1
\end{array}\right).
\end{array}
\end{align*}
where $s,r\in \frac{1}{2}\mathbb{Z}$.  %and $\mathcal{C}=\frac{\chi_2}{\chi_1}$.
By (\ref{chi12}) and the Legendre relation $\tau \eta_1-\eta_2=2\pi i$, we have $\tau \chi_1-\chi_2=4\pi i$, then $\chi_1, \chi_2$ can not vanish simultaneously and 
\iffalse 
We summarize the above discussion to the following theorem.

\begin{theorem}\label{thm-monodromy-3}
Let $\ell\in \{1,2,3\}, \{\ell,j,k\}=\{1,2,3\}$. If  $\underline{a}\in \left\{\underline{\pm \frac{\omega_j}{4}},\, \underline{\pm \frac{\omega_k}{4}}\right\}$, i.e., $Q_\ell(T_\ell(\underline{a}))=0$,  then  $\mathcal{L}(T_\ell(\underline{a}))$ is not completely reducible and the monodromy matrices are 
\begin{align*}
\begin{array}{ccc}
 \varepsilon_1\left(\begin{array}{cc} 1&0\\
1 &1
\end{array}\right),
&&
 \varepsilon_2\left(\begin{array}{cc} 1&0\\
\mathcal{C}&1
\end{array}\right),
\end{array}
\end{align*}
where $ \varepsilon_1,  \varepsilon_2\in \{\pm 1\}$ and 
%\begin{align*}
%\chi_1=&
%2\eta_1+\wp(a)+\wp(a-\frac{\omega_\ell}{2}),\\
% \chi_2=&2\eta_2+\tau\wp(a)+\tau\wp(a-\frac{\omega_\ell}{2}).
%\end{align*}
\begin{equation}\label{data-C}
\mathcal{C}=\tau-\frac{\pi i}{\eta_1+e_k}, \qquad \text{for}\quad \underline{a}=\pm \underline{\frac{\omega_k}{4}}.
\end{equation}
%\footnote{\textcolor{red}{$\pm \underline{\frac{\omega_j}{4}}$ 对应的$\frac{\chi_2}{\chi_1}$是一样的，因为$\wp(-\frac{\omega_j}{4}-\frac{\omega_\ell}{2})=\wp(\frac{\omega_j}{4}+\frac{\omega_\ell}{2})=\wp(\frac{\omega_j}{4}-\frac{\omega_\ell}{2})$.}}
\end{theorem} 
\fi 
the monodromy data
$$\mathcal{C}(\underline{a})=\frac{\chi_2}{\chi_1}=\tau-\frac{4\pi i}{2\eta_1+\wp(a)+\wp(a-\frac{\omega_\ell}{2})}.$$
Without loss of generality, we assume  $\underline{a} =\underline{\pm \frac{\omega_k}{4}}$.
%It remains to prove the identity (\ref{intro-C}). By applying the Legendre relation $\tau \eta_1-\eta_2=2\pi i$, we have 
Notice that $\wp(-\frac{\omega_k}{4}-\frac{\omega_\ell}{2})=\wp(\frac{\omega_k}{4}+\frac{\omega_\ell}{2})=\wp(\frac{\omega_k}{4}-\frac{\omega_\ell}{2})$, we have $\mathcal{C}(\underline{\frac{\omega_k}{4}})=\mathcal{C}(\underline{-\frac{\omega_k}{4}})$.  Furthermore, 
by (\ref{ek}), we have  
$$e_k=\frac{1}{2}\left(\wp(\frac{\omega_k}{4})+\wp(\frac{\omega_k}{4}-\frac{\omega_\ell}{2})\right)$$
\iffalse 
\begin{align*}
e_k=\wp(2\cdot\frac{\omega_k}{4})&=\frac{1}{4}\sum_{i=0}^3\wp(\frac{\omega_k}{4}-\frac{\omega_i}{2})\\
&=\frac{1}{4}\left(\wp(\frac{\omega_k}{4})+\wp(\frac{\omega_k}{4}-\frac{\omega_\ell}{2})+\wp(-\frac{\omega_k}{4})+\wp(\frac{\omega_k}{4}-\frac{\omega_j}{2})\right)\\
&=\frac{1}{2}\left(\wp(\frac{\omega_k}{4})+\wp(\frac{\omega_k}{4}-\frac{\omega_\ell}{2})\right),
\end{align*}
because  $\frac{\omega_k}{4}-\frac{\omega_j}{2}=-(\frac{\omega_k}{4}-\frac{\omega_\ell}{2})$ on $E_\tau$. % we have $\wp(\frac{\omega_k}{4}-\frac{\omega_j}{2})=\wp(\frac{\omega_k}{4}-\frac{\omega_\ell}{2})$. Clearly, we obtain that $\wp(\frac{\omega_\ell}{4})+\wp(\frac{\omega_k}{4}-\frac{\omega_\ell}{2})=2e_k$ and 
\fi
Combing with Theorem \ref{mapT}, we have
 $$\mathcal{C}(\underline{\pm\frac{\omega_k}{4}})=\tau-\frac{2\pi i}{\eta_1+e_k},\quad\text{where}\,\, T^2= \left(T^{(\ell)}\left(\underline{\pm \frac{\omega_k}{4}}\right)\right)^2=e_k-e_\ell.$$
\end{proof}

Before ending this section, we notice that $(T^{(\ell)})^{-1}(0)=Y_\ell\cap Y_0$. By plugging (\ref{Y0Yell}) into (\ref{rs-new}), a direct computation gives us Corollary \ref{cor-intro-T=0}.

\section{Unitary monodromy}\label{sec-unitary}
In this last section, we investigate the constraints on the apparent parameter $T\in V_\ell$ for some $\ell\in \{1,2,3\}$ such that  $\mathcal{L}_\ell(T)$ is unitarizable.  %For this purpose, we introduce  the Hill's discriminant which is determined by the trace of the monodromy matrix.
First of all, $\mathcal{L}_\ell(T)$ need to be completely reducible, i.e., $Q_\ell(T)\neq 0$, then the monodromy data is given by (\ref{rs-new}) and $\mathcal{L}_\ell(T)$ is unitarizable if and only if $(s,r)\in \mathbb{R}^2$. 
%By Corollary \ref{cor-intro-T=0}, we see that the monodromy is unitarizable if $T=0$.  In this section, we will show $T=0$ is also a necessary condition 
Solve the equation (\ref{rs-new}) gives us 
\begin{equation*}
\left\{
\begin{aligned}
2\pi is&=\eta_1(2a-\frac{1}{2}\omega_\ell-\frac{1}{2}\omega_3)-(\zeta(2a)-\frac{1}{2}\eta_\ell-\frac{1}{2}\eta_3)\\
2\pi ir&=-\eta_2(2a-\frac{1}{2}\omega_\ell-\frac{1}{2}\omega_3)+\tau(\zeta(2a)-\frac{1}{2}\eta_\ell-\frac{1}{2}\eta_3),
\end{aligned}\right.
\end{equation*}
where $\underline{a}\in (T^{(\ell)})^{-1}(T)$. In order to make a more direct connection between $T$ and $(s,r)$, we notice that $T^{(\ell)}$ is a double covering map branched along the zeros of $Q_\ell(T)$, then 
%By Theorem \ref{mapT}, we have 
\begin{equation}\label{GLC}
Y_\ell\cong \Sigma_\ell=\{(T,W)\in \mathbb{C}^2\mid W^2=Q_\ell(T)\}.
\end{equation}
Here, $Y_\ell$ is called a generalized Lam\'e curve and $\Sigma_\ell$ is called a spectral curve. 
%In general, $\Sigma_\ell$ is a hyperelliptic curve. In our situation, 
In particular, $\Sigma_\ell$ is an elliptic curve because $Q_\ell$ is of degree $4$ and has no multiple roots.  
Observe that  $\Sigma_\ell$ can be  local holomorphic parametrized by $T$ around any unramified point, so does $Y_\ell$ by (\ref{GLC}). Consequently, $(s,r)$ can be locally expressed as a holomorphic function of $T$. In what follows, we will first  unravel this relation by introducing Baker-Akhiezer functions.

\subsection{Baker-Akhiezer functions.}

%$$\xymatrix{Y_\ell\ar[rr]\ar[rd]&&\Simga_\ell \\ &V_\ell&}$$
Let $T\in V_\ell$ for some $\ell\in \{1,2,3\}$, then there are two points $(T,\pm W)\in \Sigma_\ell$ such that $(\pi_1^{(\ell)})^{-1}=\{ (T,\pm W)\}$.  In particular, $(T,W)=(T,-W)$ if and only if $Q_\ell(T)=0$.  We plan to construct the common eigenfunction in terms of $(T,\pm W)$ and then derive the monodromy data. To simplify notation, we use ${F}(z;T)$ to replace $F^{(\ell)}(z;T)$.
%Let $P_0=(W_0, T_0)\in \Sigma_\ell$ for some $\ell\in \{1,2,3\}$, %we have  $W_0^2=(F'(z;T_0))^2-2F''(z;T_0)F(z;T_0)+4q_\ell(z;T_0)F^2(z;T_0)$ and
Since $W^2=Q_\ell(T)$, then 
\begin{equation}\label{ccccc3}
    \begin{aligned}
       {F}^\prime(z;T)^2-W^2&=(-4q_\ell(z;T){F}(z;T)+2{F}^{\prime\prime}(z;T)){F}(z;T)\\
        &:= H(z;T){F}(z;T).
    \end{aligned}
\end{equation}
%Here $F$ is one of $F^{(i)}$ given by (\ref{eq_07141414}), $1\leq i\leq 3$. 
%We denote $y_1^{\prime}/y_1$ by $\phi$, that is, by (\ref{eq07091551}),
We define 
\begin{equation}\label{ccccc4}
    \phi(z;T,\pm W):=\frac{F^\prime(z;T)\mp W}{2F(z;T)}=\frac{H(z;T)}{2(F^\prime(z;T)\pm W)}.
\end{equation}
Indeed, all the numerators and denominators in the last two expressions of (\ref{ccccc4}) can not vanish at some $z_0$ simultaneously, because if it does, then $W=0$ and $H(z_0;T)$, $F^\prime(z_0;T)$ and $F(z_0;T)$ vanish at $T$, which implies $T$ is a multiple zero of $Q_\ell(T)$, a contradiction to the fact that $e_i\neq e_j$ for $i\neq j$. By (\ref{ccccc4}), 
$\phi (z;T,\pm W)$ is also an elliptic function in $z$ and satisfies the Riccati equation:
\begin{equation}\label{eq07101530}
 (\phi (z;T,\pm W))^{\prime}%=\frac{F^{\prime\prime}}{2F}-\frac{F^{\prime\,2}\mp W_0F^{\prime}}{2F^2}%=-\frac{(F^{\prime}\mp W_0)^2}{4F^2}+q(z)
=-(\phi (z;T,\pm W))^2+q_\ell(z;T).
\end{equation}
%From the Riccati equation (\ref{eq07101530}), it is easy to obtain the following result: 
\begin{proposition}\label{pro09101540}
Let $(T,\pm W) \in\Sigma_\ell$ for some $\ell\in \{1,2,3\}$ and $\phi (z;T,\pm W)$ is defined by (\ref{ccccc4}).
%Denote by $S_\pm$
Then the pole set of  $\phi (z;T,\pm W)$ is 
%Then all poles of  $\phi (z;P^\pm_0)$  are simple and 
$$ \{ 0, \frac{\omega_1}{2}, \frac{\omega_2}{2},\frac{\omega_3}{2},a^\pm, a^\pm-\frac{\omega_\ell}{2}\}+\Lambda_\tau,$$
where $a^\pm\in E_\tau\setminus E_\tau[2]$ is a common zero of $F^{(\ell)}(z;T)$ and $(F^{(\ell)})^\prime(z;T)\pm W$.  Moreover, all poles are simple and the residues at $\frac{\omega_j}{2},  j=0,\cdots, 3$ are $-\frac{1}{2}$ and the residues at $a^\pm, a^\pm-\frac{\omega_\ell}{2}$ are $1$.
\end{proposition}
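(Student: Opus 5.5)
The plan is to read off the poles of $\phi(z;T,\pm W)=\frac{F'\mp W}{2F}$ from the zeros of $F:=F^{(\ell)}(z;T)$, using the Riccati equation (\ref{eq07101530}) to pin down the residues. First, the poles of $\phi$ can only sit at zeros of $F$ or at poles of $F$. By (\ref{eq_07141414}), the poles of $F$ are the four simple poles at $\frac{\omega_j}{2}$, each with residue $-\varepsilon_j^{(\ell)}=\pm1$. Near $\frac{\omega_j}{2}$ we have $F\sim -\varepsilon_j^{(\ell)}(z-\frac{\omega_j}{2})^{-1}$ and $F'\sim \varepsilon_j^{(\ell)}(z-\frac{\omega_j}{2})^{-2}$. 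Hence
$$\frac{F'\mp W}{2F}=-\frac{1}{2}(z-\tfrac{\omega_j}{2})^{-1}+O(1),$$
which gives a simple pole with residue $-\frac12$ at every half period, independently of the sign of $W$. This agrees with the local exponent $-\frac12$ of $\mathcal{L}_\ell(T)$ at $\frac{\omega_j}{2}$.

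Next, consider the zeros of $F$. Since $F$ is elliptic with four simple poles, it has four zeros counted with multiplicity. From the proof of Theorem \ref{thm-intro-data}, at the branch points $F=b^{-1}y_\ell^2(z;a)$, and in general $F=y_+y_-$ by Proposition \ref{prop1.1}. I would use this factorization. By the proof of Proposition \ref{prop1.1}, $y_\pm=\sqrt F\exp(\mp\frac12\int W/F)$ satisfies $y_\pm'/y_\pm=\phi(z;T,\pm W)$, so $\phi(\cdot;T,\pm W)$ is the logarithmic derivative of the common eigenfunction $y_\pm$. When $T\neq0$, Lemma \ref{lem-commoneigen} and Lemma \ref{lem-YV} show that $y_\pm$ has exactly two simple zeros, forming a pair $\underline{a^\pm}=\{a^\pm,a^\pm-\frac{\omega_\ell}{2}\}\in Y_\ell$. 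The Hermite–Halphen form (\ref{commony}) then gives
$$\phi=c+\zeta(z-a^\pm)+\zeta(z-a^\pm+\tfrac{\omega_\ell}{2})-\tfrac12\sum_k\zeta(z-\tfrac{\omega_k}{2}).$$
This already exhibits all the poles and all the residues. To identify $a^\pm$ as a common zero of $F$ and $F'\pm W$, note that at a zero $z_0$ of $F$ the relation (\ref{ccccc3}) gives $F'(z_0)^2=W^2$, so $F'(z_0)=\pm W$. At $a^\pm$, where $y_\pm$ vanishes but $y_\mp$ does not, we have $F'(a^\pm)=y_\pm'(a^\pm)y_\mp(a^\pm)$, and I would match the sign of this value via the Wronskian relation $W=y_+'y_--y_+y_-'$ from the proof of Theorem \ref{prop-poly}.

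The step I expect to be the main obstacle is the degenerate cases. The first is $T=0$, where Lemma \ref{lem-commoneigen} no longer excludes zeros at half periods. There $\underline{a}\in Y_0\cap Y_\ell$ by (\ref{Y0Yell}), and I would check directly that these points are not half periods, using the explicit $y_0$ from Remark \ref{rek-yell}. The second is the branch points $W=0$, where $a^+=a^-$ and $F$ has double zeros. There the formula above still holds with $y_+=y_-=y_\ell(z;a)$, and the residue $1$ follows from simple zeros of $y_\ell$. For a uniform argument I would instead use the Riccati equation: at any pole $p\notin E_\tau[2]$ of $\phi$ we have $q_\ell$ regular, so $\phi'=-\phi^2+O(1)$ forces $\phi\sim (z-p)^{-1}$, giving residue exactly $1$. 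The remaining point is to rule out cancellation between the numerator and denominator of $\phi$ at zeros of $F$, which was already excluded right after (\ref{ccccc4}).
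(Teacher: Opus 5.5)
\paragraph{Comparison with the paper's proof.}
Your route differs from the paper's. The paper does not go through the classification of zeros of common eigenfunctions (Lemmas \ref{lem-commoneigen} and \ref{lem-YV}). Its argument runs as follows.
\begin{enumerate}
\item It reads off the residue $-\frac12$ at the half periods from the explicit quotient.
\item It shows with the Riccati equation (\ref{eq07101530}) that every other pole is simple with residue $1$.
\item It uses the residue theorem to get exactly two such poles, which by (\ref{ccccc4}) are the common zeros of $F^{(\ell)}$ and $(F^{(\ell)})'\pm W$.
\item It locates these directly. By (\ref{ccccc16}), $F^{(\ell)}(z;T)=2\bigl(T-f_\ell(z)\bigr)$ with $f_\ell$ as in (\ref{fell}). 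So by (\ref{preimage}) the zeros of $F^{(\ell)}$ are $a,\ a-\frac{\omega_\ell}{2},\ \frac{\omega_j}{2}-a,\ \frac{\omega_j}{2}-a-\frac{\omega_\ell}{2}$.
\item The symmetries $(F^{(\ell)})'(a-\frac{\omega_\ell}{2})=(F^{(\ell)})'(a)=-(F^{(\ell)})'(\frac{\omega_j}{2}-a)$ sort these zeros into the pair where $(F^{(\ell)})'=-W$ and the pair where $(F^{(\ell)})'=W$.
\end{enumerate}
You instead write $\phi=y_\pm'/y_\pm$ and import Lemmas \ref{lem-commoneigen} and \ref{lem-YV}. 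For $T\neq 0$ this works, including $W=0$, which forces $T\neq0$ since $Q_\ell(0)\neq 0$. It also gives a closed formula for $\phi$ with the constant (\ref{c-ell}). The paper's argument is uniform in $T$ and needs neither Lemma \ref{lem-YV} nor the Wronskian.

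Mind the sign: $y_\pm'/y_\pm=(F'\mp W)/(2F)$ gives $y_+y_-'-y_+'y_-=W$, not the reverse. Hence $F'(a^+)=y_+'(a^+)y_-(a^+)=-W$, which is what the statement requires.

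\paragraph{The gap at $T=0$.}
At $T=0$ your argument has a gap. Lemma \ref{lem-YV}(1) only places the zero pair of $y_\pm$ in $Y_0$, i.e.\ gives the form $\{a,-a\}$. Equation (\ref{Y0Yell}) merely describes $Y_0\cap Y_\ell$; it does not put your pair there.

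No information about $\mathcal{L}(0)$ alone can supply this. Every solution of $\mathcal{L}(0)$ is a common eigenfunction, their zero pairs range over $Y_0$ (Remark \ref{rek-yell}), and which pair you get depends on which $F^{(\ell)}(z;0)$ you factor. You must therefore use $F^{(\ell)}$ itself:
\begin{itemize}
\item The zeros of $y_\pm$ are zeros of $F^{(\ell)}(z;0)=-2f_\ell(z)$.
\item By the proof of Theorem \ref{mapT}, these lie among $\pm\frac{\omega_\ell}{4}$ and $\pm\frac{\omega_\ell}{4}+\frac{\omega_j}{2}$.
\item The only pairs of the form $\{a,-a\}$ built from these four points are the two elements of $Y_0\cap Y_\ell$.
\end{itemize}
Alternatively, argue by continuity in $T$: $W$ is holomorphic near $T=0$, and $a_1-a_2=\frac{\omega_\ell}{2}$ is a closed condition.

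Your worry about zeros at half periods is unnecessary. Your first step already shows that $\phi$ has residue $-\frac12$ at every half period, so $y_\pm$ has exponent $-\frac12$ there for every $T$.
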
 
\begin{proof}
From the expression of $F^{(\ell)}(z;T)$ in (\ref{eq_07141414}),   we have 
%$\phi (z;P_0^\pm)$ can be explicitly expressed by
\begin{equation}
   \phi (z;T,\pm W)=\frac{\sum\limits_{k=0}^{3}\epsilon_k^{(\ell)}\wp(z-\frac{\omega_k}{2})\mp W}{2\left(-\sum\limits_{k=0}^3\epsilon_k^{(\ell)}\zeta(z-\omega_k/2)+2T+\eta_{3-\ell}\right)},
\end{equation}
then it is clear that $\frac{\omega_j}{2}, j=0,\cdots, 3$ are simple poles  with $$\underset{z=\frac{1}{2}\omega_j}{\mathrm{res}}\phi(z;T,\pm W)=-\frac{1}{2},\quad  0\leq j\leq 3.$$
 If $z_1\in E_\tau\setminus E_\tau[2]$ is a pole of $\phi(z;T,\pm W)$, by the Riccati equation (\ref{eq07101530}),  we obtain that $z_1$ is  simple and the residue at $z_1$ is $1$. Notice that the total residue of an elliptic function is $0$, then 
$\phi (z;T,\pm W)$ has exactly two non-half period poles on $E_\tau$, which we denote by $a_1^\pm, a_2^\pm$. % then $a_1, a_2\not\in E_\tau[2]$. 
By (\ref{ccccc4}), we see that 
%Since all numerators and denominators in the definition of $\phi$ can not vanish at some point simultaneously, then the common zeros of $F(z;T_0)$ and $F^\prime(z;T_0)+W_0$ are poles of $\phi(z;P_0)$, thus  
$a_1^\pm, a_2^\pm$ are exactly the common zeros of $F^{(\ell)}(z;T)$ and $(F^{(\ell)})^\prime(z;T)\pm W$ on $E_\tau$.

 %Notice that $F^{(\ell)}(z;T)=0$ implies $W^2=Q_\ell(T)=((F^{(\ell)})^\prime(z;T))^2$. 
Let $a\in E_\tau$ be a common zero of $F^{(\ell)}(z;T)$ and $(F^{(\ell)})^\prime(z;T)+ W$, i.e., 
$F^{(\ell)}(a;T)=0$ and $W=-(F^{(\ell)})^\prime(a;T)$.
Notice that $F^{(\ell)}(a;T)=0$, i.e., 
$$ T=\frac{1}{2}\sum\limits_{i=0}^3 \varepsilon_{i}^{(\ell)}\zeta(a-\dfrac{\omega_i}{2})-\frac{1}{2}\eta_{3-\ell}, $$
 is equivalent to 
$$ T=\zeta(a)+\zeta(a-\frac{\omega_\ell}{2})-\zeta(2a)-\frac{1}{2}\eta_{\ell}=T^{(\ell)}(\underline{a}), $$
by  (\ref{ccccc16}).
% if and only if $T=T^{(\ell)}(\underline{a})$. 
Combing with (\ref{preimage}), $F^{(\ell)}(z;T)$ has exactly four simple zeros $\{a, a-\frac{\omega_\ell}{2}, \frac{\omega_j}{2}-a, \frac{\omega_j}{2}-a-\frac{\omega_\ell}{2}\}$ on $E_\tau$, where $j\in \{1,2,3\}\setminus\{\ell\}$.
%$(T^{(\ell)})^{-1}\left(T\right)=\left\{\underline{a}, \underline{\frac{\omega_j}{2}-a}\right\}$
Since
 $$(F^{(\ell)})^\prime(z;T)=\frac{1}{2}\sum\limits_{i=0}^3 \varepsilon_{i}^{(\ell)}\wp(z-\dfrac{\omega_i}{2}),$$
then $(F^{(\ell)})^\prime(a-\frac{\omega_\ell}{2};T)=(F^{(\ell)})^\prime(a;T)=-W$ and $(F^{(\ell)})^\prime(\frac{\omega_j}{2}-a;T)=(F^{(\ell)})^\prime(\frac{\omega_j}{2}-a-\frac{\omega_\ell}{2};T)=-(F^{(\ell)})^\prime(a;T)=W$.
\iffalse 
Notice that $a\in E_\tau$ is a sommon zero of $F^{(\ell)}(z;T)$ and $(F^{(\ell)})^\prime(z;T)+ W$ if and only if $a$ satisfies the $2\times 2$ system on $E_{\tau}$:
\begin{equation}\label{eqn-TW}
 \left\{   \begin{split}
        T=&\frac{1}{2}\sum\limits_{i=0}^3 \varepsilon_{i}^{(\ell)}\zeta(a-\dfrac{\omega_i}{2})-\dfrac{1}{2}\eta_{3-\ell}, \\
      W=& -\frac{1}{2}\sum\limits_{i=0}^3 \varepsilon_{i}^{(\ell)}\wp(a-\dfrac{\omega_i}{2}). 
    \end{split}\right.
\end{equation}
If $a$ is a solution of (\ref{eqn-TW}), by a direct calculation, $a-\frac{\omega_\ell}{2}$ is also a solution of (\ref{eqn-TW}).
Additionally, denote by $\{\ell,j,k\}= \{1,2,3\}$, we have  $\frac{\omega_j}{2}-a,\frac{\omega_k}{2}-a$  solve the similar system where $T$ still reminds the same but $W$ changes to $-W$. Hence, $\frac{\omega_j}{2}-a,\frac{\omega_k}{2}-a$  are exactly zeros of $y(z;T,-W)$ on $E_\tau$. 
\fi 
\end{proof}

\iffalse 
Let $P_0^{\ast}=(-W_0,T_0)$, which is also a point on $\Sigma_\ell$. Then by (\ref{ccccc3}), 
\begin{equation*}%\label{ccccc4}
    \phi(z;P_0^\ast):=%\frac{y_2^{\prime}}{y_2}=
\frac{F^\prime(z;T_0)+W_0}{2F(z;T_0)}=\frac{H(z;T_0)}{2(F^\prime(z;T_0)-W_0)},
\end{equation*}
which is an elliptic function and satisfies the Riccati equation (\ref{eq07101530}).   % Similarly, $\phi(z;P_0^\ast)$  has simple poles at all half period points $\omega_i/2, 0\leq i\leq 3$ on $E_\tau$ with residue $-{1}/{2}$, and has exactly two non-half period poles $a_1^\ast, a_2^\ast$ on $E_\tau$ with residue $1$ which are exactly common zeros of  $F(z;T_0)$ and  $F^{\prime}(z;T_0)-W_0$.
\fi 

%Denote by 
%$$S:=\{a_1, a_2, a_1^\ast, a_2^\ast, 0, \frac{\omega_1}{2}, \frac{\omega_2}{2},\frac{\omega_3}{2} \}+\Lambda_\tau.$$
Fix $z_0\in \mathbb{C}$ such that $z_0$ is not a pole of both $\phi(z;T,\pm W)$,  %such that $[z_0]\notin $
by Proposition \ref{pro09101540}, we can define 
\begin{align}
   y(z; T,\pm W)&={ \exp}(\int^z_{z_0}\phi(z;T,\pm W)dz),\label{ccccc5}
 % y(z,z_0;P_0^\ast)&={\exp}(\int^z_{z_0}\phi(z;P_0^\ast)dz)    
\end{align}
which is a single-valued meromorphic function.  
%where $z_0$ and the integration paths always avoid poles of  $\phi(z;P_0)$ and  $\phi(z;P_0^\ast)$ respectively. 
Notice that 
\begin{equation}\label{y-phi}
   y'(z; T,\pm W)=\phi(z;T,\pm W)y(z; T,\pm W),
%y(z_0; T,\pm W)&=1
\end{equation}
by the Riccati equation (\ref{eq07101530}), we have 
$y(z;T,\pm W)$  are solutions of $\mathcal{L}_\ell(T)$.  Furthermore, since $\phi(z;T,\pm W)$ is elliptic, by (\ref{y-phi}), we have 
\begin{align*}
   {y'(z+1; T,\pm W)}{y(z; T,\pm W)}-{y(z+1; T,\pm W)}{y'(z; T,\pm W)}=0,
\end{align*}
thus ${y(z+1; T,\pm W)}/{y(z; T,\pm W)}$ is dependent of $z\in U_0$, i.e., 
\begin{equation}\label{eqn-yp1}
\begin{aligned}
y(z+1; T,\pm W)&=\varepsilon_3{\rm exp}(\int_{z_0}^{z_0 +1}\frac{\mp W}{2F(z;T)}dz) y(z;T,\pm W)
\end{aligned}
\end{equation}
where $\varepsilon_3=\exp(\frac{1}{2}\int_{z_0}^{z_0+1}\frac{F'(z; T)}{F(z;T)}dz) =\exp(\pi i n_F)\in \{\pm 1\}$ and $n_F$ denotes 
the winding number of the closed curve $z\mapsto F(z; T)$, $z\in [z_0, z_0+1]$, with respect to the origin.
Similarly, we have 
\begin{equation}\label{eqn-yp2}
\begin{aligned}
y(z+\tau; T,\pm W)&=\varepsilon_4{\rm exp}(\int_{z_0}^{z_0 +\tau}\frac{\mp W}{2F(z;T)}dz) y(z;T,\pm W)
\end{aligned}
\end{equation}
where $\varepsilon_4=\exp(\frac{1}{2}\int_{z_0}^{z_0+\tau}\frac{F'(z; T)}{F(z;T)}dz) =\exp(\pi i m_F)\in \{\pm 1\}$ and $m_F$ denotes 
the winding number of the closed curve $z\mapsto F(z; T)$, $z\in [z_0, z_0+\tau]$, with respect to the origin.
Therefore, $y(z;T,\pm W)$  are common eigenfunctions of $\mathcal{L}_\ell(T)$.
%Let $P_0\in \Sigma_\ell$ for some $\ell\in \{1,2,3\}$ and $z_0\in \mathbb{C}\setminus (S_+\cup S_-)$,
 Following the convention of KdV theory (See \cite{ GW-1996,GH-2003}.), we call $y(z;T, \pm W)$ as \textit{Baker-Akhiezer functions}.

By (\ref{y-phi}) and Propsition \ref{pro09101540}, we obtain that $a^\pm, a^\pm-\frac{\omega_\ell}{2}\in E_\tau\setminus E_\tau[2]$ are zeros of $y(z;T,\pm W)$.
By Lemma \ref{lem-YV} and Remark \ref{rek-yell},  up to a nonzero constant multiple, 
$$y(z;T,\pm W)= y_\ell(z; a^\pm).$$
%By Lemma \ref{lem-YV}, we have $T\in V_\ell$ is given by (\ref{T-ell}).
%Notice that $T=T^{(\ell)}(\underline{a})$
Therefore,  we have the following commutative diagram 
$$
\xymatrix{Y_\ell \ar[dr]_{T^{(\ell)}}\ar[rr]^{\varphi}&&\Sigma_\ell\ar[dl]^{\pi_1^{(\ell)}} \\
&V_\ell }, \quad \text{given by}\quad 
\xymatrix{\underline{a} \ar@{|->}[dr]_{T^{(\ell)}}\ar@{|->}[rr]^{\varphi}&&(W,T)\ar@{|->}[dl]^{\pi_1^{(\ell)}} \\
&T },
$$
where $\varphi(\underline{a})=(T^{(\ell)}(\underline{a}), -(F^{(\ell)})'(a;T^{(\ell)}(\underline{a})))$  and $\varphi^{-1}(W,T)$ is defined by the two common zeros of $F^{(\ell)}(z;T), (F^{(\ell)})^\prime(z;T)+ W$.
%In particular, 
%then $T^{(\ell)}(\underline{\frac{\omega_j}{2}-a})=T=\pi_1^{(\ell)}(T,-W)$.
%where $T^{(\ell)}(\underline{a})=T$

If $T\in V_\ell$ such that $Q_\ell(T)\neq 0$, by (\ref{eqn-yp1}) and (\ref{eqn-yp1}), up to a sign, the monodromy data $(s,r)\mod \mathbb{Z}^2$ satisfying 
\begin{equation}\label{eqn-sr-exp}
\begin{aligned}
    e^{-2\pi  is}&=\varepsilon_3 {\rm exp}(-\int_{z_0}^{z_0+1}\frac{W}{2F(z;T)}dz)\\
e^{2\pi ir}&=\varepsilon_4 {\rm exp}(-\int_{z_0}^{z_0 +\tau}\dfrac{W}{2F(z;T)}dz).
\end{aligned}
\end{equation}
Hence, 
\begin{align}
s=s(W,T)=\frac{m_s}{2}+\frac{W}{2\pi  i}\int_{z_0}^{z_0+1}\frac{1}{2F^{(\ell)}(z;T)}dz,\label{eqn-s}\\
r=r(W,T)=\frac{m_r}{2}-\frac{W}{2\pi  i}\int_{z_0}^{z_0+\tau}\frac{1}{2F^{(\ell)}(z;T)}dz,\label{eqn-r}
\end{align}
 for some $m_s,m_r\in\mathbb{Z}$.

\subsection{Spectral geometry.}
Let $T\in V_\ell$ for some $\ell\in \{1,2,3\}$.
\iffalse 
we define the Hill's discriminant of $\mathcal{L}_\ell(T)$ by 
\begin{equation}\label{ccc1}
\Delta_k^{(\ell)} (T) =\frac{1}{2}\text{tr}M_k,\quad k=1,2,
\end{equation}
%where $M_k^{(\ell)}(T)$ is the monodrony matrix defined in Section \ref{sec-monodromy}. Since the potential $q_\ell(z;T)$ is holomorphic with respect to $T$, then $\Delta_k^{(\ell)}(T)$ is a holomophic function of $T\in V_\ell$. 
%Inspired by the spectral theory of the Schr\"{o}dinger operator $L$ with complex periodic smooth potentials which has attracted significant attention and has been studied widely in the literature; see e.g. \cite{LW5,LW6,BG-JAM,B-CPAM,GW,GW2,HHV-2017,Rofe-Beketov} and references therein.  We consider  
%As an analogy in the spectral theory of KdV potentials, we define the \emph{spectrum} of $\mathcal{L}_\ell(T)$ as
and the arc of stable spectrum  $\mathcal{L}_\ell(T)$ as 
\begin{equation}
\sigma_k^{(\ell)}:=(\Delta_k^{(\ell)})^{-1}([-1,1]), k=1,2.
\end{equation}
Clearly, our definitions follow from the eigenvalue problem of  Hill equations with complex-valued periodic potentials.
However, we note that equation $\mathcal{L}(\mathbb{T}, B)$ is not an eigenvalue problem because $B$ depends on $\mathbb{T}$. 
\fi 
By the expression of  (\ref{intro-Delta}), we have
\begin{align}
&\Delta^{(\ell)}_1(T)=\cos (2\pi s)\quad \text{and} \quad \Delta^{(\ell)}_2(T)=\cos (2\pi r),\quad \text{if}\,\, Q_\ell(T)\neq 0,\\
&\Delta^{(\ell)}_1(T)=\varepsilon_1\quad  \text{and} \quad \Delta^{(\ell)}_2(T)=\varepsilon_2,\quad \text{if}\,\, Q_\ell(T)=0.
\end{align}
Then  the  conditional stability set  on $V_\ell$ defined in (\ref{intro-spectrum}) is given by
 \begin{equation}
\sigma_k^{(\ell)}:=(\Delta_k^{(\ell)})^{-1}([-1,1]), k=1,2.
\end{equation}
 In the KdV theory, it has been known that $\sigma_k^{(\ell)}$ is a union of finitely many smooth arcs (a finite gap phenomena), and among those smooth arcs there is unique unbounded arcs. We will see this finite gap phenomena still holds for $\sigma_k^{(\ell)}$, however, the uniqueness of unbounded arcs might fail in general. 
 %Let $T\in V_\ell$ for some $\ell\in \{1,2,3\}$. 
As the classical theory,  there is a notion of endpoints in $\sigma_k^{(\ell)}$. We say $T\in \sigma_k^{(\ell)}$ is not an endpoint if there is a small $\varepsilon >0$ such that $\sigma_k^{(\ell)}\cap B_\varepsilon(T)=\cup_{j=1}^m\gamma_j$, where $\gamma_j$ is smooth curve and $T$ is an interior point of $\gamma_j$ for $j\in \{1,2,\cdots, m\}$. See Lemma \ref{lem-order} for the discussion of the position of endpoints. 

%Suppose $\Delta_i(T_0)=\pm1$. For the next result, we denote $F^{(i)}$, $\ell_i(T)$ and $\Delta_i$ by $F$, $\ell(T)$ and $\Delta$ respectively. 
\begin{lemma}\label{lem-order}
Let $T_0\in V_\ell$ for some $\ell\in \{1,2,3\}$. 
\begin{enumerate}
\item [(a)] If $Q_\ell(T_0)\neq 0$, then $d_k^{(\ell)}(T_0), \, k=1,2$ are  even integers;
\item[(b)] If $Q_\ell(T_0)=0$, then $d_k^{(\ell)}(T_0),\,  k=1,2$ are odd integers.
\end{enumerate}
\end{lemma}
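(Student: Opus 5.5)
Here we read $d_k^{(\ell)}(T_0)$ as the order of vanishing of the holomorphic function $1-(\Delta_k^{(\ell)}(T))^2$ at $T=T_0$. This order is $0$ when $\Delta_k^{(\ell)}(T_0)\neq\pm1$. The whole argument rests on the local analytic coordinates for the monodromy data given by (\ref{eqn-s}) and (\ref{eqn-r}) on the spectral curve $\Sigma_\ell$. We treat $k=1$; the case $k=2$ is identical with $r$ in place of $s$. First we note that $\Delta_1^{(\ell)}$ is holomorphic in $T$ on all of $V_\ell$. Indeed, $q_\ell(z;T)$ depends polynomially on $T$, so the monodromy matrix $M_1(T)$, and hence its trace, is entire in $T$.

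\textbf{Case (a): $Q_\ell(T_0)\neq 0$.} Near $T_0$ the covering $\pi_1^{(\ell)}:\Sigma_\ell\to V_\ell$ is unramified, so $W=\sqrt{Q_\ell(T)}$ is a holomorphic function of $T$ on a small disc. We fix $z_0$ away from the zeros of $F^{(\ell)}(z;T)$ for $T$ near $T_0$. By (\ref{eqn-s}), $s(T)$ is then holomorphic near $T_0$, and $\Delta_1^{(\ell)}(T)=\cos(2\pi s(T))$. Hence
$$1-(\Delta_1^{(\ell)})^2=\sin^2(2\pi s(T)),$$
which is the square of a holomorphic function. Its order of vanishing is therefore even, possibly $0$. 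The only point to check is that $\sin(2\pi s(T))$ is not identically zero; otherwise the order would be infinite. If it vanished identically, then $s$ would be constant in $\tfrac12\mathbb{Z}$ on a disc, hence on a neighbourhood in $V_\ell$. That contradicts Theorem \ref{thm-intro-1} together with Proposition \ref{prop1.1}, which give $\mathrm d_T=1$ and not $3$ for $T\neq0$.

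\textbf{Case (b): $Q_\ell(T_0)=0$.} Here $T_0$ is a branch point, so we use $W$ as the local uniformizer of $\Sigma_\ell$ at $(T_0,0)$. Since $T_0$ is a simple root of $Q_\ell$, we have $T-T_0=cW^2+O(W^3)$ with $c\neq0$. The function $s(W):=s(W,T(W))$ from (\ref{eqn-s}) is holomorphic in $W$: the integrand $1/F^{(\ell)}(z;T)$ depends holomorphically on $T$, and $T$ depends holomorphically on $W$.

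The key symmetry comes from the factor $W$ in front of the integral in (\ref{eqn-s}). The integral itself depends only on $T$, so swapping $W\mapsto -W$ gives
$$s(-W)\equiv -s(W)\pmod{\mathbb{Z}}.$$
Moreover $s(0)\in\tfrac12\mathbb{Z}$, because $\Delta_1^{(\ell)}(T_0)=\varepsilon_1=\pm1$. Set $\tilde s(W):=s(W)-s(0)$. The relation above, combined with $2s(0)\in\mathbb{Z}$ and continuity, shows that $\tilde s$ is an odd holomorphic function of $W$. Hence $\tilde s(W)=\alpha W^{m}+\dots$ with $m$ odd and $\alpha\neq0$, where non-vanishing of $\tilde s$ follows by the same argument as in case (a).

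We then compute
$$1-(\Delta_1^{(\ell)})^2=\sin^2(2\pi s)=\sin^2(2\pi\tilde s).$$
This has order exactly $2m$ in $W$, which is order $m$ in $T-T_0\sim W^2$. Since $m$ is odd, so is $d_1^{(\ell)}(T_0)$.

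\textbf{Main obstacle.} The step requiring the most care is making the oddness of $\tilde s$ rigorous. The constants $m_s$ and the sign $\varepsilon_3$ in (\ref{eqn-sr-exp}) come from winding numbers, and one must check that they are locally constant in $W$. We would settle this by fixing $z_0$ and the integration path away from the zeros of $F^{(\ell)}(z;T)$ for all $T$ in a small neighbourhood of $T_0$. These zeros move continuously in $T$ (their positions are governed by the preimages in (\ref{preimage})), so the winding numbers $n_F,m_F$ stay constant. With that in place, both $\pm W$ branches use the same $m_s$, which yields the clean parity relation for $s$.
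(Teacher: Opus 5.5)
Your argument is essentially the paper's. Away from zeros of $Q_\ell$ you use $T$ as the local coordinate on $\Sigma_\ell$, so $1-(\Delta_k^{(\ell)})^2=\sin^2(2\pi s)$ is the square of a holomorphic function. At a zero you use $W$, where $s(W)-s(0)=\frac{W}{2\pi i}\int_{z_0}^{z_0+1}\frac{dz}{2F^{(\ell)}(z;T)}$ and $T-T_0\sim W^2$. Your oddness formulation is a repackaging of the paper's expansion $s(W)-s(0)=\frac{W(T-T_0)^n}{2\pi i}\bigl(N+O(T-T_0)\bigr)$, which has order $2n+1$ in $W$. Your reading of $d_k^{(\ell)}(T_0)$ matches the one implicit in the paper. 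Your treatment of $m_s$ through local constancy of the winding number is correct, and it covers a point the paper passes over.

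The one step that does not work as written is your justification that $\sin(2\pi s)\not\equiv 0$ near $T_0$, and hence that $\tilde s\not\equiv 0$ in case (b). If $s\in\frac12\mathbb Z$ identically near $T_0$, you only get $M_1=\pm I$ there. Theorem~\ref{thm-intro-1} and Proposition~\ref{prop1.1} exclude only the simultaneous condition $(s,r)\in(\frac12\mathbb Z)^2$, that is, both $M_1$ and $M_2$ equal to $\pm I$. For a single $k$ they give no contradiction, since $\mathrm{d}_T=1$ is perfectly compatible with $s\in\frac12\mathbb Z$, $r\notin\frac12\mathbb Z$.

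The repair is as follows. $\Delta_k^{(\ell)}$ is entire in $T$, so if it were $\equiv\pm1$ near $T_0$ it would be constant on all of $V_\ell$. That contradicts the large-$T$ asymptotics $\Delta_1^{(\ell)}(T)=(-1)^{m_s}\cos\bigl(2\varepsilon_0Ti+O(T^{-1})\bigr)$ and $\Delta_2^{(\ell)}(T)=\pm\cos\bigl(2\varepsilon_0\tau Ti+O(T^{-1})\bigr)$, which the paper derives right after this lemma without using it. The paper itself simply takes $A\neq 0$ and $N\neq 0$ for granted, so with this fix your proof is complete.
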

\begin{proof}
 %Recall that the monodromy data $s$ are locally holomorphic function of $T$ and $(W,T)$. 
Let $G(W,T)=W^2-Q_\ell(T)$.
%so $T$ is a local parameter at $T_0$ by  the implicit function theorem, i.e., $W$  can be locally eliminated from $F(W,T)=0$. If $W_0=\ell_i(T_0)=0$, then $\frac{\partial F}{\partial T}(T_0)=-\ell_i'(T_0)\neq 0$ since $T_0$ is not a multiple zero of $\ell_i(T)$, so $W$ is a local parameter at $T_0$ by  the implicit function theorem, i.e., $T$ can be locally eliminated from $F(W,T)=0$

{\noindent (a)}  If $W_0^2=Q_\ell(T_0)\neq 0$, then $\frac{\partial G}{\partial W}(W_0,T_0)=2W_0\neq 0$.  By  the implicit function theorem, $W$ can be expressed as a holomorphic function of $T$ near $T_0$,
%then  the spectral curve $W^2=\ell_i(T)$ can be parameterized by $T$ near $T_0$, 
 i.e.,  $T$ can be chosen as the local coordinate of the spectral curve $\Sigma_\ell$ near $(W_0,T_0)$.  By (\ref{eqn-sr-exp}), $s=s(T)$ is a holomorphic function of $T$ near $T_0$ and then 
$$s(T)-s(T_0)=A(T-T_0)^j+O((T-T_0)^{j+1}), \quad (T\to T_0)$$
 for some $j\in\mathbb{N}^+$ and $A\ne0$.  If $\Delta_1^{(\ell)}(T_0)^2\neq 1$,  then $d_1^{(\ell)}(T_0)=0$ is even. If $\Delta_1^{(\ell)}(T_0)^2= 1$,  then 
\begin{align*}
&\Delta^{(\ell)}_1(T)-\Delta^{(\ell)}_1(T_0) \\
=&\cos( 2\pi s(T))-\cos (2\pi s(T_0))\\
=&-2\pi^2\cos(2\pi s(T_0))(s(T)-s(T_0))^2+O((s(T)-s(T_0))^4)\\
=&-2\pi^2\cos(2\pi s(T_0)) A^2(T-T_0)^{2j}+O((T-T_0)^{2j+1}, \quad  (T\to T_0).
\end{align*}
This implies $d_1^{(\ell)}(T_0)=2j$ is an even integer. 
Similarly, $d_2^{(\ell)}(T_0)$ is an even integer. This proves (a).

\medskip
 
{\noindent (b)}
\iffalse 
First of all, by (\ref{eqn-sr-exp}),
\begin{equation}\label{eqn-s}
s=s(W,T)=\frac{m}{2}-\frac{W}{2\pi  i}\int_{z_0}^{z_0+1}\frac{1}{2F^{(\ell)}(z;T)}dz
\end{equation}
 for some $m\in\mathbb{Z}$. 
\fi 
Since $\int_{z_0}^{z_0+1}\frac{1}{2F^{(\ell)}(z;T)}dz$ is a holomorphic function of $T$ near $T_0$, then we have 
$$\int_{z_0}^{z_0+1}\frac{1}{2F^{(\ell)}(z;T)}dz=N(T-T_0)^n+O((T-T_0)^{n+1}), \quad (T\to T_0)$$
for some $n\in\mathbb{N}$ and $N\neq 0$.

If $W^2_0=Q_\ell(T_0)=0$,  then $\frac{\partial G}{\partial T}(0,T_0)=-Q_\ell'(T_0)\neq 0$ because $Q_\ell(T)$ has no multiple zero.  By  the implicit function theorem, $T$ can be expressed as a holomorphic function of $W$ near $(0,T_0)$, i.e.,  $W$ can be chosen as the local coordinate of  $\Sigma_\ell$ near $(0,T_0)$. 
%then in a neighborhood of $(0,T_0)$, the local coordinate is $W$ and 
Hence, by (\ref{eqn-s}), $s=s(W)$ is a holomorphic function of $W$ near $(0,T_0)$ and 
$$s(W)-s(0)=\frac{W(T-T_0)^n}{2\pi i}\left(N+O((T-T_0))\right), \quad (W\to  0),$$
where $T\to T_0$ as $W \to 0$.

Since $T_0$ is a simple zero of $Q_\ell(T)$, then %$g(W):=\ell_i(T)/(T-T_0)$ is a holomorphic function of $W$ near $(0, T_0)$ and $g(0)\neq 0$, so 
$h(W)=(T-T_0)/Q_\ell(T)$ is  a holomorphic function of $W$ near $(0,T_0)$ and $h(0)\neq 0$. So we have 
%Note that $W^2=\ell_i(T)=16(T^4+3e_iT^2+2e_i^2+e_je_k)$ with $\{i,j,k\}=\{1,2,3\}$, then 
\begin{align*}
T-T_0%=\frac{T-T_0}{\ell_i(T)}\ell_i(T)
=W^2h(W)=W^2(h(0)+O(W)), \quad  (W\to 0).
%W^2=\ell_i(T)-\ell_i(T_0)%=16(T^4-T_0^4+3e_i^2(T^2-T_0^2))
%=16(T-T_0)(T+T_0)(T^2+T_0^2+3e_i^2)
\end{align*}
%and then 
%\begin{align*}
%s(W)-s(0)&=\frac{W^{2n+1}}{2\pi\sqrt{-1}}(h(0)^n+O(W))\left(N+O(W^2)\right), \\
%&=\frac{W^{2n+1}}{2\pi\sqrt{-1}}\left(h(0)^nN+O(W)\right), \quad (W\to  0).
%\end{align*}
Note that $\Delta_1^{(\ell)}(T_0)=\cos (2\pi s(0))= \pm 1$. When $W$ goes to $0$, we have $T\to T_0$  and 
\begin{align*}
&\Delta_1^{(\ell)}(T)-\Delta_1^{(\ell)}(T_0) \\
=&\cos( 2\pi s(W))-\cos (2\pi s(0))\\
=&\mp2\pi^2(s(W)-s(0))^2+O((s(W)-s(0))^4)\\
=&\mp\frac{1}{2}W^2(T-T_0)^{2n}(N^2+O(T-T_0))+O((T-T_0)^{4n+2}),\\
=&\mp\frac{1}{2}(T-T_0)^{2n+1}\frac{N^2+O(W^2)}{h(0)+O(W)}+O((T-T_0)^{4n+2}),
\end{align*}
thus, $d_1^{(\ell)}(T_0)=2n+1$ is an odd integer. 

Similarly, $d_2^{(\ell)}(T_0)$ is an odd integer. This proves (b).
\end{proof}

From Lemma \ref{lem-order}, we obtain that $T_0\in V_\ell$  is an endpoint of $\sigma_k^{(\ell)}, k=1,2$ if and only if $Q_\ell(T_0)=0$.  

Next, we will discuss if $\sigma_k^{(\ell)}, k=1,2$ have an endpoint at infinity. 

Let $R\in \mathbb{R}^+$ be a sufficient large number such that $Q_\ell(T)\neq 0 $ for all $|T|>R$.  %For any $T_0\in V_i$ with $|T_0|>R$, 
By the proof of Lemma \ref{lem-order}, $W$ can be expressed as a holomorphic function of $T$ for $|T|>R$.
%Since $W^2=\ell_k(T)=16(T^4+3e_iT^2+2e_i^2+e_je_k)$ with $\{i,j,k\}=\{1,2,3\}$, then 
By Proposition \ref{prop-ell}, we have 
$$W= 4\varepsilon_0T^2(1+O(T^{-2})),\quad (T\to \infty),$$
where $\varepsilon_0\in \{\pm 1\}$. 
%Since changing the sign of $W$ only reversing the sign of $(s,r)$, thus we can assume that $$W= 4T^2(1+O(T^{-2})),\quad (T\to \infty).$$
% and Proposition \ref{pro07121710}, we have 
%Recall that $$ F^{(\ell)}(z;T)=-\sum\limits_{j=0}^3\varepsilon_j^{(\ell)}\zeta(z-{\omega_j}/{2})+2T+\eta_{3-\ell}.$$ 
 Let $T\to \infty$, by (\ref{eq_07141414}), we have 
\begin{align*}
\frac{W}{2 F^{(\ell)}(z;T)}&=\frac{  2\varepsilon_0 T^2(1+O(T^{-2}))}{2T-\left(\sum\limits_{j=0}^3\varepsilon_j^{(\ell)}\zeta(z-{\omega_j}/{2})-\eta_{3-\ell}\right)}\\
%&=\frac{ 2\varepsilon_0 T(1+O(T^{-2}))}{1-\frac{1}{2}T^{-1}\left(\sum\limits_{j=0}^3\varepsilon_j^{(\ell)}\zeta(z-{\omega_j}/{2})-\eta_{3-\ell}\right)}\\
%&= 2\varepsilon_0 T\left[1+\frac{1}{2}T^{-1}\left(\sum\limits_{k=0}^3\varepsilon_j^{(\ell)}\zeta(z-{\omega_j}/{2})-\eta_{3-\ell}\right)+O(T^{-2})\right],\\
&=  2\varepsilon_0 T+ \varepsilon_0 \left(\sum\limits_{j=0}^3\varepsilon_j^{(\ell)}\zeta(z-{\omega_j}/{2})-\eta_{3-\ell}\right)+O(T^{-1})).
\end{align*}
Note that $\zeta(z)={\sigma'(z)}/{\sigma(z)}$, the transformation law
%$$\sigma(z+\omega_j)=-e^{\eta_j(z+\frac{\omega_j}{2})}\sigma(z), \quad  j=1,2,3,$$
% $\zeta'(z)=-\wp(z)$.
and the Legendre relation implies %$\tau \eta_1-\eta_2=2\pi i$,  we have
\begin{align*}
    b_\ell:=&\int_{z_0}^{z_0+1}\left(\sum\limits_{j=0}^3\varepsilon_j^{(\ell)}\zeta(z-\frac{\omega_j}{2})-\eta_{3-\ell}\right)dz\\
=&\sum\limits_{j=0}^3\varepsilon_j^{(\ell)}\ln(-e^{ \eta_1(z_0-\frac{\omega_j}{2}+\frac{1}{2})})-\eta_{3-\ell}\\
=&\sum\limits_{j=0}^3\varepsilon_j^{(\ell)}\left({ \eta_1(z_0-\frac{\omega_j}{2}+\frac{1}{2})}+(2p+1)\pi i\right)-\eta_{3-\ell}\\
 =&-\frac{1}{2}\eta_1\sum\limits_{j=0}^3\varepsilon_j^{(\ell)}{\omega_j}-\eta_{3-\ell}=\left\{\begin{array}{lll}
2\pi i& & \ell=1,\\
0&& \ell=2,3.
\end{array}\right.
\end{align*}
where $p\in \mathbb{Z}$ is some integer. 
%\begin{align*}
%    \text{If}\ i=1,\ \text{then}\ -&(\sum\limits_{k=0}^3\varepsilon_k^{(1)}\omega_k/2)\eta_1-\eta_2=-\eta_1(\omega_1/2-\omega_2/2-\omega_3/2)-\eta_2\\&=\eta_1\omega_2-\eta_2=2\pi i, \\
%\text{If}\ i=2,\ \text{then}\ -&(\sum\limits_{k=0}^3\varepsilon_k^{(2)}\omega_k/2)\eta_1-\eta_1=-(-\omega_1/2+\omega_2/2-\omega_3/2)\eta_1-\eta_1=0,
%\end{align*}
%$\text{If}\ i=3,\ \text{then}\ -(\sum\limits_{k=0}^3\varepsilon_k^{(3)}\omega_k/2)\eta_1=-(-\omega_1/2-\omega_2/2+\omega_3/2)\eta_1=0.$\\
Combine (\ref{eqn-s}) with the above calculations, we have 
\begin{equation*}
s=s(T)=\frac{m_s}{2}-\frac{\varepsilon_0}{2\pi i}
\left(2T+ b_\ell+O(T^{-1}))\right),\quad (T\to \infty).
\end{equation*}
 Therefore, 
\begin{align*}
\Delta_1^{(\ell)}(T)=\cos (2\pi s)&=\cos \left(m_s\pi+i\varepsilon_0\left( 2T+b_\ell+O(T^{-1})\right)\right)\\
&=(-1)^{m_s}\cos \left( 2\varepsilon_0 Ti+O(T^{-1})\right),\quad (T\to \infty).
\end{align*}
Therefore, for sufficient large $T$, $T\in \sigma_1^{(k)}$ if and only if 
$$t:= 2\varepsilon_0Ti+O(T^{-1}) \in \mathbb{R},$$
which is equivalent to 
$$T=\frac{1}{2}\varepsilon_0 t i+O(t^{-1}), \quad t \in \mathbb{R}.$$
%If $T$ is a pure imaginary number
%Let $T=x\sqrt{-1}$ with $x\in\mathbb{R}$, then $\Delta_1^{(k)}(T)$ approaches to a real number in $[-1,1]$ as $x\to \pm \infty$.  
This implies $\sigma_1^{(\ell)}$
 contains exactly two unbounded arcs $\sigma_{1\pm}^{(\ell)}$ which asymptotically tend to $\pm i\infty$. 
%$\pm i\infty$ are exactly two endpoints of $\sigma_1^{(k)}$. 
Similarly, we have 
\begin{align*}
\int_{z_0}^{z_0+\tau}\left(\sum\limits_{j=0}^3\varepsilon_j^{(\ell)}\zeta(z-\frac{\omega_j}{2})-\eta_{3-\ell}\right)dz&=-\frac{1}{2}\eta_2\sum\limits_{j=0}^3\varepsilon_j^{(\ell)}{\omega_j}-\tau\eta_{3-\ell}\\
& =\left\{\begin{array}{ll}
-2\pi i&  \ell=2,\\
0& \ell=1,3,
\end{array}\right.
\end{align*}
and then 
\begin{align*}
\Delta_2^{(\ell)}(T)=\cos (2\pi r)%&=\cos \left(m\pi+i\left(\pm 2T\pm d_k+O(T^{-1})\right)\right)\\
&=(-1)^n\cos \left(2\varepsilon_0\tau Ti+O(T^{-1})\right),\, (T\to \infty),
\end{align*}
where $n\in \mathbb{Z}$ is some integer. 
%where $d_k=c_k-\tau c_k$. 
%\begin{align*}
%d_k=b_k+c_k-\tau c_k=\left\{\begin{array}{lll}2\pi i+
%\eta_2-\tau\eta_2&& k=1\\
%\eta_1-\tau\eta_1&& k=2\\
%0&& k=3 \end{array} \right. 
%\end{align*}
Therefore, %for sufficient large $T$, $T\in \sigma_2^{(k)}$ if and only if 
%$$u:=\pm 2i\tau T\pm i(1-\tau) c_k+O(T^{-1}) \in \mathbb{R},$$
%which is equivalent to 
$$T=\frac{1}{2\tau}\varepsilon_0ui+O(t^{-1}), \quad (u\to \pm \infty),\quad u \in \mathbb{R}.$$
This implies $\sigma_2^{(\ell)}$ contains exactly two unbounded arcs $\sigma_{2\pm}^{(\ell)}$ which asymptotically tend to $\pm \frac{i}{2\tau}\infty$.
%$(\frac{1}{2}-\frac{1}{2\tau})c_k\pm \frac{i}{2\tau}\infty$ are exactly two endpoints of $\sigma_2^{(k)}$.
%\begin{align*}
%\lim\limits_{x\to \pm \infty} \Delta(x\sqrt{-1})=\lim\limits_{x\to \pm \infty} (-1)^m\cos (\mp 2x)
%\end{align*}

Summarize the above analysis,  we obtain the following lemma. 

\begin{lemma}\label{lem-endpoints}
Let $\ell\in \{1,2,3\}$ with $\{\ell,j,k\}=\{1,2,3\}$ and $T_j, T_k\in V_\ell $ such that 
$T_j^2+e_\ell-e_j=0$ and $T_k^2+e_\ell-e_k=0$ .
\begin{enumerate}
\item [(1)]
Both  $\sigma_1^{(\ell)}$ and  $\sigma_2^{(\ell)}$ have exactly 4  finite endpoints:
$ \pm T_j, \pm T_k.$ 
\item[(2)]  $\sigma_1^{(\ell)}$
 contains exactly two unbounded arcs $\sigma_{1\pm}^{(\ell)}$ which asymptotically tend to $\pm i\infty$. 
% Moreover,  there is exactly one spectral arc meeting at each endpoint.  
\item[(3)]  $\sigma_2^{(\ell)}$ contains exactly two unbounded arcs $\sigma_{2\pm}^{(\ell)}$ which asymptotically tend to $\pm\frac{i}{2\tau}\infty$.
%6 endpoints which are the following:
%$$ \pm T_1,\quad \pm T_2,\quad (\frac{1}{2}-\frac{1}{2\tau})c_k\pm \frac{i}{2\tau}\infty. $$
\end{enumerate}
\end{lemma}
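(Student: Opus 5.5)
The plan is to assemble Lemma \ref{lem-endpoints} from Lemma \ref{lem-order} and the asymptotic computation just carried out. Here $d_k^{(\ell)}(T_0)$ is the vanishing order of $\Delta_k^{(\ell)}(T)-\Delta_k^{(\ell)}(T_0)$ at $T_0$ whenever $\Delta_k^{(\ell)}(T_0)=\pm1$, and it is set to $0$ otherwise. The local structure of $\sigma_k^{(\ell)}$ near a point $T_0$ is that of the preimage of $[-1,1]$ under a holomorphic map with a critical point of order $d=d_k^{(\ell)}(T_0)$.

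\textbf{Part (1).} Distinguish three cases.
\begin{itemize}
\item If $\Delta_k^{(\ell)}(T_0)\in(-1,1)$ and $d=0$, then near $T_0$ the set $\sigma_k^{(\ell)}$ is the preimage of a segment under a locally biholomorphic map. This is a single smooth arc through $T_0$, so $T_0$ is not an endpoint. The subcase where $\Delta_k^{(\ell)}$ has a critical point of order $m$ at an interior value is treated the same way: there are $m+1$ smooth arcs crossing at $T_0$, so again $T_0$ is not an endpoint.
\item If $\Delta_k^{(\ell)}(T_0)=\pm1$ and $d$ is even, write $\Delta_k^{(\ell)}(T)-\Delta_k^{(\ell)}(T_0)=c\,u(T)^d$ with $u$ a local coordinate. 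The preimage of the half-line $[-1,1]-\Delta_k^{(\ell)}(T_0)$, which is one-sided at $0$, consists of $d$ rays in $u$. Since $d$ is even, these pair up into $d/2$ smooth arcs passing straight through $T_0$, so $T_0$ is not an endpoint.
\item If $d$ is odd, the $d$ rays cannot all be paired, so at least one arc terminates at $T_0$, and $T_0$ is an endpoint.
\end{itemize}
By Lemma \ref{lem-order}, $d$ is odd exactly when $Q_\ell(T_0)=0$. At such points $\Delta_k^{(\ell)}(T_0)=\varepsilon_k=\pm1$ by (\ref{CompNRed}), so they do lie in $\sigma_k^{(\ell)}$. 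The finite endpoints are therefore exactly the zeros of $Q_\ell$. By Proposition \ref{prop-ell} these are $\pm T_j$ and $\pm T_k$, and they are four distinct points since $e_1,e_2,e_3$ are distinct and $e_\ell-e_j\neq0$.

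\textbf{Parts (2) and (3).} These restate the asymptotics derived above the statement. For $|T|>R$, $W$ is a single-valued holomorphic branch of $\sqrt{Q_\ell(T)}$, and the expansion gives
$$\Delta_1^{(\ell)}(T)=(-1)^{m_s}\cos\bigl(2\varepsilon_0 iT+O(T^{-1})\bigr).$$
Put $g(T)=2\varepsilon_0 iT+O(T^{-1})$. The condition $|\cos g|\le1$ forces $\operatorname{Im} g$ to be bounded, and in fact to tend to $0$ as $|T|\to\infty$, because $|\cos(x+iy)|^2=\cos^2x+\sinh^2y$. Hence for large $|T|$ the set $\sigma_1^{(\ell)}$ is contained in a thin neighbourhood of the lines where $g$ is real. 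Since $g$ is conformal near $\infty$ with $g\sim 2\varepsilon_0 iT$, the set $\{T:\ g(T)\in\mathbb{R},\ |g(T)|\text{ large}\}$ consists of exactly two curves $T=\tfrac12\varepsilon_0 it+O(t^{-1})$, $t\to\pm\infty$. This gives the two unbounded arcs tending to $\pm i\infty$.

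The same argument applied to $\Delta_2^{(\ell)}(T)=(-1)^n\cos\bigl(2\varepsilon_0\tau iT+O(T^{-1})\bigr)$ gives two arcs asymptotic to the direction $\pm\frac{i}{2\tau}\infty$.

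\textbf{Expected obstacle.} The delicate step is the claim "exactly two". One must check that no further unbounded components exist, and that $\sigma_k^{(\ell)}$ near $\infty$ coincides with $g^{-1}(\mathbb{R})$ rather than merely lying close to it. The approach is to invert $g$ for $|T|>R'$ by Rouché's theorem. Then $\sigma_1^{(\ell)}\cap\{|T|>R'\}=g^{-1}\bigl(\{w:\ |\cos w|\le1\}\bigr)$, and the admissible $w$ with $|w|$ large lie only on the real axis: for $w$ near $\mathbb{R}$ with $y\ne0$ one has $\sinh^2y>\sin^2x$ unless $y=0$ or $\sin x=0$ with $y=0$. This set is the union of two real half-lines, so its preimage under the biholomorphism $g^{-1}$ is exactly two arcs.
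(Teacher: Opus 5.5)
Your route is the paper's. Part (1) comes from the parity dichotomy of Lemma \ref{lem-order} together with the factorization of $Q_\ell$. Parts (2)--(3) come from the large-$T$ expansion $\Delta_1^{(\ell)}(T)=\pm\cos(g(T))$. Your ray-counting at a point of vanishing order $d$, and your inversion of $g$ near $\infty$, make explicit what the paper compresses into ``from Lemma \ref{lem-order} we obtain'' and ``which is equivalent to $T=\frac12\varepsilon_0ti+O(t^{-1})$''. Part (1) is fine as it stands.

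However, the step that is supposed to give ``exactly two'' unbounded arcs is wrong as written. You replace the defining condition $\Delta_1^{(\ell)}(T)\in[-1,1]$ by $|\cos g(T)|\le1$. The set $\{w:|\cos w|\le1\}$ does not shrink onto $\mathbb{R}$: for $w=\frac{\pi}{2}+n\pi+iy$ one has $|\cos w|=|\sinh y|$, so every $|y|\le\mathrm{arcsinh}\,1$ is admissible for arbitrarily large $|w|$. Consequently:
\begin{itemize}
\item ``$\mathrm{Im}\,g\to0$'' does not follow;
\item your inequality ``$\sinh^2y>\sin^2x$ unless $y=0$'' is false (take $x=\pi/2$ and $y$ small);
\item $g^{-1}(\{|\cos w|\le1\})$ near $\infty$ is a chain of lens-shaped regions pinched at $\mathrm{Re}\,w\in\pi\mathbb{Z}$, not two arcs.
\end{itemize}
So the identity $\sigma_1^{(\ell)}\cap\{|T|>R'\}=g^{-1}(\{w:|\cos w|\le 1\})$ is not the right description, and the conclusion drawn from it does not hold.

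The repair is one line, and it is exactly what the paper uses implicitly when it says $T\in\sigma_1^{(\ell)}$ iff $t\in\mathbb{R}$. Since $\cos(x+iy)=\cos x\cosh y-i\sin x\sinh y$, the value $\cos w$ is real iff $y=0$ or $\sin x=0$. In the second case $\cos w=\pm\cosh y$, and this lies in $[-1,1]$ only if $y=0$. Hence $\{w:\cos w\in[-1,1]\}=\mathbb{R}$, and $\sigma_1^{(\ell)}\cap\{|T|>R'\}=g^{-1}(\mathbb{R})$.

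With this correction your Rouché argument goes through. The function $g$ has a simple pole at $T=\infty$, because both $W$ and $\int_{z_0}^{z_0+1}\frac{dz}{2F^{(\ell)}(z;T)}$ expand in powers of $1/T$. So $g$ maps a neighbourhood of $\infty$ biholomorphically onto a neighbourhood of $\infty$, and $g^{-1}$ of the two real half-lines $\{|w|\ \text{large}\}\cap\mathbb{R}$ gives exactly the two arcs. The same applies to $\Delta_2^{(\ell)}$.

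Incidentally, since $W\sim4\varepsilon_0T^2$ and $F^{(\ell)}\sim2T$, the leading term of $W/(2F^{(\ell)})$ is $\varepsilon_0T$, not $2\varepsilon_0T$ as in the paper's display that you copied. This only rescales $g$ and does not change the asymptotic directions.
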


\begin{proof}
Note that the spectral polynomial
$$Q_\ell(T)=16(T^2+e_\ell-e_{j})(T^2+e_\ell-e_{k}),$$
we obtain that $\pm T_j, \pm T_k$ are exactly four zeros of $Q_\ell(T)$, which is desired. 
\end{proof}

\begin{remark}
The two unbounded arcs $\sigma_{k\pm}^{(\ell)}, k=1,2$ in Lemma \ref{lem-endpoints} might be parts of the same  arc of $\sigma_k^{(\ell)}$.
\end{remark}

\begin{lemma}\label{lem-loop}
Let $\ell\in \{1,2,3\}$ and $k\in \{1,2\}$. 
\begin{enumerate}
\item [(a)]  $\sigma_k^{(\ell)}$ does not contain any loop.   
\item [(b)]  $\sigma_k^{(\ell)}$ is symmetric with respect to the origin in $V_\ell$, i.e.,  $\Delta_k^{(\ell)}(T)=\Delta_k^{(\ell)}(-T)$.
\end{enumerate}
\end{lemma}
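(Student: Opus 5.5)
The plan is to treat both parts through the holomorphic dependence of $\Delta_k^{(\ell)}$ on $T$. First I would note that $\Delta_k^{(\ell)}$ is an entire function of $T\in V_\ell\cong\mathbb{C}$. The potential $q_\ell(z;T)$ depends polynomially on $T$. Along the fixed paths $z_0+[0,1]$ and $z_0+[0,\tau]$, which avoid $E_\tau[2]$, the fundamental solution normalized at $z_0$ is therefore entire in $T$, so the entries of $M_1,M_2$ and their traces are entire in $T$. The sign ambiguity $M_j(z_0')=\pm M_j(z_0)$ does not arise, because $z_0$ is fixed once and for all.

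For (a), suppose $\sigma_k^{(\ell)}$ contains a Jordan loop $\gamma$, and let $D$ be the bounded domain it encloses. Then $\mathrm{Im}\,\Delta_k^{(\ell)}$ is harmonic on $\overline D$ and vanishes on $\partial D=\gamma$, so by the maximum principle it vanishes identically on $D$. Hence $\Delta_k^{(\ell)}$ is a real-valued holomorphic function on an open set, so it is constant there, and by the identity theorem it is constant on all of $V_\ell$. This contradicts the asymptotics obtained just before Lemma \ref{lem-endpoints}: for instance $\Delta_1^{(\ell)}(T)=(-1)^{m_s}\cos(2\varepsilon_0 Ti+O(T^{-1}))$ is unbounded along the real axis, and similarly for $\Delta_2^{(\ell)}$.

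For (b), I would relate $\mathcal{L}_\ell(T)$ and $\mathcal{L}_\ell(-T)$ via $z\mapsto -z$. Using that $\wp$ is even, $\zeta$ is odd, and $-\frac{\omega_k}{2}\equiv\frac{\omega_k}{2}$ with $\zeta(z+\frac{\omega_k}{2})=\zeta(z-\frac{\omega_k}{2})+\eta_k$, one gets
\[
q_\ell(-z;T)=q_\ell(z;-T)+c(T).
\]
Here $c(T)=-T\sum_k\varepsilon^{(\ell)}_k\eta_k+\bigl(B(T)-B(-T)\bigr)$, with $B(T)=T^2-\eta_{3-\ell}T$. The key check is that $\sum_k\varepsilon_k^{(\ell)}\eta_k=-2\eta_{3-\ell}$ (e.g.\ $\ell=3$: $-\eta_1-\eta_2+\eta_3=0$). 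This gives $c(T)\equiv0$, which I expect to be the only real computation. Granting this, if $y(z)$ solves $\mathcal{L}_\ell(T)$ then $\tilde y(z)=y(-z)$ solves $\mathcal{L}_\ell(-T)$, and $\tilde y(z+\omega_j)=y(-z-\omega_j)$. So the monodromy of $\mathcal{L}_\ell(-T)$ along $\omega_j$ is conjugate to $M_j^{-1}$, up to the global sign $\pm$ coming from the base-point change, which I would fix by taking $z_0$ and $-z_0$ symmetric or by comparing at $T=0$. Since $M_j\in SL(2,\mathbb{C})$, we have $\mathrm{tr}\,M_j^{-1}=\mathrm{tr}\,M_j$, hence $\Delta_k^{(\ell)}(-T)=\pm\Delta_k^{(\ell)}(T)$ with a sign constant in $T$ by continuity. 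Evaluating at $T=0$, where Corollary \ref{cor-intro-T=0} gives $\Delta_k^{(\ell)}(0)=-1\neq0$, shows that the sign is $+$.

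As a cross-check, I would redo (b) via Theorem \ref{thm-intro-data}. The relation $T^{(\ell)}(\underline{-a})=-T^{(\ell)}(\underline a)$ holds, and replacing $a$ by $-a$ in (\ref{rs-new}) sends $(s,r)$ to $-(s,r)$ minus the solution of $r+s\tau=\omega_\ell+\omega_3$, $r\eta_1+s\eta_2=\eta_\ell+\eta_3$, which lies in $\mathbb{Z}^2$. Hence $\cos 2\pi s$ and $\cos 2\pi r$ are unchanged away from the zeros of $Q_\ell$, and by continuity everywhere.
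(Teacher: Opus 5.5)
Your proposal is correct and follows the paper's proof. For (a) it applies the maximum principle to $\mathrm{Im}\,\Delta_k^{(\ell)}$ on the domain enclosed by a loop. For (b) it uses the identity $q_\ell(-z;-T)=q_\ell(z;T)$, where your check $\sum_k\varepsilon_k^{(\ell)}\eta_k=-2\eta_{3-\ell}$ is exactly the paper's computation, together with $\mathrm{tr}\,M^{-1}=\mathrm{tr}\,M$ when $\det M=1$. The extra details you supply are points the paper leaves implicit: entire dependence on $T$, non-constancy via the large-$T$ asymptotics, and removing the base-point sign using $\Delta_k^{(\ell)}(0)\neq 0$. That sign is in fact independent of $T$, since it only counts the half-periods lying between $z_0+\mathbb{R}\omega_j$ and $-z_0+\mathbb{R}\omega_j$.
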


\begin{proof}
{\noindent (a)} Suppose that %the spectrum $\sigma_\ell^{(k)}$ has a self-intersection point, then
 there is a loop $\gamma\in \sigma_k^{(\ell)}$ such that $\Delta_k^{(\ell)}(\gamma)\subseteq [-1,1]$, so $$\mathrm{Im}(\Delta_k^{(\ell)})|_\gamma\equiv 0.$$ Since  $\mathrm{Im}(\Delta_k^{(\ell)})$ is harmonic, by the maximum principle, $\mathrm{Im}(\Delta_k^{(\ell)})|_\gamma\equiv 0$ on the domain bounded by $\gamma$. Thus $\Delta_k^{(\ell)}(T)$ is a constant, which is impossible. 

\medskip

{\noindent (b)}  Recall that 
$$q_\ell(z;T)=\frac{3}{4}\sum_{j=0}^3\wp(z-\frac{\omega_j}{2})+T\sum_{j=0}^3\varepsilon_j^{(\ell)} \zeta(z-\frac{\omega_j}{2})+T^2-\eta_{3-\ell}T.$$
Note that  $\zeta(-z)=-\zeta(z)$ and $\zeta(z+\frac{1}{2}\omega_j)=\zeta(z-\frac{1}{2}\omega_j)+\eta_j,\ 0\leq j\leq 3$, %where we set $\eta_0=0$, 
we have 
\begin{align*}
q_\ell(-z;-T)&=\frac{3}{4}\sum_{j=0}^3\wp(z-\frac{\omega_j}{2})+T\sum_{j=0}^3\varepsilon_j^{(\ell)} \zeta(z+\frac{\omega_j}{2})+T^2+\eta_{3-\ell}T\\
&=\frac{3}{4}\sum_{j=0}^3\wp(z-\frac{\omega_j}{2})+T\sum_{j=0}^3\varepsilon_j^{(\ell)} \zeta(z-\frac{\omega_j}{2})+T\sum_{j=0}^3\varepsilon_j^{(\ell)}\eta_j+T^2+\eta_{3-\ell}T\\
&=q_\ell(z;T).
\end{align*}
Hence, if $y''(z)=q_\ell(z;-T)y(z)$, then $y''(-z)=q_\ell(z;T)y(z)$. 
By the definition of monodromy matrices in (\ref{CompRed}),  we get $$M_k^{(\ell)}(-{T})=M_k^{(\ell)}(T)^{-1}$$ and then $\Delta_k^{(\ell)}(-T)=\Delta_k^{(\ell)}(T)$  since $M_k^{(\ell)}$ has the determinant $1$. 
\end{proof}

We now consider the conditional stability sets for $\tau\in i\mathbb{R}_{>0}$. % and  $\frac{1}{2}+i\mathbb{R}$. 
Recall that 
\begin{align*}
\wp(z; \tau)&=\frac{1}{z^2}+\sum_{(m,n)\neq (0,0)} \left(\frac{1}{(z-m-n\tau)^2}-\frac{1}{(m+n\tau)^2}\right),\\
\zeta(z;\tau)&=\frac{1}{z}+\sum_{(m,n)\neq (0,0)} \left(\frac{1}{z-m-n\tau}+\frac{1}{m+n\tau}+\frac{z}{(m+n\tau)^2}\right).
\end{align*}
If $\tau \in i\mathbb{R}_{>0}$, then $\overline{m+n\tau}=m-n\tau$, so $\wp(\overline{z})=\overline{\wp(z)}$ and  $\zeta(\overline{z})=\overline{\zeta(z)}$. %Similarly, if $\tau\in \frac{1}{2}+i\mathbb{R}$, then $\overline{m+n\tau}=m+n-n\tau$, so $\wp(\overline{ z})=\overline{\wp(z)}$ and  $\zeta(\overline{ z})=\overline{\zeta(z)}$. 

\begin{lemma}\label{lem-sym}
Let $\ell\in \{1,2,3\}$ and $k\in \{1,2\}$.  Let $\tau\in i\mathbb{R}_{>0}$, 
%\begin{enumerate}
%\item [(1)] 
then $$\Delta_k^{(\ell)}(T)=\overline{\Delta_k^{(\ell)}(\overline{T})}.$$
 Consequently, $\sigma_k^{(\ell)}$ is symmetric with respect to the real axis and imaginary axis.
%\item [(2)] If $\tau\in \frac{1}{2}+i\mathbb{R}$, then $\Delta_1^{(1)}(T)=\overline{\Delta_1^{(1)}(\overline{T})}$ and $\Delta_1^{(3)}(T)=\overline{\Delta_1^{(2)}(\overline{T})}$. Consequently, 
%\begin{enumerate}
%\item $\sigma_1^{(1)}$ is symmetric with respect to the real axis and imaginary axis;
%\item $\sigma_1^{(3)}=\overline{\sigma_1^{(2)}}$, i.e., $\sigma_1^{(3)}$ and ${\sigma_1^{(2)}}$ are symmetric with respect to the real axis.
%\end{enumerate}
%\end{enumerate}
\end{lemma}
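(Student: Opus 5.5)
The plan is to mirror the proof of Lemma \ref{lem-loop}(b), replacing $z\mapsto -z$ by complex conjugation. When $\tau\in i\mathbb{R}_{>0}$, we have $\wp(\bar z)=\overline{\wp(z)}$ and $\zeta(\bar z)=\overline{\zeta(z)}$. This gives $\bar e_k=e_k$ and $\bar\eta_1=\eta_1$, and since $\eta_2=\tau\eta_1-2\pi i$ with $\tau\eta_1$ purely imaginary, also $\bar\eta_2=-\eta_2$. The half periods satisfy $\overline{\omega_j/2}\equiv\pm\omega_j/2$ modulo $\Lambda_\tau$. Thus conjugation permutes $E_\tau[2]$ trivially, but it shifts the zeta terms by quasi-periods: $\zeta(\bar z-\frac{\omega_j}{2})=\overline{\zeta(z-\overline{\omega_j/2})}$. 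For $j=2,3$ we have $\overline{\omega_j/2}=\omega_j/2-\omega_2$, which differs from $\omega_j/2$ by $-\omega_2$, so the correction is a multiple of $\bar\eta_2$.

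The first step is to compute $\overline{q_\ell(\bar z;\overline{T})}$ and show that it equals $q_\ell(z;T)$. Writing out the terms, the $\wp$ part is invariant. The $\zeta$ part becomes $T\sum_j\varepsilon_j^{(\ell)}\zeta(z-\frac{\omega_j}{2})+T\cdot(\text{constant})$, where the constant is $-\eta_2(\varepsilon_2^{(\ell)}+\varepsilon_3^{(\ell)})$ up to sign. The remaining part is $T^2-\overline{\eta_{3-\ell}}\,T$. One must then check, for each $\ell$, that the extra constant exactly compensates $\overline{\eta_{3-\ell}}-\eta_{3-\ell}$:
\begin{itemize}
\item for $\ell=3$, $\varepsilon_2+\varepsilon_3=0$ and $\eta_0=0$;
\item for $\ell=2$, $\varepsilon_2+\varepsilon_3=0$ and $\eta_1$ is real;
\item for $\ell=1$, $\varepsilon_2=\varepsilon_3=-1$ and $\bar\eta_2-\eta_2=-2\eta_2$, which should match the correction $2\eta_2$ coming from the shifts.
\end{itemize}
This bookkeeping is the main place where errors could occur, and I expect it to be the main obstacle. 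If a residual constant $cT$ survives, I would fall back on comparing $B$ via the apparent condition (\ref{Tk}) itself. Since $\overline{(\mathbb{T},B)}$ again satisfies (\ref{Tk}), the conjugated parameter lies in $AP$; because $\overline{T}$ still sits in $V_\ell$, the potentials must agree.

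Given this identity, if $y$ solves $\mathcal{L}_\ell(\overline{T})$, then $\tilde y(z):=\overline{y(\bar z)}$ solves $\mathcal{L}_\ell(T)$. Because $\bar 1=1$ and $\bar\tau=-\tau$, we get $\tilde y(z+1)=\overline{y(\bar z+1)}$ and $\tilde y(z+\tau)=\overline{y(\bar z-\tau)}$. Hence, choosing a real base point, the monodromy matrices satisfy $M_1(T)=\overline{M_1(\overline{T})}$ and $M_2(T)=\overline{M_2(\overline{T})}^{-1}$. Taking half-traces, and using $\det M_2=1$ so that $\operatorname{tr}M^{-1}=\operatorname{tr}M$, we obtain $\Delta_k^{(\ell)}(T)=\overline{\Delta_k^{(\ell)}(\overline{T})}$. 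Since $[-1,1]$ is invariant under conjugation, $\sigma_k^{(\ell)}$ is invariant under $T\mapsto\overline{T}$. Combined with Lemma \ref{lem-loop}(b), which gives invariance under $T\mapsto -T$, this also yields invariance under $T\mapsto-\overline{T}$, i.e. symmetry with respect to both axes.
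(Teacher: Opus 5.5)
Your proposal is correct and is essentially the paper's proof. It uses the same check that $\overline{q_\ell(\bar z;\overline{T})}=q_\ell(z;T)$: the $\eta_2$-shifts from $\overline{\omega_2/2},\overline{\omega_3/2}$ cancel $\eta_{3-\ell}-\overline{\eta_{3-\ell}}$, e.g.\ $-2\eta_2T+2\eta_2T=0$ for $\ell=1$. It then uses $y\mapsto\overline{y(\bar z)}$ to get $M_1(\overline{T})=\overline{M_1(T)}$ and $M_2(\overline{T})^{-1}=\overline{M_2(T)}$, and Lemma~\ref{lem-loop}(b) for the second axis.

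Two small corrections. First, your fallback is not needed, and its premise is false: conjugating (\ref{Tk}) also conjugates the coefficients $\zeta_{kj}$, so $\overline{(\mathbb{T},B)}\notin AP$ in general. Second, a real base point is not admissible, because $z_0+\mathbb{R}$ would pass through the singular points $0$ and $\tfrac12$. Take any admissible $z_0$ instead; then $\bar z_0$ is admissible too. The resulting $T$-independent sign ambiguity is harmless for $\sigma_k^{(\ell)}$, and in fact it equals $+1$, as evaluating at $T=0$ shows.
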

\begin{proof}
By a direct computation, we have 
\begin{align*}
\overline{q_\ell(\overline{z}; \overline{T})}&=\frac{3}{4}\sum_{j=0}^3\overline{\wp(\overline{z}-\frac{\omega_j}{2})}+{T}\sum_{j=0}^3\overline{\varepsilon_j^{(\ell)} \zeta(\overline{z}-\frac{\omega_j}{2})}+{T}^2-\overline{\eta_{3-\ell}}{T}\\
%&=\frac{3}{4}\sum_{j=0}^3{\wp({z}-\frac{\overline{\omega_j}}{2})}+{T}\sum_{j=0}^3{\varepsilon_j^{(\ell)} \zeta({z}-\frac{\overline{\omega_j}}{2})}+{T}^2-\overline{\eta_{3-\ell}}{T}\\
&=\frac{3}{4}\sum_{j=0}^3{\wp({z}-\frac{{\omega_j}}{2})}+{T}\sum_{j=0}^3{\varepsilon_j^{(\ell)} \zeta({z}-\frac{\overline{\omega_j}}{2})}+{T}^2-\overline{\eta_{3-\ell}}{T}.
%&=q_k(z;T).
\end{align*}
%Denote by $\{k,k_1,k_2\}=\{1,2,3\}$, we have 

If $\tau\in i\mathbb{R}_{>0}$, then $\overline{\tau}=-\tau$, so
\begin{align*}
&\varepsilon_{2}^{(\ell)}\zeta({z}-\frac{\overline{\tau}}{2})+\varepsilon_{3}^{(\ell)}\zeta({z}-\frac{\overline{1+\tau}}{2})\\
=&\varepsilon_{2}^{(\ell)}\zeta({z}+\frac{{\tau}}{2})+\varepsilon_{3}^{(\ell)}\zeta({z}-\frac{{1-\tau}}{2})\\
=&\varepsilon_{2}^{(\ell)}\zeta({z}-\frac{{\tau}}{2})+\varepsilon_{2}^{(\ell)}\eta_2+\varepsilon_{3}^{(\ell)}\zeta({z}-\frac{{1+\tau}}{2})+\varepsilon_{3}^{(\ell)}\eta_2.\\
\end{align*}
Since 
\begin{align*}
\overline{\eta_1}&=\overline{\zeta(z+1)-\zeta(z)}=\zeta(\overline{z}+1)-\zeta(\overline{z})=\eta_1;\\
\overline{\eta_2}&=\overline{\zeta(z+\tau)-\zeta(z)}=\zeta(\overline{z}-\tau)-\zeta(\overline{z})=-\eta_2,
\end{align*}
we have 
\begin{align*}
\overline{q_\ell(\overline{z}; \overline{T})}=q_\ell(z;T)+T\left(\varepsilon_{2}^{(\ell)}\eta_2+\varepsilon_{3}^{(\ell)}\eta_2+\eta_{3-\ell}-\overline{\eta_{3-\ell}}\right)=q_\ell(z;T).
\end{align*}
Hence, if $y''(z)=q_\ell(z;\overline{T})y(z)$, then $\overline{y''(\overline{z})}=q_\ell(z;T)\overline{y(\overline{z})}$. 
By the definition of monodromy matrices in (\ref{CompRed}), we get 
\begin{align*}
M_1^{(\ell)}(\overline{T})&=\overline{M_1^{(\ell)}(T)},\\
M_2^{(\ell)}(\overline{T})^{-1}&=\overline{M_2^{(\ell)}(T)}
\end{align*} and then $\Delta_k^{(\ell)}(\overline{T})=\overline{\Delta_k^{(\ell)}(T)}$ because $M_2^{(\ell)}$ has determinant $1$.

 Since $\Delta_k^{(\ell)}(-{T})={\Delta_k^{(\ell)}(T)}$ and $\Delta_k^{(\ell)}(\overline{T})=\overline{\Delta_k^{(\ell)}(T)}$, then $-T, \overline{T}\in \sigma_k^{(\ell)}$ if $T\in \sigma_k^{(\ell)}$, so $\sigma_k^{(\ell)}$ is symmetric with respect to the real axis and imaginary axis.
\end{proof}

Notice that $\tau\in i\mathbb{R}_{>0}$ gives us that  $e_j(\tau)=\wp({\omega_j}/{2}), j=1,2,3$ are real and satisfy $$e_2(\tau)<e_3(\tau)<e_1(\tau),\quad \sum\limits_{i=1}^3 e_i(\tau)=0.$$ Then we can determine the  conditional stability set  $\sigma_k^{(\ell)}$ precisely. 
\begin{proposition}\label{prop-spec}
    Let $\tau\in i\mathbb{R}_{>0}$. Then   
    \begin{align*}
 \sigma_1^{(1)}=&(-i\infty,-i\sqrt{e_1-e_2}]\cup[-i\sqrt{e_1-e_3},i\sqrt{e_1-e_3}]\cup [i\sqrt{e_1-e_2},i\infty),\\
 \sigma_2^{(1)}=&\mathbb{R}\cup[-i\sqrt{e_1-e_2},-i\sqrt{e_1-e_3}]\cup [i\sqrt{e_1-e_3},i\sqrt{e_1-e_2}],\\
 \sigma_1^{(2)}=&i\mathbb{R}\cup[-\sqrt{e_1-e_2},-\sqrt{e_3-e_2}]\cup [\sqrt{e_3-e_2},\sqrt{e_1-e_2}],\\
 \sigma_2^{(2)}=&(-\infty,-\sqrt{e_1-e_2}]\cup[-\sqrt{e_3-e_2},\sqrt{e_3-e_2}]\cup [\sqrt{e_1-e_2},+\infty),\\
\sigma_1^{(3)}=&(-i\infty,-i\sqrt{e_3-e_2}]\cup[-\sqrt{e_1-e_3},\sqrt{e_1-e_3}]\cup [i\sqrt{e_3-e_2},i\infty),\\
 \sigma_2^{(3)}=&(-\infty,-\sqrt{e_1-e_3}]\cup[-i\sqrt{e_3-e_2},i\sqrt{e_3-e_2}]\cup [\sqrt{e_1-e_3},+\infty).
    \end{align*}
 \end{proposition}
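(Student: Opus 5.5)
The plan is to combine the qualitative facts already established: the finite endpoints (Lemma \ref{lem-order}, Lemma \ref{lem-endpoints}), the behaviour at infinity, the absence of loops (Lemma \ref{lem-loop}), and the two symmetries (Lemma \ref{lem-loop}(b), Lemma \ref{lem-sym}). Together with one more observation, these should force each $\sigma_k^{(\ell)}$ to lie on the coordinate axes of $V_\ell\cong\mathbb{C}$. After that, an elementary one-variable analysis of a real function on each axis finishes the job.

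\emph{Step 1: $\Delta_k^{(\ell)}$ is real on both axes.} For $T\in\mathbb{R}$, Lemma \ref{lem-sym} gives $\Delta_k^{(\ell)}(T)=\overline{\Delta_k^{(\ell)}(T)}$. For $T=iy$ with $y\in\mathbb{R}$, Lemma \ref{lem-sym} and Lemma \ref{lem-loop}(b) give $\Delta(iy)=\overline{\Delta(-iy)}=\overline{\Delta(iy)}$. Hence on each axis, membership in $\sigma_k^{(\ell)}$ is governed by a real-analytic function. That function can only enter or leave $[-1,1]$ at a point where $\Delta=\pm1$ with odd vanishing order of $\Delta\mp1$. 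By Lemma \ref{lem-order}, such points are exactly the zeros of $Q_\ell$.

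\emph{Step 2: locate the endpoints and the point $0$.} Using $e_2<e_3<e_1$, the zeros of $Q_\ell$ are as follows.
\begin{itemize}
\item For $\ell=1$: $\pm i\sqrt{e_1-e_3}$ and $\pm i\sqrt{e_1-e_2}$, all on $i\mathbb{R}$.
\item For $\ell=2$: $\pm\sqrt{e_3-e_2}$ and $\pm\sqrt{e_1-e_2}$, all on $\mathbb{R}$.
\item For $\ell=3$: $\pm\sqrt{e_1-e_3}$ on $\mathbb{R}$ and $\pm i\sqrt{e_3-e_2}$ on $i\mathbb{R}$.
\end{itemize}
At $T=0$, Corollary \ref{cor-intro-T=0} gives $\Delta_k^{(\ell)}(0)=-1$, so $0\in\sigma_1^{(\ell)}\cap\sigma_2^{(\ell)}$. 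By evenness, the vanishing order $d$ of $\Delta+1$ at $0$ is even. Locally, $\sigma_k^{(\ell)}$ near $0$ is the set where $c\,T^{d}\ge0$, i.e. $d/2$ lines through $0$.

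\emph{Step 3: the sets on the axes.} Walk outward along each axis from $0$ and from infinity.
\begin{itemize}
\item At infinity, $\Delta_1\sim\pm\cos(2\varepsilon_0 iT)$. This has modulus at most $1$ along $i\mathbb{R}$ and grows like $\cosh$ along $\mathbb{R}$. Since $\tau\in i\mathbb{R}_{>0}$, $\Delta_2\sim\pm\cos(2\varepsilon_0\tau iT)$ behaves the opposite way.
\item Along an axis with no endpoints, membership cannot change away from $0$. So that axis lies entirely in $\sigma_k$ (the case $\sigma_2^{(1)}\supset\mathbb{R}$ and $\sigma_1^{(2)}\supset i\mathbb{R}$), or meets it only at $0$.
\item Along an axis carrying endpoints, membership alternates at each endpoint. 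Matching the status at $0$ (inside) with the status at infinity then gives exactly the stated intervals. For instance, $\sigma_1^{(1)}\cap i\mathbb{R}$ is inside on $[0,i\sqrt{e_1-e_3}]$, outside on the gap, and inside from $i\sqrt{e_1-e_2}$ to $i\infty$.
\item Consistency checks: the parity of the number of endpoints on a half-axis must match the inside/outside status at its two ends. The case $\ell=3$ has one endpoint on each half-axis, and $0$ then lies on exactly one line, the axis not containing the endpoints' unbounded continuation.
\end{itemize}

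\emph{Step 4 (main obstacle): no pieces off the axes.} I must show $\sigma_k^{(\ell)}$ has no components or branches off the axes, and that $d=2$ at $0$ where needed. I would argue via the graph structure. $\sigma_k^{(\ell)}$ is a finite union of analytic arcs whose only terminal points are the four zeros of $Q_\ell$ (degree one each) and the two ends at infinity from Lemma \ref{lem-endpoints}. By Lemma \ref{lem-loop}(a) it contains no closed loop, so every maximal branch must terminate at one of these six ends. Step 3 already shows that the axis pieces use up all six ends, each ends set being an odd-order point. An extra branch, say one leaving an axis at an interior crossing point such as $0$ with $d\ge4$, would therefore have to end at an already used end or close a loop, both impossible. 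Hence $d=2$ at $0$ and the sets are exactly as listed. The delicate point is making the "every branch ends at an end" assertion rigorous. For this I plan to invoke the maximum principle for $\mathrm{Im}\,\Delta$ on bounded complementary regions, exactly as in Lemma \ref{lem-loop}(a), combined with a degree count showing that each end is used by exactly one branch.
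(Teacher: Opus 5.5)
Your Steps 1--3 are sound, and in one respect they are more careful than the paper. Reality of $\Delta_k^{(\ell)}$ on both axes, the fact that membership flips exactly at the zeros of $Q_\ell$ (Lemma \ref{lem-order}), and the behaviour at infinity do pin down $\sigma_k^{(\ell)}\cap(\mathbb{R}\cup i\mathbb{R})$. This holds up to isolated points where $\Delta=\pm1$ with even order; your ``meets it only at $0$'' should allow for these, and they are crossing points of possible off-axis arcs, so they belong to Step 4. The paper never explains why, for example, the middle interval rather than the gap lies in $\sigma_1^{(1)}$, so this part is an improvement.

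Step 4, however, does not close as written, for two reasons.

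\emph{Degree one at the zeros of $Q_\ell$ is not available.} Lemma \ref{lem-order}(b) only gives odd order $2n+1$. Having $n=0$ would require $\chi_1\neq0$ (resp.\ $\chi_2\neq0$) at the branch point, i.e.\ $\eta_1+e_k\neq0$ (resp.\ $\eta_2+\tau e_k\neq0$), and nothing in the paper provides this.

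\emph{Ending at an already used end is not a contradiction.} Take $\sigma_1^{(1)}$. An arc could leave $0$ into the first quadrant and rejoin the axis piece $[i\sqrt{e_1-e_2},i\infty)$ at an interior point $p$. This closes no loop, because on the axis $p$ is separated from $0$ by the gap $(i\sqrt{e_1-e_3},i\sqrt{e_1-e_2})$. Any maximal path through the arc reaches its ends only along edges the axis pieces already use. So counting how many branches ``use'' each end cannot exclude this configuration.

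What excludes it is symmetry or parity, and you have the ingredients for both.

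\emph{Symmetry (the paper's route).} The mirror image $-\overline{\gamma}$ of such an arc also joins $0$ to $p$. Hence $\gamma\cup(-\overline{\gamma})$ is a loop, contradicting Lemma \ref{lem-loop}(a). In general, an off-axis arc together with its reflections across one or both axes closes a loop.

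\emph{Parity (fits your framework).} This needs only odd, not unit, degree at the ends.
\begin{enumerate}
\item Regard $\sigma_k^{(\ell)}$, with its two ends at infinity, as a finite graph. Every vertex has even degree ($2m$ at a critical point with $\Delta\in(-1,1)$, and $d$ even at $\Delta=\pm1$ when $Q_\ell\neq0$), except the six ends, which have odd degree.
\item By Lemma \ref{lem-loop}(a) this graph is a forest.
\item Your Step 3 produces three disjoint axis paths $P_1,P_2,P_3$ that pair up the six ends.
\item Contract each $P_i$ to a vertex $w_i$. The result is still a forest. Each $\deg w_i$ is a sum of terms $\deg v-2$ (interior vertices of $P_i$) and $\deg v-1$ (endpoints of $P_i$), all of which are even.
\item A forest in which every degree is even has no edges. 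Hence $\sigma_k^{(\ell)}=P_1\cup P_2\cup P_3$. This also gives $d=2$ at $0$ and rules out isolated axis points.
\end{enumerate}

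With this replacement your proof is complete. Compared with the paper, your route gives an honest derivation of which axis intervals occur. The cost is graph bookkeeping that the paper's reflection trick avoids.
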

\begin{proof}
Let $k=1,2$. By Lemma \ref{lem-loop}, the  conditional stability set $\sigma_k^{(\ell)}$ does not contain any loop.   By Lemma \ref{lem-sym}, $\sigma_k^{(\ell)}$ is symmetric with respect to the real axis and imaginary axis. 
 Furthermore, by Lemma \ref{lem-endpoints},  $\sigma_1^{(\ell)}$
 contains exactly two unbounded arcs $\sigma_{1\pm}^{(\ell)}$ which asymptotically tend to $\pm i\infty$ and  $\sigma_2^{(\ell)}$ contains exactly two unbounded arcs $\sigma_{2\pm}^{(\ell)}$ which asymptotically tend to $\pm\frac{i}{2\tau}\infty$.  These facts together gives us that $\sigma_{1\pm}^{(\ell)}\subseteq i\mathbb{R}$ and $\sigma_{2\pm}^{(\ell)}\subseteq \mathbb{R}$ . % are contained in the imaginary axis. 

By Proposition \ref{prop-ell},
$$Q_\ell(T)=16(T^2+e_\ell-e_{j})(T^2+e_\ell-e_{k})$$
where $\{\ell, j,k\}=\{1,2,3\}$. 

For $\ell=1$, $Q_1(T)=0$ has four distinct purely imaginary zeros:
$$ \pm i\sqrt{e_1-e_2},\quad \pm i\sqrt{e_1-e_3},$$
which are exactly finite endpoints of $\sigma_\ell^{(1)}$. %and  $\sigma_2^{(1)}$. 
%By Theorem \ref{thm07121705}, all $iT_\ell,\ 1\leq \ell\leq 4$, are end points. Therefore, we have $\sigma^{(-)}=(-i\infty,i\ T_1]$ and $\sigma^{(+)}=[i\ T_{4},+i\infty)$.
Suppose that there is a bounded arc $\gamma\subseteq \sigma_k^{(1)}$ which is not contained in the  imaginary axis. Since  $\sigma_k^{(1)}$ is symmetric with respect to the imaginary axis, then $$-\overline{\gamma}=\{-\overline{T}\mid T\in {\gamma}\}\subseteq\sigma_k^{(1)}.$$  Note that the two endpoints of $\gamma$ lie in the  imaginary axis, then $-\overline{\gamma}\cup{\gamma}$ contains a loop, which is a contradiction. Hence, we conclude  that all bounded  arcs of $\sigma_k^{(1)}$ are contained in the imaginary axis.  Therefore, 
 \begin{align*}
 \sigma_1^{(1)}=&(-i\infty,-i\sqrt{e_1-e_2}]\cup[-i\sqrt{e_1-e_3},i\sqrt{e_1-e_3}]\cup [i\sqrt{e_1-e_2},i\infty),\\
 \sigma_2^{(1)}=&\mathbb{R}\cup[-i\sqrt{e_1-e_2},-i\sqrt{e_1-e_3}]\cup [i\sqrt{e_1-e_3},i\sqrt{e_1-e_2}].
    \end{align*}

For $k=2$, the same proof with the imaginary axis changed by real axis gives us the  conditional stability sets $
\sigma_1^{(2)}$ and $\sigma_2^{(2)}$.
\iffalse 
$\ell_2(T)=0$ has four distinct purely imaginary zeros:
$$ \pm \sqrt{e_1-e_2},\quad \pm \sqrt{e_3-e_2},$$
which are exactly finite endpoints of $\sigma_\ell^{(2)}$. 
Suppose that there is a bounded spectral arc $\gamma\subseteq \sigma_\ell^{(2)}$ which is not contained in the real axis. Since  $\sigma_\ell^{(2)}$ is symmetric with respect to the real axis, then $$\overline{\gamma}=\{\overline{T}\mid T\in {\gamma}\}\subseteq\sigma_\ell^{(1)}.$$  Note that the two endpoints of $\gamma$ lie in the real axis, then $\overline{\gamma}\cup{\gamma}$ contains a loop, which is a contradiction. Hence, we conclude  that all bounded spectral arcs of $\sigma_\ell^{(2)}$ are contained in the real axis.  Therefore, 
 \begin{align*}
\sigma_1^{(2)}=&i\mathbb{R}\cup[-\sqrt{e_1-e_2},-\sqrt{e_3-e_2}]\cup [\sqrt{e_3-e_2},\sqrt{e_1-e_2}],\\
 \sigma_2^{(2)}=&(-\infty,-\sqrt{e_1-e_2}]\cup[-\sqrt{e_3-e_2},\sqrt{e_3-e_2}]\cup [\sqrt{e_1-e_2},+\infty).
    \end{align*}
\fi 

For $k=3$, $Q_3(T)=0$ has four distinct purely imaginary zeros:
$$ \pm \sqrt{e_1-e_3},\quad \pm i\sqrt{e_3-e_2},$$
which are exactly finite endpoints of $\sigma_k^{(3)}$. %and  $\sigma_2^{(1)}$. 
%By Theorem \ref{thm07121705}, all $iT_\ell,\ 1\leq \ell\leq 4$, are end points. Therefore, we have $\sigma^{(-)}=(-i\infty,i\ T_1]$ and $\sigma^{(+)}=[i\ T_{4},+i\infty)$.
%Clearly, $\sigma_{1-}^{(3)}=(-i\infty, -i\sqrt{e_3-e_2}]$ and $\sigma_{1-}^{(3)}=[i\sqrt{e_3-e_2}, +i\infty)$
Suppose that there is a bounded  arc $\gamma\subseteq \sigma_1^{(3)}$ which is not contained in the  real axis. Since  $\sigma_1^{(3)}$ is symmetric with respect to the real axis and the imaginary axis, then $$-\gamma,\overline{\gamma},-\overline{\gamma}\subseteq\sigma_1^{(3)}.$$  Note that the two endpoints of $\gamma$ lie in the axises, then $\gamma\cup(-\gamma)\cup\overline{\gamma}\cup(-\overline{\gamma})$ contains a loop, which is a contradiction. Hence, we conclude  that all bounded arcs of $\sigma_1^{(3)}$ are contained in the real axis. Similarly, all bounded arcs of $\sigma_2^{(3)}$ are contained in the imaginary axis.  So, 
 \begin{align*}
\sigma_1^{(3)}=&(-i\infty,-i\sqrt{e_3-e_2}]\cup[-\sqrt{e_1-e_3},\sqrt{e_1-e_3}]\cup [i\sqrt{e_3-e_2},i\infty),\\
 \sigma_2^{(3)}=&(-\infty,-\sqrt{e_1-e_3}]\cup[-i\sqrt{e_3-e_2},i\sqrt{e_3-e_2}]\cup [\sqrt{e_1-e_3},+\infty).
    \end{align*}
\end{proof}

In fact, $\sigma_1^{(\ell)}$ and $\sigma_2^{(\ell)}$ are related by the modular property of $\wp(z), \zeta(z)$. 

 %Since our potential has another period $\tau$, we can define the Hill discriminant $\Delta_{j,*}(T,\tau)$ as the half of the trace of $M_2$ and arc of stable quasi-eigenvalues $\Delta^{-1}_{j,*}(T,\tau)([-1,1])$, denoted by $\sigma_{j,*}(\tau)$ (or simply $\sigma_{j,\ast}$ if no confusion is arised). Now we want to calculate $\sigma_{j,*}(\tau),\ 1\leq j\leq 3$ when $\tau=ib$, $b>0$.

\begin{proposition}\label{prop-modular}
 Let $\tau\in i\mathbb{R}_{>0}$. Then
    $$\sigma_{2}^{(1)}(\tau)=\tau \sigma_1^{(2)}(-\frac{1}{\tau}),\quad \sigma_2^{(2)}(\tau)=\tau\sigma_1^{(1)}(-\frac{1}{\tau}), \quad \sigma_2^{(3)}(\tau)=\tau\sigma_1^{(3)}(-\frac{1}{\tau}).$$
\end{proposition}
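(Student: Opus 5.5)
The plan is to realize the modular transformation $\tau\mapsto\tau'=-1/\tau$ directly at the level of the equations $\mathcal{L}_\ell(T)$. The key observation is $\Lambda_{\tau'}=\tau^{-1}\Lambda_\tau$. Homogeneity of the Weierstrass functions then gives
$$\wp(z;\tau')=\tau^2\wp(\tau z;\tau),\qquad \zeta(z;\tau')=\tau\,\zeta(\tau z;\tau),\qquad e_k(\tau')=\tau^2 e_{\pi(k)}(\tau),$$
where $\pi$ swaps $1\leftrightarrow2$ and fixes $3$, because $\frac{\omega_1(\tau')}{2}=\frac12$ corresponds to $\frac{\tau}{2}=\frac{\omega_2(\tau)}{2}$. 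Quasi-periods transform the same way: $\eta_1(\tau')=\tau\eta_2(\tau)$ and $\eta_2(\tau')=\tau\eta_1(\tau)$, up to the sign conventions fixed by $\omega_2(\tau')=-1/\tau$, which I will check against the Legendre relation. The set of half periods is preserved with the labels $1,2$ exchanged. Consequently the sign vectors $\varepsilon^{(\ell)}$ are permuted: component $V_1(\tau')$ corresponds to $V_2(\tau)$, $V_2(\tau')$ to $V_1(\tau)$, and $V_3$ to itself, up to the translations $\omega_k/2\mapsto\omega_k/2+\omega$ that the $\eta$-terms absorb.

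Next I will compare the potentials. I expect the identity
$$q_{\pi(\ell)}(z;\tau T;\tau')=\tau^2\,q_\ell(\tau z;T;\tau),$$
and the heart of this step is matching the constant $B=T^2-\eta_{3-\ell}T$. Setting $T'=\tau T$, the $\zeta$-terms match because $T'\zeta(z-\cdot;\tau')=\tau^2T\zeta(\tau z-\cdot;\tau)$, and $T'^2=\tau^2T^2$. The remaining check is that $\eta_{3-\pi(\ell)}(\tau')T'$ equals $\tau^2\eta_{3-\ell}(\tau)T$ once the shift identities $\zeta(z+\frac{\omega_j}{2})=\zeta(z-\frac{\omega_j}{2})+\eta_j$ are taken into account. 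This mirrors the computation already done in Lemma \ref{lem-sym}. Given the identity, $y(z)$ solves $\mathcal{L}_\ell(T;\tau)$ if and only if $\tilde y(z):=y(\tau z)$ solves $\mathcal{L}_{\pi(\ell)}(\tau T;\tau')$. Since $\tilde y(z+1)=y(\tau z+\tau)$, the monodromy $M_1$ of the new equation is conjugate to $M_2(T;\tau)$ or its inverse. Orientation issues only invert the matrix, and the trace is unchanged because the determinant is $1$. This gives
$$\Delta_1^{(\pi(\ell))}(\tau T;\tau')=\Delta_2^{(\ell)}(T;\tau),$$
and hence the set relation between $\sigma_2^{(\ell)}(\tau)$ and $\sigma_1^{(\pi(\ell))}(-1/\tau)$ scaled by $\tau$, which is the statement for $\ell=1,2,3$.

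The main obstacle will be bookkeeping rather than concepts. First, the identification of $V_\ell(\tau')$ with $V_{\pi(\ell)}(\tau)$ must be fixed exactly, including the sign of $T$; the symmetry $T\mapsto -T$ from Lemma \ref{lem-loop}(b) absorbs any residual sign. Second, the constant term $\eta_{3-\ell}T$ must be tracked through the transformation, since an error there would shift the sets. Third, the direction of the scaling by $\tau$ must be pinned down. As a final consistency check I will compare endpoints with Proposition \ref{prop-spec}. For $\tau=ib$ one has $e_k(\tau')=-b^2e_{\pi(k)}(\tau)$, so the endpoints $\pm\sqrt{e_1-e_3},\pm\sqrt{e_1-e_2}$ (times $i$ where appropriate) must map onto each other under multiplication by $\tau$ or $\tau^{-1}$. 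This check is where I would confirm which side of the identity carries the factor $\tau$.
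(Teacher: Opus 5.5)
You follow the paper's own route: rescale $z\mapsto\tau z$ using $\Lambda_{-1/\tau}=\tau^{-1}\Lambda_\tau$, send $T\mapsto\tau T$ with $V_1\leftrightarrow V_2$ and $V_3\to V_3$, match the constant term, and read off $\Delta_1^{(\pi(\ell))}(\tau T;-1/\tau)=\Delta_2^{(\ell)}(T;\tau)$ from $\tilde y(z+1)=y(\tau z+\tau)$. All of that is correct. One sign needs fixing for the constant term: $\eta_1(-1/\tau)=\tau\eta_2(\tau)$, but $\eta_2(-1/\tau)=-\tau\eta_1(\tau)$. The minus sign matters on $V_2$, where $T_1+T_3=-2T$.

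The gap is your last step, ``which is the statement''.

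\begin{itemize}
\item Your relation says $T\in\sigma_2^{(\ell)}(\tau)$ if and only if $\tau T\in\sigma_1^{(\pi(\ell))}(-1/\tau)$. That is, $\tau\,\sigma_2^{(\ell)}(\tau)=\sigma_1^{(\pi(\ell))}(-1/\tau)$, or equivalently $\sigma_2^{(\ell)}(\tau)=\tau^{-1}\sigma_1^{(\pi(\ell))}(-1/\tau)$. The factor sits on the opposite side from the printed identity.
\item For $\tau=ib$ and a set with $S=-S$, one has $\tau^{-1}S=b^{-2}\,\tau S$. So the two versions differ by a real dilation by $b^2$. This is not harmless, because these sets have finite endpoints.
\item Your proposed endpoint check settles the question. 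For $\ell=1$, the finite endpoints of $\sigma_2^{(1)}(ib)$ are $\pm i\sqrt{e_1-e_2}$ and $\pm i\sqrt{e_1-e_3}$. Since $e_1(i/b)-e_2(i/b)=b^2(e_1-e_2)$ and $e_3(i/b)-e_2(i/b)=b^2(e_1-e_3)$, the endpoints of $\sigma_1^{(2)}(i/b)$ are $\pm b\sqrt{e_1-e_2}$ and $\pm b\sqrt{e_1-e_3}$.
\item Multiplying these by $\tau=ib$ gives $\pm ib^2\sqrt{e_1-e_2}$ and $\pm ib^2\sqrt{e_1-e_3}$, which match only when $b=1$. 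Multiplying by $\tau^{-1}$ matches exactly, in agreement with Proposition \ref{prop-spec}. The cases $\ell=2,3$ work out the same way.
\end{itemize}

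So the identity as printed is false for $|\tau|\neq 1$ and cannot be proved. What your argument does prove is the corrected form $\sigma_2^{(\ell)}(\tau)=\tau^{-1}\sigma_1^{(\pi(\ell))}(-1/\tau)$. The paper's proof has the same slip: it establishes $\Delta_2^{(1)}(T;\tau)=\Delta_1^{(2)}(\tau T;-1/\tau)$ and then writes the set identity with $\tau$ on the wrong side. Do not leave the placement of $\tau$ as deferred bookkeeping; state and conclude the corrected identity explicitly.
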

\begin{proof}
For any $\left(\begin{array}{cc}a&b\\ c&d\end{array}\right)\in \mathrm{SL}(2,\mathbb{Z})$, the modular property
\begin{align*}
    \wp\left(\frac{z}{c\tau+d};\frac{a\tau+b}{c\tau+d}\right)&=(c\tau+d)^2\wp(z;\tau),\\
\zeta\left(\frac{z}{c\tau+d};\frac{a\tau+b}{c\tau+d}\right)&=(c\tau+d)\zeta(z;\tau),\\
\left(\begin{array}{c}\eta_2(\frac{a\tau+b}{c\tau+d})\\\eta_1(\frac{a\tau+b}{c\tau+d})\end{array}\right)&=(c\tau+d)\left(\begin{array}{cc}a&b\\ c&d\end{array}\right)\left(\begin{array}{c}\eta_2(\tau)\\\eta_1(\tau)\end{array}\right)
\end{align*}
gives us $\eta_1(\hat\tau)=\tau\eta_2, \,\eta_2(\hat\tau)=-\tau\eta_1$ and 
\begin{align*}
  \wp\left(\frac{z}{-\tau};\hat \tau\right)=\tau^2\wp(z;\tau),  \quad  \zeta\left(\frac{z}{-\tau};\hat \tau \right)=-\tau\zeta(z;\tau).
\end{align*}
where $\hat\tau=-{1}/{\tau}$. 
If $y(z)$ satisfies 
 $$y''(z)=q(z;T_0, T_1, T_2, T_3, B, \tau)y(z),$$  then 
\begin{align*}
   % \hat{y}''(z)&=\tau^2
\frac{d^2}{dz^2}y(\tau z)&=\tau^2
\left(\dfrac{3}{4}\sum\limits_{j=0}^3\wp(\tau z-\frac{\omega_j}{2};\tau)+\sum\limits_{j=0}^3T_j\zeta(\tau z-\frac{\omega_j}{2};\tau)+B\right)y(\tau z)\\
&=
  \left(  \dfrac{3}{4}\sum\limits_{j=0}^3\wp(z-\frac{\hat{\omega}_j}{2};\hat\tau)+\sum\limits_{j=0}^3\hat{T}_j\zeta(t-\frac{\hat{\omega}_j}{2};\hat\tau)+\hat{B}\right)y(\tau z),
\end{align*}
where $\hat{\omega}_0=0, \hat{\omega}_1=\hat\tau, \hat{\omega}_2=1, \hat{\omega}_3=1+\hat\tau, \hat{T}_j=\tau T_j, j=0,1,2,3$,  and $\hat{B}=\tau^2B+(T_1+T_3)\tau\eta_2(\hat\tau)$. 
Hence, $y(\tau z)$ is a solution of
$$y''(z)=q(z;\hat{T}_0,\hat{T}_2,\hat{T}_1,\hat{T}_3,\hat B,\hat\tau)y(z).$$
Note that 
\begin{itemize}
\item $(T_0,T_1,T_2,T_3,B)\in V_1(\tau)$ implies
 $(\hat{T}_0,\hat{T}_2,\hat{T}_1,\hat{T}_3,\hat{B})\in V_2(\hat\tau)$;  
\item 
$(T_0,T_1,T_2,T_3,B)\in V_2(\tau)$  implies $(\hat{T}_0,\hat{T}_2,\hat{T}_1,\hat{T}_3,\hat{B})\in V_1(\hat\tau)$; 
\item 
$(T_0,T_1,T_2,T_3,B)\in V_3(\tau)$  if and only if  $(\hat{T}_0,\hat{T}_2,\hat{T}_1,\hat{T}_3,\hat{B})\in V_3(\hat\tau)$,
\end{itemize}
that is, $T\in V_1(\tau)$ implies $\tau T\in V_2(\hat \tau)$,  $T\in V_2(\tau)$ implies $\tau T\in V_1(\hat \tau)$, and  $T\in V_3(\tau)$ if and only if $\tau T\in V_3(\hat \tau)$.
By the definition of monodromy matrices in (\ref{CompRed}) and (\ref{CompNRed}),  we get 
\begin{align*}
&M_2^{(1)}(T;\tau)=M_1^{(2)}(\tau T;\hat\tau), &\Delta_2^{(1)}(T;\tau)=\Delta_1^{(2)}(\tau T;\hat\tau),\\
&M_2^{(2)}(T;\tau)=M_1^{(1)}(\tau T;\hat\tau),\quad \text{and then}&\Delta_2^{(2)}(T;\tau)=\Delta_1^{(1)}(\tau T;\hat\tau), \\
&M_2^{(3)}(T;\tau)=M_1^{(3)}(\tau T;\hat\tau), &\Delta_2^{(3)}(T;\tau)=\Delta_1^{(3)}(\tau T;\hat\tau).
%M_2^{(2)}(\tau T)=M_1^{(1)}(T)^{-1},
% M_2^{(3)}(\tau T)=M_2^{(3)}(T)^{-1},
\end{align*}  
Therefore,   $$\sigma_{2}^{(1)}(\tau)=\tau \sigma_1^{(2)}(\hat\tau),\quad \sigma_2^{(2)}(\tau)=\tau\sigma_1^{(1)}(\hat\tau), \quad \sigma_2^{(3)}(\tau)=\tau\sigma_1^{(3)}(\hat\tau).$$
\end{proof}

\iffalse 
Clearly, (\ref{cc1}) with $({\bf T},B)$ has unitarizable monodromy matrices iff $({\bf T},B)\in\sigma\cap\sigma_\ast$ and $({\bf T},B)$ is not an end point of $\sigma$ and $\sigma_\ast$. Hence we apply Corollary \ref{cor07121714}, that is, $\ell(T)=0$ iff (\ref{cc1}) is not completely reducible. In our case, $\sigma(\tau)=\bigcup\limits_{j=1}^3\sigma_j(\tau)$, $\sigma_\ast(\tau)=\bigcup\limits_{j=1}^3\sigma_{j,\ast}(\tau)$ and  $\sigma(\tau)\cap\sigma_\ast(\tau)=\bigcup\limits_{j=1}^3(\sigma_j(\tau)\cap\sigma_j^\ast(\tau))$. Note that $({\bf T},B)=(0,0,0,0,0)\in\sigma_i(\tau)\cap\sigma_{j,\ast}(\tau)$ for any $i,j$. So, we want to ask when $\sigma_j(\tau)\cap\sigma_{j,\ast}(\tau)$ consists only of end points and $(0,0,0,0,0)$. The next result answers it affirmatively.
\fi 

Now, we are ready to determine when the monodromy matrices are unitarizable for $\tau\in i\mathbb{R}_{>0}$. 

\begin{theorem}
Let $\tau\in i\mathbb{R}_{>0}$. Then $\mathcal{L}(T)$ with  $T\in AP$ has unitarizable monodromy matrices if and only if $T=0$. %, that is, $q(z)=\frac{3}{4}\sum\limits_{k=0}^3\wp(z-\omega_k/2)$.
\end{theorem}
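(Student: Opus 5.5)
The plan is to reduce unitarizability to membership in both conditional stability sets, and then read off the intersection from Proposition \ref{prop-spec}.

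For sufficiency, Corollary \ref{cor-intro-T=0} gives $M_1=M_2=-I_2$ at $T=0$. These matrices are already unitary, so $\mathcal{L}(0)$ is unitarizable.

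For necessity, let $T\in AP$ be unitarizable. By Lemma \ref{thm1.1}, $T\in V_\ell$ for some $\ell$.
\begin{itemize}
\item Unitarizable monodromy is completely reducible, since a non-diagonalizable unipotent block as in (\ref{CompNRed}) is never conjugate to a unitary matrix. Hence $Q_\ell(T)\neq0$ by Theorem \ref{prop-poly}, and the monodromy data $(s,r)$ are defined by (\ref{CompRed}).
\item Unitarity forces the eigenvalues $e^{\pm 2\pi i s}$ and $e^{\pm 2\pi i r}$ to lie on the unit circle, so $s,r\in\mathbb{R}$.
\item Consequently $\Delta^{(\ell)}_1(T)=\cos(2\pi s)\in[-1,1]$ and $\Delta^{(\ell)}_2(T)=\cos(2\pi r)\in[-1,1]$. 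This means $T\in\sigma_1^{(\ell)}\cap\sigma_2^{(\ell)}$ with $Q_\ell(T)\ne0$.
\end{itemize}

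It remains to compute $\sigma_1^{(\ell)}\cap\sigma_2^{(\ell)}$ from Proposition \ref{prop-spec}, using the ordering $e_2<e_3<e_1$.

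For $\ell=1$, $\sigma_1^{(1)}\subset i\mathbb{R}$, while $\sigma_2^{(1)}\cap i\mathbb{R}=\{0\}\cup\{it: \sqrt{e_1-e_3}\le |t|\le\sqrt{e_1-e_2}\}$. Intersecting with $\sigma_1^{(1)}$ leaves
\[
\sigma_1^{(1)}\cap\sigma_2^{(1)}=\{0,\ \pm i\sqrt{e_1-e_3},\ \pm i\sqrt{e_1-e_2}\}.
\]
This is $\{0\}$ together with the zeros of $Q_1$. The case $\ell=2$ is the same argument with the real and imaginary axes interchanged.

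For $\ell=3$, $\sigma_1^{(3)}$ is the union of the real segment $[-\sqrt{e_1-e_3},\sqrt{e_1-e_3}]$ and imaginary rays with $|\mathrm{Im}\,T|\ge\sqrt{e_3-e_2}$. The set $\sigma_2^{(3)}$ is the union of real rays with $|T|\ge\sqrt{e_1-e_3}$ and the imaginary segment $[-i\sqrt{e_3-e_2},i\sqrt{e_3-e_2}]$. The two real pieces meet only at $\pm\sqrt{e_1-e_3}$, the two imaginary pieces meet only at $\pm i\sqrt{e_3-e_2}$, and a real piece meets an imaginary piece only at $0$. So again the intersection is $\{0\}$ together with the zeros of $Q_3$, which also proves the first assertion of Theorem \ref{thm-intro-T=0}.

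Since $Q_\ell(T)\neq0$, the only surviving point is $T=0$. The intersection computation is elementary once Proposition \ref{prop-spec} is available. The steps that need the most care are the two reductions: checking that unitarizability forces complete reducibility, so that Theorem \ref{prop-poly} excludes the endpoints, and checking that it forces $s,r$ to be real, so that $\Delta^{(\ell)}_k(T)\in[-1,1]$ for both $k$.
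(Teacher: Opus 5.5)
Your proposal is correct and follows the paper's proof essentially step for step. You reduce unitarizability to complete reducibility ($Q_\ell(T)\neq 0$ via Theorem \ref{prop-poly}) plus $s,r\in\mathbb{R}$, i.e.\ $T\in\sigma_1^{(\ell)}\cap\sigma_2^{(\ell)}$, and then read off $\sigma_1^{(\ell)}\cap\sigma_2^{(\ell)}=\{0\}\cup\{Q_\ell=0\}$ from Proposition \ref{prop-spec}; your explicit case-by-case intersection using $e_2<e_3<e_1$ simply spells out the step the paper calls ``clear to see''.
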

\begin{proof}
    The sufficient part is obviously true. To prove the necessary part, let $T\in V_\ell$ for some $\ell\in \{1,2,3\}$, we first show that $\mathcal{L}_\ell(T)$ has unitarizable monodromy matrices if and only if $T\in\sigma_1^{(\ell)}\cap\sigma_2^{(\ell)}$   and $T$ is not an end point of $\sigma_1^{(\ell)}$ and $\sigma_2^{(k)}$, i.e., $Q_\ell(T)\neq 0$.  Indeed,  $\mathcal{L}_\ell(T)$ has unitarizable monodromy matrices if and only if $\mathcal{L}_\ell(T)$ is completely reducible and the eigenvalues have norm $1$. By Proposition \ref{prop-poly}, 
we have  $Q_\ell(T)\neq0$ if and only if  $\mathcal{L}_\ell(T)$ is completely reducible. Note that the eigenvalues of the monodromy matrices have norm $1$ if and only if the monodromy data $s,r\in \mathbb{R}$, which is equivalent to $T\in\sigma_1^{(\ell)}\cap\sigma_2^{(\ell)}$, because %  By (\ref{eqn-Delta})
$$
\Delta_1^{(\ell)}(T)=\cos (2\pi s),\quad 
\Delta_2^{(\ell)}(T)=\cos (2\pi r).
$$
By Proposition \ref{prop-spec}, it is clear to see that 
$$\sigma_1^{(\ell)}\cap\sigma_2^{(\ell)}=\{0\}\cup \{T\,|\, Q_\ell(T)=0\},$$
which is desired. 
\end{proof}

 \noindent{\bf Data Availability} No data was used for the research described in the article.

\noindent{\bf Conflict of interest} The authors have no conflicts to disclose.

%\subsection*{Acknowledgements} The authors thank A. Eremenko for his interest and comments. Z. Chen was supported by National Key R\&D Program of China (No. 2023YFA1010002) and NSFC (No. 12222109).  E. Fu was supported by NSFC (No. 12401188) and BIMSA Start-up Research Fund.

\end{document}